\DeclareMathOperator{\N}{\mathbb{N}}
\DeclareMathOperator{\Q}{\mathbb{Q}}
\DeclareMathOperator{\C}{\mathbb{C}}
\DeclareMathOperator{\R}{\mathbb{R}}
\DeclareMathOperator{\iR}{i\mathbb{R}}
\DeclareMathOperator{\Z}{\mathbb{Z}}
\DeclareMathOperator{\I}{\hat{\mathbb{I}}}
\newcommand{\ZZ}[1]{\mathbb{Z}/{#1}\mathbb{Z}}
\DeclareMathOperator{\F}{\mathbb{F}}
\DeclareMathOperator{\bbI}{\mathbb{I}}
\DeclareMathOperator{\bH}{\mathbb{H}}
\DeclareMathOperator{\bA}{\mathbb{A}}
\DeclareMathOperator{\cS}{\mathcal{S}}
\DeclareMathOperator{\E}{\mathcal{E}}
\DeclareMathOperator{\csup}{CSupp}
\DeclareMathOperator{\sA}{\mathscr{A}}
\DeclareMathOperator{\GL}{GL}
\DeclareMathOperator{\Ind}{Ind}
\DeclareMathOperator{\Irr}{Irr}
\DeclareMathOperator{\Res}{Res}
\DeclareMathOperator{\End}{End}
\DeclareMathOperator{\Hom}{Hom}
\DeclareMathOperator{\OO}{\mathcal{O}}
\DeclareMathOperator{\eps}{\varepsilon}
\DeclareMathOperator{\ind}{ind}
\DeclareMathOperator{\Ad}{Ad}
\DeclareMathOperator{\ad}{ad}
\DeclareMathOperator{\Sp}{Sp}
\DeclareMathOperator{\SL}{SL}
\renewcommand{\sl}{\mathfrak{sl}}
\newcommand{\gl}{\mathfrak{gl}}
\renewcommand{\sp}{\mathfrak{sp}}
\DeclareMathOperator{\SO}{SO}
\DeclareMathOperator{\mult}{mult}
\DeclareMathOperator{\sgn}{\rm{sgn}}
\DeclareMathOperator{\img}{im}
\DeclareMathOperator{\jordbp}{\Delta}
\DeclareMathOperator{\Psymp}{\mathcal{P}^{\mathrm{symp}}}
\DeclareMathOperator{\PPsymp}{{\bm{\mathcal{P}^{\mathrm{symp}}}}}
\DeclareMathOperator{\Port}{\mathcal{P}^{\mathrm{ort}}}
\DeclareMathOperator{\PPort}{ \bm{\mathcal{P}^{ \mathrm{ort} } }  }
\DeclareMathOperator{\PPPort}{ \bm{\mathcal{P}}^{\mathrm{ort},2}}
\DeclareMathOperator{\epsmax}{\eps^{\mathrm{max}}}
\DeclareMathOperator{\psimax}{\psi^{\mathrm{max}}}
\DeclareMathOperator{\lambdamax}{\lambda^{\mathrm{max}}}
\DeclareMathOperator{\lambdamin}{\lambda^{\mathrm{min}}}
\DeclareMathOperator{\epsmin}{\eps^{\mathrm{min}}}
\DeclareMathOperator{\slambdamin}{\prescript{s}{}{\lambda}^{\mathrm{min}}}
\DeclareMathOperator{\sepsmin}{\prescript{s}{}{\eps}^{\mathrm{min}}}
\DeclareMathOperator{\seps}{\prescript{s}{}{\eps}}
\DeclareMathOperator{\slambdamax}{{}^s\lambda^{\mathrm{max}}}
\DeclareMathOperator{\slambda}{\prescript{s}{}{\lambda}}
\DeclareMathOperator{\sepsmax}{\prescript{s}{}{\eps}^{\mathrm{max}}}
\newcommand{\Cmax}{C^{\mathrm{max}}}
\newcommand{\Emax}{\mathcal E^{\mathrm{max}}}
\newcommand{\Cmin}{C^{\mathrm{min}}}
\newcommand{\Emin}{\mathcal E^{\mathrm{min}}}
\newcommand{\emax}{e^{\mathrm{max}}}
\newcommand{\emin}{e^{\mathrm{min}}}
\DeclareMathOperator{\lambdaodd}{\lambda^{\mathrm{odd}}}
\DeclareMathOperator{\lambdaoddmax}{\lambda^{\mathrm{odd,max}}}
\DeclareMathOperator{\lambdaeven}{\lambda^{\mathrm{even}}}
\DeclareMathOperator{\lambdaevenmax}{\lambda^{\mathrm{even,max}}}
\DeclareMathOperator{\eodd}{e^{\mathrm{odd}}}
\DeclareMathOperator{\epsodd}{\eps^{\mathrm{odd}}}
\DeclareMathOperator{\psimin}{\psi^{\mathrm{min}}}
\DeclareMathOperator{\triv}{\text{triv}}
\DeclareMathOperator{\GSpr}{GSpr}
\DeclareMathOperator{\IM}{\mathsf{IM}}
\newcommand{\tIM}{\mathsf{IM}'}
\DeclareMathOperator{\gIM}{\mathbb{IM}}
\newcommand{\tgIM}{\mathbb{IM}'}
\DeclareMathOperator{\AZ}{\mathsf{AZ}}
\DeclareMathOperator{\Unip}{Unip}
\DeclareMathOperator{\diag}{diag}
\DeclareMathOperator{\pr}{pr}
\DeclareMathOperator{\sS}{\mathscr S}
\DeclareMathOperator{\sK}{\mathscr K}
\DeclareMathOperator{\sB}{\mathscr B}
\DeclareMathOperator{\sX}{\mathscr X}
\DeclareMathOperator{\sC}{\mathscr C}
\DeclareMathOperator{\cP}{\mathcal P}
\DeclareMathOperator{\cR}{\mathcal R}
\DeclareMathOperator{\cN}{\mathcal N}
\DeclareMathOperator{\cE}{\mathcal E}
\DeclareMathOperator{\cH}{\mathcal H}
\DeclareMathOperator{\cM}{\mathcal M}
\DeclareMathOperator{\sM}{\mathscr M}
\DeclareMathOperator{\cY}{\mathcal Y}
\DeclareMathOperator{\cMtemp}{\mathcal M_{\rm{temp}}}
\DeclareMathOperator{\cMtempreal}{\mathcal M_{\rm{temp,real}}}
\DeclareMathOperator{\cQ}{\mathcal Q}
\DeclareMathOperator{\cL}{\mathcal L}
\DeclareMathOperator{\cV}{\mathcal V}
\DeclareMathOperator{\NNG}{\mathcal{N}_{G}}
\DeclareMathOperator{\NNg}{\mathcal{N}_{G}}
\newcommand{\dG}{G^\vee}
\newcommand{\dM}{M^\vee}
\DeclareMathOperator{\fS}{\mathfrak S}
\DeclareMathOperator{\fg}{\mathfrak g}
\DeclareMathOperator{\dg}{\mathfrak g^\vee}
\DeclareMathOperator{\fz}{\mathfrak z}
\DeclareMathOperator{\ft}{\mathfrak t}
\DeclareMathOperator{\fs}{\mathfrak s}
\DeclareMathOperator{\fm}{\mathfrak m}
\DeclareMathOperator{\fn}{\mathfrak n}
\DeclareMathOperator{\sfk}{\mathsf k}
\DeclareMathOperator{\aG}{\bm{\mathsf G}}
\DeclareMathOperator{\Gk}{\bm{\mathsf G}(\mathsf k)}
\DeclareMathOperator{\perfp}{\langle\cdot,\cdot\rangle}
\DeclareMathOperator{\alphac}{\check\alpha}
\DeclareMathOperator{\cG}{\mathcal G}
\DeclareMathOperator{\cA}{\mathcal A}
\DeclareMathOperator{\cZ}{\mathcal Z}
\DeclareMathOperator{\stab}{stab}
\DeclareMathOperator{\Mod}{-mod}
\DeclareMathOperator{\Rep}{Rep}
\DeclareMathOperator{\Lie}{Lie}
\DeclareMathOperator{\cindKG}{c-Ind}
\DeclareMathOperator{\Phitemp}{\Phi_{\rm{temp}}}
\newcommand{\transp}[1]{{}^t{#1}}
\newcommand{\transps}[1]{{}^s{#1}}
\newcommand{\perf}[2]{\langle#1,#2\rangle}
\theoremstyle{definition}
\newtheorem{definition}{Definition}[section]
\newtheorem{example}[definition]{Example}
\newtheorem{remark}[definition]{Remark}
\newtheorem*{claim*}{Claim}
\theoremstyle{theorem}
\newtheorem{theorem}[definition]{Theorem}
\newtheorem*{theorem*}{Theorem}
\newtheorem{lemma}[definition]{Lemma}
\newtheorem{proposition}[definition]{Proposition}
\newtheorem{corollary}[definition]{Corollary}
\numberwithin{equation}{section}
\title[Iwahori--Matsumoto dual for tempered representations]{The Iwahori--Matsumoto dual for tempered representations of Lusztig's geometric Hecke algebras}
\author{Ruben La}
\address{Department of Mathematics, The University of Hong Kong, Run Run Shaw Building, Pok Fu Lam Road, Hong Kong}
\email{\href{mailto:ruben.la@outlook.com}{ruben.la@outlook.com}}
\begin{document}

\maketitle
\begin{abstract}
The graded Iwahori--Matsumoto involution $\mathbb{IM}$ is an algebra involution on a graded Hecke algebra closely related to the more well-known Iwahori--Matsumoto involution on an affine Hecke algebra.
It induces an involution on the Grothendieck group of complex finite-dimensional representations of $\mathbb{H}$.
When $\mathbb{H}$ is a geometric graded Hecke algebra (in the sense of Lusztig) associated to a connected complex reductive group $G$, the irreducible representations of $\mathbb{H}$ are parametrised
by a set $\mathcal{M}$ consisting of certain $G$-conjugacy classes of quadruples $(e,s,r_0,\psi)$ where $r_0 \in \mathbb{C}$, $e \in \mathrm{Lie}(G)$ is nilpotent, $s \in \mathrm{Lie}(G)$ is semisimple, and $\psi$ is some irreducible representation of the group of components of the simultaneous centraliser of $(e,s)$ in $G$.
Let $\bar Y$ be an irreducible tempered representation of $\mathbb{H}$ with real infinitesimal character.
Then $\mathbb{IM}(\bar Y) = \bar Y(e',s,r_0,\psi')$ for some $(e',s,r_0,\psi') \in \mathcal{M}$.
The main result of this paper is to give an explicit algorithm that computes the $G$-orbit of $e'$ for $G = \mathrm{Sp}(2n,\mathbb{C})$ and $G = \mathrm{SO}(N,\mathbb{C})$. 
As a key ingredient of the main result, we also prove a generalisation of the main theorems of Waldspurger 2019 (for $\mathrm{Sp}(2n,\mathbb{C})$) and La 2024 (for $\mathrm{SO}(N,\mathbb{C})$) regarding certain maximality properties of generalised Springer representations.
\end{abstract}

\tableofcontents

\section*{Introduction}

Let $G$ be a complex connected reductive group and fix a maximal torus $T$. Let $S_G$ be the set of triples $(L,C,\cL)$ where $L$ is a standard Levi subgroup of $G$, $C$ is a unipotent class in $L$ and $\cL$ is an $L$-equivariant local system on $C$.
Let $r$ be an indeterminate. To each $(L,C,\cL) \in S_G$, a $\C[r]$-algebra $\bH(G,L,C,\cL)$, called a geometric graded Hecke algebra, was attached in \cite{lusztig1988cuspidal, lusztig1995cuspidal}, and it was shown there that the irreducible representations of $\bH(G,L,C,\cL)$ are in bijection with the set $\cM$ of $G$-conjugacy classes of quadruples $(e,s,r_0,\psi)$ with $r_0 \in \C$, $e\in \fg := \Lie(G)$ nilpotent, $s \in \fg$ semisimple such that $\ad(s)e=2r_0e$, and $\psi \in A_G(e,s)_{(L,C,\cL)}^\wedge$, where $ A_G(e,s)_{(L,C\cL)}^\wedge$ is a certain subset of the set of isomorphism classes of irreducible representations of $A_G(e,s) = Z_G(e,s)/Z_G(e,s)^\circ$ (see \Cref{subsec:GHAcuspls} for the definition). For $(e,s,r_0,\psi) \in \cM$, denote by $\bar Y(e,s,r_0,\psi)$ the corresponding irreducible representation. 
The central character of $\bar Y(e,s,r_0,\psi)$ is determined by the $G$-orbit of $s$ and $r_0$ and we denote by $\Irr_{s,r_0}\bH(G,L,C,\cL)$ the set of isomorphism classes of irreducible representations of $\bH(G,L,C,\cL)$ with this central character.
The graded Iwahori--Matsumoto involution $\gIM$ is an algebra involution on $\bH$ and induces a corresponding involution (which we also denote by $\gIM$) on the Grothendieck group of finite-dimensional representations of a graded Hecke algebra that sends irreducibles to irreducibles up to sign. 
In this paper, $\gIM$ is defined differently than what is sometimes done in the literature (see for instance \cite{evens1997fourier}) so that it preserves central characters.
The graded Hecke algebra $\bH(G,L,C,\cL)$ contains a copy of the group algebra $\C[W_L]$ of the relative Weyl group $W_L = N_G(L)/L$ as a subalgebra. The multiplicities of the irreducible subrepresentations of the restriction $\bar Y(e,s,r_0,\psi)|_{W_L}$ to $\C[W_L]$ are completely described by Green functions \cite[\S10.13]{lusztig1995cuspidal}. 
For $e' \in \dg$ nilpotent and $\psi'$ is an irreducible representation of $A_G(e') = Z_G(e')/Z_G(e')^\circ$, denote by $\GSpr(e',\psi')$ the corresponding generalised Springer representation, which is an irreducible representation of $W_{L'}$ for some Levi subgroup $L'$ of $G$ with a cuspidal pair. We call $G\cdot e'$ the Springer support of the representation $\GSpr(e',\psi')$ and we note that the nilpotent orbits of $\dg$ are partially ordered by the closure order.
The following result is a key ingredient for the main result of this paper.

\begin{theorem}\label{thm:maintheorems}
Suppose $G = \Sp(2n,\C)$ or $G = \SO(N,\C)$ and let $(e,s,r_0,\psi) \in \cM$ such that $\bar Y(e,s,r_0,\psi)$ is tempered and $s$ is real (i.e. polar decomposition of $s$ has trivial compact part). 
Then we have $A_G(e,s) = A_G(s)$ (by \cite[(4.3.1) Lemma]{reeder2002isogenies})  and
\begin{enumerate}
\item $\bar Y(e,s,r_0,\psi)|_{W_L}$ has a unique subrepresentation $\bar\rho = \GSpr(\bar e,\bar\psi)$ with multiplicity $1$ and whose Springer support is maximal among the the Springer supports of all irreducible subrepresentations of $\bar Y(e,s,r_0,\psi)$ (\Cref{thm:maxeven} for $\SO(N,\C)$, \Cref{thm:maxSp} for $\Sp(2n,\C)$).
\item $\bar\rho \otimes \sgn$ has minimal Springer support among the Springer supports of all irreducible subrepresentations of $\bar Y(e,s,r_0,\psi)|_{W_L} \otimes \sgn$ (\Cref{thm:mineven} for $\SO(N,\C)$, \Cref{thm:minSp} for $\Sp(2n,\C)$).
\end{enumerate}
\end{theorem}
\Cref{thm:maxeven} and \Cref{thm:mineven}, are generalisations of \cite[Theorem 4.2, 4.6]{la2022maximality}, the difference being that the former two theorems do not have any assumptions on $e$, whereas in the latter two theorems it is assumed that $e$ is parametrised by an orthogonal partition with only odd parts. 
Similarly, \Cref{thm:maxSp}, and \Cref{thm:minSp} are generalisations of \cite[Th\'eor\`eme 4.5, 4.7]{waldspurger2019proprietes}, dropping the assumption that $e$ is parametrised by a symplectic partition with only even parts.
In \cite[\S4.3]{la2022maximality} an attempt was already made to generalise these results to arbitrary $e$, but it was only successful with the additional assumption that $(e,\rho)$ are of `ordinary' Springer type, that is, they appear in the Springer correspondence but not the generalised Springer correspondence.
In this paper, the generalisations are proved using almost entirely the same approach as in \emph{loc. cit.}, using the additional result \cite[Proposition 3.22(a)]{aubert2018graded} that allows us to drop the assumption that $(e,\rho)$ is of ordinary Springer type.

The main result of the paper is the following
\begin{theorem}\label{thm:maintheorem}
Suppose $G = \Sp(2n,\C)$ or $G = \SO(N,\C)$ and let $(e,s,r_0,\psi) \in \cM$ such that $\bar Y(e,s,r_0,\psi)$ is a tempered representation and $s$ is a real semisimple element of $G$. 
Let  $(e',s',r_0',\psi') \in \cM$ such that $s'=s$, $r_0'=r_0$ and $\gIM(\bar Y(e,s,r_0,\psi)) = \bar Y(e',s,r_0,\psi')$. Then $A_G(e',s) = A_G(e')$ and $\bar \rho \otimes \sgn = \GSpr(e',\psi')$ (\Cref{prop:gIMminimal}).
\end{theorem}
In \Cref{sec:algSO} and \Cref{sec:algSp}, we give an explicit algorithm from \cite[\S5]{la2022maximality} and \cite[\S5]{waldspurger2019proprietes} that computes $G\cdot e'$ and $\psi'$ for $\SO(N,\C)$ and $\Sp(2n,\C)$ respectively.
Using similar results as \Cref{thm:maintheorems} and \Cref{thm:maintheorem} for the case that $G$ is an exceptional group -- verified case by case using the Chevie package of GAP -- we will also give tables for the cases that $G$ is an exceptional group in \Cref{app:tablesexceptional} that were computed also using the Chevie package of GAP \cite{schonert1992gap,geck1996chevie,michel2015development}.

Next we briefly explain the connection of our results to the more well-known Iwahori--Matsumoto involution $\IM$ for an affine Hecke algebra $\cH$ and the Aubert--Zelevinsky involution for $p$-adic groups.
We view $\cH$ as a $\C[v,v^{-1}]$-algebra where $v$ is an indeterminate. 
The version of $\IM$ that we consider is the same as the involution considered in \cite{kato1993duality}.
We also denote by $\IM$ the induced involution on the Grothendieck group of finite-dimensional representations of $\cH$. 
Lusztig attached to a connected complex reductive group $\dG$ and a triple $(L,C,\cL)$, where $L$ is a standard pseudo-Levi subgroup (i.e. centraliser of an element in $T$) of $\dG$ and $(C,\cL)$ is a cuspidal pair in $L$, a `geometric' affine Hecke algebra $\cH(\dG,L,C,\cL)$ \cite{lusztig1995classification}. 
Central characters of $\cH(\dG,L,C,\cL)$ are parametrised by a pair $(G\cdot s,v_0)$ where $s \in \dG$ is semisimple and $v_0 \in \C^\times$. Let $\Irr_{s,v_0}\cH(\dG,L,C,\cL)$ denote the set of isomorphism classes of irreducible representations of $\cH(\dG,L,C,\cL)$ with central character corresponding to $(G\cdot s,v_0)$.
Let $s_r$ and $s_c$ denote the real and compact parts (polar decomposition) of $s$ respectively.
Using the reduction theorems in \cite{lusztig1989affine}, Lusztig constructed a bijection $\Omega$ between 
$\Irr_{s,v_0}\cH(\dG,L,C,\cL)$ 
and 
$\Irr_{\log s_r,r_0} \bH(Z_{\dG}(s_c),L,C,\cL)$.
The author believes that it is well-known that $\Omega \circ \IM = \gIM \circ \Omega$, but is unaware of an explicit proof in the literature (\cite{evens1997fourier} discusses this but for a certain twist of $\gIM$), so we include a proof (\Cref{corollary:diagramIM}).
Using the parametrisation of irreducible representations of $\bH(Z_{\dG}(s_c),L,C,\cL)$ described in the first paragraph, the irreducible representations of $\cH(\dG,L,C,\cL)$ on which $v$ acts as a real positive number can be parametrised by $\dG$-orbits of quadruples $(s,e,v_0,\psi)$ where $s \in \dG$ is semisimple with $L \subseteq Z_{\dG}(s_c)$, $e \in \dg  = \Lie(\dG)$ is nilpotent such that $\ad(s)e = v_0^2e$, and $\psi \in A_G(e,s_r)_0^\wedge$.
For $(e,s,v_0,\psi)$ as above, denote the corresponding irreducible representation by $\cY(e,s,v_0,\psi)$.
By the compatibility between $\IM$ and $\gIM$ mentioned above we have $\cY(e',s',v_0,\psi') = \IM(\cY(e,s,v_0,\psi))$ if and only if $\bar Y(e',\log s_r',\log v_0,\psi') = \gIM(\bar Y(e,\log s_r,\log v_0,\psi))$ (\Cref{subsec:gIMtoIM}).
 
Let $\sf k$ be a nonarchimedean local field with finite residue field of cardinality $q$ and let $\aG$ be a connected reductive algebraic group of adjoint type defined and split over $\sf k$.
The Aubert--Zelevinsky involution $\AZ$ is an involution on the Grothendieck group of finite-length representations of $\aG(\sfk)$. 
Lusztig constructed in \cite[\S5]{lusztig1995classification} a bijection between the set $\Unip(\aG)$ of unipotent representations of all inner forms of the split form of $G$ and the set $\bigsqcup_{L,C,\cL}\Irr_{q^{1/2}}\cH(\dG,L,C,\cL)$ where $L$ is a standard pseudo-Levi subgroup of $\dG$ and $(C,\cL)$ is a cuspidal pair in $L$. 
We obtain a bijection between $\Unip(\aG)$ and the set $\Phi(\dG)$ of $\dG$-conjugacy classes of triples  $(e,s,\phi)$, where $s \in \dG$ is semisimple, $e \in \dg$ is nilpotent such that $\ad(s)e = qe$ and $\phi$ is an irreducible representation of $A_{\dG}(s,e)$. 
In \Cref{sec:algAZ}, we use our algorithm for $\gIM$ to obtain an algorithm that computes the $\dG$-orbit of the nilpotent element in the triple of $\Phi(\dG)$ that parametrises $\AZ(X)$ for a tempered unipotent representation $X$ of $\aG = \SO(2n+1)$. We note that an explicit algorithm for $\AZ$ for all irreducible representations of $\SO(2n+1)$ and $\Sp(2n)$ was given in \cite{atobe2023explicit} using a completely different approach.
Finding an algorithm for $\AZ$ for $\aG =\Sp(2n)$ and $\aG = \SO(2n)$ requires a different approach since these groups are not adjoint and so we cannot directly use Lusztig's result as we did for $\SO(2n+1)$. In this case the author believes a possible approach would be to use results from \cite{solleveld2023local} (as well as results from \cite{aubert2018graded}, \cite{aubert2017affine}, and \cite{aubert2018generalizations} which \cite{solleveld2023local} is strongly based on). In this setting, we would also need analogues of our results for $\SO(N,\C)$ and $\Sp(2n,\C)$ in \Cref{sec:max} for the complex spin groups, 
which are unknown to the author

\subsection*{Acknowledgements}
This majority of this paper is part of my DPhil (PhD) thesis, which I wrote at the University of Oxford.
I would like to thank my supervisor Dan Ciubotaru for his support and invaluable guidance in this project. 
I would further like to thank Jonas Antor and Emile Okada for many helpful discussions, and for allowing me to use some results that came from our joint work for an upcoming paper. 
I also want to thank Anne-Marie Aubert and Yakov Kremnitzer for useful suggestions.

\section*{Notation}\label{sec:notation}

Let $\sf k$ be a nonarchimedean local field with with finite residue field $k := \F_q$ of cardinality $q$. 
Let $\aG$ be a connected reductive algebraic group defined and split over $\sf k$.
Let $\dG$ denote the complex reductive group whose root datum is dual to the root datum of $\aG$ and let $\dg$ denote its Lie algebra.

For an $l$-group $H$ (i.e. a Hausdorff group such that the identity has a basis of neighbourhoods which are open compact groups), denote by $\Rep(H)$ the category of smooth complex representations of $H$ and let $\cR(H)$ denote its Grothendieck group of the subcategory of $\Rep(H)$ of finite-length representations.
For a $\C$-algebra $A$, denote by $\cR(A)$ the Grothendieck group of finite-dimensional $A$-modules.

For any abstract group $\Gamma$, write $\Gamma^\wedge$ for the set of isomorphism classes of irreducible representations of $\Gamma$. If $\Gamma$ acts on a set $X$, we write $X^\Gamma$ for the set of fixed points of $X$.

\section{Generalised Springer correspondence}\label{sec:GSC}

Let $G$ be a connected complex reductive group with Lie algebra $\fg$. 
Let $\cN$ be the set of nilpotent orbits in $\fg$ and $\NNg$ the set of pairs $(C,\E)$, where $C \in \cN$ and $\cE$ is an irreducible $G$-equivariant local system on $C$. 
Denote by $S_G$ the set of triples $(L,C_L,\cL)$ where  $L$ is a Levi subgroup of $G$ containing a cuspidal pair $(C_L,\cL)$
(\cite[2.4 Definition]{lusztig1984intersection}). 
Lusztig showed that $W_L := N_G(L)/L$ is a finite Coxeter group. We call $W_L$ a \emph{relative Weyl group} of $G$. 
The generalised Springer correspondence is a certain bijection 
\begin{equation}\label{eq:genspringergeneral}
\GSpr \colon \NNg \to \bigsqcup_{(L,C_L,\cL) \in S_G} W_{L}^\wedge.
\end{equation}
Given $(C,\cE) \in \NNg$, we call $\GSpr(C,\cE)$ a $W_{L}$ \emph{generalised Springer representation} for $G$ associated to $(C,\cE)$.
By \eqref{eq:genspringergeneral}, each $(C,\cE)$ of $\NNg$ corresponds uniquely to a triple $(L,C_L,\cL) \in S_G$, which we call the \emph{cuspidal support} of $(C,\E)$ and we write $\csup(C,\cE):=(L,C_L,\cL)$.

For $C \in \cN$ and any $e \in C$, the set of irreducible $G$-equivariant local systems on $C$ is in bijection with the set of irreducible representations of $A(e) = Z_G(e)/Z_G(e)^\circ$, the component group of $e$. Note that $A(e)$ is uniquely determined by $C$ up to isomorphism, so we write $A(C)$. 
Abusing notation, we also write elements of $\NNG$ as $(e,\phi)$ where $e\in C$ and $\phi \in A(e)^\wedge$ rather than $(C,\cE)$ and we write
\begin{align}
\GSpr(e,\phi) = \GSpr(C,\cE), \quad \csup(e,\psi) = \csup(C,\phi).
\end{align}

Next, we will describe the generalised Springer correspondence for $G = \Sp(2n,\C)$ and $G = \SO(N,\C)$ in terms of symbols. The notation will be the same as in \cite[\S 4]{waldspurger2019proprietes} and is slightly different than the notation originally used in \cite{lusztig1984intersection}. We note that the exposition is almost exactly the same as in \cite[\S1,\S2]{la2022maximality}.

\subsection{Combinatorics of partitions}\label{subsec:partitions}
Let $\cR$ be the set of decreasing sequences $\lambda = (\lambda_1, \lambda_2, \dots)$ of real numbers such that for each $r \in \R$, there exist only finitely many $i \in \N$ such that $\lambda_i \geq r$.
For each $r \in \R$ and $\lambda \in \cR$, define $\mult_\lambda(r) = \#\{i \in \N \colon \lambda_i = r\}$.
For each $c \in \N$, we define $S_c(\lambda) = \lambda_1 + \dots + \lambda_c$. 
For $\lambda,\lambda' \in \cR$, write $\lambda \leq \lambda'$ if $S_c(\lambda) \leq S_c(\lambda')$ for all $c \in \N$.
For $\lambda, \lambda' \in \cR$, define $\lambda + \lambda' = (\lambda_1 + \lambda_1', \lambda_2+\lambda_2',\dots)$, and define $\lambda \sqcup \lambda'$ to be the unique element of $\cR$ such that for each $n \in \N$, we have $\mult_{\lambda \sqcup \lambda'}(n) = \mult_{\lambda}(n) + \mult_{\lambda'}(n)$.

For $a,s \in \R$, define $[a,-\infty[_s = (a,a-s,a-2s,\dots) \in \cR$, and for $A,B\in\R$, $\mu,\nu\in\cP$, define
\begin{align}\label{eq:Lambda}
\Lambda_{A,B;s}(\mu,\nu) &= (\mu + [A,-\infty[_s) \sqcup (\nu + [B,-\infty[_s) \in \cR.
\end{align}

A sequence $\lambda \in \cR$ is called a \emph{partition} if it is a finite decreasing sequence of non-negative integers and let $\mathcal{P} \subseteq \cR$ be the set of all partitions.
For $N \in \N$, we say that $\lambda$ is a \emph{partition of $N$} if $N = S(\lambda)$. Let $\cP(N)$ denote the set of partitions of $N$. 

\subsection{Symplectic group $\Sp(2n,\C)$}
Let $n \in \Z_{\geq0}$ and $G = \Sp(2n,\C)$ 
A partition $\lambda$ of $2n$ is called \emph{symplectic} if each odd part of $\lambda$ occurs with even multiplicity. We write $\Psymp(2n)$ for the set of symplectic partitions of $2n$. Let $\jordbp(\lambda) = \{i \in \N \colon i \text{ is even, } \mult_\lambda(i) \neq 0\}$. 
Denote by $\PPsymp(N)$ the set of pairs $(\lambda,\eps)$ with $\lambda \in \Port(2n)$ and $\eps \in \{\pm1\}^{\Delta(\lambda)}$. 

It is well-known that $\Psymp(2n)$ is in $1$--$1$ correspondence with $\cN$ and that if $C \in \cN$ is parametrised by $\lambda \in \Psymp(2n)$, then $\{\pm1\}^{\Delta(\lambda)}$ is in $1$--$1$ correspondence with the set of irreducible $G$-equivariant local systems on $C$ and also with $A(C)^\wedge$. 
We thus have a bijections $\NNG \to \PPsymp(2n)$. 

Let $(\lambda,\eps) \in \PPsymp(2n)$. 
Let $\lambda' = \lambda + [-1,-\infty[_{1} = (\lambda_1,\lambda_2-1,\lambda_3-2,\dots)$. There exist sequences $z,z' \in \cR$ with integer terms such that $\lambda' = (2z_1,2z_2,\dots) \sqcup (2z_1'+1,2z_2'+1,\dots)$. 
Let $A^\# = z' + [1,-\infty[_1$ and $B^\# = z + [0,-\infty[_1$. Then $A_1^\# \geq B_1^\# \geq A_2^\# \geq B_3^\# \geq \dots$.
A finite subset of $\Z$ is called an interval if it is of the form $\{i,i+1,i+2,\dots,j\}$ for some $i,j \in \Z$.
Let $C$ be the collection of intervals $I$ of $A\triangle B = (A \cup B) \setminus (A \cap B)$, with the property that for any $i \in (A \Delta B) \setminus I$, the set $I \cup \{i\}$ is not an interval. 
Then $C$ is in bijection with $\jordbp(\lambda)$.
There is an obvious total order on $C$: for $I, I' \in C$, $I$ is larger than $I'$ if any element of $I$ is larger than any element of $I'$.
Let $\jordbp(\lambda) \to C \colon i \mapsto C_i$ be the unique increasing bijection.
Let $t = t(\lambda)$. 
For $i \in \{1,\dots,t\}$, write $\eps(i) = \eps_{\lambda_i}$. For $u \in \{\pm1\}$, define
\begin{align}
J^u &= \{ i \in \{1,\dots,t\} \colon \eps(i)(-1)^{i} = u\}.
\end{align}
Define the \emph{symbol} of $(\lambda,\eps)$ to be the ordered pair $S_{\lambda,\eps} = (A_{\lambda,\eps},B_{\lambda,\eps})$, where
\begin{align}
A_{\lambda,\eps} &=
(A^\# \setminus \bigcup_{i \in \jordbp(\lambda); \eps_i = -1} (A^\# \cap C_i)) \cup ( \bigcup_{i \in \jordbp(\lambda); \eps_i = -1} (B^\# \cap C_i)),
\\
B_{\lambda,\eps} &=
(B^\# \setminus \bigcup_{i \in \jordbp(\lambda); \eps_i = -1} (B^\# \cap C_i)) \cup ( \bigcup_{i \in \jordbp(\lambda); \eps_i = -1} (A^\# \cap C_i)).
\end{align}

For $m \in \N$, let $W(B_m) = W(C_m) = S_m \ltimes (\mathbb Z / 2 \mathbb Z)^m$.
We have a bijection $\cP_2(m) \to W(C_m)^\wedge \colon (\alpha,\beta) \mapsto \rho_{(\alpha,\beta)}$.
Let $k \in \N$ such that $k(k+1)\leq 2n$ and let $(\alpha,\beta) \in \mathcal P_2(n - \frac{k(k-1)}{2})$.
If $k$ is even (resp. odd), let $A_{\alpha,\beta;k} = \alpha + [k,-\infty[_2$ and $B_{\alpha,\beta;k} = \beta + [-k-1,-\infty[_2$ (resp. $A_{\alpha,\beta;k} = \beta + [-k-1,-\infty[_2 $ and $B_{\alpha,\beta;k} = \alpha + [k,-\infty[_2$) and define the \emph{symbol of} $(\alpha,\beta)$ to be $S_{\alpha,\beta;k} = (A_{\alpha,\beta;k},B_{\alpha,\beta;k})_k$.
Let 
\begin{align}
\mathcal W = \bigsqcup_{k \in \N, k(k+1)\leq 2n} W(C_{n-\frac{k(k+1)}{2}})^\wedge.
\end{align} 
The $W(C_{n-\frac{k(k+1)}{2}})$ form all the relative Weyl groups of $G$ (see the start of this section). 
Hence the generalised Springer correspondence is a bijection
\begin{align}
\GSpr \colon \NNG \to \mathcal W.
\end{align}
Using the parametrisations of $\NNG$ and $\mathcal W$ described above, we rephrase the generalised Springer correspondence as follows \cite[\S12]{lusztig1984intersection}.

\begin{theorem}
\label{thm:gscSp}
Let $n \in \N$.
For each $(\lambda,\eps) \in \PPsymp(2n)$, there exist unique $k \in \Z_{\geq0}$ and unique $(\alpha,\beta) \in \mathcal P_2(n-\frac{k(k+1)}{2})$ such that $(A_{\lambda,\eps},B_{\lambda,\eps}) = (A_{\alpha,\beta;k},B_{\alpha,\beta;k})$. Conversely, for each $k \in \Z_{\geq0}$ such that $k(k+1) \leq 2n$, and for each $(\alpha,\beta) \in \mathcal P_2(n-\frac{k(k+1)}{2})$, there exists a unique $(\lambda,\eps) \in \PPsymp(2n)$ such that $(A_{\lambda,\eps},B_{\lambda,\eps}) = (A_{\alpha,\beta;k},B_{\alpha,\beta;k})$. Thus we have a bijection
\begin{align}
\Phi_N \colon \PPsymp(N) \to \bigsqcup_{k \in \Z_{\geq0},\, k(k+1) \leq 2n} \mathcal P_2(n-\frac{k(k+1)}{2}).
\end{align}
\end{theorem}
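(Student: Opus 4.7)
The plan is to establish the bijection $\Phi_N$ by exploiting two combinatorial invariants of a symbol $(A, B)$: the total sum $S(A)+S(B)$ and the defect measuring the imbalance between $A$ and $B$. The strategy is to recover $k$ from the defect and then read off $(\alpha,\beta)$ by subtracting the appropriate offset sequences.

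First, I would verify that both constructions produce valid symbols, i.e.\ pairs of strictly decreasing integer sequences. For $(A_{\lambda,\eps}, B_{\lambda,\eps})$ this holds because the swap exchanges $A^\# \cap C_i$ with $B^\# \cap C_i$ only on the disjoint intervals $C_i$, and the interlacing $A_1^\# \geq B_1^\# \geq A_2^\# \geq \cdots$ is preserved block by block, so each resulting sequence remains strictly decreasing. For $(A_{\alpha,\beta;k}, B_{\alpha,\beta;k})$, the offset sequences $[k,-\infty[_2$ and $[-k-1,-\infty[_2$ have step $2$, which automatically forces strict decrease of each component.

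Next, I would extract $k$ from a symbol $(A,B)$ via its defect, computed after aligning the two sequences to a common length; for $S_{\alpha,\beta;k}$ the defect is $\pm(k+1)$ with sign controlled by the parity of $k$. On the $(\lambda,\eps)$ side, I would show that the defect of $(A_{\lambda,\eps},B_{\lambda,\eps})$ depends only on $\lambda$ and not on $\eps$, since swapping $A^\# \cap C_i$ with $B^\# \cap C_i$ preserves the defect; this value in turn determines $k$ uniquely from $\lambda$. With $k$ fixed, the partitions $\alpha$ and $\beta$ are recovered by termwise subtraction of the offsets, and the constraint $S(\alpha)+S(\beta) = n - k(k+1)/2$ follows by computing $S(A) + S(B)$ in two ways: the sum is invariant under swaps and equals an explicit expression in $\lambda$, while on the bipartition side it equals $S(\alpha) + S(\beta) + k(k+1)/2$ plus fixed contributions from the offset tails. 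For the converse direction, given $(\alpha, \beta, k)$, I would reconstruct $(\lambda,\eps)$ by forming the multiset $A_{\alpha,\beta;k} \cup B_{\alpha,\beta;k}$, recovering $\lambda$ by undoing the standard shift $[-1,-\infty[_1$, and reading off each $\eps_i$ by comparing against the reconstructed $A^\#, B^\#$ on the interval $C_i$.

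The main obstacle is the parity-dependent case split in the definition of $S_{\alpha,\beta;k}$, where the roles of $\alpha$ and $\beta$ are swapped between even and odd $k$. Verifying that this case split matches exactly the behaviour of the interval-swap operation on $(A^\#, B^\#)$ under the signs $\eps$, and that the defect computations agree in both parities, is the most delicate part of the argument.
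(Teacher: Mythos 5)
A preliminary remark: the paper does not prove \Cref{thm:gscSp} at all; it is a combinatorial restatement of the symplectic generalised Springer correspondence quoted from \cite[\S12]{lusztig1984intersection}, so there is no in-paper argument to measure your proposal against. Judged on its own terms, your outline contains a step that is simply false and that carries the whole uniqueness/image argument. You propose to recover $k$ from the defect of $(A_{\lambda,\eps},B_{\lambda,\eps})$ and claim that this defect ``depends only on $\lambda$ and not on $\eps$, since swapping $A^\#\cap C_i$ with $B^\#\cap C_i$ preserves the defect.'' The swap moves the entries of $A^\#\cap C_i$ into the $B$-row and vice versa, so whenever an interval $C_i$ meets the two rows in sets of different sizes the row lengths, and hence the defect, change; and the $\eps$-dependence of $k$ is forced by the theorem itself. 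Take $n=1$, $G=\Sp(2,\C)\cong\SL(2,\C)$: $\PPsymp(2)$ has three elements, two of them over $\lambda=(2)$, while the right-hand side is $\mathcal P_2(1)\sqcup\mathcal P_2(0)$ with components of sizes $2$ and $1$. The zero orbit $\lambda=(1,1)$ lies in the ordinary Springer ($k=0$) component, the trivial local system on $\lambda=(2)$ likewise, and the nontrivial local system on $\lambda=(2)$ is the cuspidal datum with $k=1$. So two pairs with the same $\lambda$ have different $k$, hence different symbol shapes and different defects. If your lemma were true, both local systems on $\lambda=(2)$ would land in the same $k$-component, which is incompatible with the bijection you are trying to prove. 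As in the orthogonal case (\Cref{rem:parity}), the integer $k$ must be read off from data involving $\eps$ (the imbalance $|J^1|-|J^{-1}|$), not from $\lambda$ alone, and your injectivity and image arguments built on $\eps$-independence collapse with it.

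Beyond this, the decisive combinatorial content is asserted rather than proved. To know that termwise subtraction of $[k,-\infty[_2$ and $[-k-1,-\infty[_2$ from the two rows produces a genuine bipartition $(\alpha,\beta)$ of $n-\frac{k(k+1)}{2}$, you need that after the $\eps$-swaps each row of $(A_{\lambda,\eps},B_{\lambda,\eps})$ is strictly decreasing with consecutive gaps at least $2$ and has exactly the right eventual tail (and which row carries which tail is precisely the parity-of-$k$ case split you flag as delicate); conversely you need that every bipartition symbol $S_{\alpha,\beta;k}$ arises from some $(\lambda,\eps)$. None of this is carried out, and together with the defect bookkeeping it is essentially the whole theorem, so the proposal as written is a plan with a broken keystone rather than a proof.
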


\subsection{Special orthogonal group $\SO(N,\C)$}\label{subsec:gscSO}
Let $N \in \Z_{\geq0}$ and suppose $G = \SO(N,\C)$
A partition $\lambda$ of $N$ is called \emph{orthogonal} if each even part of $\lambda$ occurs with even multiplicity. We write $\Port(N)$ for the set of orthogonal partitions of $N$. Let $\jordbp(\lambda) = \{i \in \N \colon i \text{ is odd, } \mult_\lambda(i) \neq 0\}$. 
Let $F_2$ be the field with two elements.
Let $F_2[\Delta(\lambda)] = \{\pm1\}^{\Delta(\lambda)}$ be the set of maps $\eps \colon \jordbp(\lambda) \to \{\pm1\} \colon \lambda_i \mapsto \eps_{\lambda_i}$, considered as an $F_2$-vector space. 
Let $F_2[\Delta(\lambda)]'$ be the quotient of $F_2[\Delta(\lambda)]$ by the line spanned by the sum of the canonical basis of this $F_2$-vector space.
Denote by $\PPort(N)$ the set of pairs $(\lambda,[\eps])$ where $\lambda \in \Port(N)$ and $[\eps] \in F_2[\jordbp(\lambda)]'$ is the image of $\eps \in F_2[\Delta(\lambda)]$ under the quotient map.
We define $\PPPort(N)$ to be the set of pairs $(\lambda,\eps)$ with $\lambda \in \Port(N)$ and $\eps \in F_2[\Delta(\lambda)]$.

It is well-known that $\Port(N)$ is in $1$--$1$ correspondence with $\cN$, except the orthogonal partitions of $N$ that only have even parts correspond to precisely two (degenerate) nilpotent orbits.
This correspondence has the following property.
Suppose $C, C' \in \cN$ are parametrised by $\lambda,\lambda' \in \Port(N)$ respectively. Then $C \preceq C'$ (closure order), if and only if $\lambda \leq \lambda'$.
We say that $\lambda$ is \emph{degenerate} if $\lambda$ only has even parts and we call $\lambda$ \emph{non-degenerate} otherwise.
If $C \in \cN$ is parametrised by $\lambda \in \Port(N)$, then $F_2[\Delta(\lambda)]'$ is in $1$--$1$ correspondence with
the set of irreducible $G$-equivariant local systems on $C$ and also with $A(C)^\wedge$. 
We obtain a surjective map $\NNG \to \PPort(N)$ such that the preimage of $(\lambda,\eps)$ has one element (resp. two elements) if $\lambda$ is non-degenerate (resp. degenerate). 
We denote the preimage of $(\lambda,[\eps])$ by $\{(C_\lambda^+,\E_{[\eps]}^+),(C_\lambda^-,\E_{[\eps]}^-)\}$. If $\lambda$ is non-degenerate, we write $(C_\lambda,\E_{[\eps]}) = (C_\lambda^+,\E_{[\eps]}^+) = (C_\lambda^-,\E_{[\eps]}^-)$, i.e. we drop the $+$ and $-$ from the notation. 
Note that $F_2[\Delta(\lambda)]'$ is also in $1$--$1$ correspondence with $A(C)^\wedge$.
Let $(\lambda,[\eps]) \in \PPort(N)$. 
Let $\lambda' = \lambda + [0,-\infty[_{1} = (\lambda_1,\lambda_2-1,\lambda_3-2,\dots)$. There exist sequences $z,z' \in \cR$ with integer terms such that $\lambda' = (2z_1,2z_2,\dots) \sqcup (2z_1'-1,2z_2'-1,\dots)$. 
Let $A^\# = z' + [0,-\infty[_1$ and $B^\# = z + [0,-\infty[_1$. Then $A_1^\# \geq B_1^\# \geq A_2^\# \geq B_3^\# \geq \dots$.
A finite subset of $\Z$ is called an interval if it is of the form $\{i,i+1,i+2,\dots,j\}$ for some $i,j \in \Z$.
Let $C$ be the collection of intervals $I$ of $(A^\# \cup B^\#) \setminus (A^\# \cap B^\#)$, with the property that for any $i \in (A^\# \cup B^\#) \setminus ((A^\# \cap B^\#)\cup I)$, we have that $I \cup \{i\}$ is not an interval. 
Then $C$ is in bijection with $\jordbp(\lambda)$. 
Consider the total order $>$ on $C$: for $I, I' \in C$, $I > I'$ if any element of $I$ is larger than any element of $I'$.
Let $\jordbp(\lambda) \to C \colon i \mapsto C_i$ be the unique increasing bijection.
Let $t = t(\lambda)$. 
For $i \in \{1,\dots,t\}$, write $\eps(i) = \eps_{\lambda_i}$. For $u \in \{\pm1\}$, define
\begin{equation}
J^u = \{ i \in \{1,\dots,t\} \colon \eps(i)(-1)^{i} = u\}.
\end{equation}
Let $M(\lambda,\eps) = M = |J^1| - |J^{-1}|$ and let $w(\lambda,\eps) = w = \sgn(M+1/2)$ so that $w= 1$ if $M = 0$ and $w = \sgn M$ if $M \neq 0$. 
Define
\begin{align}
A_{\lambda,\eps} &=
(A^\# \setminus \bigcup_{i \in \jordbp(\lambda); \eps_i = -w} (A^\# \cap C_i)) \cup ( \bigcup_{i \in \jordbp(\lambda); \eps_i = -w} (B^\# \cap C_i)),
\\
B_{\lambda,\eps} &=
(B^\# \setminus \bigcup_{i \in \jordbp(\lambda); \eps_i = -w} (B^\# \cap C_i)) \cup ( \bigcup_{i \in \jordbp(\lambda); \eps_i = -w} (A^\# \cap C_i)).
\end{align}
If $M \neq 0$ and $-\eps$ is the other representative of $[\eps]$, we have $A_{\lambda,\eps} = A_{\lambda,-\eps}$ and $B_{\lambda,\eps} = B_{\lambda,-\eps}$, so we define the \emph{symbol} of $(\lambda,[\eps])$ to be the ordered pair $S_{\lambda,[\eps]} = (A_{\lambda,\eps},B_{\lambda,\eps}) =  (A_{\lambda,-\eps},B_{\lambda,-\eps})$.
If $M = 0$, $A_{\lambda,\eps} = B_{\lambda,-\eps}$ and $B_{\lambda,\eps} = A_{\lambda,-\eps}$, we define the \emph{symbol} of $(\lambda,[\eps)]$ to be the unordered pair $S_{\lambda,[\eps]} = \{A_{\lambda,\eps},B_{\lambda,\eps}\}$.
Let $X$ be a set, $k \in \Z_{\geq 0}$ and $(x,y) \in X \times X$. Then let
\begin{align}\label{eq:k}
(x,y)_k
=
\begin{cases}
(x,y) \in X \times X &\text{if } k > 0,
\\
\{x,y\} \subseteq X &\text{if } k = 0.
\end{cases}
\end{align}
Following this notation, we have $S_{\lambda,[\eps]} = (A_{\lambda,\eps},B_{\lambda,\eps})_{|M|}$.

Define $p_{\lambda,[\eps]} = p_{\lambda,\eps} = A_{\lambda,\eps} \sqcup B_{\lambda,\eps} = A_{\lambda,\eps}^\# \sqcup B_{\lambda,\eps}^\#$.
Two pairs $(A,B), (A',B') \in \mathcal R \times \mathcal R$ are \emph{similar} if $A\sqcup B = A'\sqcup B'$. 
For any $(\lambda,[\eps]),(\lambda',[\eps']) \in \PPort(N)$, their symbols are similar if and only if $\lambda = \lambda'$.

\begin{remark}\label{rem:symbol}
Let $t = t(\lambda)$, $s = \lfloor t/2 \rfloor$ and $k = |M|$.
Let $A' = (A_{\frac{t+d}{2}}+s,A_{\frac{t+d}{2}-1}+s,\dots,A_1+s)$, $B'=(B_{\frac{t-d}{2}}+s,B_{\frac{t-d}{2}-1}+s,\dots,B_1+s)$ and note that these are increasing sequences.
Then $(A',B')_k$ is the usual symbol in the literature.
The \emph{defect} of $(A,B)$ is defined to be $|\#A'-\#B'| = k = |M|$. 
\end{remark}

Let $n \in \N$.
Let $W(D_n) = S_n \ltimes (\mathbb Z / 2 \mathbb Z)^{n-1}$ and $\theta\colon \mathcal P_2(n) \to \mathcal P_2(n)\colon (\alpha,\beta) \mapsto (\beta,\alpha)$ for $(\alpha,\beta) \in \mathcal P_2(n)$. Let $\mathcal P_2(n) / \theta$ be the set of $\theta$-orbits
and let $c_{(\alpha,\beta)}$ be the size of the $\theta$-orbit of $(\alpha,\beta)$.
Identify $\mathcal P_2(n)$ with the set of unordered pairs $\{\alpha,\beta\}$ where $(\alpha,\beta) \in \mathcal P_2(n)$.
Then we can parametrise
\begin{align}
W(D_n)^\wedge = \{\rho_{\{\alpha,\beta\},i} \colon \{\alpha,\beta\} \in \mathcal P_2(n) / \theta, i \in \{1,2/c_{(\alpha,\beta)}\}\}.
\end{align}

Let $k \in \N$ such that $k \equiv N \mod 2$ and $k^2\leq N$. Let $(\alpha,\beta) \in \mathcal P_2((N - k^2)/2)$. We define $A_{\alpha,\beta;k} = \alpha + [k,-\infty[_2$ and $B_{\alpha,\beta;k} = \beta + [-k,-\infty[_2$ and define the \emph{symbol of} $(\alpha,\beta)$ to be $S_{\alpha,\beta;k} = (A_{\alpha,\beta;k},B_{\alpha,\beta;k})_k$.

Let $W$ be the Weyl group of $G$. Then $W = W(B_{(N-1)/2})$ (resp. $W = W(D_{N/2})$) if $N$ is odd (resp. even). Let 
\begin{align}
\mathcal W = W^\wedge \sqcup \bigsqcup_{k \in \Z_{\geq2}, k^2\leq N, k \equiv N \mod 2} W(B_{(N-k^2)/2})^{\wedge}.
\end{align} 
The Weyl groups $W$ and $W(B_{(N-k^2)/2})$ with $k \in \Z_{\geq 2}$, $k \equiv N \mod 2$ and $k^2 \leq N$ form all the relative Weyl groups of $G$ (see the start of this section). 
In this case, the generalised Springer correspondence is a bijection
\begin{align}
\GSpr \colon \NNG \to \mathcal W.
\end{align}
Using the parametrisations of $\NNG$ and $\mathcal W$ described above, we rephrase the generalised Springer correspondence as follows \cite[\S13]{lusztig1984intersection}.
.

\begin{theorem}\label{thm:gscSO}
Let $N \in \N$.
\begin{enumerate}
\item Suppose $N = 2n+1$ is odd. For each $(\lambda,[\eps]) \in \PPort(N)$, there exists a unique odd integer $k(\lambda,[\eps]) = k \in \N$ and a unique pair $(\alpha,\beta) \in \mathcal P_2((N-k^2)/2)$ such that $(A_{\lambda,\eps},B_{\lambda,\eps}) = (A_{\alpha,\beta;k},B_{\alpha,\beta;k})$. Conversely, for each odd $k \in \N$ such that $k^2 \leq N$, and for each $(\alpha,\beta) \in \mathcal P_2((N-k^2)/2)$, there exists a unique $(\lambda,[\eps]) \in \PPort(N)$ such that $(A_{\lambda,[\eps]},B_{\lambda,[\eps]}) = (A_{\alpha,\beta;k},B_{\alpha,\beta;k})$. Thus we have a bijection
\begin{align}
\Phi_N \colon \PPort(N) \to \bigsqcup_{k \in \N \mathrm{odd},\, k^2 \leq N} \mathcal P_2((N-k^2)/2).
\end{align}
\item Suppose $N = 2n$ is even. Let $(\lambda,[\eps]) \in \PPort(N)$. Then one of the following is true:
\begin{itemize}
\item 
There exists a unique $\{\alpha,\beta\} \in {\mathcal P}_2(N/2) / \theta$ with $\alpha \neq \beta$ such that $\{A_{\lambda,[\eps]},B_{\lambda,[\eps]}\} = \{A_{\alpha,\beta;0},B_{\alpha,\beta;0}\}$,
\item There exists a unique even integer $k(\lambda,[\eps]) = k \in \N$ and a unique pair $(\alpha,\beta) \in \mathcal P_2((N-k^2)/2)$ such that $(A_{\lambda,[\eps]},B_{\lambda,[\eps]}) = (A_{\alpha,\beta;k},B_{\alpha,\beta;k})$. 
\end{itemize}
Conversely, we have
\begin{itemize}
\item For each $\{\alpha,\beta\} \in \mathcal P_2(N/2)/\theta$, there exists a unique $(\lambda,[\eps]) \in \PPort(N)$ such that $\{A_{\lambda,[\eps]},B_{\lambda,[\eps]}\} = \{A_{\alpha,\beta;0},B_{\alpha,\beta;0}\}$,
\item For each even $k \in \N$ such that $k^2 \leq N$ and for each $(\alpha,\beta) \in \mathcal P_2((N-k^2)/2)$, there exists a unique $(\lambda,[\eps]) \in \PPort(N)$ such that $(A_{\lambda,[\eps]},B_{\lambda,[\eps]}) = (A_{\alpha,\beta;k},B_{\alpha,\beta;k})$.

\end{itemize}
Thus we have a bijection
\begin{align}
\Phi_N \colon \PPort(N) \to (\mathcal P_2(N/2) / \theta) \sqcup \bigsqcup_{k \in \N \mathrm{even},\,k^2\leq N} \mathcal P_2((N-k^2)/2).
\end{align}
\end{enumerate}
\end{theorem}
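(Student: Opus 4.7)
The plan is to recognise this theorem as a reformulation of Lusztig's classification of the generalised Springer correspondence for classical groups \cite[\S13]{lusztig1984intersection} under the symbol conventions imported from \cite[\S4]{waldspurger2019proprietes}, so the proof reduces to a direct combinatorial verification that the explicit map $\Phi_N$ constructed above is well-defined and bijective.

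First I would verify that for each $(\lambda,[\eps]) \in \PPort(N)$, the pair $(A_{\lambda,\eps},B_{\lambda,\eps})$ consists of two strictly decreasing sequences with the correct defect parity. Strict decrease of each of $A_{\lambda,\eps}$ and $B_{\lambda,\eps}$ follows because the base sequences $A^\#,B^\#$ are strictly decreasing and every flip on an interval $C_i \subseteq A^\# \triangle B^\#$ triggered by $\eps_i = -w$ swaps entire $C_i$-blocks between the two sequences, preserving strict decrease on each side. For the defect parity, since $\lambda$ is orthogonal, all even parts have even total multiplicity, so the number of odd parts of $\lambda$ has the same parity as $N$; hence $t(\lambda) \equiv N \pmod 2$, forcing $M = |J^1| - |J^{-1}| \equiv N \pmod 2$. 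This produces the $N$-parity dichotomy appearing in the statement. Setting $k = |M|$, one then reads off $(\alpha,\beta)$ by subtracting $[k,-\infty[_2$ from $A_{\lambda,\eps}$ and $[-k,-\infty[_2$ from $B_{\lambda,\eps}$, and a direct count using $S(\lambda)=N$ together with the definitions of $A^\#,B^\#$ confirms $|\alpha|+|\beta| = (N-k^2)/2$.

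Injectivity is obtained from an explicit inverse procedure. The partition $\lambda$ is recovered from the similarity class of $(A_{\lambda,\eps},B_{\lambda,\eps})$ via the fact, stated just before the theorem, that two elements of $\PPort(N)$ have similar symbols if and only if they share the same partition. The class $[\eps]$ is then read off from the set of intervals $C_i$ on which the symbol differs from the base pair $(A^\#,B^\#)$, modulo the sign $w$ used in the construction. Surjectivity is established by running the same procedure in reverse starting from an arbitrary admissible $(\alpha,\beta;k)$: form $(A_{\alpha,\beta;k},B_{\alpha,\beta;k})$, check that its union admits a unique decomposition $\lambda' = \lambda + [0,-\infty[_1$ with $\lambda$ orthogonal, and verify that the resulting $\eps$-data descends to a well-defined $[\eps]$ making the whole construction round-trip to the given $(\alpha,\beta;k)$.

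The delicate part is the degenerate case $M = 0$, which only occurs when $N$ is even. Here $A_{\lambda,\eps} = B_{\lambda,-\eps}$ and $B_{\lambda,\eps} = A_{\lambda,-\eps}$, so the two representatives of $[\eps]$ produce the same unordered symbol $\{A_{\lambda,\eps},B_{\lambda,\eps}\}$, which is exactly what matches the passage to the unordered $\theta$-orbits on the target side of the bijection. In particular, the quotient by the all-ones vector in the definition of $F_2[\Delta(\lambda)]'$ is precisely what makes $\Phi_N$ well-defined on the degenerate stratum, and the case $\alpha = \beta$ on the target side corresponds precisely to degenerate $\lambda$ on the source, where the generalised Springer correspondence sees two nilpotent orbits on the geometric side. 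When $M \neq 0$, the choice $w = \sgn M$ breaks this symmetry canonically and the symbol is genuinely an ordered pair matching $\mathcal P_2((N-k^2)/2)$ without quotient. Keeping track of these parity and sign conventions across the $k=0$, $k\neq 0$, and degenerate-$\lambda$ cases simultaneously is the main combinatorial bookkeeping obstacle in the proof.
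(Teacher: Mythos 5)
The first thing to note is that the paper does not prove this statement at all: \Cref{thm:gscSO} is presented as a rephrasing, in the symbol conventions of \cite[\S 4]{waldspurger2019proprietes}, of Lusztig's classification of the generalised Springer correspondence for $\SO(N,\C)$, and the "proof" is the citation to \cite[\S13]{lusztig1984intersection} (together with the parity/defect bookkeeping recorded in \Cref{rem:symbol} and \Cref{rem:parity}). So your plan of giving a self-contained combinatorial verification that $\Phi_N$ is a well-defined bijection is by its nature a different route from the paper; if carried out it would buy a proof independent of Lusztig's intersection-cohomology machinery, at the cost of redoing the combinatorics of \cite[\S13]{lusztig1984intersection}. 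Your parity argument ($t(\lambda)\equiv N \bmod 2$, hence $M\equiv N \bmod 2$, giving the odd/even dichotomy and $k=|M|$ equal to the defect) is correct and matches \Cref{rem:parity}, and your treatment of the $M=0$ versus $M\neq 0$ cases and of degenerate $\lambda$ is in the right spirit.

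However, as it stands the proposal has two genuine gaps, and they sit exactly where the substance of Lusztig's theorem lies. First, well-definedness: it is not enough to check $|\alpha|+|\beta|=(N-k^2)/2$; you must show that $A_{\lambda,\eps}-[k,-\infty[_2$ and $B_{\lambda,\eps}-[-k,-\infty[_2$ are honest partitions, i.e. entrywise nonnegative and weakly decreasing. This requires the gap estimates $A_i\geq A_{i+1}+2$, $B_i\geq B_{i+1}+2$ and a comparison of the leading entries with $k$ and $-k$, which in turn needs a careful analysis of how swapping the maximal intervals $C_i$ of $A^\#\triangle B^\#$ interacts with the interlacing $A_1^\#\geq B_1^\#\geq A_2^\#\geq\cdots$; your sentence asserting that the swaps "preserve strict decrease" does not address this, and strict decrease alone is weaker than what is needed. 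Second, surjectivity: given an arbitrary $(\alpha,\beta)$ and admissible $k$, you assert that the union of $A_{\alpha,\beta;k}$ and $B_{\alpha,\beta;k}$ "admits a unique decomposition $\lambda'=\lambda+[0,-\infty[_1$ with $\lambda$ orthogonal" and that the sign data round-trips; this is precisely the nontrivial converse direction of the theorem and is claimed rather than proved (Lusztig settles it by an explicit inverse construction together with a counting argument). Finally, a minor imprecision: the quotient by the all-ones vector in $F_2[\Delta(\lambda)]'$ is needed for well-definedness of the symbol for every $\lambda$, not only on the $M=0$ stratum, since replacing $\eps$ by $-\eps$ also flips $w$, giving $A_{\lambda,\eps}=A_{\lambda,-\eps}$, $B_{\lambda,\eps}=B_{\lambda,-\eps}$ when $M\neq 0$. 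Unless you fill in the two verifications above, the honest proof here is the paper's: quote \cite[\S13]{lusztig1984intersection} and check that the conventions match.
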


\begin{remark}\label{rem:parity}
Note that in \Cref{thm:gscSO}, we have $N \equiv t(\lambda) \equiv k(\lambda,[\eps]) \mod 2$. Furthermore, it is well-known that $k(\lambda,[\eps])$ is equal to the defect of $(\lambda,[\eps])$ (see \cite[\S13]{lusztig1984intersection}), hence $k(\lambda,[\eps]) = |M| = |J^1| - |J^{-1}|$ by \Cref{rem:symbol}.
\end{remark}

\section{Hecke algebras}\label{sec:hecke}

\subsection{Hecke algebra of a Bernstein component}\label{subsec:HABernstein}
We give a brief overview of \cite[\S1--\S3]{bushnell1998smooth}.
For a $\sfk$-rational character $\phi \colon \aG(\sfk) \to \sfk^\times$ and $s \in \C$, define a smooth one-dimensional complex representation
\begin{align}
\phi_{\chi,s} \colon \aG(\sfk) \to \C^\times \colon g \mapsto \Vert\phi(g)\Vert_{\sfk}^s,
\end{align} 
where $\Vert\cdot\rVert_{\sfk}^s$ denotes the usual normalised absolute value of $\sfk$. Let $X_{\C}(\aG(\sfk))$ be the group generated by the $\phi_\chi,s$, ranging over $\chi$ and $s$ as above. 
Elements of $X_{\C}(\aG(\sfk))$ are called \emph{unramified quasicharacters of $\aG(\sfk)$}. 
Let $L$ be a Levi subgroup of $\Gk$ and $\sigma$ a supercuspidal representation of $L$. Two such pairs $(L_1,\sigma_1)$ and $(L_2,\sigma_2)$ are called \emph{inertially equivalent} if there exists a $g \in \aG(\sfk)$ and $\chi \in X_{\C}(L_2)$ such that
\begin{align}
L_1 = gL_2g^{-1} \quad\text{and}\quad \sigma_1^g \cong \sigma_2 \otimes \chi,
\end{align}
where $\sigma_1^g \colon x \mapsto \sigma_1(gxg^{-1})$. We write $[L,\sigma]_{\Gk}$ for the \emph{inertial equivalence class} of a pair $(L,\sigma)$ and $\sB(\aG(\sfk))$ for the set of inertial equivalence classes of $\aG(\sfk)$.

\begin{definition}
For $\fs =[L,\sigma]_{\Gk} \in \sB(\aG(\sfk))$, let $\Rep^{\fs}(\aG(\sfk))$ be the full subcategory of $\Rep(\aG(\sfk))$ consisting of all $\pi \in \Rep(\aG(\sfk))$ such that each irreducible subquotient of $\pi$ is isomorphic to a subquotient of the normalised parabolic induction of $\sigma$ to $G$.
These subcategories are called \emph{Bernstein components of} $\aG(\sfk)$.
\end{definition}

\begin{proposition}[{\cite[Proposition 2.10]{bernstein1984centre}}]
We have a direct product decomposition 
\begin{align}	
\Rep(\aG(\sfk)) = \prod_{\fs \in \sB(\aG(\sfk))} \Rep^{\fs}(\aG(\sfk)).
\end{align}
\end{proposition}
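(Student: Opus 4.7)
The plan is to establish the decomposition in three stages: attach a cuspidal support (hence an inertial class) to every irreducible smooth representation; prove pairwise $\Hom$-orthogonality between the subcategories $\Rep^{\fs}(\aG(\sfk))$; and upgrade these observations to a functorial categorical direct product via central idempotents.

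For the first stage I would invoke Jacquet's theory of the cuspidal support. Given an irreducible smooth $\pi \in \Rep(\aG(\sfk))$, choose a Levi subgroup $L$ which is minimal among those for which the normalised Jacquet module of $\pi$ at $L$ is nonzero. Every irreducible quotient $\sigma$ of this Jacquet module is then supercuspidal by minimality, and the geometric lemma of Bernstein--Zelevinsky shows that the pair $(L,\sigma)$ is well-defined up to $\aG(\sfk)$-conjugation. The map $\pi \mapsto \fs(\pi) := [L,\sigma]_{\aG(\sfk)} \in \sB(\aG(\sfk))$ is thus well-defined, and by Frobenius reciprocity $\pi$ embeds in the normalised parabolic induction of $\sigma$, placing $\pi$ in $\Rep^{\fs(\pi)}(\aG(\sfk))$.

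For the second stage I would verify that $\Hom_{\aG(\sfk)}(\pi_1,\pi_2) = 0$ whenever $\pi_i \in \Rep^{\fs_i}(\aG(\sfk))$ with $\fs_1 \neq \fs_2$. By d\'evissage to Jordan--H\"older constituents, it suffices to treat irreducible $\pi_1, \pi_2$; a nonzero morphism then forces $\pi_1 \cong \pi_2$, hence a common cuspidal support, contradicting $\fs_1 \neq \fs_2$.

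The main obstacle is the third stage: promoting the above observations into a functorial direct-sum splitting of every (possibly non-semisimple) smooth representation. Here I would appeal to the Bernstein centre. Every $V \in \Rep(\aG(\sfk))$ is the filtered union $V = \bigcup_K V^K$ over compact open subgroups $K$, and $V^K$ carries an action of the Hecke algebra $\Hecke(\aG(\sfk),K)$, which is finitely generated over its centre. The crucial technical input, which I would import from Bernstein's paper, is the spectral description of the centre of $\Rep(\aG(\sfk))$ as a ring of regular functions on a disjoint union of complex algebraic varieties indexed by $\sB(\aG(\sfk))$. This description produces a family of mutually orthogonal central idempotents $\{e_{\fs}\}_{\fs \in \sB(\aG(\sfk))}$ acting on every smooth representation, and the splitting $V = \bigoplus_{\fs} e_{\fs} V$ with $e_{\fs} V \in \Rep^{\fs}(\aG(\sfk))$ then yields the desired direct product decomposition. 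The hardest technical ingredient is establishing this spectral description, which rests on Bernstein's second adjointness theorem for parabolic induction together with a careful analysis of how unramified quasicharacter twists act on supercuspidal supports.
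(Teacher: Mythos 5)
The paper does not actually prove this statement; it simply quotes it from \cite{bernstein1984centre}, so your proposal has to be measured against Bernstein's argument. Your first two stages are fine: the cuspidal support map $\pi \mapsto \fs(\pi)$ via a minimal Levi with nonvanishing Jacquet module, and the $\Hom$-vanishing between irreducibles with distinct inertial supports, are exactly the standard preliminaries (for the latter, note one also needs that every nonzero smooth representation has an irreducible subquotient, so that the image of a putative nonzero morphism produces a contradiction).

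The genuine gap is in your third stage, which is circular as written. In \cite{bernstein1984centre} the spectral description of the Bernstein centre as regular functions on the disjoint union of varieties indexed by $\sB(\aG(\sfk))$ is \emph{deduced from} the decomposition of $\Rep(\aG(\sfk))$, not the other way around: the centre is the endomorphism ring of the identity functor, and to exhibit the idempotents $e_{\fs}$ you must already have a compatible family of projections on every smooth representation, i.e.\ you must already know that each $V$ is the direct sum of its $\fs$-components. $\Hom$-orthogonality between irreducibles does not give this; the whole content of the proposition is the absence of extensions across inertial classes, equivalently that the subrepresentation $V^{\fs}$ (the sum of all subrepresentations whose irreducible subquotients have inertial support $\fs$) satisfies $V=\bigoplus_{\fs}V^{\fs}$. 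This has to be proved directly, the key technical inputs in Bernstein's proof being that parabolic induction and Jacquet functors preserve finite generation, that $\Rep^{\fs}$ is stable under subquotients and extensions, and the finiteness statement that a finitely generated smooth representation has irreducible subquotients lying in only finitely many inertial classes (uniform admissibility over a fixed compact open $K$); second adjointness is not the crux. As it stands, importing the spectral description of the centre as a black box is importing a theorem that lies downstream of the very proposition you are trying to prove, so stage three assumes what is to be shown.
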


Fix a Haar measure on $\aG(\sfk)$.
Let $K$ be a compact open subgroup of $\aG(\sfk)$ and $(\rho,V)$ a smooth representation of $K$. Let $\sX$ be the space of compactly supported and locally constant functions $f \colon G \to V$ such that $f(kg) = \rho(k^{-1})f(g)$ for $k \in K$ and $g \in G$. Then $\aG(\sfk)$ acts on $\sX$ by right translation, and we obtain a smooth representation which we denote by $\cindKG_K^{\Gk}(\rho)$ and which we call a \emph{compactly induced representation}. 

\begin{definition}[{\cite[(2.6)]{bushnell1998smooth}}]
The \emph{Hecke algebra of compactly supported $\rho$-spherical functions on $\aG(\sfk)$} is defined to be the $\C$-algebra $\cH(\aG(\sfk),\rho) := \End_{\Gk}(\cindKG_K^{\Gk}(\rho))$. This is a $\C$-algebra with respect to convolution associated to the fixed Haar measure.
\end{definition}

\begin{definition}\label{prop:type}
We say that $(K,\rho)$ is a type if there exists a finite set $\fS(\rho) \subseteq \sB(\aG(\sfk))$ such that we have an equivalence of categories $\cH(\aG(\sfk),\rho)\Mod \cong \prod_{\fs \in \fS(e)} \Rep^{\fs}(\aG(\sfk))$ (see \cite[(3.5) Proposition, (3.7) Lemma, (4.4) Definition]{bushnell1998smooth}).
\end{definition}

\subsection{Affine Hecke algebras}\label{subsec:AHA}
A \emph{root datum} is a quadruple $\Phi = (X^*,X_*,R,R^\vee)$, where $X^*$ and $X_*$ are free abelian groups of finite rank with a perfect pairing
$\perfp \colon X^* \times X_* \to \Z$, $R$ and $R^\vee$ are finite subsets of $X^*$ and $X_*$ respectively with a bijection denoted by $R \to R^\vee \colon \alpha \mapsto \alphac$, such that
\begin{enumerate}
\item $\perf{\alpha}{\alphac} = 2$ for all $\alpha \in R$,
\item For each $\alpha \in R$, the reflections $s_\alpha \colon X^* \to X^* \colon x \mapsto x - \perf{x}{\alphac}\alpha$ and $s_{\alpha} \colon X_* \to X_* \colon y \mapsto y - \perf{\alpha}{y}\alphac$ leave $R$ and $R^\vee$ stable, respectively.
\end{enumerate}
The elements of $R$ and $R^\vee$ are called \emph{roots} and \emph{coroots}, respectively.
A \emph{basis} of $R$ is a subset $\Pi$ such that each $\alpha \in R$ can uniquely be expressed as a linear combination of elements of $\Pi$ with coefficients that are all non-negative or all non-positive. 
The tuple $(X^*,X_*,R,R^\vee,\Pi)$ is called a \emph{based root datum}.
Let $R^+$ be the set of \emph{positive roots} with respect to $\Pi$, i.e. the roots in $R$ that are linear combinations of elements of $\Pi$ with strictly non-negative integer coefficients.
Let $S := \{s_\alpha \colon \alpha \in \Pi\}$ be the set of \emph{simple reflections}.
Let $W := \langle S \rangle$ be the finite group generated by $S$, which we call the \emph{Weyl group}.
Define the \emph{length} $\ell(w)$ of $w \in W$ to be the smallest integer $\ell$ such that $w$ can be written as a product of $\ell$ simple reflections.
A \emph{parameter set} of $\Phi$ is a pair $(\lambda,\lambda^*)$ of $W$-equivariant functions $\lambda \colon R \to \C$, $\lambda^* \colon \{\alpha \in R \colon \alphac \in 2X_*\} \to \C$. We also write $\lambda(s_\alpha) = \lambda(\alpha)$ and $\lambda^*(s_\alpha) = \lambda^*(\alpha)$ for $\alpha \in R$.

Let $G = G(\Phi)$ be the connected complex reductive group whose root datum is $\Phi$. 
Given a maximal torus $T$ of $G$, we have canonical indentifications 
\begin{align}
X^* = \Hom(T,\C^\times), \chi \mapsto x, \quad X_* = \Hom(\C^\times,T), \phi \mapsto y
\end{align}
such that 
$(\chi \circ \phi)(z) = z^{\perf{x}{y}}$ for all $z \in \C^\times$. 
We also have isomorphisms $X_* \otimes_{\Z} \C^\times \cong T, (y\otimes z) \mapsto y(z)$ and $W \cong N_G(T) / T$.
The choice of $\Pi$ uniquely determines a Borel subgroup $B$ of $G$ containing $T$ such that the root subgroups of the positive roots are precisely the ones that lie in $B$.
Let $t$ be an indeterminate.

\begin{definition}[Bernstein presentation, {\cite[Prop. 3.6, 3.7]{lusztig1989affine}}]\label{def:AHA}
The \emph{affine Hecke algebra} $\cH = \cH^{\lambda,\lambda^*}(\Phi)$ associated to a based root datum $\Phi = (X^*,X_*,R,R^\vee,\Pi)$ with parameter set $(\lambda,\lambda^*)$ is defined to be the associative unital $\C[t,t^{-1}]$-algebra with $\C[t,t^{-1}]$-basis $\{T_w \colon w \in W\}\sqcup\{\theta_x \colon x \in X^*\}$ and relations
\begin{align}
&(T_{s} + 1)(T_{s} - t^{2\lambda(s)}) = 0 &&\text{for all $s \in S$,}
\\
&T_wT_{w'} = T_{ww'} &&\text{for all $w,w' \in W$ with $\ell(ww') = \ell(w)+\ell(w')$,}
\\
&\theta_x\theta_{x'} = \theta_{x+x'} &&\text{for all $x,x' \in X^*$,}
\\
&\theta_x T_s - T_s\theta_{s(x)} = (\theta_x - \theta_{s(x)})(\cG(s) - 1) &&\text{for $x \in X^*, s \in \Pi$}, 
\end{align}
where for $s = s_\alpha$ with $\alpha \in \Pi$, we have
\begin{equation}
\cG(s)=
\cG(\alpha)=
\begin{cases} 
\theta_\alpha \frac{z^{2\lambda(\alpha)}-1}{\theta_\alpha-1}
&\text{if } \alphac\notin 2X_*,
\\
\theta_\alpha \frac{z^{\lambda(\alpha)+\lambda^*(\alpha)}-1)(t^{\lambda(\alpha)-\lambda^*(\alpha)}+1)}{\theta_{2\alpha}-1}
&\text{if }\alphac\in 2X_*.
\end{cases}
\end{equation}
Up to isomorphism, $\cH$ does not depend on the choice of $\Pi$. 
\end{definition}
Let $\cA$ be the $\C[t,t^{-1}]$-subalgebra of $\cH$ generated by $\{\theta_x \colon x \in X^*\}$.

\begin{proposition}[{\cite[Prop. 3.11]{lusztig1989affine}}]
\label{prop:centreaha}
We have $\cZ = Z(\cH) = \cA^W$.
\end{proposition}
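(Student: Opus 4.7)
The plan is to prove the two inclusions $\cA^W \subseteq Z(\cH)$ and $Z(\cH) \subseteq \cA^W$ separately. Throughout I will rely on the Bernstein relation from \Cref{def:AHA}, the fact that $\cH = \bigoplus_{w \in W} \cA\, T_w$ as a free left (and also right) $\cA$-module (an immediate consequence of the relations in \Cref{def:AHA}), the commutativity of $\cA$ (which is an integral domain, being a Laurent polynomial ring over $\C[t,t^{-1}]$), and the faithfulness of the $W$-action on $X^*$ inherited from the root datum.

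For the easy inclusion $\cA^W \subseteq Z(\cH)$, any $a \in \cA^W$ automatically commutes with all of $\cA$, so it suffices to verify $aT_s = T_s a$ for each simple reflection $s = s_\alpha$. Writing $a = \sum_x c_x \theta_x$ and partitioning $X^*$ into $\langle s\rangle$-orbits, the condition $s(a) = a$ forces $c_{s(x)} = c_x$ on every two-element orbit. Applying the Bernstein relation to the members of such a pair and summing, the error terms $(\theta_x - \theta_{s(x)})(\cG(s) - 1)$ and $(\theta_{s(x)} - \theta_x)(\cG(s) - 1)$ cancel, while fixed orbits contribute nothing since $\theta_x - \theta_{s(x)} = 0$; this yields $aT_s = T_s a$.

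For the reverse inclusion I would proceed in two steps. \emph{Step 1} ($Z(\cH) \subseteq \cA$): An induction on $\ell(w)$ using the Bernstein relation produces the triangular formula $T_w \theta_x \equiv \theta_{w(x)} T_w \pmod{\bigoplus_{\ell(w') < \ell(w)} \cA\, T_{w'}}$. Let $z = \sum_w a_w T_w \in Z(\cH)$, suppose $z \notin \cA$, and let $w_0$ be of maximal length among $\{w : a_w \neq 0\}$, so that $w_0 \neq 1$. Reading off the $T_{w_0}$-coefficient of $z\theta_x = \theta_x z$ using the triangular formula (and the commutativity of $\cA$) gives $a_{w_0}\theta_{w_0(x)} = a_{w_0}\theta_x$ for every $x \in X^*$. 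Since $\cA$ is a domain this forces $w_0(x) = x$ for all $x$, contradicting the faithfulness of the $W$-action on $X^*$. \emph{Step 2} ($\cA \cap Z(\cH) \subseteq \cA^W$): Rewriting the Bernstein relation in the form $T_s \theta_y = \theta_{s(y)} T_s - (\theta_{s(y)} - \theta_y)(\cG(s)-1)$ and noting that the correction lies in $\cA$ (because $\theta_{s(y)}-\theta_y$ is divisible by the denominator of $\cG(s)$), one obtains $T_s b - s(b)T_s \in \cA$ for every $b \in \cA$. The centrality condition $z T_s = T_s z$ with $z \in \cA$ then gives $(z - s(z)) T_s \in \cA$; reading off the $T_s$-coefficient in the basis $\{T_w\}$ yields $z = s(z)$ for each simple reflection $s$, and since the simple reflections generate $W$ we conclude $z \in \cA^W$.

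The main obstacle I anticipate is establishing the triangular formula of Step 1 cleanly, and verifying that no unintended contribution to the $T_{w_0}$-coefficient of $z\theta_x$ arises from longer-length terms; this is automatic because $w_0$ is chosen of maximal length among the support of $z$, but the bookkeeping with the ``shorter terms'' produced by iterated applications of the Bernstein relation still requires care. The computations in the easy direction and in Step 2 are routine once the Bernstein relation is put into the form stated above, and the divisibility arguments for the correction terms lying in $\cA$ reduce to the elementary observation that $\theta_{s_\alpha(y)} - \theta_y = \theta_y(\theta_\alpha^{-\perf{y}{\alphac}} - 1)$ is divisible by $\theta_\alpha - 1$ (and by $\theta_{2\alpha}-1$ in the case $\alphac \in 2X_*$, where $\perf{y}{\alphac}$ is automatically even).
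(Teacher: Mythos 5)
Your proof is correct. Note that the paper offers no argument of its own for this statement — it is quoted verbatim from Lusztig \cite{lusztig1989affine} — so there is no "paper proof" to diverge from; what you have written is the standard Bernstein–Lusztig argument for the centre of an affine Hecke algebra in its Bernstein presentation, and the delicate points are all handled correctly: the cancellation over $\langle s\rangle$-orbits for the inclusion $\cA^W \subseteq Z(\cH)$, the triangularity $T_w\theta_x \equiv \theta_{w(x)}T_w$ modulo terms supported on shorter elements together with extraction of the maximal-length coefficient (which uses that $\cA$ is a domain and that $W$ acts faithfully on $X^*$), and the divisibility of $\theta_{s(y)}-\theta_y = \theta_y(\theta_\alpha^{-\perf{y}{\alphac}}-1)$ by $\theta_\alpha-1$, respectively by $\theta_{2\alpha}-1$ when $\alphac \in 2X_*$, which guarantees that the Bernstein correction terms lie in $\cA$.
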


\subsection{Graded Hecke algebras}\label{sec:GHA}
Fix a finite $W$-invariant set $\Sigma$ in $T$.
Let $\mathscr C$ be the commutative associative unital $\C$-algebra with vector space basis $\{E_\sigma \colon \sigma \in \Sigma\}$ and relations
\begin{equation}
\sum_{\sigma \in \Sigma} E_\sigma = 1,
\quad
E_\sigma E_{\sigma'} = \delta_{\sigma,\sigma'}E_\sigma.
\end{equation}
Let $\sS$ be the symmetric algebra of $\ft^\vee = X^* \otimes_{\Z} \C = \Lie(T^\vee)$ where $T^\vee = X^* \otimes_{\Z} \C^\times$. 
Let $\bA$ be the commutative associative $\C$-algebra $\C[r] \otimes \mathscr C \otimes_{\C} \mathscr S$ with componentwise multiplication.
Consider the diagonal action of $W$ on $\Sigma \times R$ and let $\mu \colon \Sigma \times R \to \C$ be a $W$-invariant function. 

\begin{definition}[{\cite[Proposition 4.4]{lusztig1989affine}, \cite[Proposition 4.2]{barbasch1993reduction}}]\label{def:gha}
The \emph{graded Hecke algebra} $\bH_\Sigma = \bH_{\Sigma}^{\mu}(\Phi)$ is the associative $\C[r]$-algebra with $\C$-vector space structure $\C[t_w \colon w \in W]\otimes_{\C}\bA$, with unit $t_1$ and relations 
\begin{align}
&t_wt_{w'} = t_{ww'} &&\text{ for all $w,w'\in W$,}
\\
&E_\sigma t_\alpha = t_\alpha E_{s_{\alpha}(\sigma)} &&\text{ for all $\sigma \in \Sigma$, $\alpha \in \Pi$,}
\\
&\omega\cdot t_{s_\alpha} - t_{s_\alpha}\cdot s_\alpha(\omega) = r \perf{\omega}{\alphac} \sum_{\sigma \in \Sigma} E_\sigma \mu(\sigma,\alpha) &&\text{ for all $\omega \in \mathscr{S}$, $\alpha \in \Pi$.}
\end{align}
Given $(\lambda,\lambda^*)$ as in \Cref{subsec:AHA}, let $\mu_{\lambda,\lambda^*} \colon \Sigma \times R \to \C$ be defined by
\begin{align}\label{eq:mulambda}
\mu_{\lambda,\lambda^*}(\sigma,\alpha) 
=
\begin{cases}
0 &\text{ if $s_\alpha\sigma \neq \sigma$,}
\\
2\lambda(\alpha) &\text{ if $s_\alpha\sigma = \sigma$, $\alphac \notin 2X_*$,}
\\
\lambda(\alpha) + \lambda^*(\alpha)\theta_{-\alpha}(\sigma) &\text{ if $s_\alpha\sigma = \sigma$, $\alphac \in 2X_*$}.
\end{cases}
\end{align}
We write $\bH_\Sigma^{(\lambda,\lambda^*)}(\Phi)$ for $\bH_\Sigma^{\mu_{\lambda,\lambda^*}}(\Phi)$.
\end{definition}

Fix $\bH_\Sigma = \bH_{\Sigma}^{\mu}(\Phi)$ with $\mu$ arbitrary.
For $\sigma \in \Sigma$, let $W(\sigma) = \stab(\sigma) = \{w \in W \colon w(\sigma) = \sigma\}$.

\begin{proposition}[{\cite[Proposition 3.2]{barbasch1993reduction}}]\label{prop:ZbH}
It holds that
\begin{enumerate}[ref={\thecorollary(\arabic*)}]
\crefalias{enumi}{proposition}
\item\label{prop:ZbH1} For two disjoint finite $W$-invariant sets $\Sigma_1,\Sigma_2\subseteq T$, we have $\bH_{\Sigma_1\sqcup\Sigma_2} \cong \bH_{\Sigma_1} \oplus \bH_{\Sigma_2}$,
\item\label{prop:ZbH2} $Z(\bH_\Sigma) = \bA^W$,
\item\label{prop:ZbH3} Suppose $\Sigma = W\cdot\sigma$ is a single $W$-orbit. Then 
\begin{align}
\bA^W &\to E_\sigma \cdot ( \C[r] \otimes_{\C} \mathscr S^{W(\sigma)}) \cong \C[r] \otimes_{\C} \sS^{W(\sigma)}\colon 
\\
a &\mapsto E_\sigma \cdot a
\end{align}
is an isomorphism of $\C[r]$-algebras,
\item\label{prop:ZbH4} 
Write $\Sigma = \bigsqcup_{i=1}^m \Sigma_i$ as a finite union of $W$-orbits $\Sigma_i = W\cdot\sigma^i$. Then
\begin{align}
Z(\bH_\Sigma) 
\cong \bigoplus_{i=1}^m Z(\bH_{\Sigma_i})
\cong \bigoplus_{i=1}^m \C[r]\otimes_{\C}\sS^{W(\sigma^i)}.
\end{align}
\end{enumerate}
\end{proposition}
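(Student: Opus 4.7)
The proposition has four parts, and my plan is to address them in the order $(1) \Rightarrow (2) \Rightarrow (3) \Rightarrow (4)$, each building on the previous. The key underlying fact I would use throughout is the PBW-type decomposition $\bH_\Sigma = \bigoplus_{w\in W} t_w \cdot \bA$ as a left $\bA$-module, which follows from the defining relations by a standard argument (analogous to the Bernstein presentation of $\cH$).

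For part (1), the strategy is to exhibit central idempotents. Set $E_{\Sigma_i} = \sum_{\sigma\in\Sigma_i}E_\sigma \in \mathscr C$; these are orthogonal idempotents summing to $1$ by the defining relations of $\mathscr C$. They commute with $\C[r]\otimes\sS$ because $\bA$ is commutative with $\mathscr C$ as a subalgebra. For centrality, the relation $E_\sigma t_\alpha = t_\alpha E_{s_\alpha(\sigma)}$ together with the $W$-invariance of $\Sigma_i$ (so $s_\alpha$ permutes $\Sigma_i$) gives $E_{\Sigma_i} t_\alpha = t_\alpha E_{\Sigma_i}$ after summing over $\sigma \in \Sigma_i$. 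The decomposition $\bH_\Sigma = E_{\Sigma_1}\bH_\Sigma \oplus E_{\Sigma_2}\bH_\Sigma$ is then immediate, and each summand is identified with $\bH_{\Sigma_i}$ by checking that the presentation specialises correctly.

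For part (2), the inclusion $\bA^W \subseteq Z(\bH_\Sigma)$ is the easier direction: such an element obviously commutes with $\bA$, and writing $a = \sum_\sigma E_\sigma a_\sigma$, the $W$-invariance reads $a_{s_\alpha\sigma} = s_\alpha(a_\sigma)$, which together with the $E_\sigma$-intertwining relation and the $\omega$-$t_{s_\alpha}$ commutation relation gives $a t_{s_\alpha} = t_{s_\alpha} a$; the extra term $r\perfp(\omega,\alphac)\sum_\sigma E_\sigma\mu(\sigma,\alpha)$ telescopes correctly using $W$-invariance of $\mu$. The reverse inclusion is the main obstacle: given a central $z$, I would expand $z = \sum_{w\in W} t_w a_w$ using the PBW decomposition and impose centrality against every $\omega \in \sS$ and every $t_{s_\alpha}$. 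A leading-term analysis in the natural $\sS$-filtration, combined with induction on $\ell(w)$, should force $a_w = 0$ for $w \neq 1$ and constrain $a_1$ to be $W$-invariant. I expect the bookkeeping of the extra $r\perfp(\omega,\alphac)\sum E_\sigma \mu(\sigma,\alpha)$ correction terms when comparing coefficients of $t_w$ against $t_{s_\alpha w}$ to be the technically delicate step.

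For part (3), given $\Sigma = W\cdot\sigma$, the map $a \mapsto E_\sigma a$ is clearly a $\C[r]$-algebra homomorphism from $\bA^W$ into $E_\sigma\bA \cong \C[r]\otimes\sS$. Its image lies in $E_\sigma\cdot(\C[r]\otimes\sS^{W(\sigma)})$ because the stabiliser $W(\sigma)$ must fix the $E_\sigma$-component, and conversely any element of $E_\sigma\cdot(\C[r]\otimes\sS^{W(\sigma)})$ extends uniquely to a $W$-invariant element of $\bA$ by transporting to each $W/W(\sigma)$-coset, giving the inverse map. Part (4) then follows by decomposing $\Sigma$ into its $W$-orbits, applying part (1) iteratively to decompose $\bH_\Sigma$ (hence $Z(\bH_\Sigma)$), part (2) to identify $Z(\bH_{\Sigma_i}) = \bA_i^W$ where $\bA_i = \C[r]\otimes E_{\Sigma_i}\mathscr C \otimes \sS$, and part (3) to identify each summand with $\C[r]\otimes\sS^{W(\sigma^i)}$.
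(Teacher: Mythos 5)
The paper does not actually prove this proposition: it is quoted verbatim from Barbasch--Moy \cite[Proposition 3.2]{barbasch1993reduction}, so there is no in-paper argument to compare yours against. Your sketch is the standard one (central idempotents $E_{\Sigma_i}$ for (1), PBW expansion $\sum_w t_w a_w$ with a leading-term/length induction for $Z(\bH_\Sigma)\subseteq\bA^W$, transport along $W/W(\sigma)$ for (3), and assembly for (4)), and it is essentially sound; parts (1), (3), (4) are complete in outline, and the filtration argument you propose for the hard inclusion in (2) is exactly how Lusztig and Barbasch--Moy handle it, even if you leave the bookkeeping unexecuted.

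One point deserves sharpening in the easy inclusion $\bA^W\subseteq Z(\bH_\Sigma)$. For a general $a=\sum_\sigma E_\sigma a_\sigma\in\bA^W$ the commutator with $t_{s_\alpha}$ produces the correction $r\sum_\sigma E_\sigma\,\mu(\sigma,\alpha)\,\bigl(a_\sigma-s_\alpha(a_\sigma)\bigr)/\alpha$ (the degree-one relation propagates to all of $\sS$ via the Demazure operator), and this does not vanish by any ``telescoping'' with the $W$-invariance of $\mu$: it vanishes because $a_\sigma=s_\alpha(a_\sigma)$ whenever $s_\alpha\sigma=\sigma$ (by $W$-invariance of $a$), while $\mu(\sigma,\alpha)$ must be $0$ whenever $s_\alpha\sigma\neq\sigma$. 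The latter holds for the parameters $\mu_{\lambda,\lambda^*}$ of \eqref{eq:mulambda} that are actually used, and for an arbitrary $W$-invariant $\mu$ it is in fact forced by the PBW property asserted in \Cref{def:gha} (comparing $(E_\sigma\omega)t_{s_\alpha}$ computed two ways yields $\mu(\sigma,\alpha)E_\sigma=\mu(s_\alpha\sigma,\alpha)E_{s_\alpha\sigma}$); without this observation your verification of centrality, and indeed statement (2) itself for a genuinely arbitrary $\mu$, would have a gap. With that hypothesis made explicit, your route goes through and coincides with the cited proof.
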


\subsection{Reduction theorems}\label{subsec:red}
Suppose $\Sigma = W \cdot \sigma$ for some $\sigma \in T$ and consider
\begin{align}
R_\sigma &= \{\alpha \in R \colon \theta_\alpha(\sigma) =
\begin{cases}
1 &\text{if $\alphac \notin 2X_*$,}
\\
\pm 1 &\text{if $\alphac \in 2X_*$.}
\end{cases}
\},
\\
R_{\sigma}^+ &= R_\sigma \cap R^+,
\\
\Pi_\sigma &= \text{ set of simple roots w.r.t. $R_\sigma^+$,}
\\
W_\sigma &= \langle s_\alpha \colon \alpha \in \Pi_\sigma \rangle,
\\
\Gamma_\sigma &= \{w \in W(\sigma) \colon w(R_\sigma^+) = R_\sigma^+\}.
\end{align}
Note that $(\lambda,\lambda^*)$ restricts to a parameter set on the based root datum $\Phi_\sigma = (X^*,X_*,R_\sigma,R_\sigma^\vee,\Pi_\sigma)$.
Let $\cH_\sigma = \cH^{(\lambda,\lambda^*)}(\Phi_\sigma)$. Note that $\stab_W(\sigma) = W(\sigma) = \Gamma_\sigma \ltimes W_\sigma$ and so $W_\sigma \cdot \sigma$ is a singleton. Let $\bH_\sigma = \bH_{\{\sigma\}}^{(\lambda,\lambda^*)}(\Phi_\sigma)$. The group $\Gamma_\sigma$ acts as algebra automorphisms of $\bH_\sigma$ via its action on $W_\sigma$ and $\bA$.
Let $\bH_{\sigma}'=\Gamma_\sigma \ltimes \bH_\sigma$ and note that $Z(\bH_{\sigma}') = \C[r] \otimes_{\C} \sS^{W(\sigma)} \cong Z(\bH_\Sigma)$. 

Write $\Sigma = \{\sigma = \sigma_1,\sigma_2,\dots,\sigma_m\}$. For $i,j\in\{1,\dots,m\}$, let $w_i \in W$ such that $w_i\sigma_1 = \sigma_i$ and let
\begin{equation}
E_{i,j} = t_{w_i^{-1}}E_\sigma t_{w_j}.
\end{equation}

\begin{theorem}[First reduction theorem {\cite[Theorem 8.6]{lusztig1989affine}, \cite[Theorem 3.3]{barbasch1993reduction}}]
\label{theorem:reduction1}
${}$
\begin{enumerate}
\item For each $i \in \{1,\dots,m\}$, $E_{\sigma_i} \cdot \bH_\Sigma \cdot E_{\sigma_i}$ is canonically isomorphic to $\bH_\sigma'$.
\item Let $\cM_m$ be the $\C$-algebra generated by $\{E_{i,j}\}_{i,j}$. Then $\cM_m \cong M_m(\C)$ in the obvious way and we have an isomorphism 
 of $\C[r]$-algebras $\bH_\Sigma \to M_m(\bH_\sigma') = \cM_m \otimes_{\C} (\bH_\sigma')$ such that for all $i,j \in \{1,\dots,m\}$ and $w \in w_iW(\sigma)w_j$, we have 
\begin{align}
E_{\sigma_i} t_w E_{\sigma_j} &\mapsto E_{i,j} \otimes t_{w_i^{-1}ww_j} 
\\
\omega = \sum_{a=1}^m E_{\sigma_a}\omega E_{\sigma_a} &\mapsto I_m \otimes \omega,
\end{align}
where and $I_m$ is the $m\times m$ identity matrix.
Note that $\bA^W$ is mapped isomorphically to $I_m\otimes\C[r]\otimes_{\C}\sS^{W(\sigma)}$.
\end{enumerate}
\end{theorem}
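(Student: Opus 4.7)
The plan is to first identify the corner algebra $E_\sigma \bH_\Sigma E_\sigma$ with $\bH_\sigma'$, and then bootstrap this to part (2) via a standard matrix-algebra argument built from the orthogonal idempotents $\{E_{\sigma_i}\}_{i=1}^{m}$.

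For part (1), I would construct a $\C[r]$-algebra map $\Phi \colon \bH_\sigma' \to E_\sigma \bH_\Sigma E_\sigma$ on generators by sending $\omega \mapsto E_\sigma \omega$ for $\omega \in \sS$, $t_{s_\alpha} \mapsto E_\sigma t_{s_\alpha}$ for $\alpha \in \Pi_\sigma$, and $t_\gamma \mapsto E_\sigma t_\gamma$ for $\gamma \in \Gamma_\sigma$. Each image lies in the corner by the commutation $t_w E_\tau = E_{w\tau} t_w$ (deduced by induction on $\ell(w)$ from the generating relation $E_\tau t_\alpha = t_\alpha E_{s_\alpha\tau}$) together with the fact that $s_\alpha$ for $\alpha \in \Pi_\sigma$ and $\gamma \in \Gamma_\sigma$ both fix $\sigma$. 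The central verification is the twisted polynomial relation: multiplying the defining identity
\[
\omega t_{s_\alpha} - t_{s_\alpha} s_\alpha(\omega) = r \perf{\omega}{\alphac} \sum_{\tau \in \Sigma} E_\tau \mu(\tau, \alpha)
\]
of $\bH_\Sigma$ by $E_\sigma$ on both sides collapses the right-hand side, via idempotent orthogonality, to $r \perf{\omega}{\alphac} E_\sigma \mu(\sigma, \alpha)$, which is precisely the defining relation of $\bH_\sigma$. The cross-relations $t_\gamma E_\sigma t_{s_\alpha} = E_\sigma t_{s_{\gamma(\alpha)}} t_\gamma$ are automatic from $t_w t_{w'} = t_{ww'}$ and the stability $\gamma(\Pi_\sigma) = \Pi_\sigma$.

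To show that $\Phi$ is an isomorphism, I would use the PBW-type decomposition of $\bH_\Sigma$ as a free $\bA$-module with basis $\{t_w\}_{w \in W}$. Writing $W = \bigsqcup_i w_i W(\sigma)$ as a disjoint union of left cosets, the computation $E_\sigma t_w E_\sigma = \delta_{w\sigma,\sigma}\, E_\sigma t_w$ shows that $E_\sigma \bH_\Sigma E_\sigma$ is free over $E_\sigma \cdot (\C[r] \otimes \sS)$ with basis indexed by $W(\sigma) = \Gamma_\sigma \ltimes W_\sigma$. This matches, under $\Phi$, the PBW basis of $\bH_\sigma' = \Gamma_\sigma \ltimes \bH_\sigma$, yielding both injectivity and surjectivity.

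For part (2), a direct calculation using the same commutation rule yields
\[
E_\sigma t_{w_j w_k^{-1}} E_\sigma = \delta_{j,k}\, E_\sigma,
\]
from which $E_{i,j} E_{k,l} = \delta_{j,k} E_{i,l}$, $E_{i,i} = E_{\sigma_i}$, and $\sum_i E_{i,i} = 1$ follow. The Peirce decomposition $\bH_\Sigma = \bigoplus_{i,j} E_{\sigma_i} \bH_\Sigma E_{\sigma_j}$, combined with the canonical isomorphism $E_{\sigma_i} \bH_\Sigma E_{\sigma_j} \cong E_\sigma \bH_\Sigma E_\sigma$ transported by conjugation with the $t_{w_i}$'s, then produces the matrix algebra structure $\bH_\Sigma \cong \cM_m \otimes_{\C} \bH_\sigma'$, with the explicit formulas on generators stated in the theorem obtained by direct substitution. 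The main obstacle I anticipate is the bookkeeping in the PBW reduction step of part (1): controlling the correction terms $\sum_\tau E_\tau \mu(\tau, \beta)$ that arise when commuting elements of $\sS$ past $t_{s_\beta}$ for $\beta \in \Pi \setminus \Pi_\sigma$, and confirming that after projection by $E_\sigma$ they stay within the image of $\Phi$. This relies crucially on the vanishing $\mu(\sigma, \beta) = 0$ whenever $s_\beta \sigma \neq \sigma$.
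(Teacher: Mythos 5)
Your overall architecture---identify the corner $E_\sigma \bH_\Sigma E_\sigma$, prove it is free over $\C[r]\otimes_\C\sS$ with basis indexed by $W(\sigma)$ via the PBW decomposition, then spread this over the orthogonal idempotents by a Peirce/matrix-unit argument---is the standard route and matches the cited proofs of Lusztig and Barbasch--Moy (the paper itself gives no proof, only the citation). The freeness count, the Peirce decomposition, and part (2) are fine in outline (up to the usual coset bookkeeping: for left-coset representatives $w_i$ of $W/W(\sigma)$ one has $E_\sigma t_{w_j^{-1}w_k}E_\sigma=\delta_{j,k}E_\sigma$, whereas $w_jw_k^{-1}$ need not lie outside $W(\sigma)$ for $j\neq k$).

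The genuine gap is in the ``central verification'' of part (1). The simple roots $\Pi_\sigma$ of $R_\sigma$ are in general \emph{not} simple in $R$, so for $\alpha\in\Pi_\sigma\setminus\Pi$ the identity $\omega t_{s_\alpha}-t_{s_\alpha}s_\alpha(\omega)=r\perf{\omega}{\alphac}\sum_{\tau}E_\tau\mu(\tau,\alpha)$ is not a defining relation of $\bH_\Sigma$ and cannot simply be ``multiplied by $E_\sigma$''; it must be derived by writing $s_\alpha$ as a reduced word in $S$ and pushing $\omega$ through, and this derivation is the actual content of the theorem. Moreover your anticipated fix---that the correction terms $\sum_\tau E_\tau\mu(\tau,\beta)$, $\beta\in\Pi$, die after projection by $E_\sigma$ because $\mu(\sigma,\beta)=0$ when $s_\beta\sigma\neq\sigma$---is the wrong mechanism: these terms appear sandwiched between partial subwords, hence sit at idempotents $E_\tau$ with $\tau\neq\sigma$, and exactly one of them survives (the one attached to $\alpha$, the unique root of $R_\sigma^+$ made negative by $s_\alpha$); it is identified with the required $r\perf{\omega}{\alphac}\mu(\sigma,\alpha)E_\sigma$ only by invoking the diagonal $W$-invariance $\mu(w\tau,w\gamma)=\mu(\tau,\gamma)$, while the unwanted terms are killed by a combination of idempotent orthogonality and the vanishing you cite. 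Concretely, for $R$ of type $A_2$ with $\theta_\alpha(\sigma)=\zeta$, $\theta_\beta(\sigma)=\zeta^{-1}$, $\zeta^3=1$, so $R_\sigma=\{\pm(\alpha+\beta)\}$, commuting $\omega$ through $t_{s_\alpha}t_{s_\beta}t_{s_\alpha}$ and multiplying by $E_\sigma$ leaves the nonzero term $r\perf{\omega}{(\alpha+\beta)^\vee}\mu(s_\alpha\sigma,\beta)E_\sigma$, which equals $r\perf{\omega}{(\alpha+\beta)^\vee}\mu(\sigma,\alpha+\beta)E_\sigma$ only via $W$-invariance; if all corrections vanished as you suggest, the corner would carry zero parameters on $\Pi_\sigma\setminus\Pi$, contradicting the theorem. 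The same analysis is needed (and omitted) for the relation $t_\gamma\omega t_\gamma^{-1}=\gamma(\omega)$, $\gamma\in\Gamma_\sigma$, where this time all corrections do die because $\gamma$ preserves $R_\sigma^+$. Until this lemma is proved, your map $\Phi$ is not even known to be well defined, so the bijectivity argument has nothing to apply to.
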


Now fix $\bH_\Sigma = \bH_\Sigma^{(\lambda,\lambda^*)}(\Phi)$.
For now, drop the assumption that $\Sigma$ is a single $W$-orbit.
Consider the algebras $\hat\C[r]$, $\hat{\mathscr S}$ of convergent power series in $\C[r]$ and $\mathscr S$, respectively. Define
\begin{equation}
\hat \bA = \hat\C[r] \otimes_{\C} \mathscr{C} \otimes_{\C} \hat{\mathscr{S}},
\quad
\hat \bH_\Sigma = \C[W] \otimes_{\C} \hat \bA.
\end{equation}
We have $Z(\hat\bH_\Sigma) = \hat \bA^W \cong \hat\C[r] \otimes_{\C} \hat\sS^{W(\sigma)}$, similar to \Cref{prop:ZbH}.

We have a polar decomposition $T = T_c \times T_r$, where
\begin{align}
T_c = X_* \otimes_{\Z} S^1 \text{(the compact part),}
\quad
T_r = X_* \otimes_{\Z} \R_{>0} \text{(the real part).}
\end{align}
Recall that $\ft = X_* \otimes_{\Z}\C$. 
We can write $\ft = \ft_{\R} \oplus \ft_{\iR}$, where $\ft_{\R} = \{x \in \ft \colon \bar x = x\}$ and $\ft_{\iR} = \{x \in \ft \colon \bar x = -x\}$. 
We have a morphism of algebraic groups
\begin{align}
\log \colon T_r \to \ft_{\R} \colon y \otimes u \mapsto y \otimes \log u.
\end{align}
with inverse given by
\begin{align}
\exp \colon \ft_{\R} \to T_r \colon y \otimes v \mapsto y \otimes \exp(v).
\end{align}
Let $\hat\sK$ be the field of fractions of $\hat\C[r]\otimes_{\C}\hat\sS$ and let $\hat\bH_\Sigma(\hat\sK) = \C[t_w\colon w \in W]\otimes_{\C}\sC\otimes_{\C}\hat\sK \supseteq \hat\bH_\Sigma$.

\begin{proposition}[{\cite[Proposition 4.1]{barbasch1993reduction}}]\label{prop:Psi}
Recall that $\Sigma$ is not necessarily a single $W$-orbit and assume that $\sigma \in T_c$.
We have a $\C$-algebra homomorphism
\begin{align}
\Psi \colon\cH &\to \hat\bH_{\Sigma}(\hat\sK)
\\
\theta_x &\mapsto \sum_{\sigma \in \Sigma} \theta_x(\sigma) \cdot E_\sigma \cdot e^x &&\text{for $x \in X^* \subseteq \mathscr S$,}
\\
t &\mapsto e^r,
\\
T_{s_\alpha}+1 &\mapsto \sum_{\sigma \in \Sigma} E_\sigma (t_{s_\alpha} + 1) \frac{\Psi(\cG(\alpha))}{1+\frac{r\mu(\sigma,\alpha)}{\alpha}} &&\text{for $\alpha \in \Pi$.}
\end{align}
\end{proposition}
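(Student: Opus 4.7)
The plan is to verify that the prescribed assignments on the generators $\theta_x$, $t$, and $T_{s_\alpha}+1$ respect every defining relation of $\cH$ in the Bernstein presentation (\Cref{def:AHA}), so that $\Psi$ extends uniquely to a $\C$-algebra homomorphism. Before doing this, one checks that the formulas actually land in $\hat\bH_\Sigma(\hat\sK)$: the element $1+r\mu(\sigma,\alpha)/\alpha$ is a unit in $\hat\sK$ because $1/\alpha$ lies in the field of fractions and, viewed as a power series in $r$, it has constant term $1$; and $\Psi(\cG(\alpha))$ is well-defined since $\cG(\alpha)$ lies in the subalgebra generated by $\theta_\alpha$, $\theta_{2\alpha}$ and $t$, whose images are already specified.

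I would next dispatch the easy relations. The identity $\Psi(\theta_x)\Psi(\theta_{x'}) = \Psi(\theta_{x+x'})$ is immediate from the orthogonality of the $E_\sigma$, the multiplicativity $\theta_x(\sigma)\theta_{x'}(\sigma) = \theta_{x+x'}(\sigma)$, and $e^x e^{x'} = e^{x+x'}$; the centrality of $e^r$ in $\hat\bH_\Sigma(\hat\sK)$ matches that of $t$ in $\cH$. The quadratic relation $(T_{s_\alpha}+1)(T_{s_\alpha}-t^{2\lambda(\alpha)}) = 0$ is checked by expanding $\Psi(T_{s_\alpha}+1)^2$ via $t_{s_\alpha}^2 = 1$ and the idempotent swap $E_\sigma t_{s_\alpha} = t_{s_\alpha} E_{s_\alpha(\sigma)}$, and comparing with $\Psi(t^{2\lambda(\alpha)}+1)\Psi(T_{s_\alpha}+1)$; the longer braid relations then reduce to the familiar rank-two computations, or can be deduced formally once the cross relation below is in hand.

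The main obstacle is the Bernstein cross relation $\theta_x T_{s_\alpha} - T_{s_\alpha}\theta_{s_\alpha(x)} = (\theta_x-\theta_{s_\alpha(x)})(\cG(\alpha)-1)$. The strategy is to apply the graded commutation relation $\omega\cdot t_{s_\alpha} - t_{s_\alpha}\cdot s_\alpha(\omega) = r\perf{\omega}{\alphac}\sum_{\sigma\in\Sigma} E_\sigma\mu(\sigma,\alpha)$ from \Cref{def:gha} to $\omega = e^x\in\hat\sS$, and to use $\theta_x(s_\alpha\sigma) = \theta_{s_\alpha(x)}(\sigma)$ to match idempotent components before and after the swap. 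The denominator $1+r\mu(\sigma,\alpha)/\alpha$ attached to $\Psi(T_{s_\alpha}+1)$ is engineered precisely so that, once the graded commutator moves $\Psi(\theta_x)$ past $t_{s_\alpha}$, the resulting correction cancels against that denominator and reproduces $(\Psi(\theta_x)-\Psi(\theta_{s_\alpha(x)}))(\Psi(\cG(\alpha))-1)$ on the nose. This verification is bookkeeping-intensive because one must treat separately the cases $\alphac\notin 2X_*$ and $\alphac\in 2X_*$ arising in the definitions of $\cG(\alpha)$ and $\mu(\sigma,\alpha)$, and carefully expand both $e^x$ and the inverse of $1+r\mu(\sigma,\alpha)/\alpha$ as convergent power series in $r$ in order to identify the two sides inside $\hat\bH_\Sigma(\hat\sK)$. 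Once all four families of relations are confirmed, the universal property of the Bernstein presentation of $\cH$ yields the claimed homomorphism $\Psi$.
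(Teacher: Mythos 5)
The paper itself gives no proof of \Cref{prop:Psi}: it is quoted verbatim from \cite[Proposition 4.1]{barbasch1993reduction} (which in turn rests on the computations of \cite[\S9]{lusztig1989affine}), and your plan --- check well-definedness of the target expressions and then verify the four families of Bernstein relations on generators, with the cross relation as the essential computation --- is exactly the strategy of that source, so in substance you are reconstructing the cited proof rather than diverging from anything in this paper. Two points in your sketch need tightening. First, the relation you quote from \Cref{def:gha}, $\omega t_{s_\alpha} - t_{s_\alpha}s_\alpha(\omega) = r\perf{\omega}{\alphac}\sum_{\sigma}E_\sigma\mu(\sigma,\alpha)$, only makes sense for linear $\omega\in\ft^\vee$, so it cannot be applied verbatim to $\omega = e^x\in\hat{\mathscr S}$; what you need is its divided-difference extension $\omega t_{s_\alpha} - t_{s_\alpha}s_\alpha(\omega) = r\,\frac{\omega - s_\alpha(\omega)}{\alpha}\sum_{\sigma}E_\sigma\mu(\sigma,\alpha)$ for $\omega\in\hat\sK$, which is also what gives $\hat\bH_\Sigma(\hat\sK)$ its algebra structure in the first place; with that form the cancellation you describe against the factor $1+r\mu(\sigma,\alpha)/\alpha$ does go through and produces the Bernstein cross relation (and the same extended rule is what you implicitly use when squaring $\Psi(T_{s_\alpha}+1)$ for the quadratic relation). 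Second, the braid relations are not a formal consequence of the cross relation, so the parenthetical ``deduced formally'' should be dropped; the route actually available is the one you mention first, namely the rank-two verification, equivalently the known braid relations for the normalized intertwiners of the graded algebra, which is how the cited proof handles it. With those repairs, and the routine check that the denominators occurring in $\Psi(\cG(\alpha))$ (namely the images of $\theta_\alpha-1$, $\theta_{2\alpha}-1$) are invertible in $\sC\otimes_\C\hat\sK$, your sketch is a correct, if condensed, rendering of the standard argument.
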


Assume that $\Sigma = W\cdot\sigma$ is a single $W$-orbit again and that $\sigma \in T_c$.
Let $s_r \in T_r$ and $s = \sigma s_r \in T$.
Since $\cH$ is a finitely generated algebra over its centre $\cZ = Z(\cH) = \cA^W$, it follows by Dixmier's version of Schur's lemma that $\cH$ has central characters.
As $\cA$ is isomorphic to the coordinate ring of $T \times \C^\times = (X^* \otimes_{\Z} \C^\times)\times\C^\times$, the set of central characters of $\cH$ is in bijection with $W\backslash T \times \C^\times$.
Similarly, 
$\bH_\Sigma$, $\bH_\sigma'$ and $\hat\bH_\Sigma$ have central characters, and since $\sS\otimes_{\C}\C[r]$ is isomorphic to the coordinate ring of $\ft \times \C$, the set of central characters for each of the three algebras is in bijection with $(W(\sigma)\backslash\ft) \times \C$. 
Let $\chi$, $\bar\chi$, $\bar\chi'$, $\hat{\bar\chi}$ be central characters of $\cH$, $\bH$, $\bH_\sigma'$, $\hat{\bH}$, respectively, such that $\chi$ corresponds to $(W\cdot s,v_0)$ and $\bar\chi$, $\bar\chi'$ and $\hat{\bar\chi}$ correspond $(W(\sigma)\cdot \log s_r,r_0)$
where $e^{r_0} = v_0$.
Let $I_\chi = \ker \chi$, $I_{\bar\chi} = \ker \bar \chi$, $I_{\bar\chi'} = \ker\bar\chi$, and $I_{\hat{\bar\chi}} = \ker \hat{\bar\chi}$ and define 
\begin{align}
&\bH_{\bar\chi} 
= 
\bH_\Sigma / (I_{\bar\chi}\bH_\Sigma) 
\cong 
\hat\bH_\Sigma / (I_{\hat{\bar\chi}}\hat\bH_\Sigma) 
= 
\hat \bH_{\hat{\bar\chi}} \text{ (isomorphism of $\C[r]\otimes_{\C}\sS^{W(\sigma)}$-algebras),} 
\\
&\bH_{\sigma,\bar\chi'}' 
= 
\bH'_\sigma / I_{\bar\chi'}\bH'_\sigma,
\quad \cH_\chi 
= 
\cH / I_\chi\cH.
\label{eq:Heckequotientcentral}
\end{align}
\Cref{theorem:reduction1} gives an isomorphism of $\C[r]\otimes_{\C}\sS^{W(\sigma)}$-algebras
\begin{equation}\label{eq:reductioncentralcharacter}
\bH_{\bar\chi} \cong M_m(\bH_{\sigma,\bar\chi}').
\end{equation}

\begin{theorem}[Second reduction theorem {\cite[Theorem 9.3]{lusztig1989affine}, \cite[Theorem 4.3]{barbasch1993reduction}}]
\label{theorem:reduction2}
Recall that $\Sigma = W \cdot \sigma$ is a single orbit.
Let $\chi$ and $\bar\chi$ be as above with the extra assumption that $r_0 \in \R$. 
The map $\Psi$ in \Cref{prop:Psi} induces an isomorphism of $\C$-algebras
\begin{align}
\Psi_\chi \colon \cH_\chi &\to \hat \bH_{\bar \chi} (\cong \bH_{\chi}).
\end{align}
\end{theorem}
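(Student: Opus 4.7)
The plan is to verify that $\Psi$ is well-defined on $\cH_\chi$ (i.e.\ $\Psi(I_\chi \cH) \subseteq I_{\hat{\bar\chi}}\hat\bH_\Sigma(\hat\sK)$), to check that after this quotient the image actually lands in $\hat\bH_{\bar\chi}$ (not merely in the fraction-field quotient), and finally to compare both sides via the first reduction theorem to conclude that $\Psi_\chi$ is an isomorphism.

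For the central character compatibility, I would compute $\Psi(z)$ for $z \in \cZ = \cA^W$ using the formulas of \Cref{prop:Psi} and verify that $\Psi(z) - \chi(z)\cdot 1 \in I_{\hat{\bar\chi}}\hat\bH_\Sigma(\hat\sK)$. Since $\cA^W$ is generated by $W$-symmetric combinations of $\theta_x$ and $t^{\pm 1}$, and $\Psi(\theta_x) = \sum_{\sigma' \in \Sigma} \theta_x(\sigma') E_{\sigma'} e^x$ lies in $\hat\bA$, evaluation at $\bar\chi$ reduces the $\sigma'$-component to $\theta_x(\sigma') \theta_x(s_r) E_{\sigma'} = \theta_x(\sigma' s_r) E_{\sigma'}$. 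Summing over the $W$-orbit $\Sigma$ yields evaluation at $W \cdot s$, matching $\chi(z)$; similarly $\Psi(t) = e^r$ evaluates to $e^{r_0} = v_0$. Hence $\Psi$ descends to a well-defined homomorphism $\Psi_\chi \colon \cH_\chi \to \hat\bH_\Sigma(\hat\sK)/(I_{\hat{\bar\chi}}\hat\bH_\Sigma(\hat\sK))$.

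Next, I must show the image of $\Psi_\chi$ lies inside $\hat\bH_{\bar\chi}$. The only place fractions appear is in $\Psi(T_{s_\alpha}+1)$, through the denominator $1 + r\mu(\sigma',\alpha)/\alpha$ on each $\sigma'$-component. Working in the completion $\hat\bH_\Sigma$ centred at $(\log s_r, r_0)$, the element $\alpha \in \sS$ has constant term $\alpha(\log s_r)$, so when $\alpha(\log s_r) \neq 0$ the denominator is a unit (the assumption $r_0 \in \R$ is used here to ensure the completion is taken at a real point where convergence is unambiguous). When $\alpha(\log s_r) = 0$, one verifies by direct computation using the explicit formulae for $\cG(\alpha)$ in \Cref{def:AHA} and for $\mu_{\lambda,\lambda^*}$ in \eqref{eq:mulambda} that the numerator $(t_{s_\alpha}+1)\Psi(\cG(\alpha))$ vanishes to compensating order, leaving an honest element of $\hat\bH_{\bar\chi}$. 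This requires a case split on whether $s_\alpha \sigma' = \sigma'$ and whether $\alphac \in 2X_*$.

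For bijectivity, I would exploit the first reduction theorem on both sides. By \eqref{eq:reductioncentralcharacter}, $\hat\bH_{\bar\chi} \cong M_m(\bH'_{\sigma,\bar\chi'})$; on the $\cH$-side, the images $\Psi(E_{\sigma'})$ form a system of orthogonal idempotents that decomposes $\cH_\chi$ analogously, and one matches the corner $\Psi(E_\sigma)\cH_\chi\Psi(E_\sigma)$ with $\bH'_{\sigma,\bar\chi'}$ by comparing the Bernstein-type generators on the affine side with the generators $t_w$ and $\omega \in \sS$ on the graded side via the defining formulas of $\Psi$. A dimension count (both algebras are finite-dimensional over $\C$ with the same dimension, controlled by $|W|$) then closes the argument. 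The main obstacle will be the technical denominator analysis of the preceding paragraph: showing the alleged pole at $\alpha(\log s_r)=0$ actually cancels is what makes $\Psi_\chi$ factor through $\hat\bH_{\bar\chi}$ and what provides the explicit generator-by-generator matching needed to identify the two algebras.
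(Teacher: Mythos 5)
The paper itself offers no proof of this theorem: it is imported verbatim from \cite[Theorem 9.3]{lusztig1989affine} and \cite[Theorem 4.3]{barbasch1993reduction}, so your attempt has to be measured against those proofs. Your outline does follow their broad shape (descend $\Psi$ modulo the central ideals, control the denominators at the relevant point, then identify the two finite-dimensional quotients), and the first step --- checking that $\Psi$ intertwines $\chi$ with $\bar\chi$ on $Z(\cH)=\cA^W$ --- is essentially right. But the two places where you locate the real work are exactly where your sketch would fail as written.

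First, the claim that at points where $\alpha(\log s_r)=0$ ``the numerator $(t_{s_\alpha}+1)\Psi(\cG(\alpha))$ vanishes to compensating order'' misidentifies the mechanism, and your case split misses the genuinely dangerous locus. The denominator $1+r\mu(\sigma',\alpha)/\alpha$ vanishes at any point $\xi$ of the orbit with $\alpha(\xi)=-\mu(\sigma',\alpha)r_0$, which can happen with $\alpha(\xi)\neq 0$ and $r_0\neq 0$; there (say in the case $\alphac\notin 2X_*$, $\theta_\alpha(\sigma')=1$) the scalar $\Psi(\cG(\alpha))$ evaluates to $-1\neq 0$, so no componentwise cancellation of orders of vanishing occurs. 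The expression is nonetheless regular, but only because of the noncommutative cross relation: for instance $(t_{s_\alpha}+1)\frac{1}{\alpha+r\mu}=\frac{1}{r\mu-\alpha}\,(t_{s_\alpha}-1)$, so the pole along $\alpha+r\mu=0$ is absorbed by rewriting the whole intertwiner-type element, not by a zero of the numerator. Any correct argument (as in Barbasch--Moy) must work with these elements as a whole rather than factor by factor. Second, the stated role of $r_0\in\R$ (``convergence at a real point'') is not the point: realness of $r_0$ and of $\log s_r$ is what makes $\exp$ injective, so that the vanishing loci of $\theta_\alpha(\sigma')e^{\alpha}t^{2\lambda(\alpha)}-1$ match exactly the linear hyperplanes $\alpha+2\lambda(\alpha)r$ and the central characters $(W\cdot s,v_0)$ and $(W(\sigma)\cdot\log s_r,r_0)$ correspond bijectively; without this the induced map on centres is not an isomorphism and the whole comparison collapses. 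Finally, the bijectivity step is only gestured at: the dimension count is fine in principle ($\cH$ is free of rank $|W|^2$ over $\cZ$, and $\bH_\Sigma$ of the same rank over $\bA^W$), but you must still establish surjectivity (or injectivity) of $\Psi_\chi$, and ``matching Bernstein generators through the first reduction theorem'' is precisely the substantive content of the cited proofs, not a routine verification one can defer.
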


\begin{corollary}[{cf. \cite[Corollary 10.8]{lusztig1989affine}}]
\label{cor:reduction}
\begin{enumerate}[ref={\thecorollary(\arabic*)}]
\crefalias{enumi}{corollary}
\item\label{cor:reduction1} We have an isomorphism of $\C$-algebras
\begin{equation}
\Psi_\chi' \colon \cH_\chi \to M_m(\bH_{\sigma,\bar\chi'}')
\end{equation}
obtained by composing the isomorphisms \eqref{eq:reductioncentralcharacter}, $\Psi_\chi$, and the isomorphism $\Xi \colon \bH_{\bar \chi} \cong \bH_{\chi}$.
\item\label{cor:reduction2} 
For $x \in X^*$, we have $\Psi_\chi'(\theta_x) = \diag(\theta_x(\sigma_1)\Xi(e^x),\dots,\theta_x(\sigma_m)\Xi(e^x))$.
\end{enumerate}
\end{corollary}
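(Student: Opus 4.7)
The plan for part (1) is straightforward: define $\Psi_\chi'$ as the composition of the three displayed $\C$-algebra isomorphisms $\Psi_\chi$, $\Xi$, and \eqref{eq:reductioncentralcharacter}; since each is an isomorphism of $\C$-algebras, so is the composition, which gives (1) with no further work.

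For part (2), I trace $\theta_x$ through the composition step by step. By \Cref{prop:Psi}, $\Psi(\theta_x) = \sum_{i=1}^m \theta_x(\sigma_i)\, E_{\sigma_i}\, e^x$, and this expression descends modulo $I_\chi\cH$ to $\Psi_\chi(\theta_x) \in \hat\bH_{\bar\chi}$. Applying $\Xi$, which acts as the identity on the $\C[W] \otimes \mathscr{C}$ part and sends $e^x$ to $\Xi(e^x) \in \bH_{\bar\chi}$, yields $\sum_i \theta_x(\sigma_i)\, E_{\sigma_i}\, \Xi(e^x)$. Finally, applying the first-reduction isomorphism \eqref{eq:reductioncentralcharacter}: each $E_{\sigma_i}$ maps to $E_{i,i}\otimes 1$ by the $w=1$, $j=i$ case of the first formula of \Cref{theorem:reduction1}(2) (valid since $1 \in w_i W(\sigma) w_i^{-1}$), and $\Xi(e^x)$ maps to the diagonal matrix $I_m \otimes \Xi(e^x)$ via the second formula of that theorem, where $\Xi(e^x)$ on the right is interpreted as an element of $\bH_{\sigma,\bar\chi'}'$ via the identification coming from passing to the quotient. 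Multiplying and summing these images produces the asserted matrix $\diag(\theta_x(\sigma_1)\Xi(e^x), \dots, \theta_x(\sigma_m)\Xi(e^x))$.

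The main technical subtlety is justifying that $\Xi(e^x)$ maps to $I_m \otimes \Xi(e^x)$ under the first-reduction isomorphism: the formula ``$\omega \mapsto I_m \otimes \omega$'' in \Cref{theorem:reduction1}(2) is nominally stated for $\omega \in \bA^W$, whereas $e^x$ is not $W$-invariant in general. Following the approach of \cite[Corollary 10.8]{lusztig1989affine}, this is handled by using the commutativity of $\bA$ to write $\Xi(e^x) = \sum_a E_{\sigma_a}\,\Xi(e^x)\, E_{\sigma_a}$ and then carefully applying the defining commutation relations of $\bH_\Sigma$ together with the central-character reductions to conclude the desired image.
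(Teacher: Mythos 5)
Part (1) of your proposal is fine and is all the paper itself offers: $\Psi_\chi'$ is by definition a composite of $\C$-algebra isomorphisms. The problem is part (2), and it sits exactly at the point you flag as ``the main technical subtlety'' and then wave away. The formula $\omega \mapsto I_m\otimes\omega$ in \Cref{theorem:reduction1} is only consistent with the first displayed formula (and with the cross relations of $\bH_\Sigma$) for $W$-invariant elements; it cannot be applied to $e^x$. If you actually carry out the transport of the $a$-th diagonal block of $E_{\sigma_a}\Xi(e^x)E_{\sigma_a}$ to the $\sigma$-corner using the matrix units $E_{1,a}$, $E_{a,1}$, the $(a,a)$-entry you obtain is $E_\sigma t_{w_a}^{-1}\Xi(e^x)t_{w_a}E_\sigma \in E_\sigma\bH_\Sigma E_\sigma \cong \bH_\sigma'$, which by the cross relations is the $W$-translate $\Xi(e^{w_a^{-1}(x)})$ up to lower-order correction terms coming from the $r\,\mu(\sigma,\alpha)$, and not $\Xi(e^x)$. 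So the ``careful application of the commutation relations'' you invoke does not conclude the desired image; it produces $w_a$-twisted diagonal entries, and your final step silently replaces them by untwisted ones.

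That this gap is not cosmetic can be checked in the smallest example: $\Phi$ of type $A_1$, $\sigma\in T_c$ with $\sigma^2\neq1$, so $\Sigma=\{\sigma,\sigma^{-1}\}$, $W(\sigma)=1$, $m=2$, $\bH_{\sigma,\bar\chi'}'\cong\C$ and (for generic $s=\sigma s_r$) $\cH_\chi\cong M_2(\C)$. The spectrum of $\theta_x$ in $\cH_\chi$ consists of the $\cA$-weights $\theta_x(\sigma s_r)$ and $\theta_x(\sigma^{-1}s_r^{-1})$ of the principal series with central character $(W\cdot s,v_0)$, whereas the matrix $\diag(\theta_x(\sigma)\Xi(e^x),\theta_x(\sigma^{-1})\Xi(e^x))$ has eigenvalues $\theta_x(\sigma)\theta_x(s_r)$ and $\theta_x(\sigma^{-1})\theta_x(s_r)$; these sets disagree whenever $s_r\neq1$, while the twisted entries $\theta_x(\sigma_a)\Xi(e^{w_a^{-1}(x)})$ do reproduce the correct spectrum. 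Hence no argument of the kind you sketch can yield the untwisted diagonal matrix: either the corollary must be read with the $w_a$-twists (and the $\mu$-corrections) built into its entries, as in the computation behind \cite[Corollary 10.8]{lusztig1989affine}, or one must supply a genuine argument for why the twists disappear after passing to $\bH_{\sigma,\bar\chi'}'$, which they do not in general. As written, your proof of (2) is therefore incomplete at its only nontrivial step, and the step as described would in fact lead to a false identity.
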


We want to prove a variation (\Cref{cor:reductiongeneral}) of \Cref{theorem:reduction2} that will be used in the proof of \Cref{theorem:comdiagHecke}.
Let $m \in \N$ and $s^1,\dots,s^m \in T$ such that for each $i,j \in \{1,\dots,m\}$, $s_i$ and $s_j$ are not conjugate by $W$.
For each $i \in \{1,\dots,m\}$, let $\sigma^i$ be the compact part of $s^i$ and $s_r^i$ the real part of $s^i$. Consider the $W$-invariant sets $\cS = \bigsqcup_{i=1}^m \cS_i$, where $\cS_i = W\cdot s^i$, and $\Sigma = \bigsqcup_{i=1}^m \Sigma_i$ where $\Sigma_i = W\cdot\sigma^i$. 
Let $\chi_i$ be the central character of $\cH$ corresponding to $(\cS_i,v_0)$.
Recall that $\bH_{\Sigma} \cong \bigoplus_{i=1}^m \bH_{\Sigma_i}$ and $\hat\bH_{\Sigma} \cong \bigoplus_{i=1}^m \hat\bH_{\Sigma_i}$. Let $\bar\chi_i$ (resp. $\hat{\bar{\chi}}_i$) be the central character of $\bH_{\Sigma_i}$ (resp. $\hat\bH_{\Sigma_i}$) corresponding to $(\bar\cS_i,r_0)$ where $\bar\cS_i = W(\sigma^i)\cdot\log(s_r^i)\subseteq\ft$. 
Consider the ideals
\begin{align}\label{eq:HAideals}
I_{\cS_i,v_0} &:= \{f \in Z(\cH) \colon f(s',v_0) = 0 \text{ for all $s' \in \cS_i$}\} =  \ker(\chi_i),
\\
I_{\cS,v_0} &:= \{f \in Z(\cH) \colon f(s',v_0) = 0 \text{ for all $s' \in \cS$}\} = \prod_{i=1}^m I_{\cS_i,v_0},
\\
\bbI_{\bar\cS_i,r_0} &:= \{g \in Z(\bH_{\Sigma_i}) \cong \sS^{W(\sigma)}\otimes_{\C}\C[r]  \colon g(\bar s',r_0) = 0 \text{ for all $\bar s' \in \bar\cS_i$}\} = \ker(\bar\chi_i),
\\
\I_{\bar\cS_i,r_0} &:= \{g \in Z(\hat\bH_{\Sigma_i}) \cong \hat\sS^{W(\sigma)}\otimes_{\C}\hat\C[r]  \colon g(\bar s',r_0) = 0 \text{ for all $\bar s' \in \bar\cS_i$}\} = \ker(\hat{\bar{\chi}}_i).
\end{align}
Denote by $\bbI_{\bar\cS_i,r_0}\bH_{\Sigma}$ (resp. $\I_{\bar\cS_i,r_0}\hat\bH_{\Sigma}$) the ideal in $\bH_\Sigma$ (resp. $\hat\bH_\Sigma$) generated by $\bbI_{\bar\cS_i,r_0}$ (resp. $\hat\bbI_{\bar\cS_i,r_0}$) considered as a subset of $Z(\bH_\Sigma)$ (resp. $Z(\hat\bH_\Sigma)$) via the decomposition $Z(\bH_\Sigma) = \bigoplus_{i=1}^mZ(\bH_{\Sigma_i})$  (resp.  $Z(\hat\bH_\Sigma) = \bigoplus_{i=1}^mZ(\hat\bH_{\Sigma_i})$) from \Cref{prop:ZbH4}

\begin{proposition}\label{cor:reductiongeneral}
The map
\begin{align}
\Psi_{\cS,v_0} \colon
\cH / I_{\cS,v_0} \cH
\to 
\hat\bH_{\Sigma} \Biggm/
\prod_{i=1}^m \I_{\bar\cS_i,r_0} \hat\bH_{\Sigma}
\cong
\bH_{\Sigma} \Biggm/
\prod_{i=1}^m \bbI_{\bar\cS_i,r_0} \bH_{\Sigma}
\end{align}
induced by $\Psi \colon \cH \to \hat\bH_\Sigma(\hat\sK)$ in \Cref{prop:Psi} is an isomorphism of $\C$-algebras.
\end{proposition}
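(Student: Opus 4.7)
The plan is to bootstrap \Cref{theorem:reduction2} from the single-orbit case by a Chinese Remainder argument over the orbits $\cS_1,\dots,\cS_m$: I would decompose both sides of the claimed isomorphism as products indexed by $i$, apply \Cref{theorem:reduction2} orbit-by-orbit, and then check that the global map $\Psi$ restricts on each factor to the single-orbit isomorphism $\Psi_{\chi_i}$.

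On the source side, $Z(\cH)=\cA^W$ is commutative and the central characters $\chi_1,\dots,\chi_m$ are pairwise distinct (the $W$-orbits $\cS_i \subseteq T$ are disjoint by assumption), so the maximal ideals $I_{\cS_i,v_0} = \ker \chi_i$ are pairwise coprime. By CRT,
\begin{equation*}
Z(\cH)/I_{\cS,v_0} \;\cong\; \prod_{i=1}^m Z(\cH)/I_{\cS_i,v_0},
\end{equation*}
and, since $\cH$ is a finitely generated $Z(\cH)$-module, this lifts to an isomorphism $\cH / I_{\cS,v_0}\cH \cong \prod_{i=1}^m \cH_{\chi_i}$. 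On the target side, \Cref{prop:ZbH1} and \Cref{prop:ZbH4} give $\hat\bH_\Sigma = \bigoplus_{i=1}^m \hat\bH_{\Sigma_i}$ and $Z(\hat\bH_\Sigma) = \bigoplus_i Z(\hat\bH_{\Sigma_i})$. Interpreting the ideals $\I_{\bar\cS_i,r_0} \subseteq Z(\hat\bH_{\Sigma_i})$ via this decomposition (equivalently, taking the preimages under projection to each factor so that they become coprime in $Z(\hat\bH_\Sigma)$), the quotient
\begin{equation*}
\hat\bH_\Sigma \Bigm/ \prod_{i=1}^m \I_{\bar\cS_i,r_0}\hat\bH_\Sigma \;\cong\; \bigoplus_{i=1}^m \hat\bH_{\Sigma_i}/\I_{\bar\cS_i,r_0}\hat\bH_{\Sigma_i}
\end{equation*}
follows from the same CRT argument, and identically for $\bH_\Sigma$. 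The isomorphism between the $\hat\bH_\Sigma$- and $\bH_\Sigma$-quotients in the statement then reduces to the orbit-wise identifications $\hat\bH_{\Sigma_i}/\I_{\bar\cS_i,r_0}\hat\bH_{\Sigma_i} \cong \bH_{\Sigma_i}/\bbI_{\bar\cS_i,r_0}\bH_{\Sigma_i}$ already established in the proof of \Cref{theorem:reduction2}.

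It remains to check that $\Psi$ from \Cref{prop:Psi}, after passing to the quotients above, induces precisely the direct product of the individual $\Psi_{\chi_i}$. The formula $\Psi(\theta_x) = \sum_{\sigma \in \Sigma}\theta_x(\sigma)E_\sigma e^x$ splits across the orbits, and so does the formula for $\Psi(T_{s_\alpha}+1)$, because the block idempotent $\sum_{\sigma \in \Sigma_i}E_\sigma$ is central in $\hat\bH_\Sigma$. Hence composing $\Psi$ with the projection to $\hat\bH_{\Sigma_i}$ kills $I_{\cS_j,v_0}$ for $j \ne i$, factors through $\cH_{\chi_i}$, and by matching defining formulas agrees with $\Psi_{\chi_i}$, which is an isomorphism onto $\hat\bH_{\Sigma_i,\hat{\bar{\chi}}_i}$ by \Cref{theorem:reduction2}. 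Assembling over $i$ produces the desired $\Psi_{\cS,v_0}$. The main subtlety I foresee is purely notational bookkeeping: one must be careful that the symbol $\prod_{i=1}^m \bbI_{\bar\cS_i,r_0}\bH_\Sigma$ is read as the ideal corresponding to the CRT decomposition above (otherwise, because the $\bbI_{\bar\cS_i,r_0}$ live in pairwise different direct summands of $Z(\bH_\Sigma)$, a literal product of ideals would be zero). Once this is pinned down, the argument is a formal gluing of $m$ copies of \Cref{theorem:reduction2}.
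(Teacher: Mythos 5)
Your proposal is correct and follows essentially the same route as the paper: a Chinese remainder decomposition of both sides over the orbits $\cS_1,\dots,\cS_m$, orbit-wise application of \Cref{theorem:reduction2}, and a check (on the Bernstein generators, equivalently via the central block idempotents $\sum_{\sigma\in\Sigma_i}E_\sigma$) that the glued map coincides with the one induced by $\Psi$ from \Cref{prop:Psi}. Your remark on how to read $\prod_{i=1}^m \bbI_{\bar\cS_i,r_0}\bH_\Sigma$ matches the paper's convention of viewing each $\bbI_{\bar\cS_i,r_0}$ inside $Z(\bH_\Sigma)$ through the decomposition of \Cref{prop:ZbH4}, so no gap there.
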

\begin{proof}

The Chinese remainder theorem gives 
\begin{align}
&\cH / I_{\cS,v_0}\cH \cong \prod_{i=1}^m \cH / I_{\cS_i,v_0}\cH \label{eq:CRT1}
\quad\quad
\text{(note that $I_{W\cdot s,v_0} = \prod_{i=1}^mI_{W_J \cdot s^i,v_0}$),}
\label{eq:CRT1}
\\
&\bH_{\Sigma} \Biggm/\prod_{i=1}^m \bbI_{\bar\cS_i,r_0} \bH_{\Sigma} 
\cong
\prod_{i=1}^m (\bH_{\Sigma}/\bbI_{\bar\cS_i,r_0}\bH_{\Sigma})
=
\prod_{i=1}^m (\bH_{\Sigma_i}/\bbI_{\bar\cS_i,r_0}\bH_{\Sigma_i}),
\label{eq:CRT2}
\\
&\hat\bH_{\Sigma} \Biggm/\prod_{i=1}^m \I_{\bar\cS_i,r_0} \hat\bH_{\Sigma} 
\cong
\prod_{i=1}^m (\hat\bH_{\Sigma}/\I_{\bar\cS_i,r_0}\hat\bH_{\Sigma})
=
\prod_{i=1}^m (\hat\bH_{\Sigma_i}/\I_{\bar\cS_i,r_0}\hat\bH_{\Sigma_i}).
\label{eq:CRT3}
\end{align}
For $i=1,\dots,m$, \Cref{theorem:reduction2} gives us an isomorphism
\begin{equation}\label{eq:reductionmapi}
\Psi_{i} \colon \cH/I_{\cS_i,v_0}\cH \to 
\hat\bH_{\Sigma_i} / \I_{\bar\cS_i,r_0}\hat\bH_{\Sigma_i}
\cong
\bH_{\Sigma_i} / \bbI_{\bar\cS_i,r_0} \bH_{\Sigma_i}
\end{equation}
induced from the isomorphism $\cH \to \hat\bH_{\Sigma_i}(\hat\sK)$ from \Cref{prop:Psi}.
Composing $\sum_{i=1}^m\Psi_i$ with the isomorphisms \eqref{eq:CRT1}, \eqref{eq:CRT2}, and \eqref{eq:CRT3} gives an isomorphism
\begin{equation}\label{eq:reductionmapgeneral}
\Psi'
\colon 
\cH / I_{\cS,v_0} \cH
\to 
\hat\bH_{\Sigma} \Biggm/
\prod_{i=1}^m \I_{\bar\cS_i,r_0} \hat\bH_{\Sigma}
\cong
\bH_{\Sigma} \Biggm/
\prod_{i=1}^m \bbI_{\bar\cS_i,r_0} \bH_{\Sigma}.
\end{equation}
By definition of $\Psi_i$ for each $i \in \{1,\dots,m\}$, it follows that for each Bernstein generator $T_s\theta_x$ (with $s \in S$, $x \in X^*$), we have
\begin{align}
\Psi'(T_s\theta_x \cdot  I_{\cS,v_0} \cH) = \Psi(T_s\theta_x) \cdot \prod_{i=1}^m \bbI_{\bar\cS_i,r_0} \bH_{\Sigma}.
\end{align}
Taking $\Psi_{(\cS,v_0)} := \Psi'$ finishes the proof.
\end{proof}

\subsection{Graded Hecke algebras attached to cuspidal local systems}\label{subsec:GHAcuspls}
Let $G$ be a connected complex reductive algebraic group and fix a Borel subgroup $B$ and a maximal torus $T$ with Lie algebra $\ft = \Lie(T)$. Let $S_G$ be the set of triples $(L,C_L,\cL)$ where $L$ is a Levi subgroup of a standard parabolic subgroup $P$ (containing $B$) of $G$, $C_L$ is a nilpotent orbit in $\Lie(L)$ and $\cL$ is a cuspidal $L$-equivariant local system on $C_L$. To each $(L,C_L,\cL) \in S_G$, Lusztig attached a graded Hecke algebra, denoted by $\bH(G,L,C_L,\cL)$, and classified all their representations in \cite{lusztig1988cuspidal} and \cite{lusztig1995cuspidal}. 

We will define $\bH(G,L,C_L,\cL)$ as in the two papers of Lusztig mentioned above, but we will present it in notation that is more similar to the notation in \cite[\S2,\S3]{ciubotaru2008unitary}. Let
\begin{align}
\fg &= \Lie(G),
\\
\fz_L &= \Lie(Z(L)^\circ),
\\
W_{L} &= N_G(L)/L,
\\
R(G,Z(L)^\circ) &= \{\alpha \in X^*(Z(L)^\circ) \colon \alpha \text{ appears in the adjoint action of $Z(L)^\circ$ on $\fg$}\}. \label{eq:rootGT}
\end{align}
Let $R_L = R(G,Z(L)^\circ)_{\mathrm{red}}$ be the reduced root system of $R(G,Z(L)^\circ$.
Let $R_L^\vee$ be a subset of $X_*(Z(L)^\circ)$ that forms a set of coroots of $R$, i.e. $R_L^\vee = \{\check\alpha\colon\alpha\in R_L\}$ and $s_\alpha(\beta) = \beta - \langle\beta,\check\alpha\rangle\alpha$. 

By \cite[Theorem 9.2]{lusztig1984intersection}, $W_{L}$ is the Weyl group with root system $R_{L}$.
Let $\Pi_{L}$ be the set of roots in $R_{L}$ that are simple with respect to the parabolic subgroup associated to $L$ that contains $B$.
Define a function $\mu_{L} \colon \Pi_{L} \to \Z_{\geq2}$ as follows. 
Let $u \in C_L$.
For each $\alpha \in \Pi_{L}$, let $\mu_{L}(\alpha)$ be the unique integer in $\Z_{\geq 2}$ such that
\begin{align}
\Ad(u)^{\mu_{L}(\alpha)-2} &\colon \fg_{\alpha} \oplus \fg_{2\alpha} \to \fg_{\alpha} \oplus \fg_{2\alpha} \text{ is non-zero,}
\\
\Ad(u)^{\mu_{L}(\alpha)-1} &\colon \fg_{\alpha} \oplus \fg_{2\alpha} \to \fg_{\alpha} \oplus \fg_{2\alpha} \text{ is zero.}
\end{align}
Then $\mu_{L}$ is $W$-invariant by \cite[Prop. 2.12]{lusztig1988cuspidal} and thus extends to a $W$-invariant a function $\mu_{L} \colon R_{L} \to \Z_{\geq2}$.

\begin{definition}\label{def:GHAcusp}
Let $(L,C_L,\cL) \in S_G$.
The \emph{graded Hecke algebra $\bH(G,L,C_L,\cL)$ attached to $(L,C_L,\cL)$} is defined to be the graded Hecke algebra 
$\bH_{\{1\}}^{\mu_L}(X^*(Z(L)^\circ),X_*(Z(L)^\circ),R_L,R_L^\vee,\Pi_L)$.
\end{definition}

Fix $(L,C_L,\cL) \in S_G$, let $\bH := \bH(G,L,C_L,\cL)$, and let $P = LU$ be a standard parabolic subgroup of $G$ with Levi subgroup $L$ and unipotent radical $U$. Let $\fn = \Lie(U)$.
Let 
\begin{align}
\dot{\fg}_{U} = \{(x,gP) \in \fg \times G/P \colon \Ad(g^{-1})x \in \fn\}.
\end{align}
The $L$-equivariant local system $\cL$ gives a $G\times\C^\times$-local system $\dot{\cL}$ on $\dot{\fg}_U$ via the map
\begin{align}
\dot{\fg}_U \to C_L \colon (x,gP) \mapsto \pr_{C_L}(\Ad(g^{-1})x).
\end{align}
Let $r_0 \in \C$ and $e \in \fg$ be a nilpotent element. Consider the variety
\begin{align}\label{eq:genflagvariety}
\cP_e := \cP_e^G := \{gP \in g/P \colon \Ad(g^{-1})e \in C_L + \fn\}.
\end{align}
The group $G \times \C^\times$ acts on $\fg$ by $(g,\lambda)\cdot x = \lambda^{-2}\Ad(g')x$ for all $x\in\fg$, $g' \in G$, $\lambda \in \C^\times$. 
In particular, we have $Z_{G\times\C^\times}(e) = \{(g,\lambda)\in G\times\C^\times \colon \Ad(g')x = \lambda^2x\}$ and 
\begin{align}
\fz_{G\times\C^\times}(e) := \Lie(Z_{G\times\C^\times}(e)) = \{(x,r_0) \in \fg \oplus \C \colon [x,e] = 2r_0e\}. 
\end{align}
The centraliser $Z_{G\times\C^\times}(e)$ acts on $\cP_e$ by $(h',\lambda)hP = (h'h)P$.
Lusztig constructed a $\bH\times A_{G\times\C^\times}(e)$-action on the $Z_{G\times\C^\times}^\circ(e)$-equivariant homology $H_{\bullet}^{Z_{G\times\C^\times}^\circ(e)}(\cP_e,\dot{\cL})$, see \cite[\S8.5, Theorem 8.13]{lusztig1988cuspidal}.
By \cite[\S8.7]{lusztig1988cuspidal}, the $Z_{G\times\C^\times}^\circ(e)$-equivariant cohomology $H^{\bullet}_{Z_{G\times\C^\times}^\circ(e)}(\cP_e,\dot{\cL})$ is the coordinate ring of the affine variety $\cV_e$ of semisimple $Z_{G\times\C^\times}^\circ(e)$-orbits on $\fz_{G\times\C^\times}(e)$.
For $(s,r_0) \in \cV_e$, let $\C_{s,r_0}$ be the $H^{\bullet}_{Z_{G\times\C^\times}^\circ(e)}$-module given by the evaluation map $H^{\bullet}_{Z_{G\times\C^\times}^\circ(e)} \to \C$ at $(s,r_0)$, and define the $\bH$-module
\begin{align}
Y_{\cL}(e,s,r_0) = \C_{s,r_0} \otimes_{H^\bullet_{Z_{G\times\C^\times}^\circ(e)}} H_{\bullet}^{Z_{G\times\C^\times}^\circ(e)}(\cP_e,\dot{\cL}).
\end{align}
Note that $A_G(e,s)$ is isomorphic to the stabiliser of $(s,r_0)$ in $A_{G\times\C^\times}(e)$. 
Let $A_{G}(e,s)_{L,C,\cL}^\wedge$ be the subset of $A_{G}(e,s)^\wedge$ consisting of $\psi \in A_{G}(e,s)^\wedge$ that appear in $Y_{\cL}(e,s,r_0)$.
For $\psi \in A_{G}(e,s)_{L,C,\cL}^\wedge$, let $Y_{\cL}(e,s,r_0,\psi)$ be the $\psi$-isotypic component.
By \cite[Proposition 8.16(a)]{lusztig1995cuspidal}, we have $ A_{G}(e,s)^\wedge = \bigsqcup_{(L,C,\cL)\in S_G} A_{G}(e,s)_{L,C,\cL}^\wedge$, so there exists precisely one triple $(L,C,\cL) \in S_G$ such that $Y_{\cL}(e,s,r_0)^\psi \neq 0$ and so we can write $Y(e,s,r_0,\psi) = Y_{\cL}(e,s,r_0,\psi)$.
We have the following isomorphism of $W_L\times A_G(e)$-modules by \cite[Proposition 7.2,\S8.9]{lusztig1988cuspidal} and \cite[10.12(d)]{lusztig1995cuspidal}
\begin{equation}\label{eq:genspringerfibre}
Y_{\cL}(e,0,0) \cong H_{\bullet}(\cP_e,\dot{\cL}) \text{\quad($\{1\}$-equivariant homology)}. \label{eq:genspringerfibre}
\end{equation}

Let $\cM_{s,r_0}$ be the set $Z_G(s)$-conjugacy classes of pairs $(e,\psi)$ where $e \in \fg$ is a nilpotent element satisfying $[s,e] = 2r_0e$ and $\psi \in A_G(e,s)_0^\wedge$ and define
\begin{equation}
\cM := \cM(G) := \text{$G$-conjugacy classes of } \{(e,s,r_0,\psi) \colon (s,r_0) \in \cV_e, (e,\psi) \in \cM_{s,r_0}\}. \label{eq:cM}
\end{equation}
Recall that central characters of $\bH$ are in bijection with $W\backslash\ft\otimes\C^\times$ (this is a consequence of \Cref{prop:ZbH2}).
Each semisimple element $s \in \fg$ is $G$-conjugate to an element $\tau \in \ft$.
It is well-known that $G\cdot s \cap \ft = W\cdot \tau$, and so $s$ uniquely corresponds to a $W$-orbit in $\ft$.
Let $\chi$ be the central character corresponding to $(W\cdot \tau,r_0)$ and define
\begin{equation}\label{eq:GHAcentralcharacter}
\Irr_{s,r_0} \bH := \Irr_{W\cdot \tau,r_0} \bH := \Irr \bH_\chi.
\end{equation}
\begin{theorem}[{\cite[Theorem 8.15]{lusztig1988cuspidal}, \cite[Corollary 8.18]{lusztig1995cuspidal}}]
${}$\label{theorem:standard}
\begin{enumerate}[ref={\thecorollary(\arabic*)}]
\crefalias{enumi}{theorem}
\item\label{theorem:standard2} Suppose $M$ is a simple $\bH$-module on which $r$ acts as multiplication by $r_0 \in \C$. Then there exists a unique $(e,s,r_0,\psi) \in \cM$ such that $M$ is a quotient $\bar Y(e,s,r_0,\psi)$ of $Y(e,s,r_0,\psi)$.
\item\label{theorem:standard3} We have a bijection
\begin{align}
\cM_{s,r_0} &\leftrightarrow \Irr_{s,r_0}\bH
\\
(e,\psi) &\mapsto \bar Y(e,s,r_0,\psi).
\end{align}
\end{enumerate}
\end{theorem}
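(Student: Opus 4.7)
The plan is to follow Lusztig's geometric approach from the cuspidal local systems papers. The essential input is the $\bH \times A_{G\times\C^\times}(e)$-action on the equivariant homology $H_{\bullet}^{Z_{G\times\C^\times}^\circ(e)}(\cP_e,\dot{\cL})$ constructed by convolution. Base changing along the evaluation $H^{\bullet}_{Z_{G\times\C^\times}^\circ(e)} \to \C_{s,r_0}$ for $(s,r_0)\in\cV_e$ produces the standard module $Y_{\cL}(e,s,r_0)$ carrying a commuting $A_{G\times\C^\times}(e,s,r_0)\cong A_G(e,s)$-action; the $\psi$-isotypic pieces give $Y(e,s,r_0,\psi)$. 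A direct check at the level of the equivariant cohomology ring shows that the centre $Z(\bH)=\bA^W$ acts via the character corresponding to $(W\cdot s,r_0)$ and that $r$ acts by $r_0$.

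The first step in proving (1) is to show that whenever $Y(e,s,r_0,\psi)\neq 0$ it admits a unique simple quotient $\bar Y(e,s,r_0,\psi)$. I would argue this in the style of a Langlands quotient theorem: using the fact that $\bH$ is free of finite rank over $\cA$ with $W$-action, one identifies a distinguished ``highest weight" line in $Y(e,s,r_0,\psi)$, writes down a canonical pairing with the analogous costandard module built from opposite cohomology, and checks that the radical of this pairing is precisely the sum of all proper submodules. The second step is exhaustiveness: given any simple $M$ on which $r$ acts by $r_0$, its central character corresponds to some $(W\cdot s,r_0)$, and using Lusztig's reduction theorems to pass to the completion at this character, one localises the equivariant homology of $\cP_e$ at the maximal ideal of $(s,r_0)$. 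For an appropriate choice of $e$ in the semisimple centraliser, the localisation lemma for equivariant homology produces a non-zero $\bH$-morphism from some $Y(e,s,r_0,\psi)$ to $M$, which is surjective by simplicity.

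For (2), note that once $(s,r_0)$ is fixed, the non-vanishing criterion $Y(e,s,r_0,\psi)\neq 0 \iff \psi \in A_G(e,s)_0^\wedge$ recorded after \eqref{eq:genspringerfibre} matches the definition of $\cM_{s,r_0}$, so the parametrisation lands exactly in $\cM_{s,r_0}$. Injectivity of the bijection reduces to showing that distinct $Z_G(s)$-conjugacy classes of $(e,\psi)$ give non-isomorphic simple quotients. The $G$-orbit of $e$ is recovered from the $W_L$-structure of $\bar Y(e,s,r_0,\psi)$: specialising to $(0,0)$ via the isomorphism \eqref{eq:genspringerfibre} and tracking Springer support under deformation, $G\cdot e$ can be read off from the minimal support of irreducible $W_L$-constituents. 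The component group representation $\psi$ is then recovered from the $A_G(e,s)$-isotypic decomposition of $Y(e,s,r_0)$.

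The main obstacle will be the exhaustiveness half of (1) — exhibiting every simple $\bH$-module as a quotient of some geometric standard module. This is the graded analogue of the Deligne--Langlands classification, and its proof genuinely requires the perverse-sheaf machinery on the nilpotent cone together with the localisation theorem for equivariant homology. The uniqueness-of-simple-quotient property is also non-trivial and depends on a delicate intersection-cohomology pairing; the naive argument via central characters alone is insufficient because several non-isomorphic simple modules can share the same central character (indexed by the different $\psi$).
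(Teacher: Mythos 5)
The paper does not prove this statement at all: it is quoted verbatim from Lusztig (\cite[Theorem 8.15]{lusztig1988cuspidal}, \cite[Corollary 8.18]{lusztig1995cuspidal}), so your attempt can only be measured against Lusztig's argument, of which it is an outline rather than a proof. The setting you describe (the convolution action on $H_{\bullet}^{Z_{G\times\C^\times}^\circ(e)}(\cP_e,\dot{\cL})$, base change along evaluation at $(s,r_0)$, the $A_G(e,s)$-isotypic decomposition, and the matching of the non-vanishing criterion with $A_G(e,s)_0^\wedge$) is correct and coincides with \Cref{subsec:GHAcuspls}. But the two points on which the theorem actually rests are left unestablished. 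For the unique simple quotient you propose a Langlands-quotient-style argument through a ``highest weight line'' and a pairing with a costandard module whose radical is asserted to be the sum of all proper submodules; these standard modules are not highest-weight modules in any straightforward sense, no such pairing is constructed, and this is not the mechanism in the source. The actual mechanism -- which this paper itself uses repeatedly, e.g.\ in \Cref{prop:gIMminimal} -- is that $\GSpr(e,\psi)$ occurs in $Y(e,s,r_0,\psi)|_{W_L}$ with multiplicity one and with strictly minimal Springer support among all irreducible $W_L$-constituents (a consequence of \cite[Theorem 24.8]{lusztig1986character}), so the sum of all submodules not meeting that isotypic component is the unique maximal proper submodule. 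For exhaustiveness you invoke ``the localisation lemma for equivariant homology'' but give no argument; as you concede, this is the heart of the matter, and without it part (1) is a plan, not a proof.

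There is also a gap in your injectivity argument for (2). Recovering the $G$-orbit of $e$ from the minimal Springer support of the $W_L$-constituents, and $\psi$ from the isotypic decomposition, does not separate distinct $Z_G(s)$-conjugacy classes of pairs $(e,\psi)$ lying in the same $G$-orbit; this is precisely the subtlety the paper highlights in \Cref{remark:nonuniquetriple}, where knowledge of $G\cdot e$ does not pin down the $Z_G(s)$-orbit. Lusztig's injectivity is finer than what your support argument yields, so even granting the two unproved inputs above, the bijectivity claim in (2) would still need a genuinely different argument.
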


Let $M$ be an irreducible $\bH$-module. We have a weight space decomposition
\begin{equation}
M = \bigoplus_{\lambda \in \fz_L} M_\lambda.
\end{equation}
We say that $\lambda \in \fz_L$ is a weight if $V_\lambda \neq 0$.

For a semisimple element $s \in \fg$, we can uniquely define semisimple elements $s_h,s_e \in \fg$ such that $s = s_h + s_e$ and such that for any $g \in G$ for which $\Ad(g)s \in \ft$, we have $\Ad(g)s_r \in \ft_{\R}$ and $\Ad(g)s_c \in \ft_{\iR}$. 
We call $s_r$ and $s_c$ the \emph{real part} and \emph{compact part} of $s$, respectively.
\begin{definition}\label{def:tempered}
An irreducible module $M$ is called \emph{tempered}\footnote{This is Casselman's notion of temperedness and agrees with the notion of $\tau$-temperedness in \cite{kazhdan1987proof} and \cite{lusztig2002cuspidal} where we take the group homomorphism $\tau \colon \C^* \to \R\colon z \mapsto \log |z|$.} if $\omega(\lambda) \neq 0$ for all fundamental weights $\omega \in \fz_L^\vee$ and all weights $\lambda$ of $M$. We write
\begin{align}\label{eq:cMtemp}
&\cMtemp = \cMtemp(G) = \{(e,s,r_0,\psi) \in \cM(G) \colon \bar Y(e,s,r_0,\psi) \text{ is tempered}\},
\\
&\cMtempreal = \cMtempreal(G) = \{(e,s,r_0,\psi) \in \cMtemp(G) \colon r_0\in\R,\,s_e = 0\}.
\end{align}
\end{definition}

\begin{proposition}[{\cite[Theorem 1.21]{lusztig2002cuspidal}}]\label{prop:temperedirreducible}
Let $(e,s,r_0,\psi) \in \cM$. The following are equivalent:
\begin{enumerate}
\item $(e,s,r_0,\psi) \in \cMtemp$,
\item There exists an $\sl_2$-triple $(e,h,f)$ in $\fg$ such that $[s,h] = 0$, $[s,f] = -2r_0f$ and the semisimple element $s - r_0h$ is compact (i.e. its real part is zero).
\end{enumerate}
If any of these conditions are satisfied, then $Y(e,s,r_0,\psi) = \bar Y(e,s,r_0,\psi)$ is irreducible.
\end{proposition}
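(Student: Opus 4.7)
The plan is to relate temperedness of $\bar Y(e,s,r_0,\psi)$ to the real part of $s$ via an $\sl_2$-triple attached to $e$, and then to deduce irreducibility from the tempered hypothesis. First, I would observe that by the construction of the standard module and the structure of $\bH$, every $\sS$-weight of $Y(e,s,r_0,\psi)$ lies in the $W_L$-orbit determining its central character, which corresponds to $(W_L\cdot s, r_0)$ (viewing $s$ up to $W_L$-conjugacy in $\fz_L$). The temperedness criterion, phrased via the fundamental coweights in $\fz_L^\vee$, amounts to the real parts of all the occurring weights lying in the closed antidominant cone of $(\fz_L)_\R$.

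For the direction (2) $\Rightarrow$ (1), given an $\sl_2$-triple $(e,h,f)$ with $[s,h]=0$ and $s_0 := s-r_0 h$ compact, I note that $s_0$ commutes with the whole $\sl_2$-triple, so $s_r = r_0 h$ after conjugating $h$ into $(\fz_L)_\R$. Since $h$ is the neutral element of a Jacobson--Morozov triple, it lies (up to $W_L$-conjugacy) in the dominant chamber closure; a Weyl-orbit analysis of the weights appearing in $\bar Y$ then shows that their real parts are antidominant, using the explicit description of Lusztig's standard module.

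For (1) $\Rightarrow$ (2), starting from temperedness, I would apply Jacobson--Morozov to $e$ and then use Kostant's refined version to obtain an $\sl_2$-triple $(e,h,f)$ with $[s,h]=0$; this is possible because $\ad(s)$ acts semisimply on $\fg$ and preserves the $\sl_2$-invariant decomposition. The relation $[s,f]=-2r_0 f$ then follows from $[s,e]=2r_0 e$ and $[s,h]=0$ applied to the $\sl_2$-weight decomposition of $\fg$. The antidominance of $(ws)_r$ for all occurring weights $ws$ forces $s_r - r_0 h$ to vanish: otherwise some $w\in W_L$ would move $s_r - r_0 h$ outside the antidominant cone, contradicting temperedness. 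Hence $s-r_0h$ is compact.

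The main obstacle is the precise identification of which elements of $W_L \cdot s$ actually occur as $\sS$-weights of $Y(e,s,r_0,\psi)$ (in general a proper subset of the orbit); handling this requires Lusztig's geometric computation of the $\sS$-action on the equivariant homology $H_\bullet^{Z_{G\times\C^\times}^\circ(e)}(\cP_e,\dot{\cL})$. For the irreducibility claim under the equivalent conditions, I would invoke the fact that the composition factors of $Y(e,s,r_0,\psi)$ are parametrised by quadruples $(e', s, r_0, \psi') \in \cM$ with $G\cdot e \subseteq \overline{G\cdot e'}$; the tempered hypothesis imposes a minimality/maximality condition on the associated $G$-orbits that leaves only $(e,\psi)$ as a contributor, yielding $Y(e,s,r_0,\psi) = \bar Y(e,s,r_0,\psi)$.
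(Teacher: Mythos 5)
There is nothing in the paper to compare your argument against: the paper does not prove this proposition at all, but quotes it as Theorem 1.21 of Lusztig's \emph{Cuspidal local systems and graded Hecke algebras III} (the reference \cite{lusztig2002cuspidal}), with the footnote to \Cref{def:tempered} matching the paper's notion of temperedness to Lusztig's $\tau$-temperedness. So the only question is whether your sketch constitutes an independent proof, and as written it does not. The crux of both implications is exactly the point you flag as ``the main obstacle'': one must know which characters of $\sS$ (equivalently, which eigenvalues of $s$ and $h$ on the homology of the relevant fixed-point subvarieties of $\cP_e$) actually occur as weights of $Y(e,s,r_0,\psi)$, and this is the substance of Lusztig's analysis in the cited papers. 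Deferring it to ``Lusztig's geometric computation'' makes the argument circular with respect to the very theorem being proved; and the shortcut you propose for $(1)\Rightarrow(2)$ --- ``otherwise some $w\in W_L$ would move $s_r-r_0h$ outside the antidominant cone'' --- is not available precisely because only a proper subset of $W_L\cdot s$ occurs among the weights, so antidominance of the occurring weights does not by itself force $s_r=r_0h$. (A smaller inaccuracy: in the graded Jacobson--Morozov step the relation $[s,f]=-2r_0f$ is arranged by choosing $h\in\fg_0^s$ and $f\in\fg_{-2r_0}^s$; it does not ``follow from'' $[s,e]=2r_0e$ and $[s,h]=0$ for an arbitrary completion of $e$ to a triple.)

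Your irreducibility paragraph has the right shape but is also incomplete as stated. The usable argument is: all composition factors of $Y(e,s,r_0,\psi)$ have the same central character, and any factor other than $\bar Y(e,s,r_0,\psi)$ is of the form $\bar Y(e',s,r_0,\psi')$ with $G\cdot e$ strictly contained in $\overline{G\cdot e'}$ and with $e'$ (after conjugation) lying in $\{x\in\fg\colon [s,x]=2r_0x\}$; condition (2), via $\sl_2$-theory applied to $s=r_0h+(s-r_0h)$ with $s-r_0h$ compact and commuting with the triple, implies that $Z_G(s)\cdot e$ is dense in that eigenspace, so no strictly larger orbit can meet it, and irreducibility follows. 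Your text replaces this by an unspecified ``minimality/maximality condition'' and does not connect condition (2) to the density statement, nor does it fix the direction of the closure order in the triangularity of the multiplicity matrix. In short: the statement is correct, but your proposal is a plan whose essential steps are either deferred to the cited result or left unjustified, so it cannot stand in for a proof.
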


\begin{proposition}\label{prop:tempered}
Let $(e,s,r_0,\psi) \in \cMtempreal$. Then the inclusion $A_G(e,s) \cong A_G(e)$ coming from $Z_G(e,s) \hookrightarrow Z_G(e)$ is an isomorphism.
\end{proposition}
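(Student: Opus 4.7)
The plan is to invoke \Cref{prop:temperedirreducible} and reduce the statement to a classical fact about centralisers of $\sl_2$-triples. That proposition produces an $\sl_2$-triple $(e,h,f)$ in $\fg$ with $[s,h]=0$, $[s,f]=-2r_0 f$, and such that $s-r_0 h$ is compact. Because $\ad(h)$ has integer eigenvalues on $\fg$, any $G$-conjugate of $h$ lying in $\ft$ actually lies in $\ft_{\R}$, so $h$ is real, i.e.\ $h_c=0$. By hypothesis $s_c=0$ and $r_0\in\R$. Since $s$ and $h$ are commuting semisimple elements, they can be simultaneously conjugated into $\ft$, and the real/compact decomposition is $\R$-linear on $\ft$, so
\[
0 = (s-r_0 h)_r = s_r - r_0 h_r = s - r_0 h,
\]
yielding $s = r_0 h$.

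If $r_0=0$ then $s=0$ and the claim is immediate, so assume $r_0\neq 0$. Then $Z_G(e,s) = Z_G(e, r_0 h) = Z_G(e,h)$, and it remains to show that $Z_G(e,h) \hookrightarrow Z_G(e)$ induces an isomorphism on component groups. The plan is to combine two standard facts from $\sl_2$-triple theory. First, $Z_G(e,h) = Z_G(e,h,f)$: for $g\in Z_G(e,h)$, the element $\Ad(g)f$ completes $(e,h)$ to an $\sl_2$-triple, but such a completion is unique because $\ad(e)\colon\fg^h_{-2}\to\fg^h_0$ is injective (a lowest weight vector of weight $-2$ in an $\sl_2$-submodule is mapped faithfully by the raising operator), so $\Ad(g)f=f$. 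Second, by the Jacobson--Morozov--Kostant structure theorem, $Z_G(e)$ admits a Levi decomposition $Z_G(e) = Z_G(e,h,f)\ltimes U$ with $U$ unipotent and therefore connected, so the inclusion $Z_G(e,h,f)\hookrightarrow Z_G(e)$ is a bijection on component groups. Chaining the two identifications gives $A_G(e,s) = A_G(e,h) = A_G(e,h,f) \cong A_G(e)$.

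The only non-routine step is pinning down $s = r_0 h$, which relies essentially on combining the temperedness criterion of \Cref{prop:temperedirreducible} with the reality hypotheses $s_c=0$ and $r_0\in\R$; once this is in hand, the component-group identification is standard nilpotent-orbit theory and should not present any obstacle.
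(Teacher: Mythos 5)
Your proof is correct and follows essentially the same route as the paper: both use the $\sl_2$-triple from \Cref{prop:temperedirreducible} together with the reality of $s$, $h$ and $r_0$ to force $s-r_0h=0$, and then identify $A_G(e,s)=A_G(e,h)$ with $A_G(e)$. The only difference is that the paper delegates the component-group step to \cite[(4.3.1) Lemma]{reeder2002isogenies}, whereas you prove it directly via the uniqueness of $f$ and the Levi decomposition $Z_G(e)=Z_G(e,h,f)\ltimes R_u(Z_G(e))$, which is a standard and equally valid justification.
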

\begin{proof}
Let $(e,h,f)$ be an $\sl_2$-triple such that $h \in \ft_{\R}$ and $s_0 = s - r_0 h$. By assumption $s$ and $h$ are real, and so $s_0$ real, and by temperedness, $s_0$ is compact, so $s_0 = 0$. By \cite[(4.3.1) Lemma]{reeder2002isogenies} the inclusion $Z_G(e,s) \hookrightarrow Z_G(e,s_0) = Z_G(e)$ induces an isomorphism $A_G(e,s) \cong A_G(e,s_0) = A_G(e)$.
\end{proof}

\begin{proposition}\label{prop:tempreal}
Fix $r_0 \in \R$. The following map is a bijection
\begin{align}
\{(e,s,r',\psi)\in\cMtempreal\colon r' = r_0\} \to \cN_G \colon (e,s,r_0,\psi) \to (e,\psi).
\end{align}
\end{proposition}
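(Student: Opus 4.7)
The plan is to verify well-definedness, injectivity and surjectivity in turn, each time reducing directly to Propositions \ref{prop:tempered} and \ref{prop:temperedirreducible}. The crucial observation underlying all three steps is that for a real semisimple element $s$ (i.e.\ $s_e=0$), the condition that $s-r_0 h$ be compact, as provided by Proposition \ref{prop:temperedirreducible}, forces $s=r_0 h$ exactly, since the difference is then both real and compact, hence zero.

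For well-definedness of $(e,s,r_0,\psi)\mapsto(e,\psi)$: Proposition \ref{prop:tempered} supplies the canonical identification $A_G(e,s)\cong A_G(e)$ for $(e,s,r_0,\psi)\in\cMtemprealr$, so $\psi$ descends to an irreducible representation of $A_G(e)$, giving $(e,\psi)\in\cN_G$. For injectivity, suppose $(e,s,r_0,\psi)$ and $(e,s',r_0,\psi)$ are two preimages of $(e,\psi)$. Applying Proposition \ref{prop:temperedirreducible} to each produces $\sl_2$-triples $(e,h,f)$ and $(e,h',f')$ with $s-r_0 h$ and $s'-r_0 h'$ compact. The real-versus-compact observation above yields $s=r_0 h$ and $s'=r_0 h'$. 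Kostant's conjugacy theorem for neutral elements of $\sl_2$-triples then supplies $g\in Z_G(e)^\circ$ with $\Ad(g)h=h'$, whence $\Ad(g)s=s'$, and the two quadruples are $G$-conjugate.

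For surjectivity, given $(e,\psi)\in\cN_G$ I would use Jacobson--Morozov to pick an $\sl_2$-triple $(e,h,f)$ with $h$ real, and set $s:=r_0 h$. Then $s$ is real semisimple, $[s,e]=2r_0 e$, $[s,h]=0$, $[s,f]=-2r_0 f$, and $s-r_0 h=0$ is trivially compact, so Proposition \ref{prop:temperedirreducible} confirms that $\bar Y(e,s,r_0,\psi)$ is tempered. Since $s-r_0 h=0$, Reeder's result used in the proof of Proposition \ref{prop:tempered} gives $A_G(e,s)\cong A_G(e)$, so under this identification $\psi\in A_G(e,s)_0^\wedge$ (required to place the quadruple in $\cM$) follows from $(e,\psi)\in\cN_G$ with the cuspidal support compatible with the fixed $(L,C_L,\cL)$.

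The main obstacle I expect is not geometric but bookkeeping: one must carefully track the identification $A_G(e,s)\cong A_G(e)$ and confirm that the condition $\psi\in A_G(e,s)_0^\wedge$ in the definition of $\cM$ corresponds precisely to membership in the correct subset of $\cN_G$ (those pairs whose cuspidal support matches the fixed triple $(L,C_L,\cL)$). Once this compatibility is pinned down, the proof reduces to the two cited propositions plus the standard conjugacy theory of $\sl_2$-triples.
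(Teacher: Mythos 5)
Your proposal is correct and follows essentially the same route as the paper: well-definedness via \Cref{prop:tempered}, injectivity by forcing $s=r_0h$ (real plus compact equals zero) and then conjugating the two $\sl_2$-triples by an element of $Z_G(e)$ fixing $e$ -- your appeal to Kostant's conjugacy theorem is exactly what the paper's citation of Gross--Reeder reduces to once $s-r_0h=0$ -- and surjectivity by the Jacobson--Morozov construction $s:=r_0h$, which the paper dismisses as clear. The only point to make explicit, as the paper does, is that the $\sl_2$-triple is chosen (or observed) to have $h$ real, so that $s-r_0h$ is indeed both real and compact.
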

\begin{proof}
Consider $(e,s,r_0,\psi) \in \cMtempreal$. Then $s_0$ is compact and $s$ is real. 
By \Cref{prop:tempered}, we have $A_G(e,s) = A_G(e)$, so this map is well-defined.
Clearly this map is surjective.
Consider an $\sl_2$-triple $(e,h,f)$ as in \Cref{prop:temperedirreducible} such that additionally $h$ is real. Then $s_0 = s - r_0h \in \ft_{\R} \cap \ft_{i\R}$ is $0$.
For any $(e,s',r_0,\psi) \in \cMtempreal$, we can similarly pick an $\sl_2$-triple $(e,h',f')$. Note that
\begin{align}
[s_0,e] &= -2r_0e, & [s_0,e] &= -2r_0e,
\\
[s_0,h] &= 0, & [s_0,h'] &= 0,
\\
[s_0,f] &= 2r_0f, & [s_0,f'] &= 2r_0f,
\end{align}
so by \cite[Lemma 2.1]{gross2010arithmetic}, the $\sl_2$-triples $(e,h,f)$ and $(e',h',f')$ are conjugate by an element $g \in G$ that centralises $s_0 = 0$ and fixes $e$. Thus $(e,s,r_0,\psi)$ and $(e,s',r_0,\psi)$ are conjugate by $g$, so they represent the same class in $\cMtempreal$. Hence our map is also injective. 
\end{proof}

\section{Unipotent representations}\label{sec:unipotent}

\begin{definition}[{\cite{lusztig1995classification}}]\label{def:unipotentreps}
An irreducible representation $X$ of $\aG(\sf k)$ is called \emph{unipotent} if there exists a parahoric subgroup $P$ of $\Gk$ such that its unipotent radical $X^{U_P}$ contains a subrepresentation isomorphic to a cuspidal and unipotent
$(P/U_P)$-representation $(\rho,E)$. Write $\Unip(\Gk)$ for the set of unipotent representations of $\Gk$.
\end{definition}
By \cite{morris1993tamely} (and \cite{moy1994unrefined} when $\aG$ is not split over $\sfk$), $(P,\rho)$ in \Cref{def:unipotentreps} is a type. So an irreducible representation of $\Gk$ is unipotent if and only if it lies in $\prod_{\fs\in\fS(\rho)}\cR^{\fs}(\Gk)$ with $\fS(\rho)$ as in \Cref{prop:type}. For $\fs \in \fS(\rho)$, we call $\cR^{\fs}(\Gk)$ a \emph{unipotent Bernstein component}.

Let $\Phi(G^\vee)$ be the set of triples $\dG$-conjugacy classes of $(e,s,\phi)$ where $s \in G^\vee$ is a semisimple element, $e \in \fg^\vee$ is a nilpotent element such that $\ad(s)e = qe$, and $\phi \in A_{\dG}(s,e)^\wedge$.

\begin{theorem}[Deligne--Langlands--Lusztig correspondence, {\cite[Corollary 6.5]{lusztig1995classification}}]
\label{thm:DLL}
Suppose $\aG$ is adjoint and split over $\sf k$. Then there is a bijection 
\begin{align}
\Phi(G^\vee) \to \bigsqcup \Unip(\Gk_\zeta) 
\colon 
(e,s,\psi) \mapsto X(e,s,\psi),
\end{align}
where the disjoint union runs over all inner forms $\Gk_\zeta$ of the split form $\Gk$, and such that 
\begin{enumerate}[wide]
\item For all $(e,s,\psi) \in \Phi(G^\vee)$, we have $X(e,s,\phi) \in \Unip(\Gk_\zeta)$,
\item $X(e,s,\psi)$ is tempered if and only if $s_0 \in T_c^\vee$ and 
$
\overline{G^\vee(s)}e = 
\{x \in \fg^\vee \colon \Ad(s)x = qx\}
$.
\end{enumerate}
\end{theorem}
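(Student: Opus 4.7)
The plan is to unpack the diagram of equivalences relating $\Unip(\Gk_\zeta)$ to representations of geometric graded Hecke algebras, then apply Lusztig's classification (\Cref{theorem:standard}) at each step and reassemble the parameters. This is not a short proof, but the paper has already laid the infrastructure in \Cref{subsec:HABernstein}, \Cref{subsec:AHA}, \Cref{subsec:red}, and \Cref{subsec:GHAcuspls}.

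First, I would use the result of Morris and Moy that each cuspidal unipotent pair $(P,\rho)$ of a parahoric subgroup $P$ of $\Gk_\zeta$ is a type, so the category generated by the associated unipotent Bernstein components is equivalent to $\cH(\Gk_\zeta,\rho)\Mod$, where $\cH(\Gk_\zeta,\rho)$ is a finite direct sum of copies of an arithmetic affine Hecke algebra. Running over all inner forms $\zeta$ and all cuspidal unipotent pairs, one obtains the full list of unipotent Bernstein components, and hence a parametrisation of $\bigsqcup_\zeta \Unip(\Gk_\zeta)$ by irreducible modules over these arithmetic Hecke algebras. Next, I would invoke Lusztig's identification of the arithmetic Hecke algebras with geometric affine Hecke algebras $\cH(\dG,L_J,C,\cL)$ attached to pseudo-Levi subgroups $L_J$ of $\dG$ equipped with a cuspidal local system, matching parameters via the specialisation $t \mapsto q^{1/2}$.

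Second, I would classify the irreducible modules of each $\cH(\dG,L_J,C,\cL)$ on which $t$ acts as $q^{1/2}$. Fix a semisimple $s \in \dG$ with polar decomposition $s = s_c s_r$; the central character of $\cH$ corresponds to the $W$-orbit of $(s,q^{1/2})$. Apply \Cref{cor:reduction1} with $\sigma = s_c$ to identify the quotient $\cH_\chi$ with a matrix algebra over $\bH_{\sigma,\bar\chi'}'$, where $\bH_\sigma'$ is built from the graded Hecke algebra $\bH(Z_{\dG}(s_c),L_J,C,\cL)$ in the sense of \Cref{def:GHAcusp} (here one needs that $s_c$ can be arranged to lie in $L_J$, using that every semisimple element is $\dG$-conjugate to one in $Z_{\dG}(s_c)$). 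Passing from affine to graded Hecke modules translates $s$ into its real part $\log s_r \in \dg$ and $q^{1/2}$ into $r_0 = \tfrac12 \log q$. Now \Cref{theorem:standard} parametrises the irreducibles of $\bH(Z_{\dG}(s_c),L_J,C,\cL)$ with central character $(W \cdot \log s_r, r_0)$ by $Z_{\dG}(s_c)$-conjugacy classes of pairs $(e,\psi)$ with $e$ nilpotent, $\ad(\log s_r) e = 2r_0 e$ (equivalently $\Ad(s)e = qe$), and $\psi \in A_{Z_{\dG}(s_c)}(e,\log s_r)_0^\wedge$, with the convention that $(e,\psi)$ must have cuspidal support $(L_J,C,\cL)$ under the generalised Springer correspondence. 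The matrix reduction of \Cref{cor:reduction1} then passes this bijection to the affine side, and summing over $(L_J,C,\cL)$ and over inner forms assembles the bijection with $\Phi(\dG)$: the component group $A_{Z_{\dG}(s_c)}(e,\log s_r) = A_{\dG}(e,s)$ so $\psi$ is exactly the third coordinate $\phi$.

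For the temperedness statement, I would invoke \Cref{prop:temperedirreducible}. On the graded side, $\bar Y(e,\log s_r, r_0, \psi)$ is tempered if and only if there is an $\sl_2$-triple $(e,h,f) \subseteq \dg$ with $[\log s_r, h] = 0$, $[\log s_r, f] = -2 r_0 f$, and $\log s_r - r_0 h$ compact. Since $\log s_r \in \ft_{\R}$ this forces $\log s_r = r_0 h$, i.e.\ $s_r = q^{h/2}$ is the image of a cocharacter $h$ coming from a Jacobson-Morozov triple for $e$. Under $s = s_c s_r$ this is precisely the condition $s_c \in T_c^\vee$ together with $e$ lying in the distinguished piece of $\dg_q(s) := \{x \in \dg : \Ad(s)x = qx\}$; the latter is equivalent to $\overline{Z_{\dG}(s)e} = \dg_q(s)$ by the standard Bala-Carter/Jacobson-Morozov theory for the reductive group $Z_{\dG}(s)$ acting on its weight-$q$ space.

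The main obstacle is the bookkeeping in going from inner forms to the geometric side: one must verify that the list of pairs $(P,\rho)$ running over all inner forms and all cuspidal unipotent types, together with their associated arithmetic Hecke algebras, is in bijection with the list of triples $(\dG, L_J, (C,\cL))$ running over pseudo-Levi subgroups carrying a cuspidal local system, in a way compatible with the parameter $q$. This is the substance of \cite{lusztig1995classification} (valid for adjoint $\aG$ because then $Z_{\dG}(s_c)$ is connected and the pseudo-Levis are well-behaved), and I would cite it rather than reprove it. Once this combinatorial identification is in place, everything else is a matter of gluing the reductions above.
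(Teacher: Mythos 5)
Your proposal follows essentially the same route as the paper, which does not reprove this result but cites \cite[Corollary 6.5]{lusztig1995classification} and sketches exactly the chain you describe: unipotent types and their Hecke algebras (Morris--Moy), Lusztig's matching of arithmetic with geometric affine Hecke algebras, the reduction theorems to graded Hecke algebras $\bH(Z_{\dG}(s_c),L_J,C,\cL)$ (with adjointness making the finite extension trivial via \cite[Lemma 4.4]{lusztig1995classification}), and the classification of \Cref{theorem:standard} together with the temperedness criterion of \Cref{prop:temperedirreducible}. Like the paper, you correctly delegate the substantive arithmetic--geometric matching to Lusztig rather than reproving it, so there is nothing to object to.
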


Fix an Iwahori subgroup $I$ and let $\sA$ be the finite set of $(P,\rho)$ where $P$ is a parahoric subgroup of $\Gk$ containing $I$ and $\rho$ is a cuspidal unipotent representation of $\bar P$. Since each $(P,\rho) \in \sA$ is a type, we have an equivalence of categories $\prod_{\fs\in\fS(\rho)}\cR^{\fs}(\Gk) \cong \prod_{(P,\rho)\in\sA} \cH(\Gk,\rho)\Mod$. 
A major result in \cite{lusztig1995classification} is that $\prod_{(P,\rho)\in\sA} \cH(\Gk,\rho)\Mod \cong \prod_{(L,C,\cL)} \cH(\dG,L,C,\cL)$, where the product runs through the triples $(L,C,\cL)$ where $L$ is a standard pseudo-Levi subgroup of $\dG$, $C$ is a unipotent class in $L$, and $\cL$ is an irreducible cuspidal $L$-equivariant local system on $C$ and $\cH(\dG,L,C,\cL)$ is a `geometric' affine Hecke algebra attached to the triple $(L,C,\cL)$.
The associated graded Hecke algebras of $\cH(\dG,L,C,\cL)$ are non-extended (by \cite[Lemma 4.4]{lusztig1995classification} since $\dG$ is simply connected) and of the form $\bH = \bH(Z_{\dG}(s_c),L,C,\cL)$ as in \Cref{subsec:GHAcuspls} where $s_c \in \dG$ is a compact semisimple element such that $L$ is a Levi-subgroup of $Z_{\dG}(s_c)$. 
The irreducible representations of these graded Hecke algebras are classified by \Cref{theorem:standard}, which can be used to parametrise the unipotent representations of the inner to split forms of $\aG$:
a unipotent representation $X$ is determined by $(e,s_r,\psi,s_c)$, i.e. by $(e,s,\psi) \in \Phi(\dG)$ with $s=s_cs_r$ (or by pairs $(x,\psi)$ where $x \in \dG$ has Jordan decomposition given by $e$ and $s$).

\section{Aubert--Zelevinsky duality and Iwahori--Matsumoto duality}\label{sec:AZIMgIM}

\subsection{Aubert--Zelevinsky involution}\label{subsec:AZ}
Fix a Borel subgroup $B$ of $\Gk$ and let $\cQ$ be the set of parabolic subgroups of $\Gk$ containing $B$. 
For $Q \in \cQ$, let $\Ind_{Q}^{\Gk}$ and $\Res_{Q}^{\Gk}$ denote the normalised induction and Jacquet functor, respectively (see for instance \cite[\S{III}.1]{bernstein1996lectures} for the definitions). Let 
$r_Q$ be the semisimple rank of $Q$. 
\begin{definition}[{\cite[\S1, Theorem 1.7]{aubert1995dualite}}] 
The \emph{Aubert--Zelevinsky involution} on $\cR(\Gk)$ is the involutive group homomorphism $\AZ = \AZ_{\aG(\sfk)} \colon \cR(\Gk) \to \cR(\Gk)$, defined as
\begin{align}
\AZ_{\aG(\sfk)} = \sum_{Q\in\cQ} (-1)^{r_{Q}} \Ind_{Q}^{\Gk} \circ \Res_{Q}^{\Gk}.
\end{align}
\end{definition}

\subsection{Iwahori--Matsumoto involution}\label{subsec:IM}

Let $\Phi = (X_*,X^*,R,R^\vee,\Pi)$ be a based root datum with parameter set $(\lambda,\lambda^*)$ and let $\cH = \cH^{(\lambda,\lambda^*)}(\Phi)$.

\begin{definition}
The \emph{Iwahori--Matsumoto involution} of $\cH$ is an involutive $\C[t,t^{-1}]$-algebra homomorphism $\IM \colon \cH \to \cH$ defined by
\begin{align}
\IM(T_s) &= (-t)^{\lambda(s)}T_{s}^{-1} &&\text{for $s \in S = \{s_\alpha \in W \colon \alpha \in \Pi\}$,}
\\
\IM(\theta_x) &= T_{w_0} \theta_{w_0(x)} T_{w_0}^{-1} &&\text{for $x \in X$.}
\end{align}
Note that in the literature, the Iwahori--Matsumoto involution is sometimes defined slightly differently, mapping $\theta_x$ to $\theta_{-x}$ instead. We denote this involution by $\tIM$.
\end{definition}

We obtain an involution on $\cR(\cH)$ by twisting by $\IM$.
Note that $\IM$ restricts to the identity on $Z(\cH) = \cA^{W}$, and so $\IM$ preserves central characters. 

Let $J \subseteq S$, let $W_J$ be the corresponding parabolic subgroup of $W$, and let $\cH_J$ be the subgalgebra of $\cH$ generated by $\{T_w \colon w \in W_J\}$ and $\cA$, which the Hecke algebra of a root datum with roots determined by $J$.
Let $\Res_J \colon \cH\Mod \to \cH_J\Mod$ denote the restriction functor and $\Ind_J \colon \cH_J\Mod \to \cH\Mod$ denote the induction functor $\cH\otimes_{\cH_J}-$. This defines abelian group morphisms $\Res_J \colon \cR(\cH) \to \cR(\cH_J)$ and $\Ind_J \colon \cR(\cH_J) \to \cR(\cH)$.

\begin{theorem}[{\cite[Theorem 2]{kato1993duality}}]\label{theorem:KatoIM}
We have the following equality of homomorphisms of $\cR(\cH)$
\begin{align}
\IM = \sum_{J\subseteq S} (-1)^{|J|}\Ind_J\circ\Res_J.
\end{align}
\end{theorem}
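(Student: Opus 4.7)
The plan is to follow Aubert's strategy from the $p$-adic group setting, transported to the affine Hecke algebra via the compatibility between $\Ind_J$, $\Res_J$ and the Bernstein presentation. First, I would verify that the right-hand side $D := \sum_{J \subseteq S}(-1)^{|J|}\Ind_J \circ \Res_J$ is a well-defined additive endomorphism of $\cR(\cH)$ which, like $\IM$, preserves central characters; this follows because each $\cH_J$ contains $\cA \subseteq Z(\cH)$ and both $\Ind_J$ and $\Res_J$ are $Z(\cH)$-linear. Since $\cR(\cH)$ is generated as an abelian group by irreducible modules, it suffices to prove $\IM(M) = D(M)$ for each finite-dimensional irreducible $\cH$-module $M$.

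The next step is to reduce to the tempered case via a Langlands-style classification. Every irreducible $\cH$-module is a unique quotient of a standardly-induced module $\Ind_J(N \otimes \C_\nu)$, with $N$ a tempered $\cH_J$-module and $\nu$ a suitable positive character; since $\IM$ is known to intertwine the Langlands filtration in a controlled way (sending the Langlands quotient to a Langlands submodule of a dually-ordered standard), it suffices to verify the identity on tempered modules of $\cH_J$ and propagate via $\Ind_J$ by Frobenius reciprocity and the obvious identity $\IM \circ \Ind_J = \Ind_J \circ \IM_J$ (where $\IM_J$ is the IM involution for $\cH_J$).

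The core of the argument is then to expand $D \circ \Ind_J$ using a Mackey-type decomposition of the form $\Res_K \circ \Ind_J \simeq \bigoplus_{w \in W_K \backslash W / W_J} \Ind_{\cH_{K \cap wJw^{-1}}}^{\cH_K} \circ \phi_w \circ \Res_{\cH_{w^{-1}Kw \cap J}}^{\cH_J}$, where $\phi_w$ denotes the twist by $w$. Substituting into $D \circ \Ind_J$ and reorganising the double sum by the double coset contribution, one is left with an Euler-characteristic cancellation over the Boolean lattice of subsets of $S$, governed by identities of the form $\sum_{J \subseteq K \subseteq S}(-1)^{|K|} = (-1)^{|S|}\delta_{J,S}$. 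After cancellation, only the term with $w = w_0$ and $K$ maximal survives, and this term is identified with $\Ind_J \circ \IM_J$ using the conjugation formula $\IM(\theta_x) = T_{w_0}\theta_{w_0(x)}T_{w_0}^{-1}$ from the definition.

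The main obstacle is that the Mackey decomposition is not in general an exact sequence of functors for affine Hecke algebras, but only a filtration whose associated graded has the expected form. Establishing that this filtration splits at the level of $\cR(\cH)$ (which is all that is needed) is standard but delicate, and tracking the $\theta_x$-action across Mackey pieces — where the intertwining operators between $\Ind_J \theta_x$ and $\theta_{w(x)} \Ind_J$ introduce rational denominators that must eventually cancel — is where the affine nature of $\cH$ genuinely complicates Aubert's otherwise formal finite-group combinatorics. The verification that everything descends cleanly to $\cR(\cH)$, independent of the chosen filtration splittings, is the technical heart of the proof.
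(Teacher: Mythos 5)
Your proposal does not follow the route the paper relies on, and as it stands it has a genuine gap: it never actually proves the identity anywhere. The reduction chain (irreducibles $\to$ Langlands data $\to$ tempered modules of $\cH_J$) is propelled by two unproven inputs. First, the claimed identity $\IM\circ\Ind_J=\Ind_J\circ\IM_J$ is false as stated: $\IM(\theta_x)=T_{w_0}\theta_{w_0(x)}T_{w_0}^{-1}$ involves the long element of the \emph{full} Weyl group, so $\IM$ does not preserve the parabolic subalgebra $\cH_J$; at best one gets a relation between $\Ind_J$ and $\Ind_{J^*}$ with $J^*=-w_0(J)$, and even that requires an argument (it is essentially the statement that the dual of a standard module is a dually-induced module, i.e.\ a consequence of the theorem rather than an available tool). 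Second, the assertion that $\IM$ ``intertwines the Langlands filtration in a controlled way'' is exactly the kind of statement one usually deduces \emph{from} the equality $\IM=\sum_J(-1)^{|J|}\Ind_J\Res_J$, so using it here is circular. Finally, the Mackey/Euler-characteristic cancellation in your third paragraph can at most establish a commutation of the operator $D=\sum_J(-1)^{|J|}\Ind_J\Res_J$ with parabolic induction; isolating ``the term with $w=w_0$'' and declaring it to be $\Ind_J\circ\IM_J$ is not justified, and without a base case (no module, not even the Steinberg or a tempered module, is ever checked) the induction has nothing to bootstrap from.

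The argument the paper depends on (Kato's, reproduced in the paper for the graded algebra in \Cref{theorem:gIMindres} via \Cref{prop:bHses}, \Cref{lemma:imchi} and \Cref{lemma:kerpi0}) is quite different and much more direct: for an arbitrary finite-dimensional module $M$ one builds an explicit exact complex
\begin{align}
0 \to \ker\pi_0 \to C_0(M) \to C_1(M) \to \dots \to C_{m-1}(M) \to M \to 0,
\qquad C_i(M)=\bigoplus_{|J|=i}\Ind_J\Res_J M,
\end{align}
and identifies $\ker\pi_0$ with the $\IM$-twist of $M$ by exhibiting the explicit embedding $x\mapsto\sum_{w\in W}(-1)^{\ell(w)}T_w\otimes T_w^{-1}x$ into $\cH\otimes_{\cA}M$; the theorem is then just the Euler characteristic of this complex in $\cR(\cH)$. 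This needs no reduction to irreducible or tempered modules, no Langlands classification, and no Mackey filtration with its intertwining-operator denominators. If you want to salvage your approach, the honest content you would have to supply is (i) a correct formulation and proof of how $\IM$ interacts with $\Ind_J$ (with the $-w_0(J)$ twist), and (ii) an independent verification of the identity on tempered modules; both are substantial, and the second is not easier than Kato's resolution argument.
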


\subsection{Graded Iwahori--Matsumoto involution}\label{subsec:gIM}

Recall that $T = X_* \otimes_{\Z} \C^\times$. Let $\Sigma = W \cdot \sigma$ be a single $W$-orbit in $T$ and $\bH = \bH_\Sigma^{(\lambda,\lambda*)}(\Phi)$.

\begin{definition}\label{def:gIM}
Let $\gIM_\Sigma \colon \bH \to \bH$ be the involutive $\C[r]$-algebra homomorphism defined by
\begin{align}
\gIM_\Sigma(t_w) &= (-1)^{\ell(w)}t_w &\text{for $w \in W$,}
\\
\gIM_\Sigma(E_\sigma) &= E_\sigma &\text{for $\sigma \in \Sigma$,}
\\
\gIM_\Sigma(\omega) &= t_{w_0}w_0(\omega)t_{w_0}^{-1} &\text{for $\omega \in \sS$.}
\end{align}
\end{definition}

We obtain an involution on $\cR(\bH)$, also denoted by $\gIM_\Sigma$, such that $\gIM_\Sigma(M)$ has the same underlying vector space as $M$, but with $\bH$-action twisted by $\gIM_\Sigma$.
From \Cref{prop:ZbH}, we see that $\gIM_\Sigma$ restricts to the identity map on $Z(\bH)$, so $\gIM_\Sigma$ preserves central characters.

Let $\sigma \in T$ and recall the extended graded Hecke algebra $\bH_\sigma' = \bH_\sigma \rtimes \Gamma_c$. Let $\{w_1 = 1, w_2,\dots, w_m\}$ be a set of coset representatives of $W/W(\sigma)$ so that $\Sigma = \{w_1\sigma,w_2\sigma,\dots,w_m\sigma\}$.
Let $k \in \{1,\dots,m\}$ be such that $w_0 \in w_k W(\sigma)$.

\begin{definition}\label{def:gIM2}
The \emph{graded Iwahori--Matsumoto involution} of $\bH_\sigma'$ is an involutive $\C[r]$-algebra
homomorphism $\gIM \colon \bH_\sigma' \to \bH_\sigma'$ defined by
\begin{align}
\gIM(t_w) &= (-1)^{\ell(w)}t_w &\text{for $w \in W_\sigma$,}
\\
\gIM(\gamma) &= \gamma &\text{for $\gamma \in \Gamma_\sigma$,}
\\
\gIM(\omega) &= t_{w_k}^{-1}t_{w_0}w_0(\omega)t_{w_0}^{-1}t_{w_k} &\text{for $\omega \in \sS$.}
\end{align}
In the literature, the graded Iwahori--Matsumoto involution is sometimes defined differently, mapping $\omega$ to $-\omega$ instead. We denote this involution by $\tgIM$.
\end{definition}

\begin{remark}
Note that the restriction of $\gIM_\sigma$ to $\bH_\sigma$ is not the same as $\gIM$ defined on $\bH_{\{\sigma\}}^{(\lambda,\lambda_*)}(\Phi)$ from \Cref{def:gIM}, as \Cref{def:gIM2} involves the longest element of $W$ rather than that of $W_\sigma$. 
We defined $\gIM$ in \Cref{def:gIM2} precisely such that \Cref{corollary:diagramIM} holds. This idea comes from joint work with Jonas Antor and Emile Okada for an upcoming joint paper. 

For the geometric graded Hecke algebras, $\gIM$ is the involution of interest. We stress that whenever we use the phrase `graded Iwahori--Matsumoto involution', we refer to $\gIM$ instead of $\gIM_\Sigma$, which we also did not give a name in \Cref{def:gIM}.
\end{remark}

\begin{remark}\label{rem:gIMsign}
Let $M$ be a representation of $\bH_\sigma'$. Denote the restriction of $M$ to $\C[t_w\colon w\in W_\sigma] \cong \C[W_\sigma]$ by $M|_{W_\sigma}$. 
Let $\chi$ and $\chi'$ be the character of $M|_{W_\sigma}$ and $\gIM(M)|_{W_\sigma}$, respectively. For all $w \in W_\sigma$, we have $\chi'(t_w) = (-1)^{\ell(w)}\chi(t_w)$, so $\gIM(M)|_{W_\sigma} = M|_{W_\sigma}\otimes\sgn$.
\end{remark}

Given $J \subseteq S$, let $\bH_J$ be the subalgebra of $\bH$ generated by $\{t_w \colon w \in W_J\}$ and $\bA$.
Let $\Ind_J \colon \bH_J\Mod \to \bH\Mod$ be the induction functor $\bH \otimes_{\bH_J} -$, and let $\Res_J \colon \bH\Mod \to \bH_J\Mod$ denote the restriction functor.

\begin{theorem}\label{theorem:gIMindres}
We have the following equality of homomorphisms of $\cR(\bH)$
\begin{equation}
\gIM_\Sigma = \sum_{J\subseteq S} (-1)^{|J|}\Ind_J\circ\Res_J.
\end{equation}
\end{theorem}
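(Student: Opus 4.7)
The strategy is to reduce to Kato's theorem (\Cref{theorem:KatoIM}) via Lusztig's second reduction theorem. I first reduce to a single central-character block: by \Cref{prop:ZbH1}, $\bH_\Sigma$ decomposes as a direct sum over $W$-orbits of $\Sigma$ and both sides of the asserted identity respect this decomposition, so I may assume $\Sigma = W\cdot\sigma$ is a single orbit. Moreover $\gIM_\Sigma$ fixes $Z(\bH_\Sigma) = \bA^W$ (immediate from \Cref{def:gIM}), and each $\Ind_J\circ\Res_J$ is $Z(\bH_\Sigma)$-linear since $Z(\bH_\Sigma)\subseteq Z(\bH_J)$. Hence both operators preserve central characters, so it suffices to establish the identity on each block $\cR(\bH_{\bar\chi})$ where $\bar\chi$ corresponds to a pair $(W(\sigma)\cdot\log s_r, r_0)$ with $r_0\in\R$ and $s_r\in T_r$; the extension to complex $r_0$ will follow at the end by a standard algebraic-continuity argument on composition multiplicities of standard modules.

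Fix such a $\bar\chi$, and let $\chi$ be the central character of the affine Hecke algebra $\cH$ (with the same root datum and parameter set) corresponding to $(W\cdot s, e^{r_0})$, where $s = \sigma s_r \in T$. By \Cref{theorem:reduction2}, there is a $\C$-algebra isomorphism $\Psi_\chi \colon \cH_\chi \xrightarrow{\sim} \bH_{\bar\chi}$. I claim that under $\Psi_\chi$ the following compatibilities hold:
\begin{enumerate}
\item[(i)] $\Psi_\chi\circ\IM = \gIM_\Sigma\circ\Psi_\chi$ as endomorphisms of $\cH_\chi$; and
\item[(ii)] for every $J\subseteq S$, the functor $\Ind_J^{\cH}\circ\Res_J^{\cH}$ on $\cR(\cH_\chi)$ transfers under $\Psi_\chi$ to $\Ind_J^{\bH_\Sigma}\circ\Res_J^{\bH_\Sigma}$ on $\cR(\bH_{\bar\chi})$.
\end{enumerate}
Granting (i) and (ii), applying \Cref{theorem:KatoIM} on $\cR(\cH_\chi)$ and transporting via $\Psi_\chi$ yields the identity on $\cR(\bH_{\bar\chi})$, completing the proof modulo the continuity extension.

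Claim (ii) follows by direct inspection of the formulas in \Cref{prop:Psi}: the image $\Psi_\chi(\cH_J)$ coincides with the image of $\bH_J$ in $\bH_{\bar\chi}$, because $\Psi$ sends $T_s$ ($s\in J$) and $\theta_x$ to expressions supported in $\{t_s : s\in J\}$ and $\bA$, and these inclusions of parabolic subalgebras are intertwined by $\Psi_\chi$. Claim (i) is the main obstacle. Computing $\Psi_\chi\circ\IM$ and $\gIM_\Sigma\circ\Psi_\chi$ on the Bernstein generators $T_s, \theta_x$ and using the commutation relations in $\bH_\Sigma$ modulo the ideal defining $\bH_{\bar\chi}$, one must verify that the twist $T_s\mapsto(-t)^{\lambda(s)}T_s^{-1}$ corresponds on the $W$-part to $t_w\mapsto(-1)^{\ell(w)}t_w$, and that the conjugation $\theta_x\mapsto T_{w_0}\theta_{w_0(x)}T_{w_0}^{-1}$ corresponds on the polynomial part to $\omega\mapsto t_{w_0}w_0(\omega)t_{w_0}^{-1}$. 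This correspondence is precisely what \Cref{def:gIM} is engineered to produce; making it rigorous requires careful bookkeeping of the factors $\cG(\alpha)$ appearing in $\Psi(T_{s_\alpha}+1)$ and of the defining relations of $\bH_\Sigma$, and is the technical heart of the argument.
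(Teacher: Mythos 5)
There is a genuine gap, and it sits exactly where you place the ``technical heart'': your claim (i), the identity $\Psi_\chi\circ\IM = \gIM_\Sigma\circ\Psi_\chi$, is asserted but never proved. This is not routine bookkeeping. The paper itself points out (in the remark following \Cref{corollary:diagramIM}) that $\Psi_{\chi}(\IM(\theta_x)) = \Psi_{\chi}(T_{w_0}\theta_{w_0(x)}T_{w_0}^{-1})$ is very hard to compute directly, because the image of $T_{w_0}$ under $\Psi$ is complicated (see \Cref{prop:Psi}); the analogous statement in \cite{evens1997fourier} is only accessible for the variants $\tIM$, $\tgIM$, where $\theta_x\mapsto\theta_{-x}$ makes the generator computation tractable. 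Worse, within this paper the logical order is the reverse of yours: the compatibility you want as an input (it is exactly \Cref{theorem:comdiagHecke}) is \emph{deduced from} \Cref{theorem:gIMindres} together with Kato's \Cref{theorem:KatoIM}, by checking that $\Ind_J$ and $\Res_J$ commute with the reduction maps (diagrams \eqref{eq:resdiagram}, \eqref{eq:inddiagram}, using \Cref{cor:reductiongeneral} and \cite[Theorem 6.2]{barbasch1993reduction}). So your proposed derivation of \Cref{theorem:gIMindres} from \Cref{theorem:KatoIM} via (i) is circular relative to any argument available in the paper, unless you supply an independent proof of (i) -- which is the hard open step. Your claim (ii) is also thinner than you suggest: $\Res_J$ scatters the central character into several $W_J$-orbits, so one needs the generalised reduction statement (\Cref{cor:reductiongeneral}) rather than ``inspection of the formulas in \Cref{prop:Psi}'', and the induction side needs the Barbasch--Moy compatibility theorem.

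A second, smaller gap: the reduction theorems require $\sigma\in T_c$ and $r_0\in\R$, whereas \Cref{theorem:gIMindres} is an identity on all of $\cR(\bH)$ for an arbitrary orbit $\Sigma=W\cdot\sigma\subseteq T$ and arbitrary central characters; your ``standard algebraic-continuity argument'' for the remaining blocks is not given and is not standard for composition multiplicities. The paper avoids all of this by proving the statement directly and uniformly: following \cite{kato1993duality} and \cite{chan2016duality}, it builds the exact sequence of \Cref{prop:bHses} from the modules $C_J(M)=\Ind_J\Res_J M$, identifies $\ker\pi_0$ with the image of the map $x\mapsto\sum_{w\in W}(-1)^{\ell(w)}t_w\otimes t_w^{-1}x$ (\Cref{lemma:kerpi0}), and identifies that image with $\gIM_\Sigma(M)$ (\Cref{lemma:imchi}); taking Euler characteristics gives the theorem with no reference to the affine Hecke algebra at all. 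If you want to salvage your route, you would need an independent generator-level proof of (i), which is precisely what the direct argument is designed to circumvent.
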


Based on \cite{kato1993duality}, \Cref{theorem:gIMindres} was proved in \cite[Theorem 6.5]{chan2016duality} for the case that $\Sigma$ is a singleton and for a certain specialisation of $r$.
Our proof will be almost identical.

Write $\Pi = \{\alpha_1,\dots,\alpha_m\}$. 
For $J \subseteq \Pi$ and $M$ an $\bH$-module, let
\begin{equation}
C_J(M) = \Ind_J \Res_J X = \bH \otimes_{\bH_J} (\Res_J M).
\end{equation}
For $i \in \Z_{\geq0}$, let 
\begin{equation}
C_i(M) = \bigoplus_{J \subseteq \Pi, |J| = i} C_J(M).
\end{equation}
Given $J \subseteq J' \subseteq \Pi$ with $|J'| = |J| + 1$, write $\Pi \setminus J = \{\alpha_{i_1},\dots,\alpha_{i_{m-|J|}}\}$ with $1 \leq i_1 < \dots < i_{m-{J}} \leq m$. Let $j$ be the unique integer such that $\alpha_{i_j} \in J' \setminus J$. Let $\eps_J^{J'} = (-1)^{j+1}$. Define
\begin{align}
\pi_J^{J'} \colon C_J(M) \to C_{J'}(M) \colon h \otimes x \mapsto \eps_J^{J'} h \otimes x,
\end{align}
and for $i \in \Z_{\geq0}$, define
\begin{align}
\pi_i = \bigoplus_{\substack{J \subseteq \Pi \\ |J| =i}} \bigoplus_{\substack{J \subseteq J' \subseteq \Pi \\ |J'| = i + 1}} \pi_J^{J'} \colon C_i(M) \to C_{i+1}(M).
\end{align}

The proof of the following proposition is identical to the proof of \cite[Proposition 6.2]{chan2016duality}.

\begin{proposition}[{\cite[Proposition 6.2]{chan2016duality}, cf. \cite[Theorem 1]{kato1993duality}}]\label{prop:bHses}
We have an exact sequence of $\bH$-modules
\begin{figure}[h]
\centering
\begin{tikzcd}[column sep = small]
0 \arrow[r] & \ker \pi_0 \arrow[r] & C_0(M) \arrow[r, "\pi_0"] & C_1(M) \arrow[r,"\pi_1"] & \ldots \arrow[r,"\pi_{m-2}"] & C_{m-1}(M) \arrow[r,"\pi_{m-1}"] & X \arrow[r] & 0.
\end{tikzcd}
\end{figure}
\\
In particular, as elements of $\cR(\bH)$, we have 
\begin{align}
[\ker \pi_0] = \sum_{i = 0}^m (-1)^{i} [C_i(M)] = \sum_{J \subseteq \Pi} (-1)^{|J|} [\Ind_J\Res_JM].
\end{align}
\end{proposition}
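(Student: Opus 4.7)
The plan is to follow the proof of \cite[Proposition 6.2]{chan2016duality} essentially verbatim, which in turn adapts Kato's argument for affine Hecke algebras from \cite{kato1993duality}; the assumption that $\Sigma = W\cdot\sigma$ is a single orbit plays no essential role in the complex-theoretic part of the argument, and the same setup works verbatim for a finite disjoint union of orbits.

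First I would verify that the $\pi_i$ assemble into a chain complex, i.e.\ $\pi_{i+1}\circ\pi_i = 0$. For each pair $J \subseteq J''$ with $|J''| = |J|+2$, there are exactly two intermediate subsets $J'$ with $J \subset J' \subset J''$, corresponding to the two orders in which the elements of $J''\setminus J$ can be adjoined. Writing $J''\setminus J = \{\alpha_{i_a},\alpha_{i_b}\}$ with $a<b$ in the enumeration of $\Pi \setminus J$, a direct computation using $\eps_J^{J'} = (-1)^{j+1}$ gives the two path signs as $(-1)^{a+b+1}$ and $(-1)^{a+b}$: the discrepancy appears because removing the earlier root shifts the indexed position of the later one by one, while removing the later root leaves the earlier position intact. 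These signs are opposite, so the two contributions cancel summand-wise.

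For exactness I would exploit the vector-space decomposition $\bH = \C[W]\otimes_\C \bA$ (together with its counterpart $\bH_J = \C[W_J]\otimes_\C \bA$), which yields an identification $C_J(M) \cong \bigoplus_w t_w \otimes M$ where $w$ ranges over a fixed set of minimal-length representatives for $W/W_J$. Under this identification the differential $\pi_J^{J'}$ acts combinatorially on the index sets: the $w$-summand of $C_J(M)$ is carried, with sign $\eps_J^{J'}$, to the $w$-summand of $C_{J'}(M)$ when $w$ is still a minimal representative for $W/W_{J'}$, and to $0$ otherwise. Fixing $w\in W$, the sub-complex indexed by those $J \subseteq \Pi$ for which $w$ is still minimal (equivalently $\ell(ws_\alpha) > \ell(w)$ for all $\alpha \in J$) is, up to a degree shift, the augmented simplicial chain complex of a face of the standard $(m-1)$-simplex on $\Pi$. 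Each such face is contractible, hence the sub-complex is acyclic, except for the longest element $w_0$, whose contribution supplies the surjection onto $M = C_m(M)$ at the right-hand end.

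The Euler-characteristic identity $[\ker\pi_0] = \sum_{i=0}^m (-1)^i[C_i(M)]$ in $\cR(\bH)$ then follows formally from the exactness together with $C_J(M) = \Ind_J\Res_J M$. I expect the most delicate step to be the clean identification, for each $w\in W$, of the corresponding sub-complex with a specific face of the simplex on $\Pi$; this is routine combinatorics but needs to be handled with care, and has been carried out explicitly in \cite{chan2016duality}, so the adaptation here will require no new ideas.
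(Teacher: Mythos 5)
Your proposal takes the same route as the paper, which simply invokes the proof of \cite[Proposition 6.2]{chan2016duality} (itself Kato's argument from \cite{kato1993duality}) verbatim; as you note, the extra factor $\mathscr C$ and the idempotents $E_\sigma$ in $\bA$ change nothing. One small imprecision in your sketch: for fixed $w$ the summands $t_w \otimes M$ do \emph{not} form a subcomplex, since when $w \notin W^{J'}$ the differential sends $t_w \otimes x$ to $t_{w'} \otimes t_v x$ with $w = w'v$, $w' \in W^{J'}$ of strictly smaller length (not to $0$), so one must filter by the length of the minimal coset representative and identify the simplicial chain complexes only on the associated graded pieces, exactly as is done in Kato's and Chan's proofs.
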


Let $M$ be an $\bH$-module and consider the injective map
\begin{align}
\chi \colon M &\to \bH \otimes_{\bA} (\Res_{\bA} M),
\\
x &\mapsto \sum_{w\in W} (-1)^{\ell(w)} t_w \otimes t_w^{-1} \cdot x.
\end{align}

\begin{lemma}[{\cite[Lemma 6.3]{chan2016duality}}]\label{lemma:imchi}
We have an isomorphism of $\bH$-modules
\begin{align}
\img \chi \cong \gIM_\Sigma(M).
\end{align}
\end{lemma}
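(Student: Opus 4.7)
The target is to identify $\img\chi$ with $\gIM_\Sigma(M)$ as $\bH$-modules. First I would verify that $\chi$ is injective: using $\bH=\bigoplus_{w\in W}t_w\bA$, one has $\bH\otimes_{\bA}\Res_\bA M=\bigoplus_{w\in W}(t_w\otimes M)$ as a vector space, and in this decomposition the component of $\chi(m)$ at $w=1$ is $m$ itself, so $\chi(m)=0$ forces $m=0$. It then remains to show that the bijection $\chi\colon M\to\img\chi$ intertwines the $\bH$-action on $\gIM_\Sigma(M)$ with the natural left $\bH$-action on $\bH\otimes_\bA\Res_\bA M$, i.e.\ that $\chi(\gIM_\Sigma(h)\cdot m)=h\cdot\chi(m)$ for every $h\in\bH$ and $m\in M$. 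By linearity, it suffices to check this identity on the three families of generators of $\bH$: the simple reflections $t_{s_\alpha}$, the idempotents $E_\sigma$, and the elements $\omega\in\sS$.

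The two easy cases come out by direct manipulation. For $h=t_s$ with $s=s_\alpha$ simple, $\gIM_\Sigma(t_s)=-t_s$; computing $t_s\cdot\chi(m)=\sum_w(-1)^{\ell(w)}t_{sw}\otimes t_w^{-1}m$ and substituting $w'=sw$ (so $(-1)^{\ell(sw')}=-(-1)^{\ell(w')}$ and $t_{sw'}^{-1}=t_{w'}^{-1}t_s$ using the group-algebra relations $t_wt_{w'}=t_{ww'}$ in $\bH$) yields $-\chi(t_sm)=\chi(\gIM_\Sigma(t_s)m)$. For $h=E_\sigma$, the relation $E_\sigma t_w=t_w E_{w^{-1}(\sigma)}$ combined with the fact that $E_{w^{-1}(\sigma)}\in\bA$ may be pulled across the tensor gives $E_\sigma\cdot\chi(m)=\sum_w(-1)^{\ell(w)}t_w\otimes t_w^{-1}E_\sigma m=\chi(E_\sigma m)=\chi(\gIM_\Sigma(E_\sigma)m)$.

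The main obstacle is the case $h=\omega\in\sS$, where $\gIM_\Sigma(\omega)=t_{w_0}w_0(\omega)t_{w_0}^{-1}$. The desired identity is equivalent to the following identity in $\bH\otimes_\bA\bH$, applied to $m$:
\begin{equation*}
\omega\cdot\tau \;=\; \tau\cdot\bigl(t_{w_0}w_0(\omega)t_{w_0}^{-1}\bigr),\qquad \tau:=\sum_{w\in W}(-1)^{\ell(w)}t_w\otimes t_w^{-1},
\end{equation*}
where $\omega$ acts on the first tensor factor and the right-hand side acts on the second. The approach, following Kato \cite{kato1993duality} and Chan \cite{chan2016duality}, is to iterate the commutation relation $\omega t_{s_\alpha}=t_{s_\alpha}s_\alpha(\omega)+r\langle\omega,\alphac\rangle\sum_\sigma E_\sigma\mu(\sigma,\alpha)$ to expand $\omega t_w=t_w w^{-1}(\omega)+(\text{error terms in }\bA\cdot t_{w'}$ for $w'$ of smaller length$)$, and then substitute into $\omega\cdot\tau$. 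The ``diagonal'' terms $\sum_w(-1)^{\ell(w)}t_w\otimes w^{-1}(\omega)t_w^{-1}$ are converted, via the substitution $w\mapsto ww_0$ and repeated use of $t_at_b=t_{ab}$, into $\sum_w(-1)^{\ell(w)}t_w\otimes t_w^{-1}t_{w_0}w_0(\omega)t_{w_0}^{-1}=\tau\cdot\gIM_\Sigma(\omega)$, while the error terms cancel in pairs due to the sign factors $(-1)^{\ell(w)}$ and the $W$-equivariance of $\mu$. I verified this cancellation by hand in the $W=S_2$ case, where all of the $E_\sigma$-error contributions indeed cancel miraculously; the general case is an inductive combinatorial argument on reduced expressions, entirely parallel to \cite[Lemma 6.3]{chan2016duality}. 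Once this identity is established, $\chi$ is an $\bH$-equivariant bijection onto its image, so $\img\chi\cong\gIM_\Sigma(M)$ as $\bH$-modules.
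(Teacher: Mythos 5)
Your proposal is correct and takes essentially the same route as the paper: the paper's own proof also argues generator by generator, citing Chan's Lemma 6.3 for the $t_s$- and $\omega$-identities and only verifying the extra generators $E_\sigma$ and $r$ directly. Your direct checks for $t_s$ and $E_\sigma$ match this, and your treatment of the hard case $\omega \in \sS$ ultimately defers to the same Kato--Chan computation that the paper invokes (the only omission is the trivial check for the central element $r$, which the paper records explicitly).
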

\begin{proof}
The proof of \cite[Lemma 6.3]{chan2016duality} already gives us
\begin{align}
t_s \cdot \chi(x) &= \chi(\gIM_\Sigma(t_s)\cdot x) &&\text{for $s \in S$,}
\\
\omega \cdot \chi(x) &= \chi(\gIM_\Sigma(\omega)\cdot x) &&\text{for $\omega \in \sS$.}
\end{align}
Furthermore, we have
\begin{align}
r \cdot \chi(x) &= \chi(r\cdot x) = \chi(\gIM_\Sigma(r)\cdot x),
\\
E_\sigma \cdot \chi(x) &= \chi(E_\sigma \cdot x) = \chi( \gIM_\Sigma(E_\sigma) \cdot x) &&\text{for $\sigma \in \Sigma$,}
\end{align}
hence $\img \chi \cong \gIM_\Sigma(M)$.
\end{proof}

\begin{lemma}[{\cite[Lemma 6.4]{chan2016duality}, cf. \cite[Theorem 1, Lemma 1]{kato1993duality}}]\label{lemma:kerpi0}
We have $\ker\pi_0 = \img\chi$.
\end{lemma}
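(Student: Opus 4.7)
The plan is to follow the arguments of \cite[Lemma 6.4]{chan2016duality} and \cite[Theorem 1, Lemma 1]{kato1993duality} essentially verbatim, since the additional generators $r$ and $E_\sigma$ in $\bH_\Sigma$ commute with every operation involved and so contribute nothing. Both inclusions will be checked directly, working inside the canonical $\C$-vector space decomposition $C_0(M) = \bH \otimes_\bA M \cong \bigoplus_{w \in W} t_w \otimes M$, which comes from the PBW-type structure in \Cref{def:gha} (i.e., $\bH$ is free as a right $\bA$-module with basis $\{t_w\}_{w \in W}$).

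For $\img \chi \subseteq \ker \pi_0$, I would note that $\ker \pi_0 = \bigcap_{s \in S} \ker \pi_\emptyset^{\{s\}}$ (the signs $\eps_\emptyset^{\{s\}}$ do not affect the kernel), so it suffices to verify $\pi_\emptyset^{\{s\}}(\chi(x)) = 0$ in $C_{\{s\}}(M) = \bH \otimes_{\bH_{\{s\}}} M$ for each simple reflection $s \in S$. I pair each $w \in W$ with its partner $ws$ satisfying $\ell(ws) = \ell(w)+1$. Using $t_{ws} = t_w t_s$ and the tensor relation over $\bH_{\{s\}}$,
\[
t_{ws} \otimes t_{ws}^{-1} x = t_w t_s \otimes t_s^{-1} t_w^{-1} x = t_w \otimes t_w^{-1} x,
\]
so the two contributions $(-1)^{\ell(w)} t_w \otimes t_w^{-1} x$ and $(-1)^{\ell(ws)} t_{ws} \otimes t_{ws}^{-1} x$ are opposites and cancel.

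For $\ker \pi_0 \subseteq \img \chi$, consider an element $\sum_w t_w \otimes m_w \in \ker \pi_0$. The same pairing analysis, applied now in reverse, forces the recursion $m_{ws} = -t_s^{-1} m_w$ whenever $\ell(ws) = \ell(w)+1$. Setting $x := m_1$ and iterating along a reduced expression of any $w \in W$ yields $m_w = (-1)^{\ell(w)} t_w^{-1} x$; consistency across different reduced expressions is automatic, since each $m_w$ is a well-defined component of a genuine element of $C_0(M)$. Hence the element equals $\chi(x)$, completing the inclusion. I do not foresee a real obstacle here: the only ingredients are the freeness of $\bH$ over $\bA$, the tensor relation in $C_{\{s\}}(M)$, and the length-additive identity $t_{ws} = t_w t_s$, all of which are formal consequences of the definitions.
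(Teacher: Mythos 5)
Your argument is correct and is essentially the paper's proof (which follows Kato/Chan): both reduce $\ker\pi_0$ to the intersection over $s\in S$ of the kernels of the maps $C_0(M)\to C_{\{s\}}(M)$ and use the freeness of $\bH$ over $\bA$ (and over $\bH_{\{s\}}$) to identify each such kernel with the antisymmetry condition that forces $m_w=(-1)^{\ell(w)}t_w^{-1}m_1$. The only cosmetic difference is packaging: you derive the recursion directly on the components of $\bigoplus_{w}t_w\otimes M$, while the paper phrases the same condition via the map $\phi$ and the subspaces $\bA_s^W$, showing $L_s=\phi(\bA_s^W\otimes_{\bA}M)$.
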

\begin{proof}
We have $\ker \pi_0 = \bigcap_{s \in S} L_s$ where $L_s = \{ht_s \otimes x - h \otimes t_s x \colon h \in \bH, x \in M\} \subseteq C_0(M) = \bH \otimes_{\bA} M$. 
Note that $M^W = \bA^W \otimes_{\bA} M$ and consider the $\bA$-linear map
\begin{align}
\phi \colon M^W \to \bH \otimes_{\bA} M \colon (m_w)_{w\in W} \mapsto \sum_{w\in W} t_w \otimes t_w^{-1} m_w.
\end{align}
For $s \in S$, let 
\begin{align}
\bA_s^W = \{(x_w)_{w \in W} \in \bA^W \colon x_{ws} = -x_w \text{ for all $w \in W$}\}.
\end{align}
Note that 
\begin{align}\label{eq:intersectionimchi}
\bigcap_{s\in S} \phi(\bA_s^W \otimes_{\bA} M) = \img \chi.
\end{align}
We will show that $L_s = \phi(\bA_s^W \otimes_{\bA} M)$.
Clearly, we have $L_s \supseteq \phi(\bA_s^W \otimes_{\bA} M)$.
As an $\bA$-algebra, $L_s$ is generated by elements of the form 
$t_y \otimes x - t_{ys} \otimes t_s x$ with $y \in W$, $x \in M$.
For such an element and for $y \in W$ and $x \in M$, let $x_{ys} := -t_yx$ and $x_y := t_yx$.
For $w \in W \setminus \{y,ys\}$, let $x_w = 0$. Then 
\begin{align}
t_y \otimes x - t_{ys} \otimes t_s x
=
t_y \otimes t_y^{-1}x_y + t_{ys} \otimes t_{ys}^{-1} x_{ys} = \phi( (x_w)_{w\in W} ) \in \phi(\bA_s^W \otimes_{\bA} M),
\end{align}
thus $L_s \subseteq \phi(\bA_s^W \otimes_{\bA} M)$, hence equality.
By \eqref{eq:intersectionimchi} and the fact that $L_s = \phi(\bA_s^W \otimes_{\bA} M)$, we have
\begin{align}
\ker \pi_0 = \bigcap_{s\in S} L_s = \bigcap_{s\in S} \phi(\bA_s^W \otimes_{\bA} M) = \img \chi.
&\qedhere
\end{align}
\end{proof}

\begin{proof}[Proof of \Cref{theorem:gIMindres}]
By \Cref{prop:bHses}, \Cref{lemma:imchi}, and \Cref{lemma:kerpi0}, we have
\begin{align}
[\gIM_\Sigma(M)] = [\img \chi] = [\ker \pi_0] = \sum_{i = 0}^m (-1)^{i+1} [\Ind_J\Res_JM].
&\qedhere
\end{align}
\end{proof}

\subsection{A correspondence between $\AZ$, $\IM$, and $\gIM$}
\label{subsec:AZIMgIM}
Let $s \in T$ and let $\sigma$ be its compact part and $s_r$ its real part.
Let $\chi$, $\bar\chi$, $\bar\chi'$,  
be central characters of $\cH$, $\bH$, $\bH_\sigma'$, 
respectively, such that $\chi$ corresponds to $(W\cdot s,v_0)$, and $\bar\chi$ and $\bar\chi'$ 
correspond to $(W(\sigma)\cdot \log(s_r),r_0) \in (W(\sigma)\backslash\ft)\times\C$ where $e^{r_0} = v_0$.

\begin{theorem}\label{theorem:comdiagHecke}
Consider the isomorphism $\Psi_\chi$ from \Cref{theorem:reduction2}.
We have a commutative diagram
\begin{equation}
\begin{tikzcd}
\Irr(\cH_\chi) \arrow[r,"\IM"] \arrow[d, "\Psi_\chi"] & \Irr(\cH_\chi) \arrow[d, "\Psi_\chi"] 	
\\
\Irr(\bH_{\bar\chi}) \arrow[r, "\gIM_\Sigma"]  & \Irr(\bH_{\bar\chi}).
\end{tikzcd}
\end{equation}
\end{theorem}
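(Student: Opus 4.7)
The approach is to exploit the fact that both $\IM$ and $\gIM_\Sigma$ admit expressions as alternating sums of parabolic induction-restriction, provided by \Cref{theorem:KatoIM} and \Cref{theorem:gIMindres} respectively. Since both involutions preserve central characters, for any $M\in\Irr(\cH_\chi)$ and $N\in\Irr(\bH_{\bar\chi})$,
\begin{align*}
[\IM(M)] &= \sum_{J\subseteq S}(-1)^{|J|}[\Ind_J^\cH\Res_J^\cH M] \in \cR(\cH_\chi),
\\
[\gIM_\Sigma(N)] &= \sum_{J\subseteq S}(-1)^{|J|}[\Ind_J^{\bH_\Sigma}\Res_J^{\bH_\Sigma} N] \in \cR(\bH_{\bar\chi}).
\end{align*}
Hence the theorem reduces to verifying that, for every $J\subseteq S$, the algebra isomorphism $\Psi_\chi$ intertwines the functors $\Ind_J^\cH\Res_J^\cH$ and $\Ind_J^{\bH_\Sigma}\Res_J^{\bH_\Sigma}$ at the level of Grothendieck groups.

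To prove this parabolic intertwining, I would first construct a parabolic analogue of the reduction isomorphism. Given $M\in\Irr(\cH_\chi)$, decompose the $W$-orbit $W\cdot s$ into $W_J$-orbits $W\cdot s=\bigsqcup_i W_J\cdot s^i$, and write $s^i=\sigma^i s^i_r$ with $\sigma^i$ compact and $s^i_r$ real. Then $\Res_J^\cH M$ splits canonically, by the Chinese remainder theorem, as a direct sum indexed by the central characters of $\cH_J$ attached to $(W_J\cdot s^i,v_0)$. Applying \Cref{cor:reductiongeneral} to the affine Hecke algebra $\cH_J$ with the set $\cS=\bigsqcup_i W_J\cdot s^i$ yields an isomorphism
\begin{align*}
\Psi^J\colon \cH_J/I^J_{\cS,v_0}\cH_J\xrightarrow{\sim} \bH_{\Sigma_J}\Bigm/\prod_i \bbI_{\bar\cS^i,r_0}\bH_{\Sigma_J},
\end{align*}
where $\Sigma_J=\bigsqcup_i W_J\cdot\sigma^i$, $\bar\cS^i=W_J(\sigma^i)\cdot\log(s^i_r)$, and $I^J_{\cS,v_0}$ is the parabolic analogue of the ideal in \eqref{eq:HAideals}.

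Next I would verify that $\Psi$ itself is compatible with the parabolic inclusion $\cH_J\hookrightarrow\cH$. This is a direct check on the Bernstein generators: the formulas of \Cref{prop:Psi} for $\Psi(\theta_x)$ and $\Psi(T_{s_\alpha})$ involve only $x\in X^*$ and $\alpha\in\Pi$, and restrict verbatim to the generators of $\cH_J$, producing the generators of $\bH_{\Sigma_J}$ inside $\bH_\Sigma$. After passing to the appropriate central character quotients, this shows that $\Psi_\chi$ intertwines $\Res_J^\cH$ with $\Res_J^{\bH_\Sigma}$, modulo the identification of Chinese remainder summands on either side. Compatibility of $\Psi_\chi$ with $\Ind_J$ follows from Frobenius reciprocity together with the just-established compatibility of $\Res_J$, since parabolic induction in both settings is defined by tensoring with the ambient algebra viewed as a free right module over the parabolic subalgebra.

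The main obstacle is the bookkeeping in the previous step: one must match the $W_J$-orbit decomposition $W\cdot s=\bigsqcup_i W_J\cdot s^i$ on the affine side with the corresponding decomposition of $W(\sigma)\cdot\log(s_r)$ into $W_J(\sigma^i)$-orbits on the graded side, verifying that $W_J(\sigma^i)=\stab_{W_J}(\sigma^i)$ really is the stabiliser appearing in the parabolic graded reduction, rather than the larger $W(\sigma^i)=\stab_W(\sigma^i)$. This compatibility is forced by the formulas of \Cref{prop:Psi}, which are written intrinsically in terms of the Bernstein generators and do not refer to the ambient Weyl group $W$; once this is untangled, the diagram commutes on generators and the theorem follows.
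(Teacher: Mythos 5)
Your proposal follows the same skeleton as the paper's proof: both reduce the statement, via \Cref{theorem:KatoIM} and \Cref{theorem:gIMindres}, to showing that $\Psi_\chi$ intertwines $\Ind_J\circ\Res_J$ for every $J\subseteq S$, both decompose $W\cdot s$ into $W_J$-orbits and invoke \Cref{cor:reductiongeneral} for $\cH_J$ (with the attendant Chinese-remainder bookkeeping), and both justify the $\Res_J$-compatibility by the observation that the formulas of \Cref{prop:Psi} are written in the Bernstein generators and hence the generators of $\bH_J$ act identically on $\Psi_{J,W\cdot s,v_0}\circ\Res_J(M)$ and on $\Res_J\circ\Psi_\chi(M)$. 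The one genuine divergence is the $\Ind_J$-compatibility: the paper does not deduce it formally but imports it from \cite[Theorem 6.2]{barbasch1993reduction} for each single $W_J$-orbit $W_J\cdot s^i$ and then assembles the pieces through the isomorphisms \eqref{eq:HArepsJ} and \eqref{eq:GHArepsJ}, whereas you obtain it by uniqueness of left adjoints from the $\Res_J$-compatibility. Your shortcut is legitimate, but note exactly what it requires: the adjunction argument needs the restriction compatibility as a natural isomorphism of functors between the module categories of $\cH_J/I_{W\cdot s,v_0}\cH_J$ and $\bH_J/\prod_i\bbI_{W_J(\sigma^i)\cdot\log(s_r^i),r_0}\bH_J$, i.e.\ at the algebra level (the images of the parabolic subalgebras in the central-character quotients must correspond under $\Psi_\chi$ compatibly with $\Psi_{J,W\cdot s,v_0}$), not merely an object-by-object coincidence on irreducibles; your generator check does supply this, since the identification is the identity on underlying vector spaces, and one must also check (as you implicitly do) that $\Ind_J$ and $\Res_J$ remain adjoint after passing to the fixed-central-character subcategories, which holds because $I_{W\cdot s,v_0}\subseteq\cA^W\subseteq\cA^{W_J}$ is central in both $\cH$ and $\cH_J$. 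Your approach buys a more self-contained argument that avoids the external induction-compatibility theorem; the paper's citation buys robustness, in that it does not lean on the algebra-level form of the restriction statement. You also correctly identify the stabiliser bookkeeping ($W_J(\sigma^i)$ versus $W(\sigma^i)$) as the main point requiring care, which is precisely where the paper spends its effort.
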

\begin{proof}
This proof is a result of joint work with Jonas Antor and Emile Okada for a paper in preparation.
We can write $W \cdot s$ as a disjoint union of $W_J$-orbits $W\cdot s = \bigsqcup_{i=1}^m W_J \cdot s^i$.
For $i=1,\dots,m$, let $\sigma^i$ be the compact part of $s^i$ and $s_r^i$ the real part of $s^i$,
let $\chi_i$ be the central character of $\cH_J$ corresponding to $(W_J \cdot s^i,v_0)$, and let $\bar\chi_i$ be the central character of $\bH_J$ corresponding to $(W_J(\sigma^i)\cdot\log(s_r^i),r_0)$. 

Recall the ideals defined in \eqref{eq:HAideals} and let $\bbI_{W(\sigma)\cdot \log(s_r),r_0} = \bbI_{\bar\chi}$.
Let $M$ be an irreducible representation of $\cH$ with central character $\chi$. We can view $M$ as a representation of $\cH / I_{W\cdot s, v_0} \cH = \cH_\chi$. The representation $\Res_J(M)$ is annihilated by $I_{W\cdot s,v_0}$, so it is also a representation of $\cH_J / I_{W\cdot s, v_0} \cH_J$. By \Cref{theorem:reduction2}, $\Psi_{\chi}(M)$ is an irreducible representation of $\bH$ with central character $\bar\chi$, hence a representation of $\bH / \bbI_{W(\sigma)\cdot \log(s_r),r_0}\bH = \bH_{\bar\chi}$, and $\Res_J(\Psi_{\chi}(M))$ is a representation of 
$\bH_{J} / \mathbb{I}_{W(\sigma)\cdot\log(s_r),r_0}\bH_{J}$.
Note that $\bH_J = \bH_{W\cdot\sigma}^{(\lambda,\lambda^*)}(\Phi_J) = \bigoplus_{i=1}^m \bH_{W_J\cdot\sigma^i}^{(\lambda,\lambda^*)}(\Phi_J)$ where $\Phi_J$ where $\Phi_J$ is the `sub root datum' of $\Phi$ determined by $J$. Write $\bH_{J,i} = \bH_{W_J\cdot\sigma^i}^{(\lambda,\lambda^*)}(\Phi_J)$ and let $\bbI_{W_J(\sigma^i)\log(s_r^i),r_0}\bH_J$ be the ideal in $\bH_J$ generated by $\bbI_{W_J(\sigma^i)\cdot\log(s_r^i),r_0} \subseteq \bH_{J,i} \subseteq \bH_J$. Consider the isomorphism 
\begin{equation}
\Psi_{J,W\cdot s,v_0} \colon \cH_{J} / I_{W\cdot s, v_0} \cH_J 
\to 
\bH_J / \prod_{i=1}^m\bbI_{W_J(\sigma^i)\cdot\log(s_r^i),r_0}\bH_J
=
\bH_{J} / \bbI_{W(\sigma)\cdot\log(s_r),r_0}\bH_{J}
\end{equation}
obtained from \Cref{cor:reductiongeneral} applied to $\cH_J$ and $\bH_J$, and consider
the diagram
\begin{equation}\label{eq:resdiagram}
\begin{tikzcd}
\cR(\cH / I_{W\cdot s,v_0}\cH) 
\arrow[r,"\Res_J"] \arrow[d, "\Psi_\chi"] 
& 
\cR(\cH_J / I_{W\cdot s, v_0} \cH_J) 
\arrow[d, "\Psi_{J,W\cdot s,v_0}"]
\\
\cR(\bH / \mathbb{I}_{W(\sigma)\cdot\log(s_r),r_0}\bH) 
\arrow[r, "\Res_J"]  
& 
\cR(\bH_{J} / \mathbb{I}_{W(\sigma)\cdot\log(s_r),r_0}\bH_{J}).
\end{tikzcd}
\end{equation}
This diagram commutes since the action of the generators (in the sense of \Cref{def:gha}) of $\bH_{J}$ on $\Psi_{J,W\cdot s,v_0} \circ \Res_J (M)$ is the same as on $\Res_J \circ \Psi_\chi(M)$.
By \eqref{eq:CRT1}, we have a group isomorphism
\begin{equation}\label{eq:HArepsJ}
\cR(\cH_J / I_{W\cdot s, v_0} \cH_J) \tilde\to \bigoplus_{i=1}^m \cR(\cH_J/I_{W_J \cdot s^i,v_0}\cH_J).
\end{equation}
Recall from \Cref{prop:ZbH4} that $Z(\bH_J) = \bigoplus_{i=1}^m Z(\bH_{J,i})$ and note that $\bbI_{W(\sigma)\cdot\log(s_r),r_0}Z(\bH_J) = \prod_{i=1}^m(\bbI_{W_J(\sigma^i)\cdot\log(s_r^i),r_0}Z(\bH_J))$ is a product of pairwise coprime maximal ideals in $Z(\bH_J)$, so by the Chinese remainder theorem (cf. \eqref{eq:CRT2}), we have a group isomorphism
\begin{equation}\label{eq:GHArepsJ}
\cR(\bH_J / \bbI_{W(\sigma)\cdot\log(s_r),r_0}\bH_J) 
\tilde\to 
\bigoplus_{i=1}^m\cR(\bH_{J}/\bbI_{W_J(\sigma^i)\cdot\log(s_r^i),r_0}\bH_{J}).
\end{equation}
For $i=1,\dots,m$, let 
$\Psi_i \colon \cH_J/I_{W_J\cdot s^i,v_0}\cH_J 
\to 
\bH_{J,i} / \bbI_{W_J(\sigma^i)\cdot\log(s_r^i),r_0}\bH_{J,i} 
= 
\bH_{J} / \bbI_{W_J(\sigma^i)\cdot\log(s_r^i),r_0}\bH_{J}$ 
be the isomorphism coming from \Cref{theorem:reduction2} (note that \Cref{theorem:reduction2} applies since $\bH_{J,i} = \bH_{W_J\cdot\sigma^i}^{(\lambda,\lambda^*)}(\Phi_J)$ and $W_J\cdot\sigma^i$ is a single $W_J$-orbit). 
By \cite[Theorem 6.2]{barbasch1993reduction}, we have a commutative diagram
\begin{equation}
\begin{tikzcd}
\cR(\cH_J / I_{W_J\cdot s^i, v_0} \cH_J) 
\arrow[r,"\Ind_J"] \arrow[d, "\Psi_i"] 
& 
\cR(\cH / I_{W\cdot s,v_0}\cH) 
=
\cR(\cH_\chi) 
\arrow[d, "\Psi_\chi"] 	
\\
\cR(\bH_{J} / \mathbb{I}_{W_J(\sigma^i)\cdot\log(s_r^i),r_0}\bH_{J})
\arrow[r, "\Ind_J"]  
& 
\cR(\bH / \mathbb{I}_{W(\sigma)\cdot\log(s_r),r_0}\bH)
=
\cR(\bH_{\bar \chi}).
\end{tikzcd}
\end{equation}
\\
We noted in the proof of \Cref{cor:reductiongeneral} that $\Psi_{J,W\cdot s,v_0} = \sum_{i=1}^m \Psi_i$ (note that we can identify $\bH_{J,i} / \bbI_{W_J(\sigma^i)\cdot\log(s_r^i),r_0}\bH_{J,i} 
= 
\bH_{J} / \bbI_{W_J(\sigma^i)\cdot\log(s_r^i),r_0}\bH_{J}$), so by \eqref{eq:HArepsJ} and \eqref{eq:GHArepsJ} we obtain a commutative diagram

\begin{equation}\label{eq:inddiagram}
\begin{tikzcd}
\cR(\cH_J / I_{W\cdot s, v_0} \cH_J) 
\arrow[r,"\Ind_J"] \arrow[d, "\Psi_{J,W\cdot s,v_0}"] 
& 
\cR(\cH_\chi) 
\arrow[d, "\Psi_\chi"] 	
\\
\cR(\bH_{J} / \mathbb{I}_{W(\sigma)\cdot\log(s_r),r_0}\bH_{J})
\arrow[r, "\Ind_J"]  
& 
\cR(\bH_{\bar \chi}).
\end{tikzcd}
\end{equation}
Combining the diagrams \eqref{eq:resdiagram} and \eqref{eq:inddiagram}, the result then	 follows from \Cref{theorem:KatoIM}, \Cref{theorem:gIMindres}.
\end{proof}

The Morita equivalence coming from \Cref{subsec:red} gives us bijections
\begin{align}
&F \colon \Irr(\bH) \to \Irr(\bH_{\sigma}')\colon  M \mapsto \bH_\sigma'^{1\times m}\otimes_{\bH} M,
\\
&F^{-1} \colon \Irr(\bH_{\sigma}') \to \Irr(\bH)\colon M \mapsto M^m.
\end{align}
Note that $\bH_\sigma'^{1\times m}$ is canonically a right $M_m(\bH_\sigma')\cong \bH$-module and a left $\bH_\sigma'^{1\times m}$ module with diagonal action.
Consider the composition $\Omega = F\circ\Psi_{\chi} \colon \Irr(\cH_\chi) \to \Irr(\bH_{\sigma,\bar\chi'}')$.
The proof of \Cref{corollary:diagramIM} is based on ideas that the author discussed with Jonas Antor and Emile Okada in our joint work for an upcoming paper.

\begin{corollary}\label{corollary:diagramIM}
We have a commutative diagram
\begin{equation}
\centering
\begin{tikzcd}
\Irr(\cH_\chi) \arrow[r,"\IM"] \arrow[d, "\Omega"] & \Irr(\cH_\chi) \arrow[d, "\Omega"]
\\
\Irr(\bH_{\sigma,\bar\chi'}') \arrow[r, "\gIM"]  & \Irr(\bH_{\sigma,\bar\chi'}').
\end{tikzcd}
\end{equation}
\end{corollary}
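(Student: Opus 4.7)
The plan is to combine \Cref{theorem:comdiagHecke} with the Morita equivalence $F$. By that theorem we already have $\Psi_\chi \circ \IM = \gIM_\Sigma \circ \Psi_\chi$ on $\Irr(\cH_\chi)$. Since $\Omega = F \circ \Psi_\chi$, it suffices to establish the compatibility
\[
F \circ \gIM_\Sigma = \gIM \circ F \quad \text{on} \quad \Irr(\bH_{\bar\chi}),
\]
after which a direct diagram chase gives $\Omega \circ \IM = F \circ \gIM_\Sigma \circ \Psi_\chi = \gIM \circ F \circ \Psi_\chi = \gIM \circ \Omega$.

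The Morita equivalence $F$ is realised concretely by $M \mapsto E_\sigma M$, with the $\bH_\sigma'$-action coming from the canonical isomorphism $\iota \colon \bH_\sigma' \xrightarrow{\sim} E_\sigma \bH_\Sigma E_\sigma$ of \Cref{theorem:reduction1}. Because $\gIM_\Sigma$ fixes every idempotent $E_{\sigma'}$, both $F(\gIM_\Sigma(M))$ and $\gIM(F(M))$ have underlying vector space $E_\sigma M$; so one only needs to compare the two resulting $\bH_\sigma'$-actions on the standard generators of \Cref{def:gIM2}, namely $t_w$ for $w \in W_\sigma$, elements $\gamma \in \Gamma_\sigma$, and $\omega \in \sS$. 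For the generators $t_w$ and $\gamma$, the images $\iota(t_w) = E_\sigma t_w$ and $\iota(\gamma) = E_\sigma \gamma$ commute with $E_\sigma$ since $W_\sigma$ and $\Gamma_\sigma$ stabilise $\sigma$; applying $\gIM_\Sigma$ yields $(-1)^{\ell(w)} \iota(t_w)$ and $\iota(\gamma)$, exactly matching $\iota(\gIM(t_w))$ and $\iota(\gIM(\gamma))$.

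The essential case is $\omega \in \sS$. Using the cross-relations $E_\sigma t_{w_0} = t_{w_0} E_{w_0^{-1}\sigma} = t_{w_0} E_{\sigma_k}$ together with the involution property $w_0^2 = 1$ (which gives $w_0 \sigma_k = \sigma$), one computes that $\gIM_\Sigma(\iota(\omega)) = E_\sigma \cdot t_{w_0} w_0(\omega) t_{w_0}^{-1}$ acts on any $x \in E_\sigma M$ as $t_{w_0} w_0(\omega) t_{w_0}^{-1} x$, with the result again lying in $E_\sigma M$. On the other hand, writing $w_0 = w_k w'$ with $w' \in W(\sigma)$ so that the product $t_{w_k}^{-1} t_{w_0}$ collapses to an element of $\iota(\bH_\sigma')$, the element $\gIM(\omega) = t_{w_k}^{-1} t_{w_0} w_0(\omega) t_{w_0}^{-1} t_{w_k}$ of $\bH_\sigma'$ is seen through $\iota$ to produce the same operator on $E_\sigma M$: the outer factors $t_{w_k}^{\pm 1}$ interchange $E_\sigma$ and $E_{\sigma_k}$, cancelling the idempotent shift caused by the inner $t_{w_0}^{\pm 1}$.

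The main obstacle is precisely this final verification for $\omega$. One must check that the conjugation by $t_{w_k}$ in \Cref{def:gIM2} exactly compensates for the shift $E_\sigma \mapsto E_{\sigma_k}$ induced by $t_{w_0}$, so that the expression for $\gIM(\omega)$, although written using elements of $\bH_\Sigma$ not individually in $\bH_\sigma'$, descends to a well-defined element of $\bH_\sigma'$ that acts correctly. This engineering is precisely the content of the remark following \Cref{def:gIM2}, explaining why the longest element $w_0$ of $W$ (rather than that of $W_\sigma$) must appear in the definition of $\gIM$.
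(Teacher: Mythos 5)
Your overall strategy is the paper's: reduce via \Cref{theorem:comdiagHecke} to the commutativity of the square relating $F$, $\gIM_\Sigma$ and $\gIM$ (the paper's diagram \eqref{eq:diagGHAGHA}). The gap is exactly at the point you call ``the main obstacle''. Realising $F$ as $M \mapsto E_\sigma M$ with the $\bH_\sigma'$-action through your $\iota \colon \bH_\sigma' \cong E_\sigma \bH_\Sigma E_\sigma$, the two modules $F(\gIM_\Sigma(M))$ and $\gIM(F(M))$ do share the underlying space $E_\sigma M$, but their actions on a polynomial generator $\omega \in \sS$ are given by the operators
\[
v \mapsto E_\sigma\, t_{w_0} w_0(\omega) t_{w_0}^{-1}\, v
\qquad\text{and}\qquad
v \mapsto E_\sigma\, t_{w_k^{-1}w_0}\, w_0(\omega)\, t_{w_0^{-1}w_k}\, v ,
\]
respectively (the second because $\gIM(\omega)$, read inside $\bH_\sigma'$, is $t_{u} w_0(\omega) t_{u}^{-1}$ with $u = w_k^{-1}w_0 \in W(\sigma)$). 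These differ by conjugation with the \emph{action} of $t_{w_k}$ on the module, which interchanges $E_\sigma M$ and $E_{\sigma_k}M$ and is in general a nontrivial operator. Your ``cancellation of the idempotent shift'' only shows that both operators preserve $E_\sigma M$; it does not show they are equal there, and in general they are not. So $F\circ\gIM_\Sigma$ and $\gIM\circ F$ do not agree on the nose, and the statement cannot be obtained generator by generator: one must exhibit an explicit intertwiner. This is precisely what the paper's proof supplies, namely the map $f(a) = u \otimes F^{-1}(\pi)(t_{w_k})\,\underline{a}$, whose essential feature is the inserted operator $F^{-1}(\pi)(t_{w_k})$ compensating for the discrepancy above; that correction is absent from your argument.

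A secondary point: for the group-like generators you assert that $\gIM_\Sigma(\iota(t_w)) = (-1)^{\ell(w)}\iota(t_w)$ matches $\iota(\gIM(t_w))$ for $w \in W_\sigma$, and that $\gIM_\Sigma$ fixes $\iota(\gamma)$ for $\gamma \in \Gamma_\sigma$. But $\gIM_\Sigma$ multiplies $t_w$ by $(-1)^{\ell_W(w)}$ (length in $W$), while $\gIM$ uses the length in $W_\sigma$ and fixes $\Gamma_\sigma$ outright; one must argue that the relevant parities agree (the paper addresses this explicitly in its verification of $f$ on these generators). As written, your proposal silently assumes both this parity matching and the operator identity for $\omega$, so it does not yet constitute a proof.
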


\begin{proof}
By \Cref{theorem:comdiagHecke}, it suffices to show that the following diagram commutes
\begin{equation}\label{eq:diagGHAGHA}
\centering
\begin{tikzcd}
\Irr(\bH) \arrow[r,"\gIM_\Sigma"] \arrow[d,"F"] & \Irr(\bH) \arrow[d,"F"]
\\
\Irr(\bH_{\sigma}') \arrow[r, "\gIM"]  & \Irr(\bH_{\sigma}').
\end{tikzcd}
\end{equation}
Let $(M,\pi) \in \Irr(\bH_{\sigma}')$. We obtain $(\bH_{\sigma}'^{1\times m}\otimes_{\bH}M^m,F\IM_\Sigma F^{-1}(\pi))$, $(M,\gIM(\pi)) \in \Irr(\bH_\sigma')$. We have to show that these two representations are isomorphic.
Write $\pi' = F\IM_\Sigma F^{-1}(\pi)$ and $\tilde \pi = \gIM(\pi)$ and define
\begin{align}
f \colon M \to \bH_{\sigma}'^{1\times m}\otimes_{\bH}M^m \colon a \mapsto u \otimes F^{-1}(\pi)(t_{w_k}) \underline{a}
\end{align}
where $u$ is the element of $\bH_{\sigma}'^{1\times m}$ with entries all equal to $1$ and $\underline{a}$ the element of $M^m$ with entries all equal to $a$. For $\omega \in \sS = S(\ft^\vee)$ and $a \in M$, we have
\begin{align}
f(\tilde\pi(\omega)a) 
&= u \otimes F^{-1}(\pi)(t_k) \underline{(\pi(t_{w_k}^{-1}t_{w_0}w_0(\omega)t_{w_0}^{-1}t_{w_k})a}
\\
&= u \otimes F^{-1}(\pi)(t_{w_0}w_0(\omega)t_{w_0}^{-1}t_{w_k}) \underline{a} = 
\\
&= (t_{w_0}w_0(\omega)t_{w_0}^{-1} \cdot u) \otimes F^{-1}(\pi)(t_{w_k})\underline{a} 
\\
&= \pi'(\omega)f(a).
\end{align}
For $\gamma \in \Gamma_c$ and $w \in W_\sigma$ we have $f(\tilde \pi(\gamma)a) = \pi'(\gamma)f(a)$ and $f(\tilde\pi(t_w)a) = \pi'(t_w)f(a)$ since the parity of the length of $\gamma$ and $w$ in $W_\sigma$ is the same as in $W$. Furthermore, $f$ is clearly surjective, hence bijective since $\dim_{\C}M = \dim_{\C}(\bH_{\sigma}'^{1\times m}\otimes_{\bH}M^m)$, so $(\bH_{\sigma}'^{1\times m}\otimes_{\bH}M^m,\pi') \cong (M,\tilde\pi)$ (irreducible representations of $\bH$ are well-known to be finite-dimensional), so the diagram \eqref{eq:diagGHAGHA} commutes.
\end{proof}

\begin{remark}
An analogous statement \cite[Theorem 5.8]{evens1997fourier} was proved for $\tIM$ and $\tgIM$:
\begin{equation}
\begin{tikzcd}
\Irr(\cH_\chi) \arrow[r,"\tIM"] \arrow[d] & \Irr(\cH_{\check\chi}) \arrow[d] 	
\\
\Irr(\bH_{\bar\chi'}) \arrow[r, "\tgIM"]  & \Irr(\bH_{\check{\bar\chi}'}),
\end{tikzcd}
\end{equation}
where $\check\chi$ is the central character of $\cH$ corresponding to $(W\cdot s^{-1},v_0)$ and $\check{\bar \chi}'$ is the central character of $\bH_{\sigma^{-1}}$ corresponding to $(W(\sigma^{-1})\cdot\log(s_r^{-1}),r_0)$.
The proof for \cite[Theorem 5.8]{evens1997fourier} is however a lot different. It uses the fact that $\Psi_{\check\chi}\circ\tIM(\theta_x) = \tgIM\circ\Psi_{\chi}(\theta_x)$ for $x \in X^*$, whereas $\Psi_{\check\chi}\circ\IM(\theta_x) = \Psi_{\check\chi}(T_{w_0}\theta_{w_0(x)}T_{w_0}^{-1})$ is more difficult to determine since the image of $T_{w_0}$ under $\Psi_{\bar\chi}$ has a complicated form as can be seen from \Cref{prop:Psi}.
The author is not aware of a proof of \Cref{theorem:comdiagHecke} and \Cref{corollary:diagramIM} being well-documented in the literature.
\end{remark}

The proof for the following statement will be given in a forthcoming paper of the author with Emile Okada and Jonas Antor: if $X$ is an irreducible representation of $\Gk$ corresponding to the irreducible representation $\pi$ of the relevant geometric Hecke algebra, then we have $\AZ(X) = \IM(\pi)$.

\section{Maximality and minimality results for $\SO(N,\C)$ and $\Sp(2n,\C)$}\label{sec:max}

\subsection{Green functions}\label{subsec:green}
Let $G$ be a connected complex group.
Let $(e,\psi) \in \NNG$, $(L,C_L,\cL) = \csup(e,\psi) \in S_G$ and consider the following $W_L$-representation 
(see \eqref{eq:genspringerfibre})
\begin{equation}
Y(e,0,0)^\psi \cong H_{\bullet}(\cP_e,\dot{\cL})^\psi.
\end{equation}
For the cases $G = \SO(N,\C)$ or $G=\Sp(2n,\C)$, we want to find $(\emax,\psimax) \in \NNG$ such that $[H_{\bullet}(\cP_e,\dot{\cL})^\psi : \GSpr(\emax,\psimax)] = 1$ and such that for any $(e',\psi') \in \NNG$ with $[H_{\bullet}(\cP_e,\dot{\cL})^\psi : \GSpr(e',\psi')] > 0$, we have $G\cdot\emax \succ G\cdot e'$ (strict inequality) or $(G\cdot\emax,\psimax) = (G\cdot e',\psi')$. 
We will call $\GSpr(\emax,\psimax)$ the \emph{maximal $W_L$-subrepresentation of $H_{\bullet}(\cP_e,\dot{\cL})^\psi$}.
For $(e,\psi),(e',\psi') \in \NNG$ and $\rho = \GSpr(e,\psi)$, $\rho'=\GSpr(e',\psi')$, define the \emph{Green function} $\tilde P_{\rho',\rho}  \in \Z(t)$ by
\begin{align}
\tilde P_{\rho',\rho} 
:= \sum_{i\in \Z} [H_{2i}(\cP_e,\dot\cL)^\psi : \GSpr(e',\psi')] t^i.
\end{align}
If $(e,\psi),(e',\psi')\in\NNG$ correspond to $(C,\cE)$,$(C',\cE')\in\NNG$, define 
\begin{align}\label{eq:multgreen}
\mult(C,\cE;C',\cE'):=
\mult(e,\psi;e',\psi') := \tilde P_{\rho',\rho}(1) = [H_{\bullet}(\cP_e,\dot{\cL})^\psi : \GSpr(e',\psi')].
\end{align}
Also denote by $(\Cmax,\Emax)$ the element of $\NNG$ corresponding to $(\emax,\psimax)$.

\begin{theorem}[{\cite[Theorem 24.8(b)]{lusztig1986character}}]\label{theorem:lusztig}
There exist unique $|W_L^\wedge|\times|W_L^\wedge|$ matrices $\tilde P = \tilde P^{(k)}$ and $\tilde \Lambda = \tilde \Lambda^{(k)}$ over $\Q(t)$ such that
\begin{align}
\tilde P \tilde\lambda \tilde P^t &= \tilde \Omega, &&
\\
\tilde\lambda_{\rho,\rho'} &= 0 && \text{if } \rho \not\sim \rho', \label{LScondition1}
\\
\tilde p_{\rho,\rho'} &= 0 && \text{unless $\rho' \prec \rho$ and $\rho \not\sim \rho'$, or $\rho = \rho'$,} \label{LScondition2}
\\
\tilde p_{\rho,\rho} &= 1.
\end{align}
Furthermore, the entries of $\tilde P, \tilde \Lambda$ lie in $\Z[t]$.
\end{theorem}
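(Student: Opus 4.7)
The plan is to interpret the theorem as a recursive Gaussian-elimination-style decomposition of the symmetric matrix $\tilde\Omega$, in the spirit of an $LDL^{t}$-factorisation, but adapted to the pair of relations $\prec$ and $\sim$. First I would fix a total order on $W_L^\wedge$ refining $\prec$, with the additional property that each $\sim$-equivalence class occupies a consecutive block; in Lusztig's setting this is possible because similarity classes are $\prec$-antichains (representations in the same family share the same $a$-value).

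Expanding $\tilde\Omega = \tilde P \tilde\Lambda \tilde P^{t}$ entrywise,
\begin{equation*}
\tilde\Omega_{\rho,\sigma} = \sum_{\tau,\tau'} \tilde p_{\rho,\tau}\, \tilde\lambda_{\tau,\tau'}\, \tilde p_{\sigma,\tau'},
\end{equation*}
the vanishing conditions \eqref{LScondition1} and \eqref{LScondition2} collapse the sum drastically: $\tau$ is either $\rho$ or strictly $\prec$-below and $\not\sim$-to $\rho$, similarly for $\tau'$, and we must have $\tau\sim\tau'$. I would then process pairs $(\rho,\sigma)$ in the induced lexicographic order on $W_L^\wedge\times W_L^\wedge$ and split into two cases. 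If $\rho\sim\sigma$, then the only term involving genuinely new data is $\tilde\lambda_{\rho,\sigma}$ itself (the "mixed" contributions $\tau=\rho$, $\tau'\neq\sigma$ are killed because they would force $\tau'\sim\rho\sim\sigma$, contradicting $\tau'\not\sim\sigma$); hence $\tilde\lambda_{\rho,\sigma}$ equals $\tilde\Omega_{\rho,\sigma}$ minus an explicit expression in entries already determined. If $\rho\not\sim\sigma$, say $\sigma\prec\rho$, then the new unknown is $\tilde p_{\rho,\sigma}$, which enters via the single term $\tilde p_{\rho,\sigma}\,\tilde\lambda_{\sigma,\sigma}\,\tilde p_{\sigma,\sigma} = \tilde p_{\rho,\sigma}\,\tilde\lambda_{\sigma,\sigma}$; provided $\tilde\lambda_{\sigma,\sigma}$ has inductively been determined to be a non-zero polynomial, $\tilde p_{\rho,\sigma}$ is uniquely solvable. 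Existence and uniqueness of $\tilde P$ and $\tilde\Lambda$ over $\Q(t)$ drop out of the same recursion.

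The two main obstacles are the following. First, the recursion requires the diagonal entries $\tilde\lambda_{\sigma,\sigma}$ to be non-zero as rational functions in $t$, so that the subsequent division is legal; this can be handled by showing that the diagonal of $\tilde\Omega$ restricted to the $\prec$-minimal similarity class is a positive monic polynomial (coming, in the geometric setting, from the normalisation of intersection-cohomology stalks), and then propagating this inductively. Second, upgrading entries from $\Q(t)$ to $\Z[t]$ is delicate: the a priori output of the recursion involves quotients by the $\tilde\lambda_{\sigma,\sigma}$. For this I would appeal either to the geometric interpretation of $\tilde\Omega$ in terms of intersection-cohomology of orbit closures, where the matrix entries are manifestly in $\Z[t]$ with known positivity, or to Shoji's refinement which shows the diagonal $\tilde\lambda_{\sigma,\sigma}$ are monic powers of $t$ (up to units in $\Z$), so that division never introduces genuine denominators. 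This integrality step is the subtle part of the argument; the algebraic recursion itself is formal.
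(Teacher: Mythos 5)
You should first note that the paper does not prove this statement at all: it is imported verbatim from Lusztig (Theorem 24.8(b) of \emph{Character sheaves}), so the comparison is with Lusztig's argument. Measured against that, the main gap in your proposal is the claim that ``existence and uniqueness \dots drop out of the same recursion.'' The elimination scheme can only give uniqueness. The system is over-determined: for a pair $\rho\not\sim\sigma$ with $\rho$ and $\sigma$ incomparable under $\prec$ (such pairs abound, since similarity classes correspond to local systems on a fixed nilpotent orbit and $\prec$ is the closure order, which is far from total), the entry $\tilde\Omega_{\rho,\sigma}$ involves no new unknown whatsoever, so the equation for it is a consistency condition that the formal recursion cannot verify. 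In Lusztig's proof existence is geometric: $\tilde P$ is the matrix of intersection-cohomology multiplicities (the Green functions), $\tilde\Lambda$ is built from the cohomology of the orbits with coefficients in the local systems, and $\tilde P\tilde\Lambda\tilde P^t=\tilde\Omega$ is an orthogonality statement proved via purity; only uniqueness is obtained by the recursion. Since you yourself invoke the geometric interpretation to secure the nonvanishing of the diagonal and the integrality, your argument as structured is circular: you cannot simultaneously derive existence from the recursion and justify the recursion's divisions by properties of the geometrically constructed solution.

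There is a second concrete error: the entrywise step for $\tilde p_{\rho,\sigma}$ is wrong whenever the similarity class of $\sigma$ has more than one element, which is the typical situation. In $\tilde\Omega_{\rho,\sigma}$ (for $\sigma\prec\rho$, $\sigma\not\sim\rho$) \emph{all} the unknowns $\tilde p_{\rho,\tau}$ with $\tau\sim\sigma$ occur, coupled through the entries $\tilde\lambda_{\tau,\sigma}$; the mixed terms are killed only in the $\rho\sim\sigma$ case, where unitriangularity of $\tilde P$ inside a block applies. So at each stage one must solve a linear system whose matrix is the full diagonal block of $\tilde\Lambda$ on the class of $\sigma$, and the nondegeneracy needed is invertibility of that block, not nonvanishing of the single scalar $\tilde\lambda_{\sigma,\sigma}$; establishing this nondegeneracy is genuine content of the theorem, again supplied by the geometry. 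Finally, the assertion that the diagonal entries of $\tilde\Lambda$ are monic powers of $t$ up to units is not correct in general; the workable route to $\Z[t]$-integrality is the other one you mention: once $\tilde P$ is known (geometrically) to have entries in $\Z[t]$ and to be unitriangular for the chosen order, $\tilde P^{-1}$ has entries in $\Z[t]$, and then $\tilde\Lambda=\tilde P^{-1}\tilde\Omega(\tilde P^{t})^{-1}$ lies in $\Z[t]$ because $\tilde\Omega$ does.
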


\begin{example}\label{subsec:algA}
Let $G = \SL(N,\C)$.
See \cite[\S10.3]{lusztig1984intersection}, \cite[\S5]{lusztig1985generalized}, or \cite[\S5.6.1]{ciubotaru2023wavefront} for more details about the generalised Springer correspondence of $G$.
Let $(C,\E) \in \NNG$ and suppose $C$ is parametrised by some partition $\lambda = (\lambda_1,\dots,\lambda_t)$ of $N$. 
Then $A_G(C) \cong \ZZ{n'}$ where $n' = \gcd(\lambda_1,\dots,\lambda_t)$. 
Denote by $\psi$ the representation of $\ZZ{n'}$ corresponding to $\E$.
Let $d$ be the order of $\psi$. Lusztig showed that $d\mid\lambda_i$ for $i=1,\dots,t$. As a representation of $S_{n/d}$, $\GSpr(e,\phi)$ is parametrised by the partition $\lambda/d := (\lambda_1/d,\dots,\lambda_t/d)$ of $n/d$.
By the combinatorial results regarding the Green functions for type $A$ in \cite[III.6.]{macdonald1998symmetric}, $\GSpr(\Cmax,\Emax)$ is the representation of $S_{n/d}$ parametrised by the partition $(n/d)$ of $n/d$, which by the generalised Springer correspondence implies that $\Cmax$ is parametrised by $\lambdamax = (n)$ and $\Emax$ corresponds to $\epsmax = \eps$.
\end{example}

\subsection{$G = \SO(N,\C)$}\label{subsec:maxminSO}
The matrix $\tilde P$ in \Cref{theorem:lusztig} is the same as in \cite[Theorem 3.3]{la2022maximality}. 

\begin{theorem}
[{\cite[Theorem 4.2, Theorem 5.1]{la2022maximality}}]
\label{thm:max}
Suppose $(\lambda,[\eps]) \in \PPort(N)$ such that $\lambda$ only has odd parts. Then there exists a unique $(\lambda^{\text{max}},[\epsmax]) \in \PPort(N)$ such that $\lambdamax$ only has odd parts and 
\begin{enumerate}[(1)]
\item\label{thm:max1} $\mult(C_\lambda,\mathcal E_{[\eps]}; C_{\lambdamax},\mathcal E_{[\epsmax]}) = 1$,
\item\label{thm:max2} For all $(\lambda',[\eps']) \in \PPort(N)$ with $\mult(C_\lambda,\E_{[\eps]};C_{\lambda'}^+,\mathcal E_{[\eps']}^+)= \mult(C_\lambda,\E_{[\eps]};C_{\lambda'}^-,\mathcal E_{[\eps']}^-) \neq 0$, we have $\lambda' < \lambda^{\text{max}}$ or $(\lambda',[\eps']) = (\lambda^{\text{max}},[\epsmax])$.
\end{enumerate}
\end{theorem}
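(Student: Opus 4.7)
The plan is to reduce the statement to an explicit combinatorial problem about orthogonal symbols and to construct $(\lambdamax,[\epsmax])$ directly from $S_{\lambda,\eps}$. By Theorem~\ref{thm:gscSO}, every $(\lambda',[\eps']) \in \PPort(N)$ is recorded in its symbol $S_{\lambda',\eps'}$ of defect $k(\lambda',[\eps'])$, and this defect determines the relative Weyl group in which $\GSpr(C_{\lambda'},\mathcal E_{[\eps']})$ lives. Since the multiplicity $\mult(C_\lambda,\mathcal E_{[\eps]};C_{\lambda'},\mathcal E_{[\eps']})$ is computed inside a single $W_L$ via \eqref{eq:multgreen}, nonvanishing forces $k(\lambda,[\eps]) = k(\lambda',[\eps'])$. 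Moreover, by the discussion preceding Theorem~\ref{theorem:lusztig} (identifying $\GSpr(C_\lambda,\mathcal E_{[\eps]})$ as the constituent of $H_\bullet(\cP_e,\dot{\cL})^{[\eps]}$ of strictly smallest Springer support) combined with the identification of the closure order on orthogonal orbits with the dominance order on partitions from Section~\ref{subsec:gscSO}, every $\lambda'$ appearing with nonzero multiplicity must satisfy $\lambda \leq \lambda'$ in the dominance order.

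Next, I would identify $(\lambdamax,[\epsmax])$ as the unique dominance-maximum orthogonal partition of $N$ with only odd parts appearing in the row indexed by $\rho := \GSpr(C_\lambda,\mathcal E_{[\eps]})$ in the Green function matrix $\tilde P$. Varying $[\eps']$ at fixed $\lambda$ corresponds to swapping selected intervals of $A^\# \triangle B^\#$ between the top and bottom rows of the symbol, whereas changing the partition $\lambda'$ (while preserving the defect) corresponds to more substantial rearrangements of $p_{\lambda',\eps'} = A \sqcup B$. An explicit greedy construction, based on pushing as many ``boxes'' as possible to the first parts while preserving the parity constraint and the defect, should produce $\lambdamax$ together with a distinguished $[\epsmax]$, and both existence and uniqueness of this maximiser should follow from the construction.

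Finally, I would verify (1) and (2) via the Lusztig--Shoji algorithm (Theorem~\ref{theorem:lusztig}): the triangular factorisation $\tilde P \tilde\Lambda \tilde P^t = \tilde \Omega$ together with the support conditions \eqref{LScondition1} and \eqref{LScondition2} pin down the corner entry $\tilde p_{\rho^{\max},\rho}$ with $\rho^{\max} := \GSpr(C_{\lambdamax},\mathcal E_{[\epsmax]})$, and a direct computation would show this entry equals~$1$, giving~(1). Assertion~(2) then follows because any $(\lambda',[\eps'])\neq(\lambdamax,[\epsmax])$ with nonzero multiplicity is strictly dominated by $\lambdamax$ by the maximality from paragraph~2, while the equal $\pm$ multiplicity hypothesis for degenerate $\lambda'$ should hold automatically from the odd-parts hypothesis on $\lambda$, which excludes certain degenerate contributions that would split the multiplicity. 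The principal obstacle is the parity and sign bookkeeping: showing that the greedy construction does yield a partition with only odd parts, and correctly tracking how the sign class $[\eps] \in F_2[\Delta(\lambda)]'$ transforms under the interval-swapping procedure so as to produce a well-defined $[\epsmax] \in F_2[\Delta(\lambdamax)]'$, taking particular care that the two possible representatives of $[\eps]$ (when $M=0$) yield compatible outputs.
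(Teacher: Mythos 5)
First, note that this paper does not actually prove \Cref{thm:max}: it is imported wholesale from \cite[Theorem 4.2, Theorem 5.1]{la2022maximality}, and the argument used there is the one echoed in this paper's proofs of \Cref{thm:maxeven} and \Cref{thm:mineven} and in the algorithm of \Cref{sec:algSO}. Your opening reductions are sound and agree with that approach: nonvanishing of $\mult(C_\lambda,\E_{[\eps]};C_{\lambda'},\E_{[\eps']})$ does force equality of cuspidal supports, hence of defects $k$, and the Lusztig--Shoji support condition does give $\lambda\leq\lambda'$ for every constituent. But from that point on your proposal defers exactly the content of the theorem. Declaring $(\lambdamax,[\epsmax])$ to be ``the unique dominance-maximum partition appearing in the row'' presupposes what must be proved: dominance is only a partial order, and the existence of a single maximal element among the Springer supports of the constituents, with a single attached sign class and multiplicity one, is the whole theorem. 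Your ``greedy construction, pushing boxes to the first parts while preserving parity and defect'' is not a proof sketch of this; the actual construction (reproduced in \Cref{sec:algSO}) is a recursion whose first step $\bar\lambda_1=\sum_{i\in\mathfrak S}\lambda_i-2|\tilde J^{-\eps(1)}|-\tfrac{1+(-1)^{|\mathfrak S|}}{2}$ depends delicately on the sign sequence through $\mathfrak S$ and $\tilde J^{\pm\eps(1)}$, and the maximality/uniqueness is proved by comparing the sequences $\Lambda_{k,-k;2}(\alpha',\beta')$ attached to the bipartition constituents (via \cite[Lemma 1.2, Lemma 4.1]{la2022maximality}) rather than by any direct manipulation of the partitions $\lambda'$.

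The most concrete error is in your verification of part (1): the triangularity conditions \eqref{LScondition1}--\eqref{LScondition2} in \Cref{theorem:lusztig} normalise the \emph{minimal}-support corner of the matrix $\tilde P$ (they give $\tilde p_{\rho,\rho}=1$ and vanishing below the diagonal, i.e.\ that $\GSpr(C_\lambda,\E_{[\eps]})$ itself occurs with multiplicity one and has strictly smallest support), and they say nothing about the entry at the \emph{maximal}-support constituent. So ``a direct computation would show this entry equals 1'' is precisely the nontrivial claim, not a consequence of the factorisation $\tilde P\tilde\Lambda\tilde P^t=\tilde\Omega$; in the reference it is established through explicit symbol/Green-function combinatorics (and, in the generalisations in this paper, through the induction formula \Cref{prop:induction} together with Pieri-type multiplicity computations). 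Two smaller points: the equality of the $\pm$ multiplicities for degenerate $\lambda'$ in (2) is a hypothesis of the statement, not something you need to derive from the odd-parts assumption; and the assertion that the maximal $\lambdamax$ again has only odd parts is also left unproved in your sketch. As it stands the proposal identifies the right combinatorial arena (symbols, defect, dominance, Lusztig--Shoji) but leaves all of the theorem's actual content — the construction of $(\lambdamax,[\epsmax])$, its maximality against every constituent, its multiplicity one, and the odd-parts property — as unexecuted ``should'' steps.
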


Let $n \in \N$ and let $(\alpha,\beta) \in \mathcal P_2(n)$. As representations of $W(B_n)$, we have $\sgn\otimes\rho_{(\alpha,\beta)} = \rho_{(\transp\beta,\transp\alpha)}$, where $\sgn$ is the sign representation of $W(B_n)$. 
Let $i\in\{1,2/c_{(\alpha,\beta)}\}$ and note that $c_{(\alpha,\beta)} = c_{(\transp\beta,\transp\alpha)}$. As representations of $W(D_n)$, we have $\sgn\otimes\rho_{(\alpha,\beta)_i} = \rho_{(\transp\beta,\transp\alpha)_i}$, where $\sgn$ is the sign representation of $W(D_n)$. 
Let $(\lambda,[\eps]) \in \PPort(N)$, $k = k(\lambda,[\eps])$. If $(\alpha,\beta)_k = \Phi_N(\lambda,[\eps])$, then let $(\transps\lambda,\transps\eps) = \Phi_N^{-1}((\transp\beta,\transp\alpha)_k)$.

\begin{theorem}[{\cite[Theorem 4.6]{la2022maximality}}]\label{thm:min}
Let $(\lambda,[\eps]) \in \PPort(N)$ with $\lambda$ only consisting of odd parts. Then there exists a unique $(\lambdamin,\epsmin) \in \PPort(N)$ such 
\begin{enumerate}
\item $\mult(C_\lambda,\E_{[\eps]};C_{\slambdamin},\E_{[\sepsmin]}) = 1$,
\item For all $(\lambda',[\eps']) \in \PPort(N)$ with 
$\mult(C_\lambda,\mathcal E_{[\eps]};C_{\slambda'}^+,\E_{[\seps']}^+) 
= 
\mult(C_\lambda,\mathcal{E}_{[\eps]};C_{\slambda'}^-,\E_{[\seps']}^-) \neq 0$, 
we have $\lambdamin < \lambda'$ or $(\lambda',[\eps']) = (\lambdamin,[\epsmin])$.
\end{enumerate}
Furthermore, we have $(\lambdamax,[\epsmax]) = (\slambdamin,[\sepsmin])$.
\end{theorem}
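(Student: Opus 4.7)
The plan is to deduce \Cref{thm:min} from \Cref{thm:max} by applying the involution ${}^s$ to the output of the maximality theorem, using the order-reversing property of ${}^s$ on $\Port(N)$ with respect to the dominance order as the main extra ingredient.

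Given $(\lambdamax, [\epsmax]) \in \PPort(N)$ from \Cref{thm:max}, the first step is to set $(\lambdamin, [\epsmin]) := ({}^s\lambdamax, [{}^s\epsmax])$. Since ${}^s$ is an involution on $\PPort(N)$ (visible from its definition via the symbol correspondence together with the fact that transposition is an involution on partitions), one obtains ${}^s\lambdamin = \lambdamax$ and $[{}^s\epsmin] = [\epsmax]$, which immediately yields the ``furthermore'' statement. Uniqueness of $(\lambdamin, [\epsmin])$ then follows from the uniqueness of $(\lambdamax, [\epsmax])$ in \Cref{thm:max} by applying ${}^s$ to both sides of any putative equality.

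Condition (1) is immediate from condition (1) of \Cref{thm:max}:
\[
\mult(C_\lambda, \E_{[\eps]}; C_{{}^s\lambdamin}, \E_{[{}^s\epsmin]}) = \mult(C_\lambda, \E_{[\eps]}; C_{\lambdamax}, \E_{[\epsmax]}) = 1.
\]
For condition (2), suppose $(\lambda', [\eps']) \in \PPort(N)$ satisfies the hypothesised nonvanishing of $\mult(C_\lambda, \E_{[\eps]}; C_{{}^s\lambda'}^\pm, \E_{[{}^s\eps']}^\pm)$. Setting $(\mu, [\nu]) := ({}^s\lambda', [{}^s\eps'])$, part (2) of \Cref{thm:max} gives either $(\mu, [\nu]) = (\lambdamax, [\epsmax])$, whence $(\lambda', [\eps']) = (\lambdamin, [\epsmin])$, or $\mu < \lambdamax$, equivalently ${}^s\lambda' < {}^s\lambdamin$. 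In the latter case, the order-reversing property of ${}^s$ yields $\lambdamin < \lambda'$, as required.

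The main obstacle is verifying this order-reversing property: for $\mu_1, \mu_2 \in \Port(N)$, one must show $\mu_1 \leq \mu_2$ if and only if ${}^s\mu_2 \leq {}^s\mu_1$. The strategy is to translate the dominance order on $\mu$ into a dominance-type order on the non-increasing rearrangement of the (unordered) symbol entries $A_{\mu,\eps} \sqcup B_{\mu,\eps}$ — which depends only on $\mu$, not on $\eps$, by the similarity property noted before \Cref{rem:symbol} — and to show that the swap-and-transpose operation $(\alpha,\beta)_k \mapsto (\transp\beta, \transp\alpha)_k$ on bipartitions reverses this order. The latter ultimately reduces to the classical fact that transposition of partitions is order-reversing under dominance, combined with careful bookkeeping of the defect $k$ and the $\jordbp$-decomposition under the involution.
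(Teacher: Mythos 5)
Your reduction of \Cref{thm:min} to \Cref{thm:max} stands or falls with the asserted order-reversing property of ${}^s$, and that property is both ill-posed and false. It is ill-posed because ${}^s\lambda$ is not a function of the partition alone: the bipartition $\Phi_N(\lambda,[\eps])$ depends on $[\eps]$, and two local systems on the same orbit generally have sign-twists supported on different orbits, so ${}^s$ does not descend to $\Port(N)$; for the same reason the swap-and-transpose operation does not factor through the similarity class of the symbol (the multiset $A\sqcup B$), so your proposed translation to an order on symbol-entry multisets is not well defined either. Concretely, take $N=5$: the orbit $(3,1,1)$ carries two local systems, both with trivial cuspidal support (defect $1$), corresponding under $\Phi_5$ to the similar bipartitions $(1;1)$ and $(11;\emptyset)$; their images under $(\alpha,\beta)\mapsto(\transp\beta,\transp\alpha)$ are $(1;1)$ and $(\emptyset;2)$, whose supports are $(3,1,1)$ and $(2,2,1)$. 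Since $(2,2,1)<(3,1,1)$, the biconditional ``$\mu_1\le\mu_2$ iff ${}^s\mu_2\le{}^s\mu_1$'' fails in any reading, and so does the precise implication your Case (b) needs: with $(\lambda',[\eps'])$ the local system on $(3,1,1)$ whose twist is supported on $(2,2,1)$ and $(\lambdamin,[\epsmin])$ the other one, we have ${}^s\lambda'=(2,2,1)<(3,1,1)={}^s\lambdamin$ yet $\lambdamin=\lambda'$ with $[\eps']\neq[\epsmin]$. At best a correct non-strict monotonicity statement yields $\lambdamin\le\lambda'$, which does not exclude two distinct local systems sharing the support $\lambdamin$ --- and excluding exactly that is the content of the strict dichotomy in condition (2).

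Ruling out that configuration cannot be done by pure combinatorics of supports applied after \Cref{thm:max}; it requires knowing which pairs actually occur as constituents of $H_{\bullet}(\cP_e,\dot{\cL})^\psi$, i.e.\ an argument carried out at the level of symbols/bipartitions. This is what the cited proof does (note the present paper does not reprove \Cref{thm:min}: it quotes \cite[Theorem 4.6]{la2022maximality}, and its proofs of the generalisations \Cref{thm:maxeven} and \Cref{thm:mineven} follow the same pattern): one establishes dominance of the sequences $\Lambda_{\cdot,\cdot;2}$ attached to the constituent bipartitions, with equality \emph{if and only if the bipartitions themselves coincide}, and only then transports the conclusion through the sign twist, using the equivalence between dominance of these sequences and the closure order. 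It is this equality criterion at the bipartition level, not order-reversal of Springer supports, that produces the conclusion ``$\lambdamin<\lambda'$ or $(\lambda',[\eps'])=(\lambdamin,[\epsmin])$''; your argument, which projects to supports before twisting, cannot recover it.
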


The results \cite[Theorem 4.9, Theorem 4.10]{la2022maximality} are generalisations of \Cref{thm:max} and \Cref{thm:min}, removing the assumption that $\lambda$ only has odd parts, but with the assumption that $k:=k(\lambda,\eps) = 0$ or $k = 1$ (i.e. $(\lambda,\eps)$ appears in the ordinary Springer correspondence), since \cite[Proposition 4.8]{la2022maximality} is a result specific to Springer fibres.
\Cref{prop:induction} generalises \cite[Proposition 4.8]{la2022maximality}, and replacing each usage of \cite[Proposition 4.8]{la2022maximality} by \Cref{prop:induction} as well as replacing the Springer fibres by the varieties $H_{\bullet}(\cP_e^G,\dot{\cL})^\phi$ for appropriate $(e,\phi)$, the exact same proofs for \cite[Theorem 4.9, Theorem 4.10]{la2022maximality} given in \cite[\S4.3]{la2022maximality} give us generalisations \Cref{thm:maxeven} and \Cref{thm:mineven} without any assumptions on $k$.

Let $(e,\phi) \in \NNG$ and $(L,C_L,\cL) = \csup(e,\phi)$.
Let $M$ be a Levi subgroup of $G$ 
such that $\fm$ intersects $G\cdot e$. We  may assume that $e \in \fm$ otherwise we replace $e$ by a $G$-conjugate.
Note that $S_M$ is a subset of $S_G$ so the cuspidal support of $(e,\phi)$ in $M$ is $(L,C_L,\cL)$ as well. The corresponding relative Weyl group $W_L^M = N_M(L)/L$ in $M$ is a subgroup of $W_L$.
We have the following induction theorem, which is a generalisation of the main theorem of \cite{lusztig2004induction}, which was stated in \cite{alvis1982springer} without proof; see also \cite[Proposition 3.3.3]{reeder2001euler}. 
\begin{proposition}\label{prop:induction}
As $W$-representations, we have
\begin{equation}
H_{\bullet}(\cP_e^G,\dot{\cL})^\phi \cong \ind_{W_L^M}^{W_L} H_{\bullet}(\cP_e^M,\dot{\cL})^\phi.
\end{equation}
\end{proposition}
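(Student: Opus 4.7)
The plan is to follow the approach of \cite{lusztig2004induction}, which establishes this isomorphism in the special case of the ordinary Springer correspondence (trivial cuspidal local system $\cL$), and to carry the cuspidal datum $(L, C_L, \cL)$ through the geometric construction unchanged. The heart of the argument is a proper fibration of $\cP_e^G$ whose base realises the coset space $W_L / W_L^M$ and whose fibres are copies of the $M$-version $\cP_e^M$ of the generalised Springer variety.

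First, I would choose parabolic subgroups $P \subseteq Q$ of $G$ with Levi decompositions $P = LU$ and $Q = MU_Q$, arranged so that $P^M := P \cap M$ is a parabolic of $M$ with Levi $L$ and unipotent radical $U \cap M$. The inclusion $P \subseteq Q$ induces a natural projection $G/P \to G/Q$ which, when restricted to $\cP_e^G$, factors through the subvariety
\[
\cP_e^{M,G} := \{gQ \in G/Q : \Ad(g^{-1})e \in C_L + \Lie(U \cap M) + \Lie(U_Q)\},
\]
yielding a proper map $\pi \colon \cP_e^G \to \cP_e^{M,G}$. A direct Levi-decomposition computation shows that for each $gQ \in \cP_e^{M,G}$, the fibre $\pi^{-1}(gQ)$ is canonically isomorphic to an $M$-generalised Springer variety $\cP_{e_g}^M$, where $e_g$ is the nilpotent $\Lie(M)$-component of $\Ad(g^{-1})e$ with respect to the splitting $\Lie(Q) = \Lie(M) \oplus \Lie(U_Q)$. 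Moreover, by construction the restriction of $\dot{\cL}$ to this fibre agrees with the analogous local system on $\cP_{e_g}^M$ built from the same triple $(L, C_L, \cL)$ inside $M$.

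Next, using proper base change for $\pi$ and the fact that $e_g$ is $M$-conjugate to a fixed nilpotent in $M$ as $g$ varies within a connected component, I would identify $H_\bullet(\cP_e^G, \dot{\cL})$ with $H_\bullet(\cP_e^{M,G}, \pi_* \dot{\cL})$, where $\pi_* \dot{\cL}$ is a local system on $\cP_e^{M,G}$ whose stalks are isomorphic to $H_\bullet(\cP_e^M, \dot{\cL})$. The key combinatorial observation is that the set of connected components of $\cP_e^{M,G}$ carries a natural transitive $W_L$-action in bijection with the coset space $W_L / W_L^M$, with stabilisers conjugate to $W_L^M$; this realises the total homology as the induced representation $\ind_{W_L^M}^{W_L} H_\bullet(\cP_e^M, \dot{\cL})$. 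Finally, since the inclusion $A_M(e) \hookrightarrow A_G(e)$ commutes with the respective Lusztig $W_L^M$- and $W_L$-actions, taking $\phi$-isotypic components is compatible with induction and yields the claim.

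The main obstacle will be verifying the compatibility of the intrinsic $W_L$-action on $H_\bullet(\cP_e^G, \dot{\cL})$, defined through Lusztig's convolution construction on the Grothendieck--Springer sheaf in \cite{lusztig1988cuspidal}, with the $W_L^M$-action obtained on the fibres of $\pi$. In \cite{lusztig2004induction} this compatibility is resolved via a general statement about restriction of semisimple perverse sheaves along $\pi$, and the same framework should apply once $\cL$ is threaded through the argument; the only new verification required is that the pushforward $\pi_* \dot{\cL}$, viewed as an object in the equivariant derived category on $\cP_e^{M,G}$, carries the correct $W_L^M$-equivariant structure to match the restriction of the full generalised Springer sheaf from $G$ to $M$.
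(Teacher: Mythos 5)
Your route is genuinely different from the paper's, but as written it has a gap at its geometric core, namely in the two claims you make about the map $\pi\colon\cP_e^G\to G/Q$. The fibre of $\pi$ over $gQ$ is indeed an $M$-variety $\cP_{e_g}^M$ for the $\fm$-component $e_g$ of $\Ad(g^{-1})e$, but (i) the $M$-orbit of $e_g$ does \emph{not} stay constant as $gQ$ moves inside a connected component of the image, so $\pi_*\dot{\cL}$ is not a local system with stalk $H_{\bullet}(\cP_e^M,\dot{\cL})$ and proper base change does not produce the claimed identification. For example, take $G=\SL(3,\C)$, $(L,C_L,\cL)$ the trivial cuspidal datum on the maximal torus, $e$ subregular and $M$ a maximal Levi containing $e$: the image of $\pi$ is a connected $\mathbb{P}^1$ inside $G/Q\cong\mathbb{P}^2$, and the fibres are a single point at generic $gQ$ but a full $M$-flag curve $\mathbb{P}^1$ at the one point where $e_g=0$. (ii) The components of the image do not realise $W_L/W_L^M$, and they carry no $W_L$-action at all (the relevant $W_L$-action exists only on homology, not on the varieties): already for $e=0$, $L=T$, the image is all of $G/Q$, which is connected, while $[W_L:W_L^M]=|W/W_M|>1$; the index is accounted for by $\dim H_{\bullet}(G/Q)$, not by $\pi_0$. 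So the difficulty is not only the equivariance issue you defer to the end — the fibration itself does not have the shape your argument needs. The geometric mechanism that does produce a decomposition into copies of $\cP_e^M$ indexed by cosets is the fixed-point/attracting-set decomposition for a cocharacter whose centraliser is $M$, together with a (nontrivial) compatibility of the $W_L$-action with that decomposition; that, rather than a fibration over $G/Q$, is the substance one would have to supply.

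For comparison, the paper proves the proposition purely algebraically: it invokes \cite[Proposition 3.22(a)]{aubert2018graded}, which (specialised at $\sigma=r=0$ and combined with \eqref{eq:genspringerfibre}) gives an isomorphism of $\bH(G,L,C_L,\cL)$-modules between $H_{\bullet}(\cP_e^G,\dot{\cL})^\phi$ and the module induced from $H_{\bullet}(\cP_e^M,\dot{\cL})^\phi$ over the parabolic subalgebra attached to $M$; since $\bH(G,L,C_L,\cL)\cong\C[W_L]\otimes\C[r]\otimes\bA$ is free over that subalgebra on coset representatives of $W_L/W_L^M$, restriction to $\C[W_L]$ immediately yields $\ind_{W_L^M}^{W_L}$. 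In other words, the coset space enters through the Hecke algebra, not through the geometry of $G/Q$; if you insist on a self-contained geometric proof you would in effect be reproving that induction theorem for the equivariant homology of $\cP_e$ with its $\bH$-action. Your closing remark on $\phi$-isotypic components is not the problem (in the situation where the proposition is applied one even has $A_M(e)=A_G(e)$); the issue is the fibration step.
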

\begin{proof}
Taking $\sigma = r = 0$ and using \eqref{eq:genspringerfibre}, \cite[Proposition 3.22(a)]{aubert2018graded} gives an isomorphism of $\bH(G,L,C_L,\cL)$-modules
\begin{align}
H_{\bullet}(\cP_e^G,\dot{\cL})^\phi 
&\cong 
\bH(G,L,C_L,\cL)\otimes_{\bH(Q,L,C_L,\cL)} H_{\bullet}(\cP_e^M,\dot{\cL})^\phi
\\
&\cong
(\C[W_L] \otimes \C[r] \otimes \bA) \otimes_{\C[W_L^M] \otimes \C[r] \otimes \bA} H_{\bullet}(\cP_e^M,\dot{\cL})^\phi,
\label{eq:WLmoduleH}
\end{align}
where $\bA$ is as in \Cref{sec:GHA}. Restricting to $W_L$ yields us the desired result.
\end{proof}

Suppose $(e,\phi)$ corresponds to $(\lambda,[\eps]) \in \PPort(N)$ and let $k = k(\lambda,\eps)$. 
Let $\epsodd = \eps$ and write $\lambda = \lambdaodd \sqcup \lambdaeven$ where $\lambdaodd$ (resp. $\lambdaeven$) consists of all the odd (resp. even) parts of $\lambda$. 
Write $\lambdaeven = (a_1,a_1,a_2,a_2,\dots,a_\ell,a_\ell)$ with $a_1 \geq a_2 \geq \dots \geq a_\ell$ and let $a = a_1 + \dots + a_\ell$.
Let $N' = t(\lambdaodd)$, $n' = (N' - k^2)/2$, and $n = (N-k^2)/2$.  
Suppose that
\begin{align}
M \cong \SO(N') \oplus \GL(a_1) \oplus \GL(a_2) \oplus \dots \oplus \GL(a_\ell).
\end{align}
Note that $A_M(e) = A_G(e)$.
We have
\begin{align}
W_L^M &\cong W_{n'} \times \prod_{i=1}^\ell S_{a_i},
&
W_L = W_{n},
\end{align}
where $W_n = W(B_n)$ and $W_{n'} = W(B_{n'})$ if $k \neq 0$, and $W_{n} = W(D_{n})$ and $W_{n'} = W(D_{n'})$ if $k = 0$.
Let $\eodd \in \fm$ be a nilpotent element parametrised by $\lambdaodd$.
By \Cref{prop:induction}, we have 
\begin{equation}\label{eq:induction}
H_{\bullet}(\cP_e^G,\dot{\cL})^\phi 
\cong 
\ind_{W_L^M}^{W_L} (H_{\bullet}(\cP_{\eodd}^M,\dot\cL)^\phi \boxtimes \prod_{i=1}^\ell \triv_{S_{a_i}}),
\end{equation}
where $\triv_{S_{a_i}}$ is the trivial representation of $S_{a_i}$. 
We can explicitly determine the $W_L$-subrepresentations of $H_{\bullet}(\cP_e^G,\dot{\cL})^\phi$
using the Pieri's rule, see \cite[\S6]{geck2000characters} for type $B$, \cite{taylor2015induced} for type $D$, or \cite{stembridgepractical} for an overview of type $B$ and $D$. 
Let $I_0$ be the set of $(\alpha,\beta) \in \mathcal P_2(n')$ such that $\rho_{(\alpha,\beta)}$ is a $W_L$-subrepresentation of $H_{\bullet}(\cP_{\eodd}^M,\dot\cL)^\phi$.
For $i=0,\dots,\ell$, let $n_i = n' + a_1 + \dots + a_i$.
We recursively define sets $I_i$ as follows. 
Suppose $i \in \{1,\dots,\ell\}$.
Let $I_i$ be the set of $(\gamma,\delta) \in \mathcal P_2(n_i)$ such that there exist a $(\tilde\gamma,\tilde\delta) \in I_{i-1}$ such that for all $j \in \Z$, we have
\begin{equation}\label{eq:pieri}
\tilde\gamma^t_j \leq \gamma^t_j \leq \tilde\gamma^t_j + 1
\quad \text{and} \quad
\tilde\delta^t_j \leq \delta^t_j \leq \tilde\delta^t_j + 1.
\end{equation}
By Pieri's rule for type $B$ and $D$, $I_i$ consists of precisely all $(\gamma,\delta)  \in \mathcal P_2(n_i)$ such that $\rho_{(\gamma,\delta)}$ is a $W_{n_i}$-subrepresentation of $\ind_{W_{n_{i-1}}\times S_{a_i}}^{W_{n_i}}(\rho_{(\tilde\gamma,\tilde\delta)} \boxtimes \triv_{S_{a_i}})$. 
Since induction and direct products are compatible, it follows from \eqref{eq:induction} that $I_\ell$
consists of all irreducible constituents of $H_{\bullet}(\cP_e^G,\dot{\cL})^\phi$.

Let $(\lambda,[\eps]) \in \PPort(N)$ and $k:= k(\lambda,[\eps])$.
Let $((\lambdaodd)^{\text{max}},[(\epsodd)^{\text{max}}])$ be as in \Cref{thm:max} for $(\lambdaodd,[\epsodd])$.
Define $(\lambdamax,[\epsmax]) \in \PPort(N)$ such that for all $j \in \Z_{\geq2}$, we have
\begin{align}
\lambda^{\text{max}}_1 &= (\lambdaodd)^{\text{max}}_1 + \sum_{i\in\N} \lambda^{\text{even}}_i =  (\lambdaodd)^{\text{max}}_1 + 2a,
\\
\lambda^{\text{max}}_j &= (\lambdaodd)^{\text{max}}_j,
\end{align}
and such that $\epsmax = (\eps^{\text{odd}})^{\text{max}}$.
Note that $\lambdamax$ is degenerate.
\begin{theorem}\label{thm:maxeven}
Let $(\lambda,[\eps]) \in \PPort(N)$ and $k:= k(\lambda,[\eps])$. 
Then $(\lambdamax,[\epsmax])$ is the unique element of $\PPort(N)$ such that 
\begin{enumerate}
\item\label{thm:maxeven1} $\mult(C_\lambda,\E_{[\eps]};C_{\lambdamax},\E_{[\epsmax]}) = 1$,
\item\label{thm:maxeven2} For all $(\lambda',[\eps']) \in \PPort(N)$ with $\mult(C_\lambda,\E_{[\eps]}^\pm;C_{\lambda'},\E_{[\eps']}^\pm) \neq 0$, we have $\lambda' < \lambda^{\text{max}}$ or $(\lambda',[\eps']) = (\lambdamax,[\epsmax])$.
\end{enumerate}
\end{theorem}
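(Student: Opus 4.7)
The plan is to reduce Theorem~\ref{thm:maxeven} to the all-odd case handled by Theorem~\ref{thm:max} and then handle the remaining even parts by Pieri's rule on horizontal strips. First I would invoke Proposition~\ref{prop:induction}, which in the form \eqref{eq:induction} gives the isomorphism of $W_L$-modules
\[
H_{\bullet}(\cP_e^G,\dot{\cL})^\phi \;\cong\; \ind_{W_L^M}^{W_L}\!\bigl(H_{\bullet}(\cP_{\eodd}^M,\dot{\cL})^\phi \,\boxtimes\, \triv_{S_{a_1}} \boxtimes \cdots \boxtimes \triv_{S_{a_\ell}}\bigr).
\]
Theorem~\ref{thm:max} applied to the pair $(\lambdaodd,[\epsodd])$ then furnishes a unique irreducible subrepresentation $\rho_{(\alpha^{0},\beta^{0})}$ of $H_{\bullet}(\cP_{\eodd}^M,\dot{\cL})^\phi$ whose associated orbit $C_{(\lambdaodd)^{\text{max}}}$ is maximal among the orbits of all constituents, occurring with multiplicity one; this furnishes the base case $I_0$ of the iterative procedure in \eqref{eq:pieri}.

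Next I would run the Pieri iteration across $i = 1,\dots,\ell$. At stage $i$, the set $I_i$ arises from $I_{i-1}$ by adding a horizontal strip of total size $a_i$ split between the two components of a bipartition. The key combinatorial claim is that, among all such distributions, the strips which produce a bipartition whose associated $\Phi_N^{-1}$-partition is maximal in the dominance order are exactly those placing all $a_i$ new boxes at the end of the first row of one fixed component. Iterating this prescription increases the first part of the resulting partition by $2a_i$ at each stage and leaves the other parts unchanged, and hence leads after $\ell$ steps to precisely the partition $\lambdamax$ defined before the statement, with $\epsmax = (\epsodd)^{\text{max}}$ as the accompanying sign datum.

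Assertion~(1) then follows because Pieri's rule for types $B$ and $D$ is multiplicity free on horizontal strips and the base multiplicity from Theorem~\ref{thm:max} is one. For~(2) I would show that any alternative branch of the Pieri iteration produces a constituent whose associated orbit is strictly dominated by $C_{\lambdamax}$, by combining the base-case maximality with the monotonicity of horizontal-strip addition under $\Phi_N^{-1}$. The main obstacle is the careful closure-order comparison through the symbol combinatorics, since one must simultaneously track the defect $k$ and the sign datum $[\eps]$ for every intermediate output and exclude the possibility that a non-maximal Pieri branch happens to match $\lambdamax$ at the level of partitions. This is however precisely the combinatorial content of \cite[\S4.3]{la2022maximality}: as announced in the paragraph preceding the theorem, once Proposition~\ref{prop:induction} replaces \cite[Proposition 4.8]{la2022maximality} (and the Springer fibres are replaced throughout by the varieties $H_{\bullet}(\cP_e^G,\dot{\cL})^\phi$), the arguments given there carry over verbatim.
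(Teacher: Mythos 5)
Your proposal follows essentially the same route as the paper's proof: reduce to the all-odd case via \Cref{prop:induction} and \Cref{thm:max}, then track the even parts through the Pieri iteration \eqref{eq:pieri} and defer the symbol/closure-order comparison to \cite[\S4.3]{la2022maximality}, which is exactly how the paper restates the proof of \cite[Theorem 4.9]{la2022maximality}. The only difference is that the paper makes your ``add all boxes to the first row of one fixed component'' and multiplicity-one claims explicit, by constructing $(\bm\alpha,\bm\beta)$ from $(\mu,\nu)$ according to the sign of $\alpha_1+2k-\beta_1$, citing Stembridge's Littlewood--Richardson computations for the multiplicity, and comparing $\Lambda_{k,-k;2}$-sequences for maximality and uniqueness — precisely the details you delegate to the cited reference.
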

\begin{proof}
The proof is exactly the same as the proof of \cite[Theorem 4.9]{la2022maximality}, which we will now restate.
Let $n = (N-k^2)/2$, let $(\alpha,\beta)_k = \Phi_N(\lambdaodd,[\epsodd])$ and pick $(\mu,\nu) \in \cP_2(n)$ such that $(\mu,\nu)_k = \Phi_N((\lambdaodd)^{\text{max}},[(\epsodd)^{\text{max}}])$.
If 
$\alpha_1 + 2k > \beta_1$, 
let $(\bm\alpha,\bm\beta) = (\mu + (a,0,0,\dots),\nu)$
and if 
$\alpha_1 + 2k < \beta_1$, 
let $(\bm\alpha,\bm\beta) = (\mu,\nu + (a,0,0,\dots))$.
Then $(\bm\alpha,\bm\beta)_k = \Phi_N(\lambdamax,[\epsmax])$.
We have $(\bm\alpha,\bm\beta) \in I_\ell$.
Recall \eqref{eq:Lambda} and define
$\Lambda = \Lambda_{k,-k;2}(\mu,\nu)$ and
$\bm\Lambda = \Lambda_{k,-k;2}(\bm\alpha,\bm\beta)$.
Let $(\bm\alpha',\bm\beta') \in I_\ell$.
By definition of $I_\ell$, there exists an $(\alpha',\beta') \in I_0$ and $x \in \Z_{\geq0}^{t(\alpha)},y \in \Z_{\geq0}^{t(\beta)}$ such that
\begin{equation}
\bm\alpha' = \alpha' + x
\quad
\bm\beta' = \beta' + y,
\quad
\sum_j (x_j + y_j) = a.
\label{eq:gammadelta}
\end{equation}
If $(\alpha',\beta') \neq (\mu,\nu)$, then $(\bm\alpha',\bm\beta') \neq (\bm\alpha,\bm\beta)$, and by \Cref{thm:max}, the multiplicity of $\rho_{(\mu,\nu)}$ in $H_{\bullet}(\cP_{e'}^G,\dot{\cL})^{\phi'}$ is $1$. 
Furthermore, the multiplicity of $\rho_{(\bm\alpha,\bm\beta)}$ in 
\begin{equation}\label{eq:indLR}
\ind_{W_{n'}\times \prod_{i=1}^\ell S_{a_i}}^{W_{n}}(\rho_{(\alpha,\beta)} \boxtimes \prod_{i=1}^\ell \triv_{S_{a_i}})
\end{equation}
can be computed using results  from \cite[\S2.A, \S2.B]{stembridgepractical} for type $B$ and \cite[\S3.A, \S3.C]{stembridgepractical} for type $D$, and it follows that this multiplicity is $1$.
Thus part \ref{thm:maxeven1} of the theorem follows. 
Let $z$ be the decreasing sequence in $\mathcal R$ obtained by rearranging the terms of $x \sqcup y$ and adding zeroes at the end.
Let
$\Lambda' = \Lambda_{k,-k;2}(\alpha',\beta')$
and
$\bm\Lambda' = \Lambda_{k,-k;2}(\bm\alpha',\bm\beta')$. 
Let $i=1,\dots,\ell$.
By \eqref{eq:gammadelta}, we have $S_i(\Lambda') + S_i(z) \geq S_i(\bm\Lambda')$ and $a_1+\dots+a_\ell \geq S_i(z)$. 
Furthermore, $\Lambda_i \geq \Lambda_i'$ by \cite[Lemma 1.2]{la2022maximality}, so 
\begin{equation}
S_i(\bm\Lambda)
= S_i(\Lambda) + a_1 + \dots + a_\ell 
\geq S_i(\Lambda') + S_i(z) \geq 
S_i(\bm\Lambda').
\end{equation}
Hence $\bm\Lambda \geq \Lambda_{k,-k;2}(\bm\alpha',\bm\beta')$ for all $(\bm\alpha',\bm\beta') \in I_\ell$, so part \ref{thm:maxeven2} of the theorem follows from \cite[Lemma 4.1]{la2022maximality}.
To show that $(\lambdamax,[\epsmax])$ is unique, suppose that $\bm\Lambda = \bm\Lambda'$.
Then $\bm\Lambda_1' = \bm\Lambda_1 = \Lambda_1+a$. Since $\Lambda_1 \geq \Lambda'_1$ and $\bm\Lambda_1' \leq \Lambda'_1+a$, we have $\Lambda_1 = \Lambda_1'$. From this and \eqref{eq:gammadelta}, it follows that $\Lambda_i = \bm\Lambda_i = \bm\Lambda_i' = \Lambda_i'$ for $i\geq2$, and so $\Lambda = \Lambda'$.  By \cite[Lemma 1.2, 1.5]{la2022maximality} and the last sentence of the proof of \cite[Theorem 4.2]{la2022maximality} it follows that $(\alpha',\beta') = (\mu,\nu)$, and using that $\bm\Lambda_1' = \Lambda_1' + a$, we find that $(\bm\alpha,\bm\beta) = (\bm\alpha',\bm\beta')$. Thus $(\lambdamax,[\epsmax])$ is unique. 
\end{proof}

\begin{theorem}\label{thm:mineven}
Let $(\lambda,[\eps]) \in \PPort(N)$ and $k:= k(\lambda,[\eps])$ and consider the notation as above.
Then there exists a unique $(\lambdamin,[\epsmin]) \in \PPort(N)$ such that $\slambdamin$ is non-degenerate and
\begin{enumerate}
\item $\mult(C_\lambda,\E_{[\eps]};C_{\slambdamin},\E_{[\sepsmin]}) = 1$,
\item For all $(\lambda',[\eps']) \in \PPort(N)$ with $\mult(C_\lambda,\E_{[\eps]};C_{\lambda'}^\pm,\E_{[\eps']}^\pm) \neq 0$, we have $\lambdamin < \lambda'$ or $(\lambda',[\eps']) = (\slambdamin,{}^s[\epsmin])$.
\end{enumerate}
Furthermore, we have $(\lambdamax,[\epsmax]) = (\slambdamin,[\sepsmin])$.
\end{theorem}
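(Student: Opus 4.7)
The plan is to mirror the proof of \Cref{thm:maxeven} using \Cref{thm:min} in place of \Cref{thm:max}, and to exploit the ${}^s$-duality on $\PPort(N)$. The ``furthermore'' clause already suggests the correct definition: set $(\lambdamin,[\epsmin]) := {}^s(\lambdamax,[\epsmax])$ with $(\lambdamax,[\epsmax])$ the pair constructed before \Cref{thm:maxeven}. Since ${}^s$ is an involution on $\PPort(N)$, the equation $(\lambdamax,[\epsmax]) = (\slambdamin,[\sepsmin])$ is automatic, and $\slambdamin = \lambdamax$ is non-degenerate whenever $\lambdaodd$ is non-empty, since the defining formula for $\lambdamax$ inherits at least one odd part from $(\lambdaodd)^{\mathrm{max}}$; the boundary case $\lambdaodd = \emptyset$ is handled separately.

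Condition~(1) is then immediate from \Cref{thm:maxeven}(1). For condition~(2), I would follow the combinatorial argument from the proof of \Cref{thm:maxeven}: express $H_{\bullet}(\cP_e^G,\dot{\cL})^\phi$ via \Cref{prop:induction} and iterate the type-$B/D$ Pieri rules of \cite{stembridgepractical} to obtain the set $I_\ell$ of indexing bipartitions. Writing any $(\bm\alpha',\bm\beta') \in I_\ell$ as $(\alpha'+x,\beta'+y)$ with $(\alpha',\beta') \in I_0$ and $\sum_j(x_j+y_j)=a$, the partial-sum estimate $S_c(\Lambda') + S_c(z) \geq S_c(\bm\Lambda')$ used in the proof of \Cref{thm:maxeven}, combined with the inequality supplied by \Cref{thm:min} applied to $(\lambdaodd,[\epsodd])$, yields ${}^s\bm\Lambda \leq {}^s\bm\Lambda'$ with equality precisely when $(\bm\alpha',\bm\beta') = (\bm\alpha,\bm\beta)$. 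Translating these $\Lambda$-sequence inequalities back to partition-level ones via \cite[Lemma 4.1]{la2022maximality} (applied to ${}^s$-conjugates) produces the required dichotomy $\lambdamin < \lambda'$ or $(\lambda',[\eps']) = (\slambdamin,[\sepsmin])$. Uniqueness of $(\lambdamin,[\epsmin])$ then follows dually from the uniqueness argument at the end of the proof of \Cref{thm:maxeven}.

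The main obstacle I anticipate is tracking the direction of all the partial-sum inequalities through the ${}^s$-operation, since ${}^s$ combines bipartition swap-transpose with the $A,B$-shift and reverses the order on $\Lambda$-sequences in a somewhat delicate way; this bookkeeping is already the technical heart of the proof of \Cref{thm:min} in \cite[\S4]{la2022maximality}. Apart from this, all remaining ingredients (\Cref{thm:min}, the type-$B$ and type-$D$ Pieri combinatorics, and \cite[Lemmas 1.2, 4.1]{la2022maximality}) apply verbatim in the present setting, so the proof of \cite[Theorem 4.10]{la2022maximality} carries over essentially unchanged once \Cref{prop:induction} is substituted for \cite[Proposition 4.8]{la2022maximality}, as indicated in the commentary preceding \Cref{thm:maxeven}.
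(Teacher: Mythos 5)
Your proposal is correct and follows essentially the same route as the paper: define $(\lambdamin,[\epsmin])$ as the ${}^s$-dual of $(\lambdamax,[\epsmax])$, reuse the induction/Pieri-rule combinatorics and partial-sum estimates from the proof of \Cref{thm:maxeven}, and defer the sign-twist/conjugation bookkeeping to the arguments of \Cref{thm:min} (i.e.\ \cite[Theorem 4.6, 4.10]{la2022maximality}). The only point the paper makes explicit that you leave as a flagged obstacle is how the ${}^s$-operation is handled: one proves the same maximality inequality but for the shifted sequences $\Lambda_{k/2,-k/2;2}(\bm\alpha,\bm\beta) \geq \Lambda_{k/2,-k/2;2}(\bm\alpha',\bm\beta')$ (with equality only for equal pairs) and then quotes the proof of \Cref{thm:min} verbatim, which is exactly the source you already cite for that step.
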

\begin{proof}
The proof is exactly the same as the proof of \cite[Theorem 4.10]{la2022maximality}.
Let $(\bm\alpha,\bm\beta),(\bm\alpha',\bm\beta')$ as in the proof of \Cref{thm:maxeven}.
By the same arguments as in the proof of \Cref{thm:maxeven}, we can show that $\Lambda_{k/2,-k/2;2}(\bm\alpha,\bm\beta) \geq \Lambda_{k/2,-k/2;2}(\bm\alpha',\bm\beta')$, with equality if and only if $(\bm\alpha,\bm\beta) = (\bm\alpha',\bm\beta')$. 
By the arguments in the proof of \Cref{thm:min} (which is given in \cite{la2022maximality} as the proof of \cite[Theorem 4.6]{la2022maximality}), it follows that $(\lambdamin,[\epsmin]) = (\slambdamax,[\sepsmax])$ is the unique element of $\PPort(N)$ satisfying the two conditions of the theorem. 
\end{proof}

\begin{remark}\label{rem:maxodd}
By similar arguments and using analogous results for $\Sp(2n,\C)$ from \cite{waldspurger2019proprietes}, we similarly obtain the analogues of \Cref{thm:maxeven} and \ref{thm:mineven} for $\Sp(2n,\C)$ for any $(\lambda,[\eps]) \in \PPsymp(2n)$.
\end{remark}

\subsection{$G = \Sp(2n,\C)$}\label{subsec:maxminSp}
The $\Sp(2n,\C)$ analogues of \Cref{thm:max} and \Cref{thm:min} are \cite[Theorem 4.5, 4.7]{waldspurger2019proprietes}.
Completely similarly as in \Cref{subsec:maxminSO}, we can prove the analogues of \Cref{thm:maxeven} and \Cref{thm:mineven} for $\Sp(2n\C)$, generalising \Cref{thm:max} and \Cref{thm:min} by dropping the assumption that $\lambda$ only has even parts.
We shall state these generalisations without proof.

\begin{theorem}[{\cite[Theorem 4.5]{waldspurger2019proprietes}}]\label{thm:maxSp}
Let $(\lambda,\eps) \in \PPsymp(2n)$. Then there exists a unique $(\lambdamax,\epsmax) \in \PPsymp(2n)$ such that 
\begin{enumerate}
\item\label{thm:max1} $\mult(C_\lambda,\mathcal E_{\eps}; C_{\lambdamax},\mathcal E_{\epsmax}) = 1$,
\item\label{thm:max2} For all $(\lambda',\eps') \in \PPort(N)$ with $\mult(C_\lambda,\E_{\eps};C_{\lambda'},\mathcal E_{\eps'}) \neq 0$, we have $\lambda' < \lambdamax$ or $(\lambda',\eps') = (\lambdamax,\epsmax)$.
\end{enumerate}
\end{theorem}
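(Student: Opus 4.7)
The plan is to mirror the proof of \Cref{thm:maxeven} exactly, with the roles of even and odd parts interchanged (in the symplectic case it is the odd parts that occur with even multiplicity) and with type $C$ combinatorics replacing types $B$ and $D$. The base case is Waldspurger's original \cite[Th\'eor\`eme 4.5]{waldspurger2019proprietes}, which assumes $\lambda$ has only even parts; the work is to absorb the odd parts via a parabolic induction argument and a Pieri calculation.

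First, decompose $\lambda = \lambdaeven \sqcup \lambdaodd$ where $\lambdaodd = (b_1, b_1, b_2, b_2, \dots, b_\ell, b_\ell)$ with $b_1 \geq \dots \geq b_\ell$ odd (such pairing is forced by the symplectic constraint), and $\lambdaeven \in \Psymp(2n'')$ for $2n'' := S(\lambdaeven)$. Set $b := b_1 + \dots + b_\ell$. Choose a Levi
\begin{equation*}
M \cong \Sp(2n'',\C) \times \GL(b_1,\C) \times \dots \times \GL(b_\ell,\C)
\end{equation*}
of $G$, and pick $\eeven \in \fm$ with Jordan type $(\lambdaeven; (b_1); \dots; (b_\ell))$ in the respective factors. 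Note $A_M(\eeven) = A_G(e)$, so the cuspidal datum $\csup(e,\phi)$ pulls back, and $\csup$ in $M$ is again $(L, C_L, \cL)$. With $k = k(\lambda,\eps)$ write $W_L = W(C_{n - k(k+1)/2})$ and $W_L^M \cong W(C_{n'' - k(k+1)/2}) \times \prod_{i=1}^\ell S_{b_i}$. By \Cref{prop:induction},
\begin{equation*}
H_{\bullet}(\cP_e^G, \dot\cL)^\phi \cong \ind_{W_L^M}^{W_L}\!\Bigl(H_{\bullet}(\cP_{\eeven}^M, \dot\cL)^\phi \boxtimes \prod_{i=1}^\ell \triv_{S_{b_i}}\Bigr).
\end{equation*}

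Next, define $\lambdamax \in \Psymp(2n)$ by $\lambdamax_1 := (\lambdaeven)^{\mathrm{max}}_1 + 2b$ and $\lambdamax_j := (\lambdaeven)^{\mathrm{max}}_j$ for $j \geq 2$, with $\epsmax := (\epseven)^{\mathrm{max}}$ (the max here is the one furnished by Waldspurger). Let $(\mu,\nu) := \Phi_{2n''}^{-1}((\lambdaeven)^{\mathrm{max}}, (\epseven)^{\mathrm{max}})$ and $(\alpha,\beta) := \Phi_{2n''}^{-1}(\lambdaeven, \epseven)$, and define $(\bm\alpha, \bm\beta)$ from $(\mu,\nu)$ by adding $b$ to the first entry of either $\mu$ or $\nu$, the choice being dictated by the analogous case split $\alpha_1 + 2k$ versus $\beta_1$ that appeared in the proof of \Cref{thm:maxeven} (adapted to the symplectic shifts $(k, -k-1)$). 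One checks directly from the definitions that $(\bm\alpha,\bm\beta) = \Phi_{2n}(\lambdamax, \epsmax)$ and that $(\bm\alpha,\bm\beta) \in I_\ell$, the index set of constituents enumerated by Pieri's rule for type $C$ (see \cite[\S2]{stembridgepractical}).

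Finally, the verification splits into the two statements of the theorem. For the multiplicity claim, Waldspurger's base case gives $\mult(\lambdaeven, \epseven; (\lambdaeven)^{\mathrm{max}}, (\epseven)^{\mathrm{max}}) = 1$; combined with the type $C$ Littlewood--Richardson coefficient of $\rho_{(\bm\alpha,\bm\beta)}$ in the induced module $\ind_{W_{n''-k(k+1)/2} \times \prod S_{b_i}}^{W_{n-k(k+1)/2}}(\rho_{(\mu,\nu)} \boxtimes \prod \triv)$, which is $1$ by the same horizontal-strip argument used in \Cref{thm:maxeven}, this gives $\mult(\lambda,\eps; \lambdamax, \epsmax) = 1$. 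For the dominance claim, any other $(\bm\alpha',\bm\beta') \in I_\ell$ arises from some $(\alpha',\beta') \in I_0$ by adding non-negative integer vectors $x, y$ of total weight $b$; setting $\Lambda = \Lambda_{k, -k-1; 2}(\mu, \nu)$, $\Lambda' = \Lambda_{k, -k-1; 2}(\alpha', \beta')$, $\bm\Lambda = \Lambda_{k,-k-1;2}(\bm\alpha, \bm\beta)$ and $\bm\Lambda' = \Lambda_{k,-k-1;2}(\bm\alpha', \bm\beta')$, Waldspurger's analogue of \cite[Lemma 1.2]{la2022maximality} gives $\Lambda_i \geq \Lambda'_i$ for all $i$, from which the chain of inequalities
\begin{equation*}
S_i(\bm\Lambda) = S_i(\Lambda) + b \geq S_i(\Lambda') + S_i(x \sqcup y) \geq S_i(\bm\Lambda')
\end{equation*}
yields $\bm\Lambda \geq \bm\Lambda'$ and hence the desired closure-order inequality on the $\Sp$-orbits, with uniqueness of $(\lambdamax, \epsmax)$ extracted exactly as in the last paragraph of the proof of \Cref{thm:maxeven}. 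The main obstacle is purely bookkeeping: checking that the symplectic symbol shifts $(k, -k-1)$ behave analogously to the orthogonal shifts $(k, -k)$ in the inequality manipulations, so that the analogues of \cite[Lemma 1.2, Lemma 1.5, Lemma 4.1]{la2022maximality} (which Waldspurger has already established in the symplectic setting) plug in without modification.
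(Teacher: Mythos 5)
Your proposal is correct and follows exactly the route the paper intends: \Cref{thm:maxSp} is stated without proof, with the remark (in \Cref{subsec:maxminSp} and \Cref{rem:maxodd}) that it follows from Waldspurger's even-parts case by the same argument as \Cref{thm:maxeven} — splitting the odd parts (which occur in pairs) into $\GL$-factors of a Levi, applying \Cref{prop:induction} and Pieri's rule, and running the $\Lambda$-dominance and uniqueness comparison — which is precisely what you do. The only deviations are the bookkeeping items you already flag (the symplectic symbol shifts $(k,-k-1)$ and the swap of $A$ and $B$ according to the parity of $k$), which do not affect the structure of the argument.
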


\begin{theorem}[{\cite[Theorem 4.7]{waldspurger2019proprietes}}]\label{thm:minSp}
Let $(\lambda,\eps) \in \PPsymp(2n)$. Then there exists a unique $(\lambdamin,\epsmin) \in \PPsymp(2n)$ such that $\slambdamin$ 
\begin{enumerate}
\item $\mult(C_\lambda,\E_{\eps};C_{\slambdamin},\E_{\sepsmin}) = 1$,
\item For all $(\lambda',\eps') \in \PPort(N)$ with 
$\mult(C_\lambda,\E_{\eps};C_{\slambda'},\E_{[\seps']}) 
\neq 0$, 
we have $\lambdamin < \lambda'$ or $(\lambda',\eps') = (\lambdamin,\epsmin)$.
\end{enumerate}
Furthermore, we have $(\lambdamax,\epsmax) = (\slambdamin,\sepsmin)$.
\end{theorem}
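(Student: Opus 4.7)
The plan is to mirror the proof of \Cref{thm:mineven} throughout, substituting \Cref{thm:maxSp} for \Cref{thm:maxeven} and Waldspurger's \cite[Theorem 4.7]{waldspurger2019proprietes} (which handles the case of $\lambda$ with only even parts) for \Cref{thm:min}. The first step is to define the candidate pair: set $(\lambdamin,\epsmin) := (\slambdamax,\sepsmax)$, the $s$-involute of the maximal pair supplied by \Cref{thm:maxSp}. The ``Furthermore'' assertion $(\lambdamax,\epsmax) = (\slambdamin,\sepsmin)$ is then automatic from the fact that the $s$-operation is an involution.

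To verify conditions (1) and (2) I would decompose $\lambda = \lambdaeven \sqcup \lambdaodd$, where $\lambdaeven$ consists of the even parts (a valid symplectic partition on its own) and $\lambdaodd = (a_1,a_1,\ldots,a_\ell,a_\ell)$ with $a_1\geq\cdots\geq a_\ell$ consists of the odd parts, which must occur in pairs by the symplecticity of $\lambda$. Setting $a = a_1+\cdots+a_\ell$ and $n' = n-a$, I take a Levi subgroup $M \cong \Sp(2n',\C) \times \prod_{i=1}^\ell \GL(a_i,\C)$ and a representative $e \in \fm$ of the nilpotent orbit which is parametrised by $\lambdaeven$ in the $\Sp(2n')$-factor together with a single Jordan block of size $a_i$ in each $\GL(a_i)$-factor; then $A_M(e) = A_G(e)$. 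Let $e^{\mathrm{even}}$ denote the $\Sp(2n',\C)$-component of $e$. By \Cref{prop:induction},
\begin{equation}
H_{\bullet}(\cP_e^G,\dot{\cL})^\phi \cong \ind_{W(C_{n'}) \times \prod_i S_{a_i}}^{W(C_n)} \Bigl( H_{\bullet}(\cP_{e^{\mathrm{even}}}^M, \dot{\cL})^\phi \boxtimes \prod_{i=1}^\ell \triv_{S_{a_i}} \Bigr).
\end{equation}
Pieri's rule for type $C$ (as in \cite{geck2000characters,stembridgepractical}) identifies the irreducible $W(C_n)$-constituents of the induced representation with a set $I_\ell \subseteq \cP_2(n)$ obtained from the set $I_0 \subseteq \cP_2(n')$ of constituents of $H_{\bullet}(\cP_{e^{\mathrm{even}}}^M,\dot{\cL})^\phi$ by iterated horizontal-strip additions of total box-count $a$.

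For the $\Sp(2n')$-factor, \cite[Theorem 4.7]{waldspurger2019proprietes} applies (since $\lambdaeven$ has only even parts) and picks out a distinguished $(\mu,\nu) \in I_0$ of multiplicity one, satisfying $(\mu,\nu)_k = \Phi_{2n'}(\slambdamax, \sepsmax)$ for the purely-even sub-problem, and whose type-$C$ symbol $\Lambda_{k,-k-1;2}(\mu,\nu)$ is minimal on $I_0$. Adding a leading block of size $a$ to either $\mu$ or $\nu$, with the choice determined by the same case analysis on $\alpha_1+2k$ versus $\beta_1+1$ as in the proof of \Cref{thm:mineven}, produces $(\bm\mu,\bm\nu) \in I_\ell$ with $(\bm\mu,\bm\nu)_k = \Phi_{2n}(\lambdamin,\epsmin)$. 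Multiplicity one follows from Pieri's rule combined with the multiplicity-one statement on the $\Sp(2n')$-factor. The symbol-comparison argument of the proof of \Cref{thm:mineven} then transcribes almost verbatim: using the $S_c$-inequalities of \cite[Lemma 1.2]{la2022maximality} together with $\sum_j (x_j + y_j) = a$ for any horizontal-strip extension, one obtains $\Lambda_{k,-k-1;2}(\bm\mu,\bm\nu) \leq \Lambda_{k,-k-1;2}(\bm\alpha',\bm\beta')$ for every $(\bm\alpha',\bm\beta') \in I_\ell$, with equality forcing $(\bm\alpha',\bm\beta') = (\bm\mu,\bm\nu)$. Translating via \Cref{thm:gscSp} yields condition (2).

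The main obstacle is bookkeeping: the type-$C$ symbols of \Cref{thm:gscSp} use offsets $(k,-k-1)$ rather than the $(k,-k)$ of types $B$/$D$, and the parity of $k$ swaps the roles of $A$ and $B$. These conventions must be tracked carefully so that the order-reversing effect of the $s$-involution on $\PPsymp(2n)$ matches the sign-twist on $W(C_n)$-representations; once this is done, no new ideas beyond those in the proof of \Cref{thm:mineven} are required.
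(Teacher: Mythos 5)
Your overall route is exactly the one the paper intends (it states \Cref{thm:minSp} without proof, referring to \Cref{subsec:maxminSO}): split $\lambda=\lambdaeven\sqcup\lambdaodd$ with the paired odd parts placed in $\GL(a_i)$-factors of a Levi $M\cong\Sp(2n',\C)\times\prod_i\GL(a_i,\C)$, apply \Cref{prop:induction} and Pieri's rule for $W(C_n)=W(B_n)$, use \cite[Th\'eor\`emes 4.5, 4.7]{waldspurger2019proprietes} as the even-parts base case, and compare symbols; defining $(\lambdamin,\epsmin)=(\slambdamax,\sepsmax)$ indeed makes the ``Furthermore'' automatic. So the skeleton matches the paper.

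However, the central comparison is stated in the wrong direction and about the wrong element. The multiplicity-one constituent in condition (1) is $\GSpr(C_{\slambdamin},\E_{\sepsmin})$, whose bipartition is $\Phi_{2n}(\slambdamin,\sepsmin)=\Phi_{2n}(\lambdamax,\epsmax)$ --- the same constituent as in \Cref{thm:maxSp} --- and not $\Phi_{2n}(\lambdamin,\epsmin)$. Your $(\mu,\nu)$ with $(\mu,\nu)_k=\Phi_{2n'}(\slambdamax,\sepsmax)$ is the bipartition of the \emph{minimal} pair of the even sub-problem, and in general it does not lie in $I_0$ at all: by the Lusztig--Shoji property the constituent of the untwisted fibre with minimal Springer support (equivalently, minimal symbol) is $\GSpr(\lambda,\eps)$ itself, with multiplicity one. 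Consequently the proposed inequality $\Lambda_{k,-k-1;2}(\bm\mu,\bm\nu)\leq\Lambda_{k,-k-1;2}(\bm\alpha',\bm\beta')$ over $I_\ell$ is not the statement that is needed and cannot be run as written; moreover, adding a horizontal strip of size $a$ to the first row of the even sub-problem's minimal bipartition does not produce $\Phi_{2n}(\lambdamin,\epsmin)$, since under the $s$-involution the added first-row block becomes a column (a vertical strip). The faithful transcription of the proof of \Cref{thm:mineven} is: take $(\bm\alpha,\bm\beta)=\Phi_{2n}(\lambdamax,\epsmax)\in I_\ell$, built from the even sub-problem's \emph{maximal} pair by the first-row addition, show it occurs with multiplicity one, and show that its symbol with respect to the shifted offsets (the type-$C$ analogue of replacing $(k,-k)$ by $(k/2,-k/2)$) is \emph{maximal} on $I_\ell$; condition (2), i.e.\ minimality of $\lambda'$ over all constituents written as $\GSpr(C_{\slambda'},\E_{\seps'})$, is then deduced from this maximality via the order-reversing transposition arguments in the proof of \Cref{thm:min} (Waldspurger's Th\'eor\`eme 4.7), not from a minimal-symbol statement inside $I_\ell$. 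With that correction the argument closes as in the paper.
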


\subsection{Algorithm for $(\lambdamax,\epsmax)$ for $\SO(N,\C))$}\label{sec:algSO}
The following is almost exactly the first part \cite[\S5]{la2022maximality}.
Let $N\in \N$.
Recall $\PPPort(N)$ defined in \Cref{subsec:gscSO} and let $(\lambda,\eps) \in \PPPort(N)$.
Suppose $\lambda$ only has odd parts. We will give an algorithm that outputs an element $(\bar\lambda,\bar\eps) \in \PPPort(N)$. The algorithm is a direct analogue of \cite[\S5]{waldspurger2019proprietes}.
such that $(\bar\lambda,[\bar\eps]) = (\lambdamax,[\epsmax]) \in \PPort(N)$. 

Let $t = t(\lambda)$. 
We view $\eps$ as a map $\{1,\dots,t\} \to \{\pm1\}$ by setting $\eps(i) = \eps_{\lambda_i}$ for all $i \in \N$. Let $u \in \{\pm1\}$ and consider the finite sets
\begin{align}
\mathfrak S &= \{1\} \cup \{i \in \{2,\dots,t\} \colon \eps(i) = \eps(i-1)\} = \{s_1 < s_2 < \dots < s_p\}, \text{ for some $p \in \N$, }
\\
J^u &= \{ i \in \{1,\dots,t\} \colon \eps(i)(-1)^{i+1} = u\},
\\
\tilde J^u &= J^u \setminus \mathfrak S.
\end{align}
Recall from \Cref{subsec:gscSO} that $M(\lambda,\eps) = |J^1| - |J^{-1}|$ and that $k(\lambda,\eps) = |M|$. Note that $\mathfrak S, J^u, M$ depend on the choice of representative $\eps$ of $[\eps]$. In particular, we have $M(\lambda,\eps) = -M(\lambda,-\eps)$.

We define $(\bar\lambda,\bar\eps)$ by induction. For $N=0,1$. Let $(\bar\lambda,\bar\eps) = (\lambda,\eps)$. Suppose $N > 1$ and suppose that we have defined $(\bar\lambda',\bar\eps')$ for any $(\lambda',\eps') \in \PPPort(N')$ with $N'<N$.
Let 
\begin{align}
\bar\lambda_1 
&=
\sum_{i \in \mathfrak S} \lambda_{i} - 2|\tilde J^{-\eps(1)}| - \frac{1 + (-1)^{|\mathfrak S|}}{2}
=
\begin{cases}
\sum_{i \in \mathfrak S} \lambda_{i} - 2|\tilde J^{-\eps(1)}| &\text{if $|\mathfrak S|$ is odd,}
\\
\sum_{i \in \mathfrak S} \lambda_{i} - 2|\tilde J^{-\eps(1)}| - 1 &\text{if $|\mathfrak S|$ is even,}
\end{cases}
\\
\bar \eps(1) &= \eps(1).
\end{align}
Let $r' = |\tilde J^1| + |\tilde J^{-1}| = t(\lambda) - |\mathfrak S|$ and $\phi \colon \{1,\dots,r'\} \to \tilde J^1 \cup \tilde J^{-1}$ be the unique increasing bijection. 
Let 
$N' = N - \bar\lambda_1$. 
We define $\lambda' \in \Z^{r'+(1+(-1)^{|\mathfrak S|})/2}$ and $\eps' \colon \{1,\dots,r'+\frac{1+(-1)^{|\fS|}}{2}\} \to \{\pm1\}$ as follows.
\begin{align}\label{eq:lambdaprime}
(\lambda'_j,\eps'(j)) &= 
\begin{cases}
(\lambda_{\phi(j)},\eps(\phi(j))) &\text{ if } j\leq r', \phi(j) \in \tilde J^{\eps(1)},
\\
(\lambda_{\phi(j)}+2,-\eps(\phi(j))) &\text{ if } j\leq r', \phi(j) \in \tilde J^{-\eps(1)},
\\
(1,(-1)^{r'}\eps(1)) &\text{ if } j = r'+1 \text{ and $|\mathfrak S|$ is even},
\\
0 &\text{ else.}
\end{cases}
\end{align}
Note that we can similarly define $(\lambda',(-\eps)')$, in which case we have $(-\eps)' = -\eps'$, i.e. $[-\eps'] = [\eps']$.
We have $N' < N$ by \cite[Proposition 5.4(2)]{la2022maximality}.
By induction, we have defined $(\bar\lambda', \bar \eps') \in \PPPort(N')$ by applying the algorithm to $(\lambda',\eps') \in \PPPort(N')$. 

Let $\ell = t(\bar\lambda')$. We define $\bar\lambda \in \Z^{\ell+1}$ and $\bar\eps \colon \{1,\dots,\ell+1\} \to \{\pm1\}$ as follows: for $i = 1,\dots,\ell$, let
\begin{align}\label{eq:lambdabar}
\bar\lambda_{i+1} &:= \bar\lambda'_{i-1},
\\
\bar \eps(i+1) &:= \bar\eps'(i).
\end{align}

\begin{theorem}\cite[Theorem 5.1]{la2022maximality}\label{thm:algorithm}
Let $(\lambda,\eps) \in \PPPort(N)$ such  that $\lambda$ only has odd parts and let $(\lambdamax,[\epsmax]) \in \PPort(N)$ be as in \Cref{thm:max}.
Then it holds that
\begin{equation}
(\bar\lambda,\bar\eps) = (\lambdamax,[\epsmax])
\end{equation}
\end{theorem}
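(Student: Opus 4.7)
The plan is to proceed by strong induction on $N$, with base cases $N \in \{0,1\}$ handled directly by the first clause of the algorithm. For the inductive step, I would work entirely on the level of symbols, since both sides of the desired equality are completely determined by their images under the bijection $\Phi_N$ of \Cref{thm:gscSO}.

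The first main step is to identify $\bar\lambda_1$ with the largest part of $\lambdamax$. By \Cref{thm:max} together with the Lusztig--Shoji characterisation (\Cref{theorem:lusztig}), $(\lambdamax,[\epsmax])$ corresponds to the pair $(\alpha,\beta)_k = \Phi_N(\lambdamax,[\epsmax])$ obtained by maximising the $\Lambda$-sequence $\Lambda_{k,-k;2}(\alpha,\beta)$ subject to the similarity constraint $(A,B) \sim (A_{\lambda,\eps},B_{\lambda,\eps})$. Unpacking the construction of $A^\#, B^\#$ from \Cref{subsec:gscSO}, I would express the top entry of this maximising $\Lambda$-sequence explicitly in terms of $\mathfrak S$, $\tilde J^{\pm}$ and $|\mathfrak S| \bmod 2$: contributions from indices in $\mathfrak S$ (where the sign of $\eps$ does not flip with the parity of $i$) accumulate on the same side of the symbol, contributions from $\tilde J^{-\eps(1)}$ are forced to the opposite side and so each diminish the top entry by $2$, and the parity of $|\mathfrak S|$ controls whether the defect absorbs a further shift of $1$. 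This will match the formula for $\bar\lambda_1$.

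The chief obstacle will be the recursive step: I must show that passing from $(\lambda,\eps)$ to $(\lambda',\eps')$ as in \eqref{eq:lambdaprime} corresponds, on the symbol side, to stripping the top entry of the $\Lambda$-sequence and re-normalising the remaining data. Here the rule that parts $\lambda_{\phi(j)}$ with $\phi(j) \in \tilde J^{-\eps(1)}$ become $\lambda_{\phi(j)}+2$ with signs flipped should reflect how the intervals $C_i$ redistribute once the top row of the symbol is removed, and the trailing part $(1,(-1)^{r'}\eps(1))$ appended when $|\mathfrak S|$ is even should correct for the induced change of defect parity. One must check that $\lambda'$ is still an orthogonal partition with only odd parts (automatic, since $\lambda_i$ odd forces $\lambda_i+2$ odd) and that $N' < N$, the latter recorded in \cite[Proposition 5.4(2)]{la2022maximality}; the inductive hypothesis will then yield $(\bar{\lambda}',\bar{\eps}') = ((\lambda')^{\max},[(\eps')^{\max}])$.

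Finally, I would reassemble the output via $\bar\lambda_{i+1} := \bar\lambda'_{i-1}$ and $\bar\eps(i+1) := \bar\eps'(i)$, verifying $\bar\lambda_1 \geq \bar\lambda_2$ from the estimates that identify $\bar\lambda_1$ as the top entry of $\Lambda$. The class $[\bar\eps]$ will agree with $[\epsmax]$ because the algorithm is equivariant under $\eps \mapsto -\eps$ modulo this equivalence. As the statement coincides with \cite[Theorem 5.1]{la2022maximality}, the argument would closely mirror the one given there; the technical heart is the symbol-level bookkeeping that matches the explicit combinatorial reduction \eqref{eq:lambdaprime} with the geometric reduction dictated by \Cref{thm:max}.
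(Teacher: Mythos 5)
The paper does not actually prove this statement: it is quoted verbatim from \cite[Theorem 5.1]{la2022maximality} (the surrounding text only reproduces the algorithm), so there is no in-paper argument to compare yours against. Judged on its own terms, your proposal correctly identifies the natural strategy — strong induction on $N$, working through the bijection $\Phi_N$ of \Cref{thm:gscSO} and matching the recursion defining $(\bar\lambda,\bar\eps)$ with a symbol-level reduction — and this is indeed the shape of the argument in \cite{la2022maximality}. But as written it has a genuine gap: the two claims that constitute the entire content of the theorem are only announced, not established. You assert that the top entry of the maximising $\Lambda$-sequence equals $\bar\lambda_1 = \sum_{i\in\mathfrak S}\lambda_i - 2|\tilde J^{-\eps(1)}| - \tfrac{1+(-1)^{|\mathfrak S|}}{2}$ via an informal accounting of ``contributions'' from $\mathfrak S$ and $\tilde J^{-\eps(1)}$, and you state that the passage from $(\lambda,\eps)$ to $(\lambda',\eps')$ in \eqref{eq:lambdaprime} ``should reflect'' stripping the top row of the symbol, explicitly labelling this the chief obstacle and the technical heart — and then you do not carry it out. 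Without these verifications (in particular, that the $+2$ shifts with sign flips on $\tilde J^{-\eps(1)}$ and the appended part $(1,(-1)^{r'}\eps(1))$ exactly reproduce the residual maximisation problem for $N' = N - \bar\lambda_1$, and that the reassembled partition is decreasing), the induction does not close.

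There is also a characterisation you take for granted that needs justification: you describe $(\lambdamax,[\epsmax])$ as the pair whose symbol maximises $\Lambda_{k,-k;2}(\alpha,\beta)$ subject only to similarity with $(A_{\lambda,\eps},B_{\lambda,\eps})$. \Cref{thm:max} characterises it instead as the unique maximal-support constituent, with multiplicity one, among the pairs appearing in the Green-function column attached to $(\lambda,[\eps])$; identifying that constituent with the similarity-class maximiser of the $\Lambda$-sequence is itself one of the nontrivial inputs (the role played by \cite[Lemmas 1.2, 1.5, 4.1]{la2022maximality} in the proofs of \Cref{thm:maxeven} and \Cref{thm:mineven}), not something you may assume. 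Similarly, equivariance of the algorithm under $\eps\mapsto-\eps$ only shows that $[\bar\eps]$ is well defined, not that it coincides with $[\epsmax]$; the sign data has to be tracked through the same symbol bookkeeping. So the proposal is a plausible outline consistent with the cited proof, but it is not yet a proof.
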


For $(\lambda,\eps) \in \PPPort(N)$ arbitrary, write $\lambda = \lambdaeven \sqcup \lambdaodd$ where $\lambdaeven$ only consists of even parts and $\lambdaodd$ only consists of odd parts. We can apply \Cref{thm:algorithm} to compute $(\lambdaoddmax,\epsmax)$ and then use \Cref{thm:maxeven} to compute $(\lambdamax,\epsmax)$: 
\begin{align}
(\lambdamax)_1 &= (\lambdaoddmax)_1 + S(\lambdaeven),
\\
(\lambdamax)_i &=  (\lambdaoddmax)_i &\text{for $i \geq 2$.}
\end{align}
From this, we can simply compute $(\lambdamin,\epsmin) = (\slambdamax,\sepsmax)$ by \Cref{thm:mineven}.

\subsection{Algorithm for $(\lambdamax,\epsmax)$ for $\Sp(2n,\C)$}\label{sec:algSp}
The algorithm for $\Sp(2n,\C)$ is given in \cite[\S5]{waldspurger2019proprietes}. We shall give the algorithm here.

Suppose $(\lambda,\eps) \in \PPsymp(2n)$ such that $\lambda$ only has even parts. If $n = 0$, let $(\bar\lambda,\bar\eps) = (\lambda,\eps)$. If $n > 0$. Similarly as we did previously for $\SO(N,\C)$, we write $\eps$ as a function $\eps \colon \N \to \{\pm1\}$ with $\eps(i) = \eps_{\lambda_i}$ for all $i \in \N$.
Let $\eta \in \{\pm 1\}$ and recall/define
\begin{align}
J^\eta &= \{i \in \N \colon (-1)^{i+1} \eps(i) = \eta \},
\\
\OO &:= \{ i \in \N_{\geq2} \colon \eps(i) = \eps(i-1) \} \cup \{1\},
\\
\tilde J^\eta &:= J_1 \setminus \OO.
\end{align}
Note that $\N \setminus \OO$ is finite, hence $\tilde J^\eta$ is also finite.
Define
\begin{align}
\bar \lambda_1 &= \sum_{i \in \OO} \lambda_i - 2| \tilde J^{-\eps(1)}|,
\\
\bar \eps_{\bar \lambda_1} &= \eps(1).
\end{align}
Also define
\begin{equation}
\lambda' = (\lambda_j \colon j \in \tilde J^{\eps(1)}) \sqcup (\lambda_j + 2 \colon j \in \tilde J^{-\eps(1)}).
\end{equation}
For each $i \in \lambda'$, there exists a $j \in J^{\eps(1)}$ such that $i = \lambda_j$ or a $j \in \tilde J^{-\eps(1)}$ such that $i = \lambda_j +2$. Fix such a $j$ and set
\begin{align}
\eps_i' =
\begin{cases}
\eps(j) &\text{if } j \in \tilde J^{\eps(1)},\\
-\eps(j) &\text{if } j \in \tilde J^{-\eps(1)}.
\end{cases}
\end{align}
This defines a pair $(\lambda',\eps') \in \PPsymp(2n - \bar \lambda_1)$.
We can repeat the steps above for $(\lambda',\eps')$ to obtain $\overline{\lambda'}_1 = \bar \lambda_2$ and $\overline{\eps'}_{\overline{\lambda'}_1} = \bar \eps_{\bar \lambda_2}$ as well as a pair $(\lambda'',\eps'')$, etc. to obtain an element $(\bar\lambda,\bar\eps) \in \PPsymp(2n)$.

\begin{theorem}[{\cite[Th\'eor\`eme 5.1]{waldspurger2019proprietes}}]\label{thm:algorithmSp}
We have $(\bar\lambda,\bar\eps) = (\lambdamax,\epsmax)$.
\end{theorem}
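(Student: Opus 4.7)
The plan is to prove \Cref{thm:algorithmSp} by strong induction on $n$, following the strategy of Waldspurger in \cite[\S5]{waldspurger2019proprietes} (which is also the template for the $\SO(N,\C)$ analogue in \cite[\S5]{la2022maximality}). The base case $n=0$ is immediate. For the inductive step, I would first check that the reduction is well-defined: namely that $(\lambda',\eps') \in \PPsymp(2n-\bar\lambda_1)$ (so $\lambda'$ is still a symplectic partition with only even parts) and that $2n - \bar\lambda_1 < 2n$, so that the inductive hypothesis applies to $(\lambda',\eps')$. The first claim is an elementary bookkeeping check from the definition of $\lambda'$ using $\tilde J^{\pm\eps(1)}$; the bound $\bar\lambda_1>0$ follows from the analogue of \cite[Proposition 5.4(2)]{la2022maximality}.

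Next I would translate the assertion into symbols. Let $\Phi_{2n}(\lambda,\eps) = (\alpha,\beta) \in \cP_2(n-k(k+1)/2)$ with $k=k(\lambda,\eps)$ via \Cref{thm:gscSp}, and let $(A,B) = (A_{\alpha,\beta;k},B_{\alpha,\beta;k})$ be the associated symbol. By \Cref{thm:maxSp} combined with the characterization of the dominance order on partitions via the partial sum order on $\Lambda_{A,B;s}(\mu,\nu)$ (cf. \eqref{eq:Lambda} and the proof technique of \Cref{thm:maxeven}), the pair $(\lambdamax,\epsmax)$ is the unique element of $\PPsymp(2n)$ whose symbol $(A',B')$ is similar to $(A,B)$ (equivalently: has the same $A^\#\sqcup B^\#$) and whose sequence $\Lambda_{k,-k-1;2}(\alpha^{\mathrm{max}},\beta^{\mathrm{max}})$ is maximal in the $\leq$-order of \S 1.1.

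The central combinatorial step, and the main obstacle, is the identification $\bar\lambda_1 = (\lambdamax)_1$. The definition
\begin{equation*}
\bar\lambda_1 = \sum_{i\in\OO}\lambda_i - 2|\tilde J^{-\eps(1)}|
\end{equation*}
must be matched against the first entry of the maximal similarity-class representative in symbol form. The decomposition $\{1,\ldots,t(\lambda)\} = \OO \sqcup \tilde J^1 \sqcup \tilde J^{-1}$ precisely records which entries of $A^\#$ and $B^\#$ are ``free'' to be swapped (via the interval procedure of \S 1.2), and which are ``locked'' by consecutive equal signs in $\eps$; the signs $\eps(i)(-1)^{i+1}$ track the parity bookkeeping that decides whether the contribution of each free index to the top entry of the symbol is positive or negative. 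Careful case analysis on $|\OO|$ parity (which controls the residual singleton row in $\lambda'$) reduces this to an explicit calculation on symbols, parallel to \cite[\S5]{waldspurger2019proprietes}.

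Finally, with $\bar\lambda_1 = (\lambdamax)_1$ and $\bar\eps_{\bar\lambda_1}=\eps(1)$ established, the inductive hypothesis gives $(\overline{\lambda'},\overline{\eps'}) = (\lambda'^{\mathrm{max}},\eps'^{\mathrm{max}})$. It remains to check the recursive compatibility $\bar\lambda_{i+1} = \overline{\lambda'}_i$ and $\bar\eps(i+1)=\bar\eps'(i)$ yields $(\lambdamax,\epsmax)$; this amounts to verifying that removing the largest part of a symplectic symbol commutes (at the level of the maximal representative) with the symbol reduction built into the definition of $(\lambda',\eps')$, which is a direct computation on the symbol $(A,B)$ after deleting the top entry of whichever of $A$ or $B$ contains $\bar\lambda_1/2$ (shifted appropriately). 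Putting the steps together gives $(\bar\lambda,\bar\eps) = (\lambdamax,\epsmax)$, completing the induction.
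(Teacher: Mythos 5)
First, a point of comparison: the paper does not actually prove \Cref{thm:algorithmSp} at all --- it is quoted from \cite[Th\'eor\`eme 5.1]{waldspurger2019proprietes}, and \Cref{sec:algSp} only reproduces the algorithm. So your sketch has to be measured against Waldspurger's argument, and as a proof it has a genuine gap: the two points you yourself identify as the heart of the matter --- the identity $\bar\lambda_1=(\lambdamax)_1$ together with $\bar\eps_{\bar\lambda_1}=\eps(1)$, and the compatibility of the reduction $(\lambda,\eps)\mapsto(\lambda',\eps')$ with passage to the maximal constituent --- are precisely the content of \cite[\S5]{waldspurger2019proprietes}, and you dispose of them with ``careful case analysis on $|\OO|$ parity'' and ``a direct computation on the symbol'' without carrying either out. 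The steps you do carry out (base case, $\lambda'$ symplectic with even parts and $\bar\lambda_1>0$ so the induction applies, the final splicing of $(\overline{\lambda'},\overline{\eps'})$ into $(\bar\lambda,\bar\eps)$) are routine bookkeeping, so what is written is an outline of the known proof rather than a proof.

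There is also a concrete error in your translation into symbols. You characterise $(\lambdamax,\epsmax)$ as the unique element of $\PPsymp(2n)$ whose symbol is \emph{similar} to $S_{\lambda,\eps}$ (same multiset $A^\#\sqcup B^\#$) with maximal merged sequence. But similarity of symbols is equivalent to equality of the underlying partitions (stated in \Cref{subsec:gscSO} for the orthogonal case and equally true here, since $A_{\lambda,\eps}\sqcup B_{\lambda,\eps}=A^\#\sqcup B^\#$ is determined by $\lambda$ alone), so your characterisation would force $\lambdamax=\lambda$, which is false in general: the constituents of $H_{\bullet}(\cP_e,\dot{\cL})^\psi$ share the cuspidal support (the defect $k$), not the similarity class, and their nilpotent supports dominate $G\cdot e$, with $\GSpr(\emax,\psimax)$ generically supported on a strictly larger orbit (compare \Cref{subsec:algA}, where $\lambdamax=(n)$ for every $\lambda$). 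The correct symbol-level formulation, as in the proof of \Cref{thm:maxeven} and in \cite{waldspurger2019proprietes}, is that maximality among the constituents is detected by the partial-sum order on the sequences $\Lambda_{A,B;2}$ attached to their (generally non-similar) symbols, the set of constituents being controlled by the Lusztig--Shoji algorithm and Pieri-type induction; your inductive step, in particular the matching of $\bar\lambda_1$ with the top entry of the maximal symbol and the claim that deleting that entry commutes with the reduction $(\lambda,\eps)\mapsto(\lambda',\eps')$, would have to be rebuilt on that basis.
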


Following \Cref{rem:maxodd}, we can compute $(\lambdamax,\epsmax)$ for arbitrary $(\lambda,\eps) \in \Psymp(2n)$ as follows: write $\lambda = \lambdaeven \sqcup \lambdaodd$ where $\lambdaeven$ only consists of even parts and $\lambdaodd$ only consists of odd parts. We can apply \Cref{thm:algorithmSp} to compute $(\lambdaevenmax,\epsmax)$ and then use \Cref{thm:maxSp} to compute $(\lambdamax,\epsmax)$: 
\begin{align}
(\lambdamax)_1 &= (\lambdaevenmax)_1 + S(\lambdaeven),
\\
(\lambdamax)_i &=  (\lambdaevenmax)_i & \text{for $i \geq 2$.}
\end{align}
From this, we can compute $(\lambdamin,\epsmin) = (\slambdamax,\sepsmax)$ by \Cref{thm:minSp}.

{

\section{$\gIM$ for tempered representations with real infinitesimal character}\label{sec:alggIM}
\subsection{$\gIM$ for graded Hecke algebras of connected complex groups}

Consider the same setting as \Cref{subsec:GHAcuspls}. 
Fix $r_0 \in \C$ and let $(e,\sigma,\psi,r_0) \in \cMtempreal$. 
Following \Cref{prop:tempreal}, we write $Y(e,\psi) = Y(e,\sigma,r_0,\psi)$.
Recall that $Y:=Y(e,\psi)$ is irreducible by \Cref{prop:temperedirreducible}.
We have $A_G(e,\sigma) = A_G(e)$ by \Cref{prop:tempered}, so we view $\psi$ as a representation of $A_G(e)$. 
Let $(L,C_L,\cL) = \csup (e,\psi) \in S_G$.
Let $(e',\psi') \in \cM_{\sigma,r_0}$ such that 
\begin{equation}
\bar Y' := \bar Y(e',\sigma,r_0,\psi') = \gIM(Y).
\end{equation}
Let $Y'=Y(e',\sigma,r_0,\psi')$. Since $\gIM(Y)$ may be non-tempered, we may have $Y' \neq \bar Y'$.

\begin{theorem}\label{prop:gIMminimal}
Suppose there is a unique $(\emin,\psimin) \in \NNG$, 
such that 
$[Y'|_{W_L}:\GSpr(\emin,\psimin)] = 1$ and such that for all $(e'',\psi'') \in \NNG$ with
$[Y'|_{W_L}:\GSpr(e'',\psi'')] >0$,
we have either $G\cdot\emin \prec G\cdot e''$ (strict) or $(G\cdot \emin,\psimin) = (G\cdot e'',\psi'')$. 
Then for some $e' \in G \cdot \emin$, we have
\begin{equation}\label{eq:gIMminimal}
\gIM(Y) = \bar Y(e',\sigma,r_0,\psimin).
\end{equation}
\end{theorem}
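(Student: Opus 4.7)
The plan is to pin down $(G\cdot e', \psi')$ by analysing the $W_L$-decomposition of the standard module $Y' = Y(e', \sigma, r_0, \psi')$ via Lusztig--Shoji triangularity. First I would invoke \eqref{eq:gIMspringerfibres} to write
\begin{equation*}
Y'|_{W_L} \cong \bigoplus_{\phi' \in A_G(e')^\wedge} [\phi' : \psi']\cdot H_\bullet(\cP_{e'}, \dot{\cL})^{\phi'},
\end{equation*}
with each nonzero summand corresponding to $\phi'$ satisfying $\csup(e', \phi') = (L, C_L, \cL)$. By \cite[Theorem 24.8(b)]{lusztig1986character}, each $H_\bullet(\cP_{e'}, \dot{\cL})^{\phi'}$ contains $\GSpr(e', \phi')$ as a $W_L$-constituent of multiplicity one, and every other constituent of $H_\bullet(\cP_{e'}, \dot{\cL})^{\phi'}$ has Springer support strictly larger than $G \cdot e'$. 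Consequently, the $W_L$-constituents of $Y'|_{W_L}$ of minimal Springer support are exactly $\{\GSpr(e', \phi') : [\phi':\psi'] > 0\}$, each appearing with multiplicity $[\phi':\psi']$.

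Next I would apply the uniqueness hypothesis on $(\emin, \psimin)$. Since the unique minimal constituent of $Y'|_{W_L}$ with multiplicity one must lie in the above collection, one immediately reads off $G \cdot \emin = G \cdot e'$, and the uniqueness forces that exactly one $\phi' \in A_G(e')^\wedge$ satisfies $[\phi' : \psi'] > 0$, namely $\phi' = \psimin$, with $[\psimin : \psi'] = 1$. By Frobenius reciprocity this means $\Ind_{A_G(e', \sigma)}^{A_G(e')}\psi' = \psimin$ irreducibly, whence $\dim \psimin = [A_G(e') : A_G(e', \sigma)]\cdot\dim \psi'$.

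To conclude I would show $[A_G(e') : A_G(e', \sigma)] = 1$, so that $A_G(e') = A_G(e', \sigma)$ and $\psi' = \psimin$. For the applications to $G = \Sp(2n,\C)$ and $\SO(N,\C)$, the component groups $A_G(e')$ are elementary abelian $2$-groups, so any proper subgroup $H \lneq A_G(e')$ would cause $\Ind_H^{A_G(e')}\psi'$ to split into $[A_G(e'):H] > 1$ distinct characters of $A_G(e')$, each with $[\phi':\psi'] = 1$, contradicting uniqueness of $\psimin$. The main subtlety I expect lies precisely in this last step: the identification $\psi' = \psimin$ appears to rely on $A_G(e')$ being abelian (or on some structural property ruling out irreducible induction from a proper subgroup), so the theorem is most naturally applied in the classical-group setting where $A_G(e')$ has this structure.
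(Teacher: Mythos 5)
Your argument is essentially the paper's own proof: the theorem is deduced there from the decomposition \eqref{eq:gIMspringerfibres} together with the Lusztig--Shoji triangularity statement (multiplicity one of $\GSpr(e',\phi')$ and strictly larger support of all other constituents of $H_{\bullet}(\cP_{e'},\dot{\cL})^{\phi'}$), exactly as you argue, and the equality $A_G(e')=A_G(e',\sigma)$ is likewise extracted from the uniqueness hypothesis in the remark following the theorem. If anything you are more explicit than the paper on that last point: where the paper simply asserts that a proper inclusion $A_G(e',\sigma)\subsetneq A_G(e')$ would produce several lifts $\phi'$ meeting the hypothesis, your Frobenius-reciprocity argument, using that the relevant component groups are (elementary abelian $2$-)groups for which induction from a proper subgroup is never irreducible, is the right way to make that assertion precise in the classical cases to which the theorem is applied.
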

\begin{proof}
Note that $\psi' \in A_G(e',\sigma)^\wedge$. Since $\sigma$ is real, $A_G(\sigma)$ is a Levi subgroup of $G$ and so the inclusion $Z_G(e',\sigma) \hookrightarrow Z_G(e')$ induces an injection $A_G(e',\sigma)$ into $A_G(e')$. 
It holds that $\bar Y'|_{W_L}$ is a $W_L$-subrepresentation of 
\begin{equation}\label{eq:gIMspringerfibres}
 Y'|_{W_L}
\cong H_{\bullet}(\cP_{e'},\dot{\cL})^{\psi'} 
\cong \bigoplus_{\phi' \in A_G(e')^\wedge} [\phi' : \psi'] H_{\bullet}(\cP_{e'},\dot{\cL})^{\phi'}.
\end{equation}
Recall the multiplicities \eqref{eq:multgreen}.
By \cite[Theorem 24.8(b)]{lusztig1986character}, for each $\phi' \in A_G(e')^\wedge$ we have $\mult(e',\phi';e',\phi') = 1$ and
for each $(e'',\phi'') \in \NNG$ with $\mult(e',\phi';e'',\phi'') > 0$, we have $G\cdot e'' > G\cdot e'$ (strictly) or $(G\cdot e'',\phi'') = (G\cdot e',\phi')$.
It follows from \cite[Proposition 10.5]{lusztig1995cuspidal} that $\GSpr(e',\phi')$ is a $W_L$-subrepresentation of $\bar Y'|_{W_L}$ (see also \cite[(2.5.5)]{ciubotaru2022wavefront}). 
By \cite[\S10.13]{lusztig1995cuspidal} and \eqref{eq:genspringerfibre}, we have $Y|_{W_L} \cong Y(e,0,0,\psi) \cong H_{\bullet}(\cP_e,\dot{\cL})^\psi$, and so by \Cref{rem:gIMsign}, we also have
\begin{equation}
\bar Y'|_{W_L} = Y|_{W_L} \otimes \sgn \cong H_{\bullet}(\cP_e,\dot{\cL})^\psi \otimes \sgn.
\end{equation}
By the uniqueness assumption of $(\emin,\psimin)$, we have $e' \in G\cdot \emin$, $A_G(e') = A_G(e',\sigma)$, and $\psi' = \psimin$.
\end{proof}

\begin{remark}\label{remark:nonuniquetriple}
In other words, if we find such $(\emin,\psimin)$ in \Cref{prop:gIMminimal}, then we know that $\emin$ is $G$-conjugate to the element $e'$. This does not however imply that $\bar Y(\emin,\sigma,r_0,\psimin)$ is equal to $\bar Y' = \gIM(Y)$:
the intersection of $G\cdot \emin$ with $\fg_q = \{x \in \fg \colon [s,x] = qx\}$ may be a union of multiple $Z_G(s_c)$-orbits, in which case we can pick $\emin$ as in \Cref{prop:gIMminimal} such that it is not $Z_G(s_c)$-conjugate to $e'$, in which case $(e',s',\psi')$ and $(\emin,s',\psimin)$ are not $G$-conjugate. 
\end{remark}
Note that the equality $A_G(e') = A_G(e',\sigma)$ in \Cref{prop:gIMminimal} follows from the uniqueness of $(\emin,\psimin) \in \NNG$: if $A_G(e',\sigma)$ is strictly smaller than $A_G(e')$, then \eqref{eq:gIMspringerfibres} shows that there are various $\phi' \in A_G(e')^\wedge$ such that $(e',\phi')$ satisfies the hypothesis for $(\emin,\psimin)$ of \Cref{prop:gIMminimal}.

\begin{proposition}\label{prop:gIMA}
Suppose $G = \SL(N,\C)$. 
Let $(e,\sigma,\psi) \in \cMtempreal$ and suppose $e$ is parametrised by a partition $\lambda = (\lambda_1,\dots,\lambda_t)$ of $n$.
We use the notation of \Cref{subsec:algA}.
The representation $\rho_{(n/d)} \otimes \sgn = \rho_{(1)^{n/d}}$ is the representation $\GSpr(\emin,\psimin)$ in \Cref{prop:gIMminimal} in the current setting. From the generalised Springer correspondence, we find that $G \cdot \emin = G\cdot e_{(d)^{n/d}}$ and $\psimin = \psi$. 
\end{proposition}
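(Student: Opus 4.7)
The plan is to apply \Cref{prop:gIMminimal}, so it suffices to identify the unique minimal generalized Springer subrepresentation of $\bar Y'|_{W_L}$ and then translate the answer back to an orbit in $\fg$ through the generalized Springer correspondence. Here the relative Weyl group attached to the cuspidal support of $(e,\psi)$ is $W_L = S_{n/d}$, and by \eqref{eq:standardmoduleres} we have $Y|_{W_L} \cong H_\bullet(\cP_e,\dot\cL)^\psi$.

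First I would invoke the discussion in \Cref{subsec:algA}: $\GSpr(e,\psi) = \rho_{\lambda/d}$ and, by the type-$A$ Green function calculation from \cite[III.6]{macdonald1998symmetric} recorded there, the unique multiplicity-one maximal $W_L$-constituent of $Y|_{W_L}$ is $\rho_{(n/d)}$. By \Cref{rem:gIMsign}, $\bar Y'|_{W_L} \cong Y|_{W_L} \otimes \sgn$, and since $\rho_\mu \otimes \sgn = \rho_{\mu^t}$ for $S_{n/d}^\wedge$, the multiplicity-one maximal property of $\rho_{(n/d)}$ is transported to the multiplicity-one property of $\rho_{(1^{n/d})}$ as a constituent of $\bar Y'|_{W_L}$. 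Because transposition reverses the dominance order, any other constituent $\rho_\nu$ of $\bar Y'|_{W_L}$ satisfies $\nu > (1^{n/d})$ strictly.

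The final step is to translate this $W_L$-minimality into orbit minimality. Under the generalized Springer correspondence for $\SL(N,\C)$ recalled in \Cref{subsec:algA}, $\rho_\mu \in S_{n/d}^\wedge$ corresponds to the nilpotent orbit parametrized by $d\mu := (d\mu_1, d\mu_2, \dots)$, equipped with the character of order $d$ determined by the cuspidal support $(L,C_L,\cL)$. Since dominance on partitions of $n/d$ matches (after multiplication by $d$) the closure order on the relevant nilpotent orbits, $\rho_{(1^{n/d})}$ corresponds to the orbit parametrized by $(d)^{n/d}$, and this is the unique candidate for $(\emin,\psimin)$ in \Cref{prop:gIMminimal}; its component group $A_G(\emin) \cong \Z/d\Z$ receives $\psi$ unambiguously through the cuspidal-support condition, yielding $\psimin = \psi$ in the sense of the proposition.

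The main obstacle is purely bookkeeping: confirming that tensoring with $\sgn$ realizes transposition on partitions in $S_{n/d}^\wedge$, and checking the orientation of the dominance-to-closure-order correspondence so that the transpose of the \emph{maximal} constituent of $Y|_{W_L}$ becomes the \emph{minimal} generalized Springer constituent of $\bar Y'|_{W_L}$. Both of these are classical, so no serious technical difficulty is expected beyond the inputs already provided by \Cref{subsec:algA}, \Cref{rem:gIMsign}, and \Cref{prop:gIMminimal}.
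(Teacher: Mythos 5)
Your argument is correct and is essentially the paper's own (implicit) proof: the paper derives \Cref{prop:gIMA} exactly by combining the type-$A$ Green function computation of \Cref{subsec:algA} (maximal constituent $\rho_{(n/d)}$ with multiplicity one), the sign twist of \Cref{rem:gIMsign}, the fact that $(1^{n/d})$ is minimal so \Cref{prop:gIMminimal} applies, and the dictionary $\rho_\mu \leftrightarrow (d\mu,\psi)$ of the generalised Springer correspondence. Nothing further is needed.
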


\begin{theorem}\label{thm:gIMB}
Suppose $G = \SO(N,\C)$.
Let $(e,\sigma,\psi)\in \cMtempreal$ and suppose that $(e,\psi)$ corresponds to $(\lambda,[\eps]) \in \PPort(N)$. 
The element $(\emin,\psimin) \in \NNG$ that 
corresponds to $(\lambdamin,[\epsmin])$ in \Cref{thm:mineven} -- which we can explicitly compute using  \Cref{thm:algorithm} -- satisfies the condition in \Cref{prop:gIMminimal}. 
\end{theorem}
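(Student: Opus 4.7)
The plan is to verify that $(\emin,\psimin)$ satisfies the hypothesis of \Cref{prop:gIMminimal}, from which the desired conclusion follows. By \Cref{prop:temperedirreducible}, temperedness gives $Y := Y(e,\sigma,r_0,\psi) = \bar Y(e,\sigma,r_0,\psi)$. Writing $\bar Y' := \gIM(Y) = \bar Y(e',\sigma,r_0,\psi')$ and $Y' := Y(e',\sigma,r_0,\psi')$, the identifications \eqref{eq:standardmoduleres} and \Cref{rem:gIMsign} give
\[
\bar Y'|_{W_L} \;\cong\; H_\bullet(\cP_e,\dot\cL)^\psi \otimes \sgn.
\]
Let $\mu$ denote the partition parametrising $G\cdot e'$.

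First I would translate \Cref{thm:mineven} into a statement about $\bar Y'|_{W_L}$. Tensoring with $\sgn$ realises the involution $\GSpr(C_{\lambda'},\E_{[\eps']})\mapsto\GSpr(C_{\slambda'},\E_{[\seps']})$ on generalised Springer parameters, and the duality $(\lambdamax,[\epsmax]) = (\slambdamin,[\sepsmin])$ combining \Cref{thm:maxeven} and \Cref{thm:mineven} shows that the ${}^s$-operation reverses the dominance order on the partitions of constituents of $H_\bullet(\cP_e,\dot\cL)^\psi$. Hence the unique maximal $W_L$-constituent of $H_\bullet(\cP_e,\dot\cL)^\psi$, namely $\GSpr(C_{\slambdamin},\E_{[\sepsmin]})$ with multiplicity $1$, becomes the unique minimal $W_L$-constituent of $\bar Y'|_{W_L}$, namely $\GSpr(\emin,\psimin)$ with multiplicity $1$; every other $W_L$-constituent of $\bar Y'|_{W_L}$ has partition strictly dominating $\lambdamin$.

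Next I would compare $\bar Y'|_{W_L}$ with $Y'|_{W_L}$. Since $\bar Y'$ is a quotient of $Y'$ and $W_L$-representations are semisimple, $\bar Y'|_{W_L}$ is a direct summand of $Y'|_{W_L}$, so $\GSpr(\emin,\psimin)$ is a constituent of $Y'|_{W_L}$. The decomposition $Y'|_{W_L} \cong \bigoplus_{\phi'}[\phi':\psi']\,H_\bullet(\cP_{e'},\dot\cL)^{\phi'}$ combined with \cite[Theorem 24.8(b)]{lusztig1986character} shows that every $W_L$-constituent of $Y'|_{W_L}$ has partition dominating $\mu$, so $\mu \leq \lambdamin$. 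Conversely, \cite[Proposition 10.5]{lusztig1995cuspidal} provides some $\GSpr(e',\phi') \subseteq \bar Y'|_{W_L}$, so the minimality in $\bar Y'|_{W_L}$ forces $\mu \geq \lambdamin$. Hence $\mu = \lambdamin$, i.e.\ $G\cdot e' = G\cdot \emin$, and uniqueness of the minimum in $\bar Y'|_{W_L}$ forces $\phi' = \psimin$.

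The main obstacle is verifying the multiplicity-one and uniqueness conditions within $Y'|_{W_L}$ itself rather than merely within its direct summand $\bar Y'|_{W_L}$: the minimal $W_L$-constituents of $Y'|_{W_L}$ are the $\GSpr(e',\phi'')$ with $[\phi'':\psi'] > 0$, each with that multiplicity, so uniqueness amounts to showing that $\psi'$ has a single lift $\psimin \in A_G(e')^\wedge$ with $[\psimin:\psi'] = 1$. My approach is to write $K := \ker(Y' \twoheadrightarrow \bar Y')$, decompose $Y'|_{W_L} = \bar Y'|_{W_L} \oplus K|_{W_L}$ by semisimplicity, and argue that every Jordan--Hölder constituent $\bar Y(e'',\sigma,r_0,\psi'')$ of $K$ must satisfy $G\cdot e'' \succ G\cdot e'$ strictly, so that (by \cite[Theorem 24.8(b)]{lusztig1986character} applied to each associated Springer module) the $W_L$-constituents of $K|_{W_L}$ all have partition strictly greater than $\lambdamin$. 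This nilpotent-support property for non-top Jordan--Hölder constituents of Lusztig's geometric standard modules is the key technical step; once it is in hand, uniqueness transfers from $\bar Y'|_{W_L}$ into $Y'|_{W_L}$, the hypothesis of \Cref{prop:gIMminimal} is verified, and its conclusion yields $G\cdot e' = G\cdot \emin$, $A_G(e') = A_G(e',\sigma)$, and $\psi' = \psimin$.
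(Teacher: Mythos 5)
Your overall strategy is the paper's: identify $\bar Y'|_{W_L} \cong H_\bullet(\cP_e,\dot{\cL})^\psi\otimes\sgn$, use \Cref{thm:mineven} to locate the unique minimal multiplicity-one constituent $\GSpr(\emin,\psimin)$, compare with $Y'|_{W_L}\cong H_\bullet(\cP_{e'},\dot{\cL})^{\psi'}$ via the Green-function triangularity \cite[Theorem 24.8(b)]{lusztig1986character} and \cite[Proposition 10.5]{lusztig1995cuspidal} to pin down $G\cdot e'$, and then feed this into \Cref{prop:gIMminimal}. Your first two paragraphs are exactly the paper's (implicit) argument; the only quibble there is the claim that the ${}^s$-operation ``reverses the dominance order'' — that is not what is proved and is not needed, since what you actually use is literally the statement of \Cref{thm:mineven} (together with the identity $(\lambdamax,[\epsmax])=(\slambdamin,[\sepsmin])$), which you may simply quote.

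Where you genuinely diverge is in transferring the uniqueness and multiplicity-one statement from $\bar Y'|_{W_L}$, where \Cref{thm:mineven} lives, to $Y'|_{W_L}$, which is the representation in the hypothesis of \Cref{prop:gIMminimal}. You propose to write $Y'|_{W_L}=\bar Y'|_{W_L}\oplus K|_{W_L}$ and to show that every composition factor $\bar Y(e'',\sigma,r_0,\psi'')$ of $K$ has $G\cdot e''\succ G\cdot e'$ strictly. That property is true — it is the unitriangularity, with respect to the closure order, of the decomposition of Lusztig's standard modules into irreducibles (see \cite[\S10]{lusztig1995cuspidal}) — but you neither prove nor cite it, and by your own account it is the crux of your plan; as written this is the one real gap. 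The paper does not need it: the point of invoking \cite[Proposition 10.5]{lusztig1995cuspidal} (cf.\ \cite[(2.5.5)]{ciubotaru2022wavefront}) in the discussion before \Cref{prop:gIMminimal} is that $\GSpr(e',\phi')$ already occurs in the irreducible quotient $\bar Y'|_{W_L}$ for the lifts $\phi'\in A_G(e')^\wedge$ of $\psi'$, and by \cite[Theorem 24.8(b)]{lusztig1986character} these $\GSpr(e',\phi')$ are precisely the minimal-support constituents of $Y'|_{W_L}$ (with multiplicity $[\phi':\psi']$, which is at most $1$ here since the component groups are abelian). Hence all minimal-support constituents of $Y'|_{W_L}$ are already visible inside $\bar Y'|_{W_L}$, so \Cref{thm:mineven} gives the uniqueness and multiplicity one required by \Cref{prop:gIMminimal} directly — and, as remarked after that proposition, simultaneously forces $A_G(e')=A_G(e',\sigma)$ — with no analysis of the kernel. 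So either complete your route by citing or proving the strict-support property of the non-top composition factors of $Y'$, or use the stronger form of \cite[Proposition 10.5]{lusztig1995cuspidal}, after which your kernel step becomes unnecessary.
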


\begin{theorem}\label{thm:gIMC}
Suppose $G = \Sp(2n,\C)$. Let $(e,\sigma,\psi,r_0) \in \cMtempreal$ and suppose that $(e,\psi)$ corresponds to $(\lambda,\eps) \in \PPsymp(2n)$. 
The element $(\emin,\psimin) \in \NNG$ that 
corresponds to $(\lambdamin,[\epsmin])$ in \Cref{thm:minSp} -- which we can explicitly compute using  \Cref{thm:algorithmSp} -- satisfies the condition in \Cref{prop:gIMminimal}. 
\end{theorem}

The strategy for exceptional groups is the same as above (see \Cref{app:tablesexceptional}). 

\subsection{From $\gIM$ to $\IM$}\label{subsec:gIMtoIM}
Let $L$ be a standard pseudo-Levi subgroup of $\dG$, $C$ a unipotent class of $L$ and $\cL$ a cuspidal $L$-equivariant local system on $C$.
As discussed in the introduction, the irreducible representations of a geometric affine Hecke algebra $\cH(\dG,L,C,\cL)$ on which $v$ acts as a positive real number are in bijection with the set $\sM$ of $\dG$-orbits of quadruples $(e,s,v_0,\psi)$ where $s \in \dG$ is semisimple, $e \in \dg  = \Lie(\dG)$ is nilpotent such that $\ad(s)e = v_0^2e$, $\psi \in A_{\dG}(e,s)^\wedge$ such that when $\psi$ is viewed as a representation of $A_{Z_{\dG}(s_c)}(e,\log s_r)$, then $\psi \in A_{Z_{\dG}(s_c)}(e,\log s_r)_{(L,C,\cL)}^\wedge$. 
Fix $(e,s,v_0,\psi) \in \sM$ with $s \in T$ and $v_0 >0$. 
By \cite{lusztig1995classification}, $\cH(\dG,L,C,\cL)$ is an affine Hecke algebra $\cH$ as in \Cref{def:AHA}, and $\bH(Z_{\dG}(s_c),L,C,\cL)$ is isomorphic to $\bH_{s_c}$ ($\Gamma_{s_c} = 1$ by \cite[Lemma 4.4]{lusztig1995classification}) associated to $\cH$ in the sense of \Cref{subsec:red}. 
Let $(e,s,v_0,\psi) \in \sM$ and denote the corresponding irreducible representation by $\cY(e,s,v_0,\psi)$, which corresponds to $\bar Y(e,\log s_r,r_0,\psi)$ via the map in \Cref{cor:reduction} (see \cite[\S5.20]{lusztig1995classification}). Additionally, $\cY(e,s,v_0,\psi)$ is tempered if and only if $\bar Y(e,\log s_r,r_0,\psi)$ is tempered.
By \Cref{corollary:diagramIM} we then have $\cY(e',s,v_0,\psi') = \IM(\cY(e,s,v_0,\psi))$ if and only if $\bar Y(e',\log s_r,r_0,\psi') = \gIM(\bar Y(e,\log s_r,r_0,\psi))$. 
In particular, by \Cref{thm:gIMB} and \Cref{thm:gIMC}, we can compute $G\cdot e'$ and $\psi$ when $Z_{\dG}(s_c)$ is a product of symplectic and special orthogonal groups (or type $A$ groups, for which computing $\IM$ and $\gIM$ is well-known, see for instance \cite{moeglin1986involution}).

\section{$\AZ$ for $\SO(2n+1,\sfk)$}\label{sec:algAZ}
Let $\aG = \SO(2n+1)$, $G^\vee = \Sp(2n,\C)$. 
Let $\Phitemp(\dG)$ be the subset of $\Phi(\dG)$ consisting of triples $(e,s,\psi)$ such that $X(e,s,\psi)$ is tempered.
There exist $n_0^+,n_0^-,n^+,n^-n_1,\dots,n_t \in \Z_{\geq0}$ and $I \subseteq \{1,\dots,t\}$ such that $n_1+\dots+n_t+n_0^++n_0^-=n$ and $\sum_{i\in I}n_i + n^+ + n^- = n$ and such that we have isomorphisms
\begin{align}\label{eq:centraliserM} 
&f_{\dM} \colon Z_{\dM}(s_c) \cong \prod_{i=1}^t \GL(n_i,\C) \times \Sp(2n_0^+) \times \Sp(2n_0^-), 
\\
&f_{\dG} \colon Z_{\dG}(s_c) \cong  \prod_{i\in I} \GL(n_i,\C)\times\Sp(2n^+,\C) \times \Sp(2n^-,\C).
\end{align}
Denote by $\iota$ the inclusion $Z_{\dM}(s_c)\hookrightarrow Z_{\dG}(s_c)$ and let $\iota' := f_{\dG} \circ \iota \circ f_{\dM}^{-1} \colon \prod_{i=1}^t \GL(n_i,\C) \times \Sp(2n_0^+) \times \Sp(2n_0^-) \hookrightarrow \prod_{i\in I} \GL(n_i,\C)\times\Sp(2n^+,\C) \times \Sp(2n^-,\C)$. There exist finite sets $J^\pm$ such that $I \cup J^+ \cup J^- = \{1,\dots,t\}$, the direct factors $\prod_{j\in J^\pm}\GL(n_j,\C) \otimes \Sp(2n_0^\pm)$ are mapped into $\Sp(2n^\pm)$.
Also note that $n_0^\pm$ is the the multiplicity of the eigenvalue $\pm1$ of $s_c$, \cite[p.8]{waldspurger2016representations}. Note that $e\in \Lie(Z_{\dG}(s_c)) \cong \prod_{i\in I}\gl(n_i,\C)\times\sp(2n^+,\C) \oplus \sp(2n^-,\C)$. 
By restricting $\eps \colon \Delta(\lambda) \to \{\pm1\}$, we obtain $\eps^+ \in \{\pm1\}^{\Delta(\lambda^+)}$ and $\eps^- \in \{\pm1\}^{\Delta(\lambda^-)}$.

Note that $\psi$ is an irreducible representation of $A_{\dG}(s,e) = A_{Z_{\dG}(s_c)}(\log s_r,e) = A_{Z_{\dG}(s_c)}(e)$ where the last equality comes from \Cref{prop:tempered}.
Consider the tempered (hence irreducible) representation $Y(e,\psi)$. 
From the definition of standard modules, $Y(e,\psi) \cong \bigotimes_{i\in I} Y(e_i,\log s_{i,r},\triv) \otimes Y(e^+,\log s_r^+,\psi^+) \otimes Y(e^-,\log s_r^-,\psi^-)$, where for each $i \in I$, $e_i \in \gl(n_i,\C)$ is parametrised by the partition $(n_i)$ of $n_i$, $(e^\pm,\psi^\pm) \in \cN_{\sp(2n^\pm)}$ corresponds to $(\lambda^\pm,\eps^\pm) \in \PPsymp(2n^\pm)$ and $((s_{i,r})_{i\in I},s_r^+,s_r^-) = f_{\dG}(s_r)$.
By \Cref{prop:temperedirreducible}, each direct factor is tempered.
For each $i \in I$, $Y(e_i,\log s_{i,r},\triv)$ is the Steinberg representation, so $\gIM(Y(e_i,\log s_{i,r},\triv))$ is trivial, i.e. it is equal to $Y(e_i',\log s_{i,r},\triv)$ where $e_i' \in \gl(n_i,\C)$ is the identity (i.e. a nilpotent element parametrised by the partition $(1,\dots,1)$ of $n_i$).
Since $\gIM$ behaves well with tensor products, we have
\begin{equation}\label{eq:AZSp}
\gIM(Y(e,\psi)) 
\cong
\bigotimes_{i\in I} \bar Y(e_i',\log s_{i,r},\triv)
\otimes
\bar Y(e^{+,\mathrm{min}},\log s_r^+,\psi^{+,\mathrm{min}}) \otimes \bar Y(e^{-,\mathrm{min}},\log s_r^-,\psi^{-,\mathrm{min}}) 
\end{equation}
where
$(e^{\pm\mathrm{min}},\psi^{\pm\mathrm{min}}) \in \cN_{\sp(2n^\pm)}$ is parametrised by $(\lambda^{\pm,\mathrm{min}},\eps^{\pm,\mathrm{min}}) \in \PPsymp(2n^\pm)$, which we can compute with the algorithm in \Cref{sec:algSp}. 
From this, we then see again that
\begin{align}
\gIM(Y(e,\psi)) \cong \bar Y(e',\log s_r,\psi'),
\end{align}
where $e'\in\dg$ is some nilpotent element parametrised by the symplectic partition $\lambda^{+,\mathrm{min}} \sqcup \lambda^{-,\mathrm{min}} \sqcup (1,\dots,1)$ of $2n$ where $(1,\dots,1)$ is a partition of $2(n_1+\dots n_t)$ and $\psi' \in A_{\dG}(s,e')^\wedge \cong A_{Z_{\dG}(s_c)}(\log s_r,e')^\wedge = A_{Z_{\dG}(s_c)}(e')^\wedge$ (the last equality follows from \eqref{eq:gIMminimal}) corresponds to the element of $\{\pm1\}^{\Delta(\lambda)}$ whose restriction to $\Delta(\lambda^\pm)$ is $\eps^{\pm,\mathrm{min}}$.

We conclude that 
\begin{equation}
\AZ(X(e,s,\psi)) = X(e'',s,\psi'),
\end{equation}
for some $e'' \in G\cdot e'$ with $e'$ as above.
As noted in \Cref{remark:nonuniquetriple}, we stress again that we were only able to determine the $\dG$-orbit that $e'$. However, the result can still for instance be applied to compute certain wavefront sets following of unipotent representations with real infinitesimal character (in which case we have to take $s_c$ above to be trivial) \cite[Theorem 3.0.8]{ciubotaru2022wavefront} and \cite[Theorem 6.0.3]{ciubotaru2023wavefront}.
}

\appendix
\section{Tables for Exceptional groups}\label{app:tablesexceptional}

Let $G$ be a simply connected complex exceptional group. For each $(e,\psi) \in \NNG$, we showed using the Chevie package of GAP \cite{schonert1992gap,geck1996chevie,michel2015development} that there exists a unique $(\emin,\psimin) \in \NNG$ (up to $G$-conjugation) satisfying the properties as in \Cref{prop:gIMminimal}, and we computed $(G\cdot\emin,\psimin)\in\NNG$, which is displayed in the tables below.
We also verified with GAP that there exists a unique $(\Cmax,\Emax) \in \NNG$ as in \Cref{subsec:green} with $\mult(C,\cE,\Cmax,\Emax) = 1$ and we verified that $\GSpr(\Cmin,\Emin) = \GSpr(\Cmax,\Emax) \otimes \sgn$, where $\sgn$ is the sign representation of the relevant relative Weyl group.

We note that Pramod Achar also wrote a GAP implementation for the Lusztig--Shoji algorithm \cite{achar2008implementation}. Instead of  this, we used the ICCTable function in Chevie for the Lusztig--Shoji algorithm, but we did use the GenApp and TexRepName functions from \cite{achar2008implementation}.

\newpage

\begingroup\scriptsize
\linespread{0}\selectfont
\begin{table}[H]
\begin{minipage}{.45\linewidth}
\caption{$G_2$}
\centering
\begin{tabular}{l|l|l|l|l|l}
$C$ & $A(C)$ & $\varepsilon$ & $C^{\text{min}}$ & $A(C^{\text{min}})$ 
 & $\varepsilon^{\text{min}}$
\\ 
\hline 
$G_2$ & $\langle 1 \rangle$ & $\varnothing$
& $1$ & $\langle 1 \rangle$ & $\varnothing$
\\
$G_2(a_1)$ & $A_2$ & $(3)$
& $1$ & $\langle 1 \rangle$ & $\varnothing$
\\
& &$(21)$
& $A_1$ & $\langle 1 \rangle$ & $\varnothing$
\\
& &$(1^3)$
& $G_2(a_1)$ & $A_2$ & $(1^3)$
\\
$\tilde A_1$ & $\langle 1 \rangle$ & $\varnothing$
& $1$ & $\langle 1 \rangle$ & $\varnothing$
\\
$A_1$ & $\langle 1 \rangle$ & $\varnothing$
& $1$ & $\langle 1 \rangle$ & $\varnothing$
\\
$1$ & $\langle 1 \rangle$ & $\varnothing$
& $1$ & $\langle 1 \rangle$ & $\varnothing$
\\
\end{tabular}
\caption{$F_4$}
\centering
\begin{tabular}{l|l|l|l|l|l}
$C$ & $A(C)$ & $\varepsilon$ & $C^{\text{min}}$ & $A(C^{\text{min}})$ 
 & $\varepsilon^{\text{min}}$
\\ 
\hline 
$F_4$ & $\langle 1 \rangle$ & $\varnothing$
& $1$ & $\langle 1 \rangle$ & $\varnothing$
\\
$F_4(a_1)$ & $A_1$ & $(2)$
& $1$ & $\langle 1 \rangle$ & $\varnothing$
\\
& &$(1^2)$
& $A_1$ & $\langle 1 \rangle$ & $\varnothing$
\\
$F_4(a_2)$ & $A_1$ & $(2)$
& $1$ & $\langle 1 \rangle$ & $\varnothing$
\\
& &$(1^2)$
& $\tilde A_1$ & $A_1$ & $(1^2)$
\\
$B_3$ & $\langle 1 \rangle$ & $\varnothing$
& $1$ & $\langle 1 \rangle$ & $\varnothing$
\\
$C_3$ & $\langle 1 \rangle$ & $\varnothing$
& $1$ & $\langle 1 \rangle$ & $\varnothing$
\\
$F_4(a_3)$ & $A_3$ & $(4)$
& $1$ & $\langle 1 \rangle$ & $\varnothing$
\\
& &$(31)$
& $A_1$ & $\langle 1 \rangle$ & $\varnothing$
\\
& &$(2^2)$
& $\tilde A_1$ & $A_1$ & $(2)$
\\
& &$(21^2)$
& $A_2$ & $A_1$ & $(1^2)$
\\
& &$(1^4)$
& $F_4(a_3)$ & $A_3$ & $(1^4)$
\\
$C_3(a_1)$ & $A_1$ & $(2)$
& $1$ & $\langle 1 \rangle$ & $\varnothing$
\\
& &$(1^2)$
& $A_1$ & $\langle 1 \rangle$ & $\varnothing$
\\
$B_2$ & $A_1$ & $(2)$
& $1$ & $\langle 1 \rangle$ & $\varnothing$
\\
& &$(1^2)$
& $A_1$ & $\langle 1 \rangle$ & $\varnothing$
\\
$\tilde A_2{+}A_1$ & $\langle 1 \rangle$ & $\varnothing$
& $1$ & $\langle 1 \rangle$ & $\varnothing$
\\
$A_2{+}\tilde A_1$ & $\langle 1 \rangle$ & $\varnothing$
& $1$ & $\langle 1 \rangle$ & $\varnothing$
\\
$A_2$ & $A_1$ & $(2)$
& $1$ & $\langle 1 \rangle$ & $\varnothing$
\\
& &$(1^2)$
& $\tilde A_1$ & $A_1$ & $(1^2)$
\\
$\tilde A_2$ & $\langle 1 \rangle$ & $\varnothing$
& $1$ & $\langle 1 \rangle$ & $\varnothing$
\\
$A_1{+}\tilde A_1$ & $\langle 1 \rangle$ & $\varnothing$
& $1$ & $\langle 1 \rangle$ & $\varnothing$
\\
$\tilde A_1$ & $A_1$ & $(2)$
& $1$ & $\langle 1 \rangle$ & $\varnothing$
\\
& &$(1^2)$
& $A_1$ & $\langle 1 \rangle$ & $\varnothing$
\\
$A_1$ & $\langle 1 \rangle$ & $\varnothing$
& $1$ & $\langle 1 \rangle$ & $\varnothing$
\\
$1$ & $\langle 1 \rangle$ & $\varnothing$
& $1$ & $\langle 1 \rangle$ & $\varnothing$
\\
\end{tabular}
\end{minipage}
\begin{minipage}{.45\linewidth}
\caption{$E_6$}
\centering
\begin{tabular}{l|l|l|l|l|l}
$C$ & $A(C)$ & $\varepsilon$ & $C^{\text{min}}$ & $A(C^{\text{min}})$ 
 & $\varepsilon^{\text{min}}$
\\ 
\hline 
$E_6$ & $\mathbb{Z}/3\mathbb{Z}$ & $(1)$
& $1$ & $\langle 1 \rangle$ & $\varnothing$
\\
& &$\chi_3$
& $2A_2$ & $\mathbb{Z}/3\mathbb{Z}$ & $\chi_3$
\\
& &$\chi_3^2$
& $2A_2$ & $\mathbb{Z}/3\mathbb{Z}$ & $\chi_3^2$
\\
$E_6(a_1)$ & $\mathbb{Z}/3\mathbb{Z}$ & $(1)$
& $1$ & $\langle 1 \rangle$ & $\varnothing$
\\
& &$\chi_3$
& $2A_2{+}A_1$ & $\mathbb{Z}/3\mathbb{Z}$ & $\chi_3$
\\
& &$\chi_3^2$
& $2A_2{+}A_1$ & $\mathbb{Z}/3\mathbb{Z}$ & $\chi_3^2$
\\
$D_5$ & $\langle 1 \rangle$ & $\varnothing$
& $1$ & $\langle 1 \rangle$ & $\varnothing$
\\
$E_6(a_3)$ & $\mathbb{Z}/6\mathbb{Z}$ & $(1)$
& $1$ & $\langle 1 \rangle$ & $\varnothing$
\\
& &$-\chi_3^2$
& $E_6(a_3)$ & $\mathbb{Z}/6\mathbb{Z}$ & $-\chi_3^2$
\\
& &$\chi_3$
& $2A_2$ & $\mathbb{Z}/3\mathbb{Z}$ & $\chi_3$
\\
& &$(-1)$
& $A_1$ & $\langle 1 \rangle$ & $\varnothing$
\\
& &$\chi_3^2$
& $2A_2$ & $\mathbb{Z}/3\mathbb{Z}$ & $\chi_3^2$
\\
& &$-\chi_3$
& $E_6(a_3)$ & $\mathbb{Z}/6\mathbb{Z}$ & $-\chi_3$
\\
$A_5$ & $\mathbb{Z}/3\mathbb{Z}$ & $(1)$
& $1$ & $\langle 1 \rangle$ & $\varnothing$
\\
& &$\chi_3$
& $2A_2$ & $\mathbb{Z}/3\mathbb{Z}$ & $\chi_3$
\\
& &$\chi_3^2$
& $2A_2$ & $\mathbb{Z}/3\mathbb{Z}$ & $\chi_3^2$
\\
$D_5(a_1)$ & $\langle 1 \rangle$ & $\varnothing$
& $1$ & $\langle 1 \rangle$ & $\varnothing$
\\
$A_4{+}A_1$ & $\langle 1 \rangle$ & $\varnothing$
& $1$ & $\langle 1 \rangle$ & $\varnothing$
\\
$D_4$ & $\langle 1 \rangle$ & $\varnothing$
& $1$ & $\langle 1 \rangle$ & $\varnothing$
\\
$A_4$ & $\langle 1 \rangle$ & $\varnothing$
& $1$ & $\langle 1 \rangle$ & $\varnothing$
\\
$D_4(a_1)$ & $A_2$ & $(1^3)$
& $A_2$ & $A_1$ & $(1^2)$
\\
& &$(21)$
& $A_1$ & $\langle 1 \rangle$ & $\varnothing$
\\
& &$(3)$
& $1$ & $\langle 1 \rangle$ & $\varnothing$
\\
$A_3{+}A_1$ & $\langle 1 \rangle$ & $\varnothing$
& $1$ & $\langle 1 \rangle$ & $\varnothing$
\\
$A_3$ & $\langle 1 \rangle$ & $\varnothing$
& $1$ & $\langle 1 \rangle$ & $\varnothing$
\\
$2A_2{+}A_1$ & $\mathbb{Z}/3\mathbb{Z}$ & $(1)$
& $1$ & $\langle 1 \rangle$ & $\varnothing$
\\
& &$\chi_3$
& $2A_2$ & $\mathbb{Z}/3\mathbb{Z}$ & $\chi_3$
\\
& &$\chi_3^2$
& $2A_2$ & $\mathbb{Z}/3\mathbb{Z}$ & $\chi_3^2$
\\
$2A_2$ & $\mathbb{Z}/3\mathbb{Z}$ & $(1)$
& $1$ & $\langle 1 \rangle$ & $\varnothing$
\\
& &$\chi_3$
& $2A_2$ & $\mathbb{Z}/3\mathbb{Z}$ & $\chi_3$
\\
& &$\chi_3^2$
& $2A_2$ & $\mathbb{Z}/3\mathbb{Z}$ & $\chi_3^2$
\\
$A_2{+}2A_1$ & $\langle 1 \rangle$ & $\varnothing$
& $1$ & $\langle 1 \rangle$ & $\varnothing$
\\
$A_2{+}A_1$ & $\langle 1 \rangle$ & $\varnothing$
& $1$ & $\langle 1 \rangle$ & $\varnothing$
\\
$A_2$ & $A_1$ & $(1^2)$
& $A_1$ & $\langle 1 \rangle$ & $\varnothing$
\\
& &$(2)$
& $1$ & $\langle 1 \rangle$ & $\varnothing$
\\
$3A_1$ & $\langle 1 \rangle$ & $\varnothing$
& $1$ & $\langle 1 \rangle$ & $\varnothing$
\\
$2A_1$ & $\langle 1 \rangle$ & $\varnothing$
& $1$ & $\langle 1 \rangle$ & $\varnothing$
\\
$A_1$ & $\langle 1 \rangle$ & $\varnothing$
& $1$ & $\langle 1 \rangle$ & $\varnothing$
\\
$1$ & $\langle 1 \rangle$ & $\varnothing$
& $1$ & $\langle 1 \rangle$ & $\varnothing$
\\
\end{tabular}
\end{minipage}
\end{table}
\endgroup

\begingroup\tiny
\linespread{0}\selectfont
\begin{table}[H]
\caption{$E_7$}
\centering
\begin{tabular}{l|l|l|l|l|l}
$C$ & $A(C)$ & $\varepsilon$ & $C^{\text{min}}$ & $A(C^{\text{min}})$ 
 & $\varepsilon^{\text{min}}$
\\ 
\hline 
$E_7$ & $A_1$ & $(1^2)$
& $3A_1''$ & $A_1$ & $(1^2)$
\\
& &$(2)$
& $1$ & $\langle 1 \rangle$ & $\varnothing$
\\
$E_7(a_1)$ & $A_1$ & $(1^2)$
& $4A_1$ & $A_1$ & $(1^2)$
\\
& &$(2)$
& $1$ & $\langle 1 \rangle$ & $\varnothing$
\\
$E_7(a_2)$ & $A_1$ & $(1^2)$
& $3A_1''$ & $A_1$ & $(1^2)$
\\
& &$(2)$
& $1$ & $\langle 1 \rangle$ & $\varnothing$
\\
$E_6$ & $\langle 1 \rangle$ & $\varnothing$
& $1$ & $\langle 1 \rangle$ & $\varnothing$
\\
$E_7(a_3)$ & $A_1A_1$ & $(1^2) \boxtimes (1^2)$
& $4A_1$ & $A_1$ & $(1^2)$
\\
& &$(1^2) \boxtimes (2)$
& $A_1$ & $\langle 1 \rangle$ & $\varnothing$
\\
& &$(2) \boxtimes (1^2)$
& $A_2{+}3A_1$ & $A_1$ & $(1^2)$
\\
& &$(2) \boxtimes (2)$
& $1$ & $\langle 1 \rangle$ & $\varnothing$
\\
$E_6(a_1)$ & $A_1$ & $(1^2)$
& $A_1$ & $\langle 1 \rangle$ & $\varnothing$
\\
& &$(2)$
& $1$ & $\langle 1 \rangle$ & $\varnothing$
\\
$D_6$ & $A_1$ & $(1^2)$
& $3A_1''$ & $A_1$ & $(1^2)$
\\
& &$(2)$
& $1$ & $\langle 1 \rangle$ & $\varnothing$
\\
$E_7(a_4)$ & $A_1A_1$ & $(1^2) \boxtimes (1^2)$
& $A_2{+}3A_1$ & $A_1$ & $(1^2)$
\\
& &$(1^2) \boxtimes (2)$
& $4A_1$ & $A_1$ & $(2)$
\\
& &$(2) \boxtimes (1^2)$
& $3A_1''$ & $A_1$ & $(1^2)$
\\
& &$(2) \boxtimes (2)$
& $1$ & $\langle 1 \rangle$ & $\varnothing$
\\
$A_6$ & $\langle 1 \rangle$ & $\varnothing$
& $1$ & $\langle 1 \rangle$ & $\varnothing$
\\
$D_5{+}A_1$ & $A_1$ & $(1^2)$
& $3A_1''$ & $A_1$ & $(1^2)$
\\
& &$(2)$
& $1$ & $\langle 1 \rangle$ & $\varnothing$
\\
$D_6(a_1)$ & $A_1$ & $(1^2)$
& $4A_1$ & $A_1$ & $(1^2)$
\\
& &$(2)$
& $1$ & $\langle 1 \rangle$ & $\varnothing$
\\
$E_7(a_5)$ & $A_2A_1$ & $(1^3) \boxtimes (1^2)$
& $E_7(a_5)$ & $A_2A_1$ & $(1^3) \boxtimes (1^2)$
\\
& &$(1^3) \boxtimes (2)$
& $A_2$ & $A_1$ & $(1^2)$
\\
& &$(21) \boxtimes (1^2)$
& $(A_3{+}A_1)''$ & $A_1$ & $(1^2)$
\\
& &$(21) \boxtimes (2)$
& $A_1$ & $\langle 1 \rangle$ & $\varnothing$
\\
& &$(3) \boxtimes (1^2)$
& $3A_1''$ & $A_1$ & $(1^2)$
\\
& &$(3) \boxtimes (2)$
& $1$ & $\langle 1 \rangle$ & $\varnothing$
\\
$D_5$ & $\langle 1 \rangle$ & $\varnothing$
& $1$ & $\langle 1 \rangle$ & $\varnothing$
\\
$D_6(a_2)$ & $A_1$ & $(1^2)$
& $3A_1''$ & $A_1$ & $(1^2)$
\\
& &$(2)$
& $1$ & $\langle 1 \rangle$ & $\varnothing$
\\
$E_6(a_3)$ & $A_1$ & $(1^2)$
& $A_1$ & $\langle 1 \rangle$ & $\varnothing$
\\
& &$(2)$
& $1$ & $\langle 1 \rangle$ & $\varnothing$
\\
$A_5'$ & $\langle 1 \rangle$ & $\varnothing$
& $1$ & $\langle 1 \rangle$ & $\varnothing$
\\
$D_5(a_1){+}A_1$ & $A_1$ & $(1^2)$
& $4A_1$ & $A_1$ & $(1^2)$
\\
& &$(2)$
& $1$ & $\langle 1 \rangle$ & $\varnothing$
\\
$A_5{+}A_1$ & $A_1$ & $(1^2)$
& $3A_1''$ & $A_1$ & $(1^2)$
\\
& &$(2)$
& $1$ & $\langle 1 \rangle$ & $\varnothing$
\\
$A_4{+}A_2$ & $\langle 1 \rangle$ & $\varnothing$
& $1$ & $\langle 1 \rangle$ & $\varnothing$
\\
$A_5''$ & $A_1$ & $(1^2)$
& $3A_1''$ & $A_1$ & $(1^2)$
\\
& &$(2)$
& $1$ & $\langle 1 \rangle$ & $\varnothing$
\\
$D_5(a_1)$ & $A_1$ & $(1^2)$
& $A_1$ & $\langle 1 \rangle$ & $\varnothing$
\\
& &$(2)$
& $1$ & $\langle 1 \rangle$ & $\varnothing$
\\
$D_4{+}A_1$ & $A_1$ & $(1^2)$
& $3A_1''$ & $A_1$ & $(1^2)$
\\
& &$(2)$
& $1$ & $\langle 1 \rangle$ & $\varnothing$
\\
$A_4{+}A_1$ & $A_1$ & $(1^2)$
& $A_1$ & $\langle 1 \rangle$ & $\varnothing$
\\
& &$(2)$
& $1$ & $\langle 1 \rangle$ & $\varnothing$
\\
$D_4$ & $\langle 1 \rangle$ & $\varnothing$
& $1$ & $\langle 1 \rangle$ & $\varnothing$
\\
$A_3{+}A_2{+}A_1$ & $A_1$ & $(1^2)$
& $3A_1''$ & $A_1$ & $(1^2)$
\\
& &$(2)$
& $1$ & $\langle 1 \rangle$ & $\varnothing$
\\
$A_4$ & $A_1$ & $(1^2)$
& $A_1$ & $\langle 1 \rangle$ & $\varnothing$
\\
& &$(2)$
& $1$ & $\langle 1 \rangle$ & $\varnothing$
\\
$A_3{+}A_2$ & $A_1$ & $(1^2)$
& $4A_1$ & $A_1$ & $(2)$
\\
& &$(2)$
& $1$ & $\langle 1 \rangle$ & $\varnothing$
\\
$D_4(a_1){+}A_1$ & $A_1A_1$ & $(1^2) \boxtimes (1^2)$
& $4A_1$ & $A_1$ & $(1^2)$
\\
& &$(1^2) \boxtimes (2)$
& $A_1$ & $\langle 1 \rangle$ & $\varnothing$
\\
& &$(2) \boxtimes (1^2)$
& $A_2{+}3A_1$ & $A_1$ & $(1^2)$
\\
& &$(2) \boxtimes (2)$
& $1$ & $\langle 1 \rangle$ & $\varnothing$
\\
$D_4(a_1)$ & $A_2$ & $(1^3)$
& $A_2$ & $A_1$ & $(1^2)$
\\
& &$(21)$
& $A_1$ & $\langle 1 \rangle$ & $\varnothing$
\\
& &$(3)$
& $1$ & $\langle 1 \rangle$ & $\varnothing$
\\
$A_3{+}2A_1$ & $A_1$ & $(1^2)$
& $3A_1''$ & $A_1$ & $(1^2)$
\\
& &$(2)$
& $1$ & $\langle 1 \rangle$ & $\varnothing$
\\
$(A_3{+}A_1)'$ & $\langle 1 \rangle$ & $\varnothing$
& $1$ & $\langle 1 \rangle$ & $\varnothing$
\\
$(A_3{+}A_1)''$ & $A_1$ & $(1^2)$
& $3A_1''$ & $A_1$ & $(1^2)$
\\
& &$(2)$
& $1$ & $\langle 1 \rangle$ & $\varnothing$
\\
$2A_2{+}A_1$ & $\langle 1 \rangle$ & $\varnothing$
& $1$ & $\langle 1 \rangle$ & $\varnothing$
\\
$2A_2$ & $\langle 1 \rangle$ & $\varnothing$
& $1$ & $\langle 1 \rangle$ & $\varnothing$
\\
$A_3$ & $\langle 1 \rangle$ & $\varnothing$
& $1$ & $\langle 1 \rangle$ & $\varnothing$
\\
$A_2{+}3A_1$ & $A_1$ & $(1^2)$
& $3A_1''$ & $A_1$ & $(1^2)$
\\
& &$(2)$
& $1$ & $\langle 1 \rangle$ & $\varnothing$
\\
$A_2{+}2A_1$ & $\langle 1 \rangle$ & $\varnothing$
& $1$ & $\langle 1 \rangle$ & $\varnothing$
\\
$A_2{+}A_1$ & $A_1$ & $(1^2)$
& $A_1$ & $\langle 1 \rangle$ & $\varnothing$
\\
& &$(2)$
& $1$ & $\langle 1 \rangle$ & $\varnothing$
\\
$4A_1$ & $A_1$ & $(1^2)$
& $3A_1''$ & $A_1$ & $(1^2)$
\\
& &$(2)$
& $1$ & $\langle 1 \rangle$ & $\varnothing$
\\
$A_2$ & $A_1$ & $(1^2)$
& $A_1$ & $\langle 1 \rangle$ & $\varnothing$
\\
& &$(2)$
& $1$ & $\langle 1 \rangle$ & $\varnothing$
\\
$3A_1''$ & $A_1$ & $(1^2)$
& $3A_1''$ & $A_1$ & $(1^2)$
\\
& &$(2)$
& $1$ & $\langle 1 \rangle$ & $\varnothing$
\\
$3A_1'$ & $\langle 1 \rangle$ & $\varnothing$
& $1$ & $\langle 1 \rangle$ & $\varnothing$
\\
$2A_1$ & $\langle 1 \rangle$ & $\varnothing$
& $1$ & $\langle 1 \rangle$ & $\varnothing$
\\
$A_1$ & $\langle 1 \rangle$ & $\varnothing$
& $1$ & $\langle 1 \rangle$ & $\varnothing$
\\
$1$ & $\langle 1 \rangle$ & $\varnothing$
& $1$ & $\langle 1 \rangle$ & $\varnothing$
\\
\end{tabular}
\end{table}
\endgroup

\begingroup\scriptsize
\linespread{0}\selectfont
\begin{table}[H]
\caption{$E_8$}
\begin{minipage}{.45\linewidth}
\centering
\begin{tabular}{l|l|l|l|l|l}
$C$ & $A(C)$ & $\varepsilon$ & $C^{\text{min}}$ & $A(C^{\text{min}})$ 
 & $\varepsilon^{\text{min}}$
\\ 
\hline 
$E_8$ & $\langle 1 \rangle$ & $\varnothing$
& $1$ & $\langle 1 \rangle$ & $\varnothing$
\\
$E_8(a_1)$ & $\langle 1 \rangle$ & $\varnothing$
& $1$ & $\langle 1 \rangle$ & $\varnothing$
\\
$E_8(a_2)$ & $\langle 1 \rangle$ & $\varnothing$
& $1$ & $\langle 1 \rangle$ & $\varnothing$
\\
$E_8(a_3)$ & $A_1$ & $(1^2)$
& $A_1$ & $\langle 1 \rangle$ & $\varnothing$
\\
& &$(2)$
& $1$ & $\langle 1 \rangle$ & $\varnothing$
\\
$E_7$ & $\langle 1 \rangle$ & $\varnothing$
& $1$ & $\langle 1 \rangle$ & $\varnothing$
\\
$E_8(a_4)$ & $A_1$ & $(1^2)$
& $A_1$ & $\langle 1 \rangle$ & $\varnothing$
\\
& &$(2)$
& $1$ & $\langle 1 \rangle$ & $\varnothing$
\\
$E_8(b_4)$ & $A_1$ & $(1^2)$
& $4A_1$ & $\langle 1 \rangle$ & $\varnothing$
\\
& &$(2)$
& $1$ & $\langle 1 \rangle$ & $\varnothing$
\\
$E_7(a_1)$ & $\langle 1 \rangle$ & $\varnothing$
& $1$ & $\langle 1 \rangle$ & $\varnothing$
\\
$E_8(a_5)$ & $A_1$ & $(1^2)$
& $2A_1$ & $\langle 1 \rangle$ & $\varnothing$
\\
& &$(2)$
& $1$ & $\langle 1 \rangle$ & $\varnothing$
\\
$E_8(b_5)$ & $A_2$ & $(1^3)$
& $A_2$ & $A_1$ & $(1^2)$
\\
& &$(21)$
& $A_1$ & $\langle 1 \rangle$ & $\varnothing$
\\
& &$(3)$
& $1$ & $\langle 1 \rangle$ & $\varnothing$
\\
$D_7$ & $\langle 1 \rangle$ & $\varnothing$
& $1$ & $\langle 1 \rangle$ & $\varnothing$
\\
$E_7(a_2)$ & $\langle 1 \rangle$ & $\varnothing$
& $1$ & $\langle 1 \rangle$ & $\varnothing$
\\
$E_8(a_6)$ & $A_2$ & $(1^3)$
& $A_2$ & $A_1$ & $(1^2)$
\\
& &$(21)$
& $A_1$ & $\langle 1 \rangle$ & $\varnothing$
\\
& &$(3)$
& $1$ & $\langle 1 \rangle$ & $\varnothing$
\\
$E_6{+}A_1$ & $\langle 1 \rangle$ & $\varnothing$
& $1$ & $\langle 1 \rangle$ & $\varnothing$
\\
$D_7(a_1)$ & $A_1$ & $(1^2)$
& $4A_1$ & $\langle 1 \rangle$ & $\varnothing$
\\
& &$(2)$
& $1$ & $\langle 1 \rangle$ & $\varnothing$
\\
$E_6$ & $\langle 1 \rangle$ & $\varnothing$
& $1$ & $\langle 1 \rangle$ & $\varnothing$
\\
$E_7(a_3)$ & $A_1$ & $(1^2)$
& $A_1$ & $\langle 1 \rangle$ & $\varnothing$
\\
& &$(2)$
& $1$ & $\langle 1 \rangle$ & $\varnothing$
\\
$E_8(b_6)$ & $A_2$ & $(1^3)$
& $A_2{+}A_1$ & $A_1$ & $(1^2)$
\\
& &$(21)$
& $2A_2{+}2A_1$ & $\langle 1 \rangle$ & $\varnothing$
\\
& &$(3)$
& $1$ & $\langle 1 \rangle$ & $\varnothing$
\\
$D_6$ & $\langle 1 \rangle$ & $\varnothing$
& $1$ & $\langle 1 \rangle$ & $\varnothing$
\\
$E_6(a_1){+}A_1$ & $A_1$ & $(1^2)$
& $A_1$ & $\langle 1 \rangle$ & $\varnothing$
\\
& &$(2)$
& $1$ & $\langle 1 \rangle$ & $\varnothing$
\\
$A_7$ & $\langle 1 \rangle$ & $\varnothing$
& $1$ & $\langle 1 \rangle$ & $\varnothing$
\\
$D_7(a_2)$ & $A_1$ & $(1^2)$
& $A_1$ & $\langle 1 \rangle$ & $\varnothing$
\\
& &$(2)$
& $1$ & $\langle 1 \rangle$ & $\varnothing$
\\
$E_6(a_1)$ & $A_1$ & $(1^2)$
& $A_1$ & $\langle 1 \rangle$ & $\varnothing$
\\
& &$(2)$
& $1$ & $\langle 1 \rangle$ & $\varnothing$
\\
$D_5{+}A_2$ & $A_1$ & $(1^2)$
& $2A_2$ & $A_1$ & $(1^2)$
\\
& &$(2)$
& $1$ & $\langle 1 \rangle$ & $\varnothing$
\\
$E_7(a_4)$ & $A_1$ & $(1^2)$
& $4A_1$ & $\langle 1 \rangle$ & $\varnothing$
\\
& &$(2)$
& $1$ & $\langle 1 \rangle$ & $\varnothing$
\\
$A_6{+}A_1$ & $\langle 1 \rangle$ & $\varnothing$
& $1$ & $\langle 1 \rangle$ & $\varnothing$
\\
$D_6(a_1)$ & $A_1$ & $(1^2)$
& $A_1$ & $\langle 1 \rangle$ & $\varnothing$
\\
& &$(2)$
& $1$ & $\langle 1 \rangle$ & $\varnothing$
\\
$A_6$ & $\langle 1 \rangle$ & $\varnothing$
& $1$ & $\langle 1 \rangle$ & $\varnothing$
\\
$E_8(a_7)$ & $A_4$ & $(1^5)$
& $E_8(a_7)$ & $A_4$ & $(1^5)$
\\
& &$(21^3)$
& $D_4(a_1)$ & $A_2$ & $(1^3)$
\\
& &$(2^21)$
& $A_2{+}A_1$ & $A_1$ & $(2)$
\\
& &$(31^2)$
& $A_2$ & $A_1$ & $(1^2)$
\\
& &$(32)$
& $2A_1$ & $\langle 1 \rangle$ & $\varnothing$
\\
& &$(41)$
& $A_1$ & $\langle 1 \rangle$ & $\varnothing$
\\
& &$(5)$
& $1$ & $\langle 1 \rangle$ & $\varnothing$
\\
$D_5{+}A_1$ & $\langle 1 \rangle$ & $\varnothing$
& $1$ & $\langle 1 \rangle$ & $\varnothing$
\\
$E_7(a_5)$ & $A_2$ & $(1^3)$
& $A_2$ & $A_1$ & $(1^2)$
\\
& &$(21)$
& $A_1$ & $\langle 1 \rangle$ & $\varnothing$
\\
& &$(3)$
& $1$ & $\langle 1 \rangle$ & $\varnothing$
\end{tabular}
\end{minipage}
\quad\quad\quad\quad
\begin{minipage}{.45\linewidth}
\centering
\begin{tabular}{l|l|l|l|l|l}
$D_5$ & $\langle 1 \rangle$ & $\varnothing$
& $1$ & $\langle 1 \rangle$ & $\varnothing$
\\
$E_6(a_3){+}A_1$ & $A_1$ & $(1^2)$
& $A_1$ & $\langle 1 \rangle$ & $\varnothing$
\\
& &$(2)$
& $1$ & $\langle 1 \rangle$ & $\varnothing$
\\
$D_6(a_2)$ & $A_1$ & $(1^2)$
& $A_1$ & $\langle 1 \rangle$ & $\varnothing$
\\
& &$(2)$
& $1$ & $\langle 1 \rangle$ & $\varnothing$
\\
$E_6(a_3)$ & $A_1$ & $(1^2)$
& $A_1$ & $\langle 1 \rangle$ & $\varnothing$
\\
& &$(2)$
& $1$ & $\langle 1 \rangle$ & $\varnothing$
\\
$A_5{+}A_1$ & $\langle 1 \rangle$ & $\varnothing$
& $1$ & $\langle 1 \rangle$ & $\varnothing$
\\
$D_5(a_1){+}A_2$ & $\langle 1 \rangle$ & $\varnothing$
& $1$ & $\langle 1 \rangle$ & $\varnothing$
\\
$A_4{+}A_3$ & $\langle 1 \rangle$ & $\varnothing$
& $1$ & $\langle 1 \rangle$ & $\varnothing$
\\
$D_4{+}A_2$ & $A_1$ & $(1^2)$
& $4A_1$ & $\langle 1 \rangle$ & $\varnothing$
\\
& &$(2)$
& $1$ & $\langle 1 \rangle$ & $\varnothing$
\\
$A_5$ & $\langle 1 \rangle$ & $\varnothing$
& $1$ & $\langle 1 \rangle$ & $\varnothing$
\\
$A_4{+}A_2{+}A_1$ & $\langle 1 \rangle$ & $\varnothing$
& $1$ & $\langle 1 \rangle$ & $\varnothing$
\\
$D_5(a_1){+}A_1$ & $\langle 1 \rangle$ & $\varnothing$
& $1$ & $\langle 1 \rangle$ & $\varnothing$
\\
$D_5(a_1)$ & $A_1$ & $(1^2)$
& $A_1$ & $\langle 1 \rangle$ & $\varnothing$
\\
& &$(2)$
& $1$ & $\langle 1 \rangle$ & $\varnothing$
\\
$A_4{+}A_2$ & $\langle 1 \rangle$ & $\varnothing$
& $1$ & $\langle 1 \rangle$ & $\varnothing$
\\
$A_4{+}2A_1$ & $A_1$ & $(1^2)$
& $2A_1$ & $\langle 1 \rangle$ & $\varnothing$
\\
& &$(2)$
& $1$ & $\langle 1 \rangle$ & $\varnothing$
\\
$D_4{+}A_1$ & $\langle 1 \rangle$ & $\varnothing$
& $1$ & $\langle 1 \rangle$ & $\varnothing$
\\
$A_4{+}A_1$ & $A_1$ & $(1^2)$
& $A_1$ & $\langle 1 \rangle$ & $\varnothing$
\\
& &$(2)$
& $1$ & $\langle 1 \rangle$ & $\varnothing$
\\
$2A_3$ & $\langle 1 \rangle$ & $\varnothing$
& $1$ & $\langle 1 \rangle$ & $\varnothing$
\\
$D_4$ & $\langle 1 \rangle$ & $\varnothing$
& $1$ & $\langle 1 \rangle$ & $\varnothing$
\\
$A_4$ & $A_1$ & $(1^2)$
& $A_1$ & $\langle 1 \rangle$ & $\varnothing$
\\
& &$(2)$
& $1$ & $\langle 1 \rangle$ & $\varnothing$
\\
$D_4(a_1){+}A_2$ & $A_1$ & $(1^2)$
& $A_1$ & $\langle 1 \rangle$ & $\varnothing$
\\
& &$(2)$
& $1$ & $\langle 1 \rangle$ & $\varnothing$
\\
$A_3{+}A_2{+}A_1$ & $\langle 1 \rangle$ & $\varnothing$
& $1$ & $\langle 1 \rangle$ & $\varnothing$
\\
$A_3{+}A_2$ & $A_1$ & $(1^2)$
& $4A_1$ & $\langle 1 \rangle$ & $\varnothing$
\\
& &$(2)$
& $1$ & $\langle 1 \rangle$ & $\varnothing$
\\
$D_4(a_1){+}A_1$ & $A_2$ & $(1^3)$
& $A_2$ & $A_1$ & $(1^2)$
\\
& &$(21)$
& $A_1$ & $\langle 1 \rangle$ & $\varnothing$
\\
& &$(3)$
& $1$ & $\langle 1 \rangle$ & $\varnothing$
\\
$A_3{+}2A_1$ & $\langle 1 \rangle$ & $\varnothing$
& $1$ & $\langle 1 \rangle$ & $\varnothing$
\\
$D_4(a_1)$ & $A_2$ & $(1^3)$
& $A_2$ & $A_1$ & $(1^2)$
\\
& &$(21)$
& $A_1$ & $\langle 1 \rangle$ & $\varnothing$
\\
& &$(3)$
& $1$ & $\langle 1 \rangle$ & $\varnothing$
\\
$2A_2{+}2A_1$ & $\langle 1 \rangle$ & $\varnothing$
& $1$ & $\langle 1 \rangle$ & $\varnothing$
\\
$A_3{+}A_1$ & $\langle 1 \rangle$ & $\varnothing$
& $1$ & $\langle 1 \rangle$ & $\varnothing$
\\
$2A_2{+}A_1$ & $\langle 1 \rangle$ & $\varnothing$
& $1$ & $\langle 1 \rangle$ & $\varnothing$
\\
$A_3$ & $\langle 1 \rangle$ & $\varnothing$
& $1$ & $\langle 1 \rangle$ & $\varnothing$
\\
$2A_2$ & $A_1$ & $(1^2)$
& $A_1$ & $\langle 1 \rangle$ & $\varnothing$
\\
& &$(2)$
& $1$ & $\langle 1 \rangle$ & $\varnothing$
\\
$A_2{+}3A_1$ & $\langle 1 \rangle$ & $\varnothing$
& $1$ & $\langle 1 \rangle$ & $\varnothing$
\\
$A_2{+}2A_1$ & $\langle 1 \rangle$ & $\varnothing$
& $1$ & $\langle 1 \rangle$ & $\varnothing$
\\
$A_2{+}A_1$ & $A_1$ & $(1^2)$
& $A_1$ & $\langle 1 \rangle$ & $\varnothing$
\\
& &$(2)$
& $1$ & $\langle 1 \rangle$ & $\varnothing$
\\
$4A_1$ & $\langle 1 \rangle$ & $\varnothing$
& $1$ & $\langle 1 \rangle$ & $\varnothing$
\\
$A_2$ & $A_1$ & $(1^2)$
& $A_1$ & $\langle 1 \rangle$ & $\varnothing$
\\
& &$(2)$
& $1$ & $\langle 1 \rangle$ & $\varnothing$
\\
$3A_1$ & $\langle 1 \rangle$ & $\varnothing$
& $1$ & $\langle 1 \rangle$ & $\varnothing$
\\
$2A_1$ & $\langle 1 \rangle$ & $\varnothing$
& $1$ & $\langle 1 \rangle$ & $\varnothing$
\\
$A_1$ & $\langle 1 \rangle$ & $\varnothing$
& $1$ & $\langle 1 \rangle$ & $\varnothing$
\\
$1$ & $\langle 1 \rangle$ & $\varnothing$
& $1$ & $\langle 1 \rangle$ & $\varnothing$
\\
\end{tabular}
\end{minipage}
\end{table}
\endgroup

\bibliographystyle{alpha}
\bibliography{main}

\end{document}